\theoremstyle{plain}
\newtheorem{lem}{Lemma}[section]
\newtheorem{cor}[lem]{Corollary}
\newtheorem{prop}[lem]{Proposition}
\newtheorem{thm}[lem]{Theorem}
\newtheorem*{thmm}{Theorem}
\theoremstyle{definition}
\newtheorem{rem}[lem]{Remark}
\newtheorem{dfn}[lem]{Definition}
\newtheorem{defs}[lem]{Definition}
\renewcommand{\phi}{\varphi}
\renewcommand{\leq}{\leqslant}
\renewcommand{\geq}{\geqslant}
\renewcommand{\epsilon}{\varepsilon}
\renewcommand{\kappa}{\varkappa}
\DeclareMathOperator{\proj}{proj}
 \DeclareMathOperator{\glob}{proj}
\DeclareMathOperator{\Hom}{Hom} \DeclareMathOperator{\cof}{cof}
 \DeclareMathOperator{\Mor}{Mor}
\DeclareMathOperator{\Ab}{Ab}
\DeclareMathOperator{\Set}{Set}
 \DeclareMathOperator{\im}{Im}
\DeclareMathOperator{\coker}{Coker} \DeclareMathOperator{\nis}{\mathsf{nis}}
\DeclareMathOperator{\hocofib}{hocofib}
\DeclareMathOperator{\local}{local}
\newcommand{\cc}{\mathcal}
\newcommand{\bb}{\mathbb}
\newcommand{\eff}{\mathsf{eff}}
\newcommand{\pt}{\mathsf{pt}}
\newcommand{\Shv}{\mathsf{Shv}}
\newcommand{\Cor}{\mathcal{A}}
\newcommand{\Smk}{\mathsf{Sm}_k}
\newcommand{\Smkplus}{\mathsf{Sm}_{k,+} }
\newcommand{\Psh}{\mathsf{Psh}}
\newcommand{\Sm}{\cc Sm}
\newcommand{\ShvA}{\Shv(\Cor)}
\newcommand{\PshA}{\Psh(\Cor)}
\newcommand{\wgtc}{weakly generating trivial cofibrations}
\newcommand{\homShv}{\mathrm{map}^{\mathrm{Ch(\ShvA)}}}
\newcommand{\otimesPsh}{\underset{\mathrm{Psh}}{\otimes}}
\newcommand{\otimesShv}{\underset{\mathrm{Shv}}{\otimes}}
\newcommand{\otimesDay}{\underset{\mathrm{Day}}{\otimes}}
\newcommand{\otimesL}{\underset{\mathrm{Day}}{\otimes}^{\mathbf{L}}}
\newcommand{\Gmn}[1]{\mathbb{G}_m^{\wedge #1}}
\newcommand{\Gmt}[1]{\mathbb{G}_m^{\times #1}}
\newcommand{\embed}[1]{I(#1)}
\newcommand{\inthom}[3]{\underline{\mathrm{Hom}}_{#1}(#2,#3)}
\newcommand{\psSet}{\Delta^{op}\Set_*}
\newcommand{\Gm}{\mathbb{G}_m}
\newcommand{\colim}[1]{\underset{#1}{\mathsf{colim}}}
\newcommand{\Ch}{\mathsf{Ch}}
\newcommand{\Sp}{\mathsf{Sp}}
\newcommand{\SpGm}{\Sp_{\Gm}}
\newcommand{\motive}[1]{\mathcal{L}(\Sigma^{\infty}_{S^1,\Gm}#1_+)}
\newcommand{\mathscr}{\cc}
\begin{document}

\footskip30pt

\baselineskip=1.1\baselineskip

\title{Triangulated Categories of Big Motives via Enriched Functors}
\address{Department of Mathematics, Swansea University, Fabian Way, Swansea SA1 8EN, UK}
\email{peterjbonart@gmail.com}

\author{Peter Bonart}

\begin{abstract}
Based on homological algebra of Grothendieck categories of enriched functors, two models for 
Voevodsky's category of big motives with reasonable correspondences are given in this paper. 
\end{abstract}

\keywords{Triangulated categories of motives, enriched category theory}
\subjclass[2010]{14F42, 14F08, 18G80}
\thanks{Supported by the Swansea Science Doctoral Training Partnerships, and the Engineering
and Physical Sciences Research Council (Project Reference: 2484592)}

\maketitle
\thispagestyle{empty}
\pagestyle{plain}

\tableofcontents

\section{Introduction}
\label{section:introduction}

In his fundamental paper~\cite{Voe1} Voevodsky defined the triangulated category of motives $DM^{\eff}$ over a (perfect)
field $k$ as the full triangulated subcategory of the derived category $D(\Shv_{tr})$ of Nisnevich sheaves with transfers of those
complexes whose cohomology sheaves are $\bb A^1$-invariant, i.e. the $\bb A^1$-local complexes. The triangulated category of big motives
$DM$ is obtained from $DM^{\eff}$ by stabilisation in the $\bb G_m^{\wedge 1}$-direction.

Let $\Cor$ be a symmetric monoidal category of correspondences that satisfies the strict $V$-property
and cancellation, as defined in \cite{garkusha2019compositio}. Basic examples are given by the categories of finite
correspondences $Cor$ or Milnor--Witt correspondences $\widetilde{Cor}$.
The goal of this paper is to recover the triangulated category of big $\Cor$-motives $DM_{\Cor}$ out of
Grothendieck categories of enriched functors $[\cc B,\ShvA]$ in the sense of \cite{AlGG}, 
where $\cc B$ is either the $\ShvA$-category $\cc C$ of the powers 
$\bb G_m^{\times n}$ or the $\ShvA$-category $\Sm$ of all smooth $k$-schemes.
To this end, we use homological algebra of enriched Grothendieck 
categories developed in~\cite{garkusha2019derived,garkusha2022recollements}.

As we have mentioned above, Voevodsky's construction of $DM^{\eff}_{\cc A}$ is based on the $\bb A^1$-locality
of chain complexes $\Ch(\ShvA)$ of Nisnevich $\cc A$-sheaves. In our setting we consider two types of the $\bb A^1$-locality
of chain complexes in $\Ch([\cc B,\ShvA])$:
one for the contravariant $\bb A^1$-locality in the $\cc A$-direction (i.e. the usual one), denoted by $\bb A^1_1$, another for the covariant $\bb A^1$-locality
in the $\cc B$-direction, denoted by $\bb A^1_2$.
We also consider $\tau$-locality  in $\Ch([\cc B,\ShvA])$ with respect to the family 
   $$\tau = \{[\Gmn{n+1},-]\otimes_{\ShvA} \Gmn{1} \rightarrow [\Gmn{n},-]\mid n\geq 0\}$$
as well as $Nis$-locality in the covariant $\cc B$-direction associated to the elementary Nisnevich squares.
As we work with Grothendieck categories of $\ShvA$-enriched functors here, we say that the relevant chain complexes are
strictly local with respect to the specified family above.
We refer the reader to Section~\ref{ThmStatement} for details.
The relations are also counterparts of the axioms (2)-(5) for special motivic $\Gamma$-spaces in the sense of \cite{garkusha2019framed}
and framed spectral functors in the sense of~\cite[Section~6]{garkusha2018triangulated}. 

Our first reconstruction result states the following (see Theorem~\ref{cthm}).

\begin{thmm} 
Let $\cc C$ be the natural $\ShvA$-category represented by the $\cc A$-sheaves 
$\cc A(-,\bb G_m^{\times n})_{\nis}$, $n\geq 0$. Let $DM_{\Cor}[\cc C]$ be the full triangulated subcategory of the derived category $D([\cc C,\ShvA])$
consisting of the strictly $\bb A^1_1$-local and $\tau$-local complexes. Then the canonical evaluation functor
	$$ev_{\Gm} : DM_{\Cor}[\cc C] \rightarrow DM_{\Cor}$$
	is an equivalence of compactly generated triangulated categories.
\end{thmm}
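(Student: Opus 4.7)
The plan is to equip $\Ch([\cc C,\ShvA])$ with a suitable combinatorial model structure---built using the machinery of~\cite{garkusha2019derived,garkusha2022recollements} for enriched Grothendieck categories---and to perform two successive Bousfield localisations, one enforcing strict $\bb A^1_1$-locality and one enforcing strict $\tau$-locality. The homotopy category of the resulting model structure is precisely $DM_{\Cor}[\cc C]$. On the target side I would use the standard presentation of $DM_{\Cor}$ as the homotopy category of $\Gm$-spectra in $\Ch(\ShvA)$ with the levelwise Nisnevich-local, $\bb A^1$-local, $\Omega_{\Gm}$-spectrum model structure.

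\textbf{Adjunction.} Any enriched functor $F\in[\cc C,\ShvA]$ automatically assembles into a $\Gm$-spectrum via $n\mapsto F(\Gmn{n})$: the bonding maps come from the $\ShvA$-Hom objects of $\cc C$ linking $\Gmn{n}$ with $\Gmn{n+1}$. This yields the evaluation functor $ev_{\Gm}$, and I would produce a left adjoint $L$ by enriched left Kan extension along the inclusion of the full subcategory on the $\Gmn{n}$'s into $\cc C$; on free spectra it acts by $L(\Sigma^{-n}_{\Gm}M(X)) = [\Gmn{n},-]\otimes_{\ShvA}\cc A(-,X)_{\nis}$. The central technical verifications are that $\bb A^1_1$-locality corresponds under $ev_{\Gm}$ to levelwise $\bb A^1$-locality, and that $\tau$-locality is precisely the $\Omega_{\Gm}$-spectrum condition---this last equivalence is essentially forced by Yoneda, since $\Map([\Gmn{n+1},-]\otimes\Gmn{1},F)\simeq\Map(\Gmn{1},F(\Gmn{n+1}))$. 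Once these identifications are in place, $(L, ev_{\Gm})$ descends to a Quillen adjunction between the Bousfield-localised model categories.

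\textbf{Equivalence on compact generators.} Both sides are compactly generated, so it is enough to verify that the derived functors restrict to an equivalence on a set of compact generators. On the source, the compact generators are the $[\Gmn{n},-]\otimes_{\ShvA}\cc A(-,X)_{\nis}$ with $X\in\Smk$ and $n\geq 0$, and these are sent by $ev_{\Gm}$ to the standard generators $\Sigma^{-n}_{\Gm}M(X)$ of $DM_{\Cor}$. Full faithfulness on these generators amounts to an isomorphism
$$\Hom_{DM_{\Cor}[\cc C]}([\Gmn{m},-]\otimes\cc A(-,X)_{\nis},\, [\Gmn{n},-]\otimes\cc A(-,Y)_{\nis}[i]) \cong \Hom_{DM_{\Cor}}(\Sigma^{-m}_{\Gm}M(X),\, \Sigma^{-n}_{\Gm}M(Y)[i]).$$
After $\tau$-local $\bb A^1_1$-local replacement on the left and the standard filtered colimit description on the right, both reduce, for $N\gg 0$, to $\Hom_{DM^{\eff}_{\Cor}}(M(X)\otimes\Gmn{N-m},\, M(Y)\otimes\Gmn{N-n}[i])$, whose stability in $N$ is exactly the cancellation theorem for $\Cor$ guaranteed by the standing hypotheses. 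Essential surjectivity follows because $ev_{\Gm}$ sends compact generators to compact generators whose closure under triangles and retracts exhausts $DM_{\Cor}$.

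\textbf{Main obstacle.} The technical core of the proof is showing that the interaction of strict $\bb A^1_1$- and strict $\tau$-localisation on $\Ch([\cc C,\ShvA])$ faithfully models $\Omega_{\Gm}$-spectrification of $\bb A^1$-local replacement; concretely, that $\tau$-fibrant replacement preserves $\bb A^1_1$-locality and is compatible with the enriched Day tensor structure on $[\cc C,\ShvA]$. The strict $V$-property is needed to control sheafification and to ensure that $\tau$-localisation behaves well on representables, while the cancellation theorem is the only tool available for stability under $\wedge\Gmn{1}$. Assembling these compatibilities inside the enriched-Grothendieck-category framework of~\cite{garkusha2019derived, garkusha2022recollements}---rather than the formal setup of $(L,ev_{\Gm})$ or the abstract Quillen equivalence criterion---will constitute the bulk of the work.
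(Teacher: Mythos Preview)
Your proposal is essentially correct and follows the same overall architecture as the paper: identify compact generators on both sides, verify that $ev_{\Gm}$ matches them up, and reduce full faithfulness on generators to the cancellation theorem. The paper likewise invokes a ``check on compact generators'' criterion (its Lemma~4.8 from \cite{garkusha2022recollements}, stated here as Lemma~\ref{garkushaLemma}) rather than building a full Quillen equivalence.

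Where the paper's execution differs from yours is in how it handles the fibrant replacements and the Hom computation. Rather than working abstractly with $\tau$-localisation and verifying that it preserves $\bb A^1_1$-locality, the paper produces an \emph{explicit} $\sim_{\cc C}$-fibrant replacement of the generator $\cc C(\Gmn{k},-)\otimes_{\ShvA} X$: it first shows that $\cc C(\Gmn{1},-)$ is $\otimes$-\emph{invertible} in $D([\cc C,\ShvA])/\tau$ with inverse $I\otimes\Gmn{1}$, so that $\cc C(\Gmn{k},-)\otimes_{\ShvA} X \cong [\Gmn{k},\cc M_{\Cor}(X)]$ where $\cc M_{\Cor}(X)=C_*(I(X\times -))$ is the enriched Suslin complex. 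This object is visibly strictly $\bb A^1_1$- and $\tau$-local by the strict $V$-property and cancellation, so no abstract compatibility of localisations is needed. Similarly, $\bb A^1_1$-localisation is realised concretely as $C_*$ (via a $\ker(C_*)=T_{\bb A^1_1}$ lemma), and the delicate point you flag---that $[\Gmn{k},-]$ interacts well with local quasi-isomorphisms---is isolated as a separate lemma using Morel's contraction formula $H_n[\Gmn{1},X]\cong[\Gmn{1},H_n X]$ for $\bb A^1$-invariant sheaves. Your filtered-colimit reduction of both Hom groups to $\Hom_{DM^{\eff}_{\Cor}}(M(X)\otimes\Gmn{N-m},M(Y)\otimes\Gmn{N-n}[i])$ is correct in spirit, but the paper's invertibility trick lets it bypass the colimit and compute both sides directly as $H_n(\mathcal{Y}(\Gmn{k})(X))$ for a $\sim_{\cc C}$-fibrant $\mathcal{Y}$.
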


Our second reconstruction result states the following (see Theorem~\ref{mainthm}).

\begin{thmm} 
Let $\Sm$ be the natural $\ShvA$-category represented by the $\cc A$-sheaves 
$\cc A(-,X)_{\nis}$, $X\in\Smk$. Let $DM_{\Cor}[\Sm]$ be the full triangulated subcategory of the derived category $D([\Sm,\ShvA])$
consisting of the strictly $\bb A^1_1$-, $\tau$-, $Nis$- and $\bb A_2^1$-local complexes. Then the canonical evaluation functor
	$$ev_{\Gm} : DM_{\Cor}[\Sm][1/p] \rightarrow DM_{\Cor}[1/p]$$
	is an equivalence of compactly generated triangulated categories, where $p$ is the exponential characteristic of the base field $k$.
\end{thmm}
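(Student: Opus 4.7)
The plan is to reduce the statement to Theorem~\ref{cthm} via a comparison between the enriched functor categories $[\Sm,\ShvA]$ and $[\cc C,\ShvA]$. Let $i\colon\cc C\hookrightarrow\Sm$ denote the natural $\ShvA$-enriched inclusion of the full subcategory spanned by the powers of $\bb G_m$. Restriction along $i$ gives a functor $i^{*}\colon[\Sm,\ShvA]\to[\cc C,\ShvA]$ with left adjoint $i_{!}$ given by enriched left Kan extension. Since $ev_{\Gm}$ samples only the values at powers of $\bb G_m$, it factors as $ev_{\Gm}^{\Sm}=ev_{\Gm}^{\cc C}\circ i^{*}$, and by Theorem~\ref{cthm} (inverted at $p$) it suffices to prove that $i^{*}$ induces a triangulated equivalence
\[
i^{*}\colon DM_{\Cor}[\Sm][1/p]\xrightarrow{\sim}DM_{\Cor}[\cc C][1/p].
\]

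That $i^{*}$ descends to the local subcategories is essentially formal: the $\tau$- and $\bb A^{1}_{1}$-local conditions are defined purely in the $\cc C$-direction and are preserved by restriction, while the $Nis$- and $\bb A^{1}_{2}$-conditions on the source are simply dropped. The first substantive step is to lift $(i_{!},i^{*})$ to a Quillen adjunction between the Bousfield localizations of $\Ch([\cc C,\ShvA])$ at $\tau,\bb A^{1}_{1}$ and of $\Ch([\Sm,\ShvA])$ at $\tau,\bb A^{1}_{1},Nis,\bb A^{1}_{2}$, produced using the homological machinery of enriched Grothendieck categories developed in the cited work of Garkusha. Essential surjectivity of $i^{*}$ is then easy: since $i$ is fully faithful, for any $G\in DM_{\Cor}[\cc C][1/p]$ a subsequent $Nis$- and $\bb A^{1}_{2}$-fibrant replacement of $\mathbb{L}i_{!}G$ in the $\Sm$-direction restricts back to $G$.

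The main obstacle is fully-faithfulness, i.e.\ showing that for each $F\in DM_{\Cor}[\Sm][1/p]$ the counit $\mathbb{L}i_{!}i^{*}F\to F$ becomes an equivalence once the $\Sm$-direction localizations are applied. This is precisely where the extra local conditions in the $\Sm$-direction earn their place: strict $Nis$- and $\bb A^{1}_{2}$-locality force an enriched functor on $\Sm$ to satisfy Nisnevich hyperdescent and $\bb A^{1}$-homotopy invariance in the covariant variable, so its value on an arbitrary smooth $k$-scheme is controlled by its restriction to the subcategory of $\bb G_m$-powers. The step at which the exponential characteristic must be inverted is the construction of an $\bb A^{1}$-Nisnevich resolution of a general $X\in\Smk$ in terms of $\bb G_m$-powers, for which one invokes Gabber's prime-to-$p$ alteration theorem as a replacement for resolution of singularities; this is exactly the hypothesis that cannot be removed, since without inverting $p$ the discrepancy between the two derived categories cannot be contracted.

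Finally, compact generation is inherited: $DM_{\Cor}[\Sm][1/p]$ is a Bousfield localization of a compactly generated triangulated category at a set of maps whose domains and codomains are compact, the representable enriched functors $[X,-]$ with $X\in\Smk$ form a generating set, and the equivalence transports their images to a set of compact generators of $DM_{\Cor}[1/p]$.
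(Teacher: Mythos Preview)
Your overall architecture---reducing to Theorem~\ref{cthm} by showing that restriction along $i\colon\cc C\hookrightarrow\Sm$ induces an equivalence between the localized derived categories---matches the paper's strategy. The paper phrases this as showing that the inclusion $T_{\cc C}\hookrightarrow D([\Sm,\ShvA])$ induces an equivalence $T_{\cc C}/\sim_{\cc C}[1/p]\to D([\Sm,\ShvA])/\sim[1/p]$ and then invokes the recollement result of \cite{garkusha2022recollements} to identify $T_{\cc C}$ with $D([\cc C,\ShvA])$; this is formally the same comparison you describe via $(i_!,i^*)$.

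However, there is a genuine gap in your proposed mechanism for the crucial step. You claim that the reason a $\sim$-local $F$ is determined by its restriction to $\cc C$ is that one can build an ``$\bb A^1$-Nisnevich resolution of a general $X\in\Smk$ in terms of $\bb G_m$-powers'' via Gabber's alterations. No such resolution exists: the smooth schemes $X$ are not generated, even after $\bb A^1$-Nisnevich localization, by powers of $\bb G_m$ in any direct sense, and Gabber's theorem produces prime-to-$p$ alterations by smooth varieties, not by tori. The paper's actual argument is quite different and substantially more involved. One first converts the $\Ch(\ShvA)$-enriched functor $F$ into a motivic functor $F^{\cc M}\colon f\mathscr M\to\Sp_{S^1}(\mathscr M)$ (Section~\ref{ROSection}), proves that $F^{\cc M}$ sends motivic equivalences to local equivalences (Theorem~\ref{mottolocal}), and then applies the R\"ondigs--{\O}stv{\ae}r theorem \cite[Corollary~56]{rondigs2008modules} together with Riou's result that $\Sigma^\infty_{S^1,\Gm}X_+$ is strongly dualizable in $SH(k)[1/p]$ (Theorem~\ref{Riou}). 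This yields the key isomorphism $ev_{\Gm}(F)\otimes\mathcal L(\Sigma^\infty X_+)\cong ev_{\Gm}(F(X\times-))$ in $DM_{\Cor}[1/p]$ (Theorem~\ref{RO}), from which the compact generation statement (Lemma~\ref{compactgenerationLemma}) and hence full faithfulness follow. The inversion of $p$ enters precisely through Riou's dualizability theorem, not through a resolution argument for $X$ itself. Your sketch does not supply, or even gesture at, this chain of reductions, and the substitute you offer would not go through.
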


It is worth mentioning that the latter result requires recollement theorems of~\cite{garkusha2022recollements}
as well as a generalization of Röndigs--\O stv\ae r's Theorem~\cite{rondigs2008modules} (see Section~\ref{ROSection}).

The results of the paper were first presented at the Conference on Motivic and Equivariant Topology in May 2023 (Swansea, UK).
The author expresses his gratitude to his supervisor Prof. Grigory Garkusha
whose patience and keen insight have been indispensable throughout this work.

\subsubsection*{\bf Notation}
Throughout the paper we use the following notation.\\

\begin{tabular}{l|l}
	$k$ & field of exponential characteristic $p$\\
	$\Smk$ & smooth separated schemes of finite type over $k$ \\
	$\Cor$ & symmetric monoidal additive $V$-category of correspondences\\ 
	$\PshA$ & presheaves of abelian groups on $\Cor$ \\
	$\ShvA$ & Nisnevich sheaves of abelian groups on $\Cor$ \\
	$DM_{\Cor}$ & triangulated category of big motives with $\Cor$-correspondences\\
	$SH(k)$ & stable motivic homotopy category over $k$\\
	$\Sm$ & enriched category of smooth schemes (see Section~\ref{ThmStatement})\\
	$\cc C$ & subcategory of $\Sm$ on $\Gmt{n}$ for $n \in \bb N$ (see Section~\ref{ThmStatement})\\
	$I$ & canonical embedding $\Sm \rightarrow \ShvA$, $X\mapsto\Cor(-,X)_{\nis}$\\
	$M_{\Cor}(X)$ & $\Cor$-motive of $X\in\Smk$\\
	$\mathscr M$ & category of motivic spaces\\
	$f\mathscr M$ & category of finitely presented motivic spaces
\end{tabular}

Also, we assume that $0$ is a natural number.

\section{Categories of correspondences} \label{defrecall}

In this section we recall the definition of a category of correspondeces $\Cor$ and the 
construction of the triangulated category of big motives with $\Cor$-correspondences $DM_{\Cor}$
in the sense of Voevodsky~\cite{Voe1}. 
We shall adhere to~\cite{garkusha2019compositio}.

\begin{defs}\label{tak}{\rm
		
		A \textit{preadditive category of correspondences} $\Cor$ consists of
		
		\begin{enumerate}
			\item a preadditive category $\Cor$ whose objects are those of $\Smk$, 
			called the \textit{underlying preadditive category},
			\item a functor $\Gamma : \Smk \rightarrow \Cor$, called the \textit{graph functor},
			\item a functor
			$\boxtimes:\cc A\times \Smk\to\cc A$
		\end{enumerate}
	such that the following axioms are satisfied:		
		\begin{enumerate}
			
			\item the functor $\Gamma:\Smk\to \Cor$ is the identity on objects;
			
			\item for every elementary Nisnevich square
			$$\xymatrix{U'\ar[r]\ar[d]&X'\ar[d]\\
				U\ar[r]&X}$$
			the sequence of Nisnevich sheaves
			$$0\to\cc A(-,U')_{\nis}\to\cc A(-,U)_{\nis}\oplus\cc A(-,X')_{\nis}\to\cc A(-,X)_{\nis}\to 0$$
			is exact. Moreover, we require $\cc A(-,\emptyset)_{\nis}=0$;
			
			\item for every $\cc A$-presheaf $\cc F$ (i.e. an additive contravariant
			functor from $\cc A$ to Abelian groups Ab)
			the associated Nisnevich sheaf $\cc F_{\nis}$ has a unique structure of an
			$\cc A$-presheaf for which the canonical morphism $\cc F\to\cc
			F_{\nis}$ is a morphism of $\cc A$-pre\-sheaves.
			
			\item the functor $\boxtimes : \Cor \times \Smk \to \Cor$ sends an object 
			$(X,U) \in \Smk\times \Smk$ to $X \times U \in \Smk$ and satisfies $1_X\boxtimes f=\Gamma(1_X\times f)$,
			$(u+v)\boxtimes f=u\boxtimes f+v\boxtimes f$ for all
			$f\in\Mor(Sm/k)$ and $u,v\in\Mor(\cc A)$.
		\end{enumerate}
}\end{defs}

\begin{defs}\label{corproperties}
	\begin{enumerate}
		\item A preadditive category of correspondences $\Cor$ is called an \textit{additive category of correspondences} if its underlying preadditive category is an additive category.
		\item A preadditive category of correspondences $\Cor$ is called a \textit{symmetric monoidal category of correspondences} if its underlying preadditive category $\Cor$ is also equipped with an $\Ab$-enriched symmetric monoidal structure, such that the graph functor $\Gamma : \Smk \rightarrow \Cor$ is a strong monoidal functor with respect to the cartesian monoidal structure on $\Smk$. This means in particular that for $X, Y \in \Smk$ the tensor product $X \otimes Y$ in $\Cor$ is isomorphic to the usual product of schemes $X \times Y$.
		\item A preadditive category of correspondences $\Cor$ is a {\it $V$-category\/} if it satisfies the {\it $V$-property}.
		The $V$-property says that for any $\bb A^1$-invariant $\Cor$-presheaf of abelian groups $\cc F$ the associated Nisnevich sheaf $\cc F_{\nis}$ is $\bb A^1$-invariant, in the sense that for all $X\in \Smk$ the map $$\cc F_{\nis}(X) \rightarrow  \cc F_{\nis}(X \times \bb A^1) $$ induced by the projection $X \times \bb A^1 \rightarrow X$ is an isomorphism.
		\item Recall from~\cite{Voe} that a Nisnevich sheaf $\cc F$ of abelian groups is {\it
strictly $\bb A^1$-invariant\/} if for any $X\in {Sm}/k$, the
canonical morphism
   $$H^*_{\nis}(X,\cc F)\to H^*_{\nis}(X\times\bb A^1,\cc F)$$
is an isomorphism.
A $V$-category of correspondences $\cc A$ is a {\it strict
$V$-category of correspondences\/} if for any $\bb A^1$-invariant
$\cc A$-presheaf of abelian groups $\cc F$ the associated Nisnevich
sheaf $\cc F_{\nis}$ is strictly $\bb A^1$-invariant.
\item 

For $i \leq k+1 \in \mathbb{N}$ let $\iota_{i,k} : \Gmt{k} \rightarrow \Gmt{k+1}$ be the inclusion map in $\Smk$ sending $(x_1,\dots,x_k)$ to $(x_1,\dots,x_{i-1},1,x_{i+1},\dots,x_k)$.
In $\ShvA$ define $$\Gmn{k} := \cc A(-,{\Gmt{k}})_{\nis} / \underset{i=1}{\overset{k}{\sum}} \im(\embed{\iota_{i,k-1}}).$$
Furthermore, let $$\Delta_k^n := \mathrm{Spec}(k[t_0,\dots,t_n]/(t_0+\dots+t_n-1)).$$
Similarly to \cite[Definition 3.5]{garkusha2019compositio} we can define bivariant $\Cor$-motivic cohomology groups by
$$H_{\Cor}^{p,q}(X,Y):=H^p_{\nis}(X,\Cor(-\times \Delta_k^\bullet,Y\wedge \Gmn{q})_{\nis}[-q]),$$
where the $H^p_{\nis}$ on the right hand side refers to Nisnevich hypercohomology groups.
We say that a strict $V$-category of correspondences $\Cor$ \textit{satisfies the cancellation property} if all the canonical maps
$$\beta^{p,q} : H_{\Cor}^{p,q}(X,Y) \rightarrow H_{\Cor}^{p+1,q+1}(X\wedge \Gmn{1},Y)$$
are isomorphisms.
	\end{enumerate}	
\end{defs}

{\it From now on, $\Cor$ is an additive symmetric monoidal strict $V$-category of correspondences. 
From Section \ref{ThmStatement} onwards we will furthermore assume that $\Cor$ satisfies the cancellation property.}
Non-trivial examples are given by finite correspondences $Cor$ in the sense of
Voevodsky~\cite{Voe1}, finite Milnor--Witt correspondences $\widetilde{Cor}$
in the sense of Calm\`es--Fasel~\cite{CF} or $K_0^{\oplus}$ in the
sense of Walker~\cite{Wlk}. Given a ring $R$ (not necessarily
commutative) which is flat as a $\bb Z$-algebra and a category of
correspondences $\cc A$, we can form an {\it additive category of
correspondences\/} $\cc A_R$ with coefficients in $R$. By
definition, $\cc A_R(X,Y):=\cc A(X,Y)\otimes R$ for all $X,Y\in
\Smk$.

We are now passing to the construction of Voevodsky's triangulated category of big motives with $\Cor$-correspondences $DM_{\Cor}$.

Let $\ShvA$ be the Grothendieck category of Nisnevich sheaves on $\Cor$ with values in abelian groups. 
The category $\ShvA$ of $\Ab$-valued Nisnevich sheaves on $\Cor$ is symmetric closed monoidal 
with the Day convolution product \cite{day1970closed} that is induced by the monoidal structure of $\Cor$. 
The internal hom of $\ShvA$ will be denoted sometimes by $\inthom{\ShvA}{-}{-}$, and sometimes 
by $[-,-]$ if there is no likelihood of confusion.

Let $D(\ShvA)$ be the derived category of $\ShvA$. 
Consider the localizing subcategory $\cc L$ in $D(\ShvA)$ that is compactly generated by the shifts of the complexes
$$\cdots \rightarrow 0 \rightarrow \Cor(-,X \times \bb A^1)_{\nis} \rightarrow \Cor(-,X)_{\nis} \rightarrow 0 \rightarrow \cdots$$
for all $X \in \Smk$.

By general localization theory for triangulated categories \cite{neeman2001triangulated} 
we can form the quotient triangulated category $D(\ShvA)/ \cc L$.

\begin{defs}
We call $DM_{\Cor}^{\eff} := D(\ShvA)/ \cc L$ the  \textit{triangulated category of effective motives with $\Cor$-correspondences}.  It can be identified with the full subcategory of  $D(\ShvA)$ of those objects that have $\bb A^1$-invariant cohomology sheaves.
\end{defs}

In $DM_{\Cor}^{\eff}$ we can $\otimes$-invert $\Gmn{1}$ using a procedure similar to~\cite[5.2]{morel131motivic}. Namely,
we define a $\Gmn{1}$-spectrum of chain complexes $C$ to be a collection $(C_n,\sigma_n)_{n\in \bb N}$ consisting for each $n \in \bb Z_{\geq 0}$ of a chain complex $C_n \in \Ch(\ShvA)$, and a morphism of chain complexes $\sigma_n: C_n \otimes \Gmn{1} \rightarrow C_{n+1}$. A morphism of $\Gmn{1}$-spectra of chain complexes is a graded morphism of complexes respecting the structure maps $\sigma_n$. The category of $\Gm$-spectra of chain complexes is denoted $\Sp_{\Gm}(\Ch(\ShvA))$.

\begin{defs} 
	\label{homologydef}
	\begin{enumerate}
		\item 
		Let $I : \Smk \rightarrow \ShvA$ be the obvious inclusion functor $I(X) := \Cor(-,X)_{\nis}$. For any $\Gmn{1}$-spectrum of chain complexes $C$ we define \textit{presheaves of homology groups} by assigning to each $U \in \Smk$ and $n, m \in \bb Z$ the group $\bb H_n(C)_m(U)$ as the colimit of the diagram
		$$\dots \rightarrow Hom_{DM_{\Cor}^{\eff}} (I(U)[n-m] \otimes \Gmn{m+r} , C_r) \rightarrow \dots$$
		ranging over $r \in \bb N$.
		\item
		A morphism of $\Gmn{1}$-spectra of chain complexes is called a \textit{stable motivic equivalence} if it induces isomorphisms on these homology presheaves.
		\item
		We define $DM_{\Cor}$ to be the category obtained from $\Sp_{\Gm}(\Ch(\ShvA))$ by inverting the stable motivic equivalences.
	\end{enumerate}
\end{defs}

\section{A model structure on $\Ch(\ShvA)$} \label{sectionChShvAModel}

Let $\Cor$ be a symmetric monoidal category of correspondences satisfying the $V$-property.
The goal of this section is to construct a monoidal model structure on $\Ch(\ShvA)$ that is weakly finitely generated (Definition \ref{defweakfingen}), satisfies the monoid axiom \cite[Definition 3.3]{schwede2000algebras}, and in which the weak equivalences are the quasi-isomorphisms. Once we have such a model structure we can use \cite[Theorem 5.5]{garkusha2019derived} to construct the projective model structure on the category of chain complexes $\Ch([\cc C, \ShvA])$ of the Grothendieck category of enriched functors $[\cc C, \ShvA]$ for any small $\ShvA$-enriched category $\cc C$.
The model structure will be useful for proving the reconstruction theorems of the next two chapters.

There is a finitely generated monoidal model structure on the category of unbounded chain compelxes of abelian groups $\Ch(\Ab)$, where weak equivalences are quasi-isomorphisms and fibrations are epimorphisms \cite{strickland2020model}.
This model structure also satisfies the monoid axiom in the sense of~\cite[Definition 3.3]{schwede2000algebras}.
For any abelian group $A$, let $S^n A$ be the chain complex that is $A$ in degree $n$ and $0$ everywhere else. Let $D^n A$ be the chain complex that is $A$ in degree $n$ and $n+1$, and $0$ everywhere else, and where the differential from degree $n+1$ to degree $n$ is the identity map on $A$. For every $n \in \bb Z$ there is a canonical map $S^n A \rightarrow D^n A$ which is $id_A$ in degree $n$.
A set of generating cofibrations of $\Ch(\Ab)$ is $I_{\Ch} := \{S^n \bb Z \rightarrow D^n \bb Z \mid n \in \bb Z \}$, and a set of generating trivial cofibrations is $J_{\Ch} := \{0 \rightarrow D^n \bb Z \mid n \in \bb Z \}$.

Let $\PshA$ be the category of $\Ab$-enriched functors $\cc A^{op} \rightarrow \Ab$.
We can then apply \cite[Theorem 5.5]{garkusha2019derived} to get a weakly finitely generated monoidal model structure on $\Ch(\PshA)$, where weak equivalences are sectionwise quasi-isomorphisms, and the fibrations are epimorphisms. We call it the \textit{standard projective model structure on presheaves}, or sometimes just the \textit{projective model structure on presheaves}.
The proof of \cite[Theorem 4.2]{dundas2003enriched} shows that the generating cofibrations and generating trivial cofibrations of this model structure are given by the sets $$I_{\glob} := \{ \Cor(-,X) \otimes S^n\bb Z \rightarrow \Cor(-,X) \otimes D^n \bb Z | X \in \Smk, n \in \bb Z \}$$ $$J_{\glob} := \{ 0 \rightarrow \Cor(-,X) \otimes D^n \bb Z | X \in \Smk, n \in \bb Z \}.$$
From \cite[Theorem 4.4]{dundas2003enriched} it also follows that this model structure on $\Ch(\PshA)$ satisfies the monoid axiom.

\begin{lem} \label{degreesplitmono}
	Every cofibration in the projective model structure on $\Ch(\PshA)$ is a degreewise split monomorphism with degreewise projective cokernel.
	
\end{lem}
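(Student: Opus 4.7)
The plan is to verify the property on the generating cofibrations in $I_{\glob}$ and then show that the class of maps which are degreewise split monomorphisms with degreewise projective cokernels is closed under the three operations producing general cofibrations: pushouts, transfinite composition, and retracts. Since a cofibration in a cofibrantly generated model structure is a retract of a transfinite composition of pushouts of generators, this will suffice.

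First I would check the generators. Each $\iota: \Cor(-,X)\otimes S^n\bb Z \to \Cor(-,X)\otimes D^n\bb Z$ is the identity on $\Cor(-,X)$ in degree $n$ and the inclusion $0\to \Cor(-,X)$ in degree $n+1$, and is $0\to 0$ otherwise. So $\iota$ is degreewise a split monomorphism with cokernel either zero or $\Cor(-,X)$, and the representable $\Cor(-,X)$ is a projective object of $\PshA$ by the Yoneda lemma.

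Next I would handle pushouts and retracts. Colimits in $\Ch(\PshA)$ are computed degreewise, and the pushout of a split monomorphism $A\to B$ of abelian presheaves along any map $A\to C$ is a split monomorphism $C\to C\sqcup_A B$ whose cokernel is canonically isomorphic to $B/A$. Thus pushouts preserve the property. Retracts are immediate: a retract of a split monomorphism is a split monomorphism, and a retract of a projective is projective.

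The main step, and the one requiring care, is transfinite composition, since filtered colimits of projectives need not be projective in general. Given a $\lambda$-sequence $X_0\to X_1\to\cdots$ in which every $X_\alpha\to X_{\alpha+1}$ is degreewise split mono with degreewise projective cokernel $P_\alpha$, I would prove by transfinite induction on $\alpha$ that one can choose compatible degreewise isomorphisms $X_\alpha\cong X_0\oplus\bigoplus_{\beta<\alpha}P_\beta$; successor steps use the chosen splittings, and limit steps use that filtered colimits commute with coproducts. Taking $\alpha=\lambda$ identifies $X_0\to X_\lambda$ with the inclusion $X_0\hookrightarrow X_0\oplus\bigoplus_{\beta<\lambda}P_\beta$, which is degreewise split with cokernel $\bigoplus_{\beta<\lambda}P_\beta$, a coproduct of projectives in $\PshA$ and therefore projective. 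This trick of recording the cumulative cokernel as a coproduct of the individual ones, rather than as an abstract filtered colimit, is what makes the projectivity survive the passage to the limit.
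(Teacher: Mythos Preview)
Your argument is correct, but the paper takes a different and somewhat slicker route. Rather than decomposing a cofibration as a retract of a relative $I_{\glob}$-cell complex and tracking the property through pushouts, transfinite compositions, and retracts, the paper uses direct lifting arguments against the model structure. For the degreewise splitting, given a cofibration $f:A\to B$ and a fixed degree $n$, one builds a map $\phi:A\to D^n(A_n)$ that is the identity in degree $n$; since $D^n(A_n)\to 0$ is a surjective quasi-isomorphism (hence a trivial fibration), the lifting property of $f$ produces $s:B\to D^n(A_n)$ with $s_n\circ f_n=\id_{A_n}$. For projectivity, one observes that $\coker(f)$ is cofibrant, then shows any cofibrant $C$ is degreewise projective by lifting against $D^n(p):D^n(X)\to D^n(Y)$ for an arbitrary epimorphism $p$ in $\PshA$.

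The trade-offs: the paper's approach avoids the transfinite bookkeeping entirely and never touches the explicit form of the generating cofibrations, so it would work verbatim in any abelian projective model structure where $D^n(-)$ behaves as expected. Your approach, by contrast, makes the structure of cofibrant objects completely explicit (degreewise a coproduct of representables), which is sometimes useful downstream. The delicate point in your argument is the transfinite composition step, where you correctly note that one must record the cumulative cokernel as a coproduct of the stagewise cokernels rather than as an abstract filtered colimit; this is exactly the place where a careless argument would fail, and you handled it properly by setting up a transfinite recursion on the compatible splittings.
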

\begin{proof} 
	Take a cofibration $f : A \rightarrow B$ in $\Ch(\PshA)$.
	Take an arbitrary $n \in \bb Z$.
	Define a morphism of complexes $\phi : A \rightarrow D^n(A_n)$ by means of the following diagram
	$$\xymatrix{\dots \ar[r]^{\partial_A^{n+3}} & A_{n+2} \ar[r]^{\partial_A^{n+2}} \ar[d] & A_{n+1} \ar[r]^{\partial_A^{n+1}} \ar[d]^{\partial_A^{n+1}} & A_{n} \ar[r]^{\partial_A^{n}} \ar[d]^{id} & A_{n-1} \ar[r]^{\partial_A^{n-1}} \ar[d] & \dots \\
		\dots \ar[r] & 0 \ar[r] & A_n \ar[r]^{id} & A_n \ar[r] & 0  \ar[r] & \dots }$$
	In the following commutative diagram in $\Ch(\PshA))$ the right hand side morphism is a surjective quasi-isomorphism, i.e. a projective trivial fibration
	$$\xymatrix{A \ar[d]_f \ar[r]^{\phi} & D^n(A_n) \ar[d] \\
		B \ar[r] \ar@{..>}^s[ur] & 0}$$
	So we get a lift $s : B \rightarrow D^n(A_n)$ with $s \circ f = \phi$. In particular $s_n \circ f_n = \phi_n = id_{A_n}$. Since $n \in \bb Z$ was arbitrary, $f$ is a degreewise split monomorphism.
	
	We have a pushout diagram:
	$$\xymatrix{A \ar[r]^{f}\ar[d]  & B \ar[d]\\
		0 \ar[r] & \coker(f)}$$
	Since the upper map is a cofibration, the lower map is a cofibration. So $\coker(f)$ is a cofibrant object.
	To show that $f$ is a degreewise split monomorphism with degreewise projective cokernel, we now just need to show that every cofibrant object in $\Ch(\PshA)$ is degreewise projective.
	
	Let $C$ be any cofibrant object in $\Ch(\PshA)$, and let $n \in \bb Z$. We claim that $C_n$ is projective in $\PshA$. Take an arbitrary epimorphism $p : X \rightarrow Y$ in $\PshA$ and an arbitrary map $g: C_n \rightarrow Y$ in $\PshA$. We need to find a lift in the diagram
	$$\xymatrix{& X \ar[d]^{p}\\
		C_n \ar@{..>}[ur] \ar[r]_g & Y}$$
	Just like at the begining of the lemma we can construct a morphism of chain complexes $\phi : C \rightarrow D^n(C_n)$ with $\phi_n = id_{C_n}$, $\phi_{n+1}=\partial_C^{n+1}$ and $\phi_k = 0$ for $k \notin\{n,n+1\}$.
	In $\Ch(\PshA)$ we then have a diagram
	$$\xymatrix{0 \ar[rr] \ar[d] & & D^n(X) \ar[d]^{D^n(p)} \\
		C \ar[r]_(0.4){\phi} \ar@{..>}[urr]^s & D^n(C_n) \ar[r]_{D^n(g)} & D^n(Y) } $$
	We claim that in this diagram a lift $s: C \rightarrow D^n(X)$ exists. This is true for the following reason:
	Since $p$ is an epimorphism, $D^n(p)$ is an epimorphism, so $D^n(p)$ is a projective fibration. Since $D^n(X)$ and $D^n(Y)$ are both acyclic, it follows that $D^n(p)$ is a quasi-isomorphism, so $D^n(p)$ is a trivial fibration. Since $0 \rightarrow C$ is a cofibration, it follows that the lift $s : C \rightarrow D^n(X)$ exists. 
	Then $s_n : C_n \rightarrow X$ satisfies $p \circ s_n = g$, and this then shows that $C_n$ is projective.    
\end{proof}

\begin{cor}
	The standard projective model structure on $\Ch(\PshA)$ is cellular, in the sense of \cite[Definition 12.1.1]{hirschhorn2003model}
\end{cor}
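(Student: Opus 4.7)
The plan is to verify the three conditions in Hirschhorn's Definition 12.1.1 of a cellular model category for the standard projective model structure on $\Ch(\PshA)$ with the generating sets $I_{\glob}$ and $J_{\glob}$ identified above: (i) every cofibration is an effective monomorphism; (ii) the domains and codomains of morphisms in $I_{\glob}$ are compact relative to $I_{\glob}$; and (iii) the domains of morphisms in $J_{\glob}$ are small relative to $I_{\glob}$.

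Condition (i) falls out immediately from the preceding lemma, which shows that every cofibration is a degreewise split monomorphism, hence a monomorphism in the abelian category $\Ch(\PshA)$; in any abelian category a monomorphism is the kernel of its cokernel, and is therefore a regular, and thus effective, monomorphism in the categorical sense required by Hirschhorn. Condition (iii) is trivial, since the domains of morphisms in $J_{\glob}$ are all $0$, which is vacuously small relative to any class of maps.

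For condition (ii), the domains and codomains of the maps in $I_{\glob}$ are the objects $\Cor(-,X)\otimes S^n\bb Z$ and $\Cor(-,X)\otimes D^n\bb Z$. The representable $\Cor(-,X)$ is finitely presented in $\PshA$, and $S^n\bb Z$, $D^n\bb Z$ are bounded chain complexes of finitely generated free abelian groups, so the tensor products are finitely presented in $\Ch(\PshA)$, using that filtered colimits of chain complexes of presheaves are computed degreewise and sectionwise. The step I expect to require the most care is translating this finite presentability into Hirschhorn's compactness relative to $I_{\glob}$: one must verify that along any transfinite cell complex built from pushouts of maps in $I_{\glob}$, $\Hom$ out of our finitely presented objects commutes with the transfinite colimit. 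This follows because the maps of $I_{\glob}$ are degreewise split monomorphisms between finitely presented objects, so any relative $I_{\glob}$-cell complex is a transfinite composition of monomorphisms in the locally finitely presentable Grothendieck category $\Ch(\PshA)$, and such colimits commute with $\Hom$ out of finitely presented objects. Assembling these three verifications yields the claim.
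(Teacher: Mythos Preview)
Your proposal is correct and follows essentially the same approach as the paper: the paper's proof also invokes the preceding lemma to see that cofibrations are degreewise split monomorphisms, then uses that monomorphisms in an abelian category are effective monomorphisms, and asserts that the domains and codomains of $I_{\glob}$ and $J_{\glob}$ are compact. Your treatment of condition (ii), spelling out why the representable-tensor-bounded-complex objects are finitely presented and why this yields compactness relative to $I_{\glob}$, is more detailed than the paper's one-line assertion, but the underlying reasoning is the same.
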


\begin{proof}
	The domains and codomains from $I_{\glob}$ and $J_{\glob}$ are compact.
	By Lemma \ref{degreesplitmono} every cofibration is a degreewise split monomorphism.
	Since $\Ch(\PshA)$ is an abelian category, every monomorphism is an effective monomorphism.
	So every cofibration is an effective monomorphism, and the projective model structure on $\Ch(\PshA)$ is cellular.
\end{proof}

We next apply a left Bousfield localization on the projective model structure on presheaves.

\begin{dfn} \label{localprojdef}
	
	Let $\cc Q$ be the set of all elementary Nisnevich squares in $\Smk$.
	We want to make the following class of maps in $\Ch(\PshA)$ into weak equivalences:
	\begin{enumerate}
		\item
		The morphism $0 \rightarrow \Cor(-,\emptyset)$ will be a weak equivalence.
		\item
		For every elementary Nisnevich square $Q \in \cc Q$ of the form $$\xymatrix{ U^\prime \ar[r]^\beta \ar[d]^\alpha & X^\prime \ar[d]^\gamma \\
			U \ar[r]^\delta & X} $$
		we get a square 
		$$ \xymatrix{ \Cor(-,U^\prime) \ar[r]^{\beta_*} \ar[d]^{\alpha_*} & \Cor(-,X^\prime) \ar[d]^{\gamma_*} \\
			\Cor(-,U) \ar[r]^{\delta_*}  & \Cor(-,X) } $$
		in $\Ch(\PshA)$ (we regard each entry of the square as a complex concentrated in zeroth degree).
		We take the mapping clyinder $C$ of the map $\Cor(-,U^\prime) \rightarrow \Cor(-,X^\prime)$. So the map factors as 
		$\xymatrix{\Cor(-,U^\prime) \: \ar@{>->}[r]& C \ar@{->>}[r]^(0.4){\sim} & \Cor(-,X^\prime)}$, where the first map is a cofibration, the second map is a trivial fibration and $C$ is finitely presented. Let $s_Q := \Cor(-,U) \underset{\Cor(-,U^{\prime})}{\coprod} C$. Then $s_Q$ is also finitely presented. 
		Notice that $s_Q$ is the homotopy pushout of $\Cor(-,U)$ and $\Cor(-,X^\prime)$ over $\Cor(-,U^\prime)$. Take the mapping cylinder $t_Q$ of the map $s_Q = \Cor(-,U) \underset{\Cor(-,U^{\prime})}{\coprod} C \rightarrow \Cor(-,X)$, so that it factors as a cofibration followed by a trivial fibration $\xymatrix{s_Q \: \ar@{>->}[r]^{p_Q}& t_Q \ar@{->>}[r]^(0.4){\sim} & \Cor(-,X) }$, and $t_Q$ is finitely presented.\\
		For every $Q \in \cc Q$ this cofibration $p_Q: s_Q \rightarrow t_Q$ will be a weak equivalence.
	\end{enumerate}
	
	Our notation here is similar to that of \cite[Notation 2.13]{dundas2003motivic}.
	Denote the set of all the shifts of these morphisms by $S = \{0 \rightarrow \Cor(-,\emptyset)[n] \mid n \in \bb Z \} \cup \{p_Q[n] | Q  \in \cc Q, n \in \bb Z \}$.
	We can apply \cite[Theorem 4.11]{hirschhorn2003model} to get the left Bousfied localization of the projective model structure of presheaves with respect to $S$. We call the resulting model structure the \textit{local projective model structure on presheaves}.
	We write $I_{\local}$, $J_{\local}$ for the generating cofibrations, generating trivial cofibrations and \wgtc\space of the local projective model structure on $\Ch(\PshA)$.
	
\end{dfn}

We will say that an object $F \in \Ch(\PshA)$ is \textit{locally fibrant}, if it is fibrant in the local projective model structure.

\begin{lem}\label{lemmaChPshALocalFibrant}
	An object $F \in \Ch(\PshA)$ is locally fibrant if and only if
	$F(\emptyset) \rightarrow 0$ is a quasi-isomorphism in $\Ch(\Ab)$, and $F$ sends elementary Nisnevich squares to homotopy pullback squares.
\end{lem}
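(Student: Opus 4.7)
The plan is to apply the general theory of left Bousfield localization from Hirschhorn. By construction (Definition \ref{localprojdef}), the local projective model structure is the left Bousfield localization of the projective model structure with respect to the set $S$. Hence, by \cite[Proposition 3.4.1]{hirschhorn2003model}, an object $F$ is locally fibrant if and only if it is projectively fibrant and $S$-local, i.e.\ for every $s : A \to B$ in $S$ the induced map on derived mapping complexes $\RHom(B,F) \to \RHom(A,F)$ is a quasi-isomorphism. Observe first that every object of $\Ch(\PshA)$ is projectively fibrant, since projective fibrations are exactly sectionwise (hence objectwise) epimorphisms and the terminal morphism $F \to 0$ is trivially surjective. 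So the condition reduces entirely to $S$-locality.

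Next I would unpack $S$-locality for each of the two families defining $S$. For the first family $0 \to \Cor(-,\emptyset)[n]$, locality is equivalent to $\RHom(\Cor(-,\emptyset)[n], F) \simeq 0$ for all $n \in \bb Z$. Since $\Cor(-,\emptyset) \otimes S^0\bb Z$ is a representable cofibrant generator (an element of $I_{\glob}$ up to the obvious generator), the enriched Yoneda lemma identifies $\RHom(\Cor(-,\emptyset),F) \simeq F(\emptyset)$ in $D(\Ab)$. Requiring this to be acyclic for all shifts is therefore equivalent to $F(\emptyset) \to 0$ being a quasi-isomorphism in $\Ch(\Ab)$.

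For the second family $p_Q : s_Q \to t_Q$ (with all shifts), I note that by construction $s_Q$ and $t_Q$ are both cofibrant and finitely presented: $t_Q$ comes with a projective trivial fibration $t_Q \xrightarrow{\sim} \Cor(-,X)$, and $s_Q$ is the homotopy pushout of $\Cor(-,U) \leftarrow \Cor(-,U') \to \Cor(-,X')$ in $\Ch(\PshA)$ (the cofibration $\Cor(-,U') \rightarrowtail C$ together with the projective trivial fibration $C \twoheadrightarrow \Cor(-,X')$ computes a homotopy pushout model). Since $\RHom$ turns homotopy colimits into homotopy limits and evaluates representables to sections, $\RHom(t_Q,F) \simeq F(X)$ and $\RHom(s_Q,F)$ is the homotopy pullback of $F(U) \to F(U') \leftarrow F(X')$. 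Thus $F$ is local with respect to $p_Q$ (and all its shifts) precisely when the comparison map $F(X) \to F(U) \times^h_{F(U')} F(X')$ is a quasi-isomorphism, i.e.\ $F$ sends the square $Q$ to a homotopy pullback square.

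Combining these two reductions over all $n \in \bb Z$ and $Q \in \cc Q$ yields exactly the two stated conditions, proving both directions. The main technical point, and the only non-formal step, is the identification of $\RHom(s_Q,F)$ with the stated homotopy pullback; this rests on the fact that $s_Q$ was built as a genuine (cofibration-fibration) model of a homotopy pushout in the cellular, finitely generated projective model structure on $\Ch(\PshA)$, together with the enriched Yoneda identification $\RHom(\Cor(-,Y),F) \simeq F(Y)$ for representables.
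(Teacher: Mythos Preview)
Your proposal is correct and follows essentially the same approach as the paper: both invoke Hirschhorn's characterization of fibrant objects in a left Bousfield localization, note that every object is projectively fibrant, and then unwind $S$-locality via Yoneda and the homotopy-pushout description of $s_Q$ and $t_Q$. The only difference is presentational: the paper works explicitly with the good truncation $\tau_{\geq 0}$ of the internal mapping complex (since Hirschhorn's $S$-locality is phrased in terms of simplicial mapping spaces, which detect only connective information) and then uses closure of $S$ under all shifts to recover the unbounded quasi-isomorphism condition, whereas you pass directly to $\RHom$; since $S$ is indeed closed under shifts, your shortcut is harmless.
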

\begin{proof}
	Let $\tau_{\geq 0} : \Ch(\PshA) \rightarrow \Ch_{\geq 0}(\PshA)$ be the good truncation functor, sending
	$$\dots \rightarrow A_1 \rightarrow A_0 \overset{\partial_A^0}{\rightarrow} A_{-1} \rightarrow \dots $$
	to
	$$\dots \rightarrow A_1 \rightarrow \ker(\partial_A^0).$$
	For $A, B \in \Ch(\PshA)$ let $\inthom{\Ch(\PshA)}{A}{B}$ be the internal hom of $\Ch(\PshA)$ and let $\mathrm{map}^{\Delta^{op}\Set}(A,B) \in \Delta^{op}\Set$ be the derived simplicial mapping space.
	Define $$\mathrm{map}^{\Ch_{\geq 0}(\Ab)}(A,B) := \tau_{\geq 0}(\inthom{\Ch(\PshA)}{A}{B}(pt)) \in \Ch_{\geq 0}(\Ab).$$
	If $A$ is cofibrant and $B$ is fibrant, then for every $n \geq 0$ we have an isomorphism of abelian groups
	$$H_n(\mathrm{map}^{\Ch_{\geq 0}(\Ab)}(A,B)) \cong \pi_n(\mathrm{map}^{\Delta^{op}\Set}(A,B)).$$
	
	By \cite[Definition 3.1.4]{hirschhorn2003model} an object $F \in \Ch(\PshA)$ is locally fibrant if and only if
	for every $s : A \rightarrow B, $ with $s \in S$ the map
	$$s^* : \mathrm{map}^{\Delta^{op}\Set}(B,F) \rightarrow \mathrm{map}^{\Delta^{op}\Set}(A,F) $$
	is a weak equivalence of simplicial sets.
	Since $s$ is a cofibration between cofibrant objects, and every object in $\Ch(\PshA)$ in the standard projective model structure is fibrant, it follows that $F$ is locally fibrant if and only if
	$$s^* : \mathrm{map}^{\Ch_{\geq 0}(\Ab)}(B,F) \rightarrow \mathrm{map}^{\Ch_{\geq 0}(\Ab)}(A,F) $$
	is a quasi-isomorphism in $\Ch_{\geq 0}(\Ab)$.
	If $s$ is of the form $0 \rightarrow \Cor(-,\emptyset)[n]$, this means that the map
	$$\tau_{\geq 0}(F(\emptyset)[-n]) \rightarrow 0$$
	is a quasi-isomorphism.
	This holds for every $n \in \bb Z$ if and only if
	$0 \rightarrow F(\emptyset)$
	is a quasi-isomorphism.
	If $s$ is of the form $p_Q: s_Q \rightarrow t_Q$ for an elementary Nisnevich square $Q$ of the form
	$$\xymatrix{ U^\prime \ar[r]^\beta \ar[d]^\alpha & X^\prime \ar[d]^\gamma \\
		U \ar[r]^\delta & X} $$then this means that the map
	$$\tau_{\geq 0}(F(X)[-n]) \rightarrow \tau_{\geq 0}((F(X^\prime)  \underset{F(U^\prime)}{\times^h}F(U) )[-n]) $$
	is a quasi-isomorphism in $\Ch(\Ab)$, where $F(X^\prime)\underset{F(U^\prime)}{\times^h}F(U)$ is the homotopy pullback of $F(U) \rightarrow F(U^\prime) \leftarrow F(X^\prime)$.
	This holds for every $n \in \bb Z$ if and only if
	$$F(X) \rightarrow F(X^\prime)  \underset{F(U^\prime)}{\times^h}F(U) $$
	is a quasi-isomorphism in $\Ch(\Ab)$, which is the case if and only if $F$ sends $Q$ to a homotopy pullback square.
\end{proof}
The property of sending elementary Nisnevich squares to homotopy pullback squares is also called the B.G.-property in \cite{MV}.
We now prove basic facts about the local projective model structure.
\begin{lem}\label{stalkwiseqis}
	A morphism $f : A \rightarrow B$ in $\Ch(\PshA)$ is a weak equivalence in the local projective model structure if and only if 
	it is a local quasi-isomorphism, in the sense that it is a stalkwise quasi-isomorphism with respect to the Nisnevich topology.
\end{lem}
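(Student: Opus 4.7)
The plan is to prove both implications separately. Let $x^{*}: \Ch(\PshA) \to \Ch(\Ab)$ denote the stalk functor at an arbitrary Nisnevich point $x$; being an exact left adjoint, it carries sectionwise quasi-isomorphisms — in particular all projective weak equivalences — to quasi-isomorphisms, and so descends to the projective derived category.

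For the direction ``local weak equivalence $\Rightarrow$ stalkwise quasi-isomorphism'', the key observation is that every generator of $S$ becomes a quasi-isomorphism after applying $x^{*}$. For $0 \to \Cor(-,\emptyset)[n]$ this follows from axiom (2) of Definition \ref{tak}, which gives $\Cor(-,\emptyset)_{\nis}=0$. For $p_{Q}:s_{Q}\to t_{Q}$ built from an elementary Nisnevich square $Q$, the same axiom asserts that $\Cor(-,?)_{\nis}$ sends $Q$ to a pushout square of Nisnevich sheaves, so after stalkification the homotopy pushout $x^{*}s_{Q}$ is quasi-isomorphic to $x^{*}\Cor(-,X) \simeq x^{*}t_{Q}$. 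Therefore $x^{*}$ inverts every element of $S$, and by the universal property of Bousfield localization $x^{*}$ descends to a functor out of the homotopy category of the local projective model structure. In particular it sends every local weak equivalence to a quasi-isomorphism at every Nisnevich point.

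For the converse, the plan is to use the standard characterization: $f$ is a local weak equivalence if and only if for every locally fibrant $F \in \Ch(\PshA)$ the induced map of derived mapping complexes $\map(B,F) \to \map(A,F)$ is a quasi-isomorphism. By Lemma \ref{lemmaChPshALocalFibrant}, locally fibrant objects are exactly those that vanish up to quasi-isomorphism on $\emptyset$ and satisfy the B.G.-property for every elementary Nisnevich square. A Brown--Gersten style descent argument — relying on the fact that elementary Nisnevich squares form a complete, regular, bounded cd-structure on $\Smk$ in the sense of Voevodsky — then shows that derived maps into such an $F$ compute Nisnevich hypercohomology, hence depend only on the associated Nisnevich sheaves of $A$ and $B$. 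Since a stalkwise Nisnevich quasi-isomorphism is the same as a quasi-isomorphism of Nisnevich sheafifications, this delivers the required equivalence on mapping complexes.

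The principal obstacle is the unbounded case in the converse direction: the descent spectral sequence translating the B.G.-property into genuine Nisnevich hyperdescent converges automatically for bounded-below complexes, but $\Ch(\PshA)$ contains arbitrary unbounded complexes. This is handled by invoking the Morel--Voevodsky bound on the Nisnevich cohomological dimension of a smooth $k$-scheme by its Krull dimension, which together with a truncation and cofinality argument reduces the unbounded case to the bounded one.
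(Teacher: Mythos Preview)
Your proposal is correct and expands the very argument the paper appeals to: the paper simply cites \cite[C.2.1]{kahn2010motives}, and the Kahn--Levine proof there proceeds exactly through the Brown--Gersten/cd-structure descent mechanism, using that the Nisnevich cd-structure on $\Smk$ is complete, regular and bounded together with the finite Nisnevich cohomological dimension of smooth $k$-schemes to handle unbounded complexes. Your two-direction outline (stalk functors invert $S$ for one implication; B.G.-property $\Rightarrow$ hyperdescent $\Rightarrow$ mapping into locally fibrant objects sees only the Nisnevich sheafification for the other) is precisely that argument spelled out.
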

\begin{proof}
	This follows using a similar argument as in \cite[ C.2.1]{kahn2010motives}. They
	use finite correspondences, but all the arguments of \cite[\S C.2]{kahn2010motives} work 
	for an arbitrary additive symmetric monoidal category of correspondences satisfying the strict $V$-property.
\end{proof}

\begin{lem} \label{projectiveimpliesflat}
	Let $C \in \PshA$ be projective.
	Then $C$ is flat, in the sense that $$C \underset{\Psh}{\otimes} - : \PshA \rightarrow \PshA$$ is an exact functor.
\end{lem}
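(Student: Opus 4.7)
The plan is to reduce to representables via a standard argument. By Yoneda, $\Hom_{\PshA}(y(X), F) = F(X)$ is exact in $F$, so the representables $y(X) := \cc A(-, X)$ form a family of projective generators of $\PshA$. Hence any projective $C$ admits an epimorphism from some direct sum $\bigoplus_{i} y(X_i)$ (as representables generate), which splits by projectivity of $C$. Thus $C$ is a retract of a direct sum of representables.

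Next I would verify that flatness is preserved under direct sums and retracts. Since $- \otimesPsh F$ is a left adjoint (to the internal hom $\inthom{\PshA}{-}{F}$), it commutes with direct sums; combined with exactness of direct sums in the Grothendieck abelian category $\PshA$, this handles the case of direct sums. For retracts, $\PshA$ being abelian means any retract $C$ of $C'$ decomposes as $C' \cong C \oplus K$, and then $C \otimesPsh -$ is a direct summand of the exact functor $C' \otimesPsh -$, hence itself exact. It therefore suffices to show that every representable $y(X)$ is flat.

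For this, by the coend formula for the Day convolution combined with co-Yoneda, for $F \in \PshA$ and $Z \in \cc A$ one has
$$(y(X) \otimesPsh F)(Z) = \int^{Y \in \cc A} \cc A(Z, X \otimes Y) \otimes_{\bb Z} F(Y).$$
The right adjoint $\inthom{\PshA}{y(X)}{G}(Y) = G(X \otimes Y)$ is sectionwise precomposition along $X \otimes - : \cc A \to \cc A$ and is therefore manifestly exact. Exactness of the left adjoint $y(X) \otimesPsh -$ is the subtle point: coends are in general only right exact, and the argument relies on the additivity of $\cc A$, viewing the coend as a tensor product over the ringoid $\cc A$ against the covariant module $Y \mapsto \cc A(Z, X \otimes Y)$, which is flat by virtue of the $\Ab$-bilinearity of the monoidal product on $\cc A$. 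The main obstacle is precisely this last verification, since one must show that the coend interacts correctly with short exact sequences in $F$; the key input is the additive enriched structure of $\cc A$ as a symmetric monoidal category of correspondences.
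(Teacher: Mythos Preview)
The paper's proof is a two-line Tor argument: since $\PshA$ is abelian with enough projectives, one forms $\mathrm{Tor}^{\Psh}_i(A,-)$ as the left derived functors of $A\otimesPsh-$; these vanish on the projective $C$ (Weibel, Corollary~2.4.2), and by symmetry of Tor one also gets $\mathrm{Tor}^{\Psh}_i(C,A)=0$ for $i>0$, so $C\otimesPsh-$ is exact. No reduction to representables is used.

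Your strategy---exhibit a projective as a retract of a sum of representables, then reduce to flatness of $y(X)$---is sound through steps~1 and~2, but step~3 is a genuine gap, and neither of the hints you offer closes it. First, the exactness of the right adjoint $\inthom{\PshA}{y(X)}{-}\cong(-)\circ(X\otimes-)$ only forces the left adjoint $y(X)\otimesPsh-$ to \emph{preserve projectives} (via $\Hom(y(X)\otimesPsh P,-)\cong\Hom(P,\inthom{}{y(X)}{-})$); there is no formal passage from that to exactness of the left adjoint. Second, the assertion that the left $\cc A$-module $Y\mapsto\cc A(Z,X\otimes Y)$ is flat ``by virtue of the $\Ab$-bilinearity of the monoidal product'' is not an argument: that module is not corepresentable in general, and mere additive enrichment of $\otimes$ on $\cc A$ gives no control over its flatness as an $\cc A$-module. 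The cleanest way to verify flatness of $y(X)$ is exactly the paper's balanced-Tor argument applied to the projective object $y(X)$---at which point the detour through representables becomes superfluous, since the same reasoning handles an arbitrary projective $C$ directly.
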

\begin{proof}
	Since $\PshA$ is an abelian category with enough projectives, we know that for every $A \in \PshA$ the tensor product functor $A \underset{\Psh}{\otimes} - $ has  left derived functors $$\mathrm{Tor}^{\Psh}_i(A,-) : \PshA \rightarrow \PshA$$ for $i \geq 0$.
	By \cite[Corollary 2.4.2]{weibel1996introduction}, if $C$ is projective, then $$\mathrm{Tor}^{\Psh}_i(A,C) = 0$$ for all $i \neq 0$ and all $A \in \PshA$.
	Since $\mathrm{Tor}^{\Psh}_i$ is symmetric we therefore also get $\mathrm{Tor}^{\Psh}_i(C,A) = 0$.
	But this then means that the functor $C \underset{\Psh}{\otimes} - : \PshA \rightarrow \PshA$ is exact.
\end{proof}

\begin{lem}\label{degreewiseflat}
	Let $C \in \Ch(\PshA)$ be a degreewise flat chain complex. Then $C$ is a flat chain complex in the sense that $$C \otimes - : \Ch(\PshA) \rightarrow \Ch(\PshA) $$ is an exact functor.
\end{lem}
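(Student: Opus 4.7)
The plan is to check exactness degreewise in $\PshA$, exploiting the fact that the tensor product of chain complexes decomposes as a direct sum of pointwise tensor products, each of which is exact by the degreewise flatness assumption.

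More concretely, let $0 \to A \to B \to D \to 0$ be a short exact sequence in $\Ch(\PshA)$. A sequence of chain complexes is exact if and only if it is exact in each degree, so it suffices to show that for every $n \in \mathbb{Z}$ the sequence
\[
0 \to (C \otimes A)_n \to (C \otimes B)_n \to (C \otimes D)_n \to 0
\]
is exact in $\PshA$. By definition of the tensor product of chain complexes,
\[
(C \otimes X)_n \;=\; \bigoplus_{i+j=n} C_i \underset{\Psh}{\otimes} X_j,
\]
so the sequence in question is the direct sum over the pairs $(i,j)$ with $i+j=n$ of the sequences
\[
0 \to C_i \underset{\Psh}{\otimes} A_j \to C_i \underset{\Psh}{\otimes} B_j \to C_i \underset{\Psh}{\otimes} D_j \to 0.
\]
Since the original short exact sequence of complexes is in particular degreewise short exact, each $0 \to A_j \to B_j \to D_j \to 0$ is exact in $\PshA$, and the flatness of $C_i$ gives exactness of the tensored sequence above for each $(i,j)$.

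The final step is to note that $\PshA$, being a category of $\mathrm{Ab}$-valued presheaves, is a Grothendieck abelian category, so arbitrary direct sums are exact. Hence the direct sum over $i+j=n$ of the exact sequences above remains exact, yielding degreewise exactness of $0 \to C\otimes A \to C \otimes B \to C \otimes D \to 0$. This proves that $C \otimes -$ is exact. There is no real obstacle here beyond bookkeeping: the only non-formal input is the exactness of direct sums in $\PshA$, which is automatic, together with the pointwise flatness hypothesis applied in each bidegree.
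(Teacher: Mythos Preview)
Your proof is correct and follows essentially the same approach as the paper: decompose $(C\otimes -)_n$ as the direct sum $\bigoplus_{i+j=n} C_i\otimesPsh(-)_j$, use flatness of each $C_i$, and conclude via exactness of direct sums in $\PshA$. The paper's version is marginally more economical in that it first observes $C\otimes -$ is automatically right exact and then only checks preservation of monomorphisms degreewise, but the content is the same.
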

\begin{proof}
	Since the functor $C \otimes -$ is right exact, we just need to show that $C \otimes -$ preserves monomorphisms.
	Let $\iota : A \rightarrow B$ be a monomorphism in $\Ch(\PshA)$.
	For every $n\in \bb Z$ we have
	$$ (C \otimes \iota)_n = \underset{p + q = n}{\bigoplus} C_p \otimes \iota_q.$$
	Since each $C_p$ is flat and each $\iota_q$ is a monomorphism, each $C_p \otimes \iota_q$ is a monomorphism. Then $(C \otimes \iota)_n$ is a monomorphism because it is a direct sum of monomorphisms. So $C \otimes \iota$ is a monomorphism, and therefore $C$ is flat in $\Ch(\PshA)$.
\end{proof}

There is an adjunction $L_{\nis}: \PshA \rightleftarrows \ShvA : U_{\nis}$, where the left adjoint $L_{\nis}$ is Nisnevich sheafification and the right adjoint $U_{\nis}$ is the forgetful functor. The sheafification functor $L_{\nis}$ is well-defined because one of the axioms of the category of correspondences $\Cor$ states that for every $\Cor$-presheaf the associated sheaf with respect to the Nisnevich topology on $\Smk$ has a unique strucutre of an $\Cor$-presheaf.
This adjunction extends to an adjunction on chain complexes 
$$L_{\nis}: \Ch(\PshA) \rightleftarrows \Ch(\ShvA) : U_{\nis}.$$

\begin{lem}
	The local projective model structure on $\Ch(\PshA)$ is monoidal.
\end{lem}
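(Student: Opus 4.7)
The plan is to verify the two axioms of a monoidal model category: the pushout product axiom and the unit axiom. Left Bousfield localization at $S$ leaves the class of cofibrations unchanged, and the standard projective model structure on $\Ch(\PshA)$ is already monoidal, so the pushout product $f \square g$ of two (local) cofibrations is a projective cofibration, hence a local cofibration. The unit axiom is automatic because the monoidal unit $\Cor(-,\mathrm{Spec}(k))$ is representable, hence projective, hence cofibrant in both model structures. The content is therefore to show that if $f\colon A \to B$ is a cofibration and $g\colon C \to D$ is a trivial cofibration in the local model structure, then $f \square g$ is a local weak equivalence.

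Here I would use Lemma \ref{degreesplitmono}: both $f$ and $g$ are degreewise split monomorphisms with degreewise projective cokernels $B/A$ and $D/C$. Splitting these degreewise decompositions and unraveling the pushout identifies the cofiber of $f \square g$ with $(B/A) \otimes (D/C)$. Consequently $f \square g$ is itself a degreewise split monomorphism with this cofiber, and by Lemma \ref{stalkwiseqis} it will be a local weak equivalence precisely when $(B/A) \otimes (D/C)$ is stalkwise acyclic. On the other hand, $g$ is a local weak equivalence and a degreewise split monomorphism, so its cokernel $D/C$ is already stalkwise acyclic by Lemma \ref{stalkwiseqis}.

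To conclude I would apply Nisnevich sheafification $L_{\nis}$, which is exact and strong monoidal for Day convolution (the latter is essentially built into the definition of $\otimes_{\ShvA}$ as the sheafification of the presheaf tensor product). Then
\[
L_{\nis}\bigl((B/A) \otimes (D/C)\bigr) \;\cong\; L_{\nis}(B/A) \otimes_{\ShvA} L_{\nis}(D/C).
\]
The right factor is an acyclic complex of sheaves because the stalkwise acyclicity of $D/C$ forces the Nisnevich cohomology sheaves of $L_{\nis}(D/C)$ to vanish. The left factor $L_{\nis}(B/A)$ is degreewise flat in $\ShvA$: $B/A$ is degreewise a retract of sums of representables, and the sheafified representables $\Cor(-,X)_{\nis}$ are flat in $\ShvA$ by the sheaf-level analogs of Lemmas \ref{projectiveimpliesflat} and \ref{degreewiseflat}. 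Tensoring an acyclic complex with a degreewise flat complex stays acyclic, so $L_{\nis}((B/A) \otimes (D/C))$ is acyclic in $\Ch(\ShvA)$; equivalently, $(B/A) \otimes (D/C)$ is stalkwise acyclic, which is what was required.

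The main obstacle I anticipate is the sheafification step: one must verify that $L_{\nis}$ is strong symmetric monoidal for Day convolution (this follows from the universal property of sheafification combined with the fact that $\Cor(-,X\otimes Y) \cong \Cor(-,X) \otimes \Cor(-,Y)$ at the level of representables) and that sheafified representables are flat in $\ShvA$. Everything else is bookkeeping with the direct sum decompositions coming from Lemma \ref{degreesplitmono}.
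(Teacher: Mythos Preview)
Your overall strategy is sound and close in spirit to the paper's, but the route is genuinely different: the paper invokes \cite[Theorem~B]{white2014monoidal}, which reduces the monoidality check to showing that $K\otimes s$ is a local weak equivalence for each cofibrant $K$ and each $s$ in the localizing set $S$, whereas you verify the pushout product axiom directly for an arbitrary cofibration $f$ and an arbitrary local trivial cofibration $g$. Both arguments finish with sheafification plus flatness, and your identification $\coker(f\square g)\cong (B/A)\otimes (D/C)$ is correct.

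There is, however, a real gap at the step ``tensoring an acyclic complex with a degreewise flat complex stays acyclic.'' For unbounded complexes this fails in general: degreewise flatness only yields exactness in the abelian sense (Lemma~\ref{degreewiseflat}), while preservation of acyclicity under tensor requires K-flatness, which is strictly stronger for unbounded complexes. In your setup $g\colon C\to D$ is an \emph{arbitrary} local trivial cofibration, so $D/C$ is typically unbounded, and degreewise flatness of $L_{\nis}(B/A)$ does not by itself force $L_{\nis}(B/A)\otimes_{\ShvA}L_{\nis}(D/C)$ to be acyclic. The paper's route via White's theorem sidesteps this precisely because the localizing maps $p_Q\in S$ lie between \emph{bounded} complexes, so their cones are bounded and the usual double-complex spectral sequence applies. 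Your direct approach can be repaired by proving that cofibrant objects of $\Ch(\PshA)$ are K-flat---a transfinite induction over the cell structure, using that each generating cofibration has bounded (hence K-flat) cokernel and that K-flatness is closed under extensions and filtered colimits---but that is genuine additional work you have not supplied. Indeed, the unbounded case is essentially the content of the later Lemma~\ref{ftimesZinjectiveqis}, where the paper handles it carefully with truncations and spectral sequences.
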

\begin{proof}
	We use \cite[Theorem B]{white2014monoidal}.
	Cofibrant objects in the local projective model structure are also cofibrant in the standard projective model structure. By Lemma \ref{degreesplitmono} they are degreewise projective, and therefore degreewise flat by Lemma \ref{projectiveimpliesflat}, and therefore flat by Lemma \ref{degreewiseflat}.
	We now need to show for every elementary Nisnevich square $Q$ and cofibrant object $K$ that the morphism $$K \otimes p_Q : K \otimes s_Q \rightarrow K \otimes t_Q$$ is a local quasi-isomorphism.
	For this it suffices to show that the sheafification $L_{\nis}(K \otimes p_Q)$ is a local quasi-isomorphism.
	Since $L_{\nis} : \Ch(\PshA) \rightarrow \Ch(\ShvA)$ is a strong monoidal functor we have
	$$L_{\nis}(K \otimes p_Q) \cong L_{\nis}(K) \otimes L_{\nis}(p_Q).$$
	
	Since $K$ is a cofibrant object in $\Ch(\PshA)$, it follows that $K$ is flat in $\Ch(\PshA)$. This then also implies that the sheafification $L_{\nis}K$ of $K$ is flat in $\Ch(\ShvA)$, and this implies that the functor
	$L_{\nis}(K) \otimes - : \Ch(\ShvA) \rightarrow \Ch(\ShvA)$ preserves local quasi-isomorphisms.
	Since $p_Q$ is a local quasi-iso\-mor\-phism, it follows that $L_{\nis}(K) \otimes L_{\nis}(p_Q)$ is a local quasi-iso\-mor\-phism. So $K \otimes p_Q$ is a local quasi-iso\-mor\-phism. Similarly $0 \rightarrow K \otimes \Cor(-,\emptyset)$ is a local quasi-iso\-mor\-phism. With this we have proved the lemma.
\end{proof}

We want to show that the local projective model structure is weakly finitely generated in the sense of \cite[Definition 3.4]{dundas2003enriched}. For the convenience of the reader we recall this notion here.

\begin{dfn}\label{defweakfingen}
	A cofibrantly generated model category $M$ is said to be \textit{weakly finitely generated}, if it is 
	cofibrantly generated and the generating cofibrations $I$ and generating trivial cofibrations $J$ can be chosen such that
	\begin{enumerate}
		\item The domains and codomains of maps in $I$ are finitely presented.
		\item The domains of maps in $J$ are small.
		\item
		There exists a subset $J^\prime \subseteq J$ of maps with finitely presented 
		domains and codomains, such that for every map $f : A \rightarrow B$, 
		if $B$ is fibrant and $f$ has the right lifting property with respect to $J^\prime$, then $f$ is a fibration.
	\end{enumerate}
	We will call $J^\prime$ the set of \textit{\wgtc}.
\end{dfn}

Let $I_{\Ch_{\geq 0}} = \{S^n \bb Z \rightarrow D^n \bb Z \mid n \geq 0 \} \cup \{0 \rightarrow S^0 \bb Z\}$ be a set of generating cofibrations for the standard projective model structure on the category of connective chain complexes $\Ch_{\geq 0}(\Ab)$.
Let $S \square I_{\Ch_{\geq 0}}$ denote the set of all maps which are pushout-products of maps in $S$ and $I_{\Ch_{\geq 0}}$.

\begin{lem}\label{SsquareI}
	An object $F \in \Ch(\PshA)$ is fibrant in the local projective model structure if and only if the map $F \rightarrow 0$ has the right lifting property with respect to $S \square I_{\Ch_{\geq 0}}$.
\end{lem}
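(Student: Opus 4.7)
The plan is to transfer the right lifting property against $S\square I_{\Ch_{\geq 0}}$ to an equivalent condition on internal hom complexes via the pushout--product adjunction, and then to match that condition with the $S$-locality criterion from the proof of Lemma~\ref{lemmaChPshALocalFibrant}. First, using that $\Ch(\PshA)$ is tensored, cotensored and enriched over $\Ch(\Ab)$, for any $s\colon A\to B$ in $S$ and $i\colon C\to D$ in $I_{\Ch_{\geq 0}}$ the map $F\to 0$ has the right lifting property against $s\square i$ if and only if the induced chain map $s^{*}\colon \inthom{\Ch(\PshA)}{B}{F}(pt)\to\inthom{\Ch(\PshA)}{A}{F}(pt)$ in $\Ch(\Ab)$ has the right lifting property against $i$. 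The statement therefore reduces to showing that $F$ is locally fibrant if and only if, for every $s\in S$, the chain map $s^{*}$ has the right lifting property against $I_{\Ch_{\geq 0}}$.

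Next I would use that $S$ is closed under all integer shifts. Writing every $s\in S$ as $s_{0}[n]$ with $s_{0}\in\{p_{Q}\mid Q\in\cc Q\}\cup\{0\to\Cor(-,\emptyset)\}$ and $n\in\bb Z$, and $s^{*}=s_{0}^{*}[-n]$, demanding the right lifting property against $I_{\Ch_{\geq 0}}$ for every $s\in S$ is equivalent to demanding that each $s_{0}^{*}$ has the right lifting property against every integer shift of $I_{\Ch_{\geq 0}}$. This shifted family contains the full generating set $\{S^{k}\bb Z\to D^{k}\bb Z\mid k\in\bb Z\}$ of cofibrations of the projective model structure on $\Ch(\Ab)$, so the condition becomes that each $s_{0}^{*}$ is a surjective quasi-isomorphism in $\Ch(\Ab)$. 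Surjectivity in every degree is automatic from Lemma~\ref{degreesplitmono}: each $s_{0}$ is a degreewise split monomorphism in $\Ch(\PshA)$, and splitting each degree makes precomposition surjective on every component $\prod_{k}\mathrm{Hom}(X_{k},F_{k+n})$ of the internal hom complex. Hence the right lifting property against $S\square I_{\Ch_{\geq 0}}$ is equivalent to each $s_{0}^{*}$ being a quasi-isomorphism in $\Ch(\Ab)$.

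Finally I would match this with local fibrancy. By the Dold--Kan correspondence, the derived simplicial mapping space $\mathrm{map}^{\Delta^{op}\Set}(X,F)$ agrees, up to weak equivalence, with $\tau_{\geq 0}\inthom{\Ch(\PshA)}{X}{F}(pt)$, so $s_{0}^{*}$ is a quasi-isomorphism in $\Ch(\Ab)$ if and only if $\tau_{\geq 0}(s_{0}[n]^{*})$ is a quasi-isomorphism in $\Ch_{\geq 0}(\Ab)$ for every $n\in\bb Z$, which is precisely the $S$-local criterion used in the proof of Lemma~\ref{lemmaChPshALocalFibrant}. Because fibrations in $\Ch(\PshA)$ are epimorphisms, every object is projectively fibrant, so $S$-locality coincides with local fibrancy by Hirschhorn's characterization of fibrant objects in left Bousfield localizations. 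I expect the main obstacle to be the bookkeeping in the second paragraph: one has to carefully collapse the shift indexing of $S$ against the non-negative indexing of $I_{\Ch_{\geq 0}}$ to recover an unbounded generating set of $\Ch(\Ab)$, and simultaneously verify that the surjectivity half of the ``trivial fibration'' condition is supplied for free by Lemma~\ref{degreesplitmono} rather than being an independent requirement.
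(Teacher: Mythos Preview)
Your proposal is correct and follows essentially the same strategy as the paper: use the pushout--product/pullback--hom adjunction to translate the right lifting property of $F\to 0$ against $S\square I_{\Ch_{\geq 0}}$ into a condition on the maps $s^{*}$, and identify that condition with $S$-locality.

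The one place where your presentation differs is in how you verify that ``quasi-isomorphism'' and ``right lifting against $I_{\Ch_{\geq 0}}$'' coincide for $s^{*}$. The paper works directly in $\Ch_{\geq 0}(\Ab)$ and observes that, because each $s\in S$ is a cofibration and every object of $\Ch(\PshA)$ is projectively fibrant, the truncated map $s^{*}$ is automatically a fibration in $\Ch_{\geq 0}(\Ab)$; hence $s^{*}$ is a quasi-isomorphism iff it is a trivial fibration iff it has the right lifting property against $I_{\Ch_{\geq 0}}$. You instead pass to the untruncated hom, exploit the shift-closure of $S$ to replace $I_{\Ch_{\geq 0}}$ by the full unbounded generating set $I_{\Ch}$, and supply the surjectivity half of ``trivial fibration'' by hand from Lemma~\ref{degreesplitmono}. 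Both routes are valid; the paper's is slightly shorter because it invokes the enriched model structure (the pullback--hom axiom) in one line, whereas yours unpacks that step into the shift bookkeeping and the explicit degreewise splitting.
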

\begin{proof}
	For $A, B \in \Ch(\PshA)$ let $\mathrm{map}^{\Ch(\Ab)}(A,B) \in \Ch_{\geq 0}(\Ab)$ denote the good truncation of the chain complex of morphisms $A \rightarrow B$, just like in the proof of Lemma \ref{lemmaChPshALocalFibrant}.
	An object $F \in \Ch(\PshA)$ is $S$-local if and only if for every $s : X \rightarrow Y$, $s \in S$ the map
	$$s^* : \mathrm{map}^{\Ch(\Ab)}(Y,F) \rightarrow \mathrm{map}^{\Ch(\Ab)}(X,F)$$
	is a quasi-isomorphism. Since $s$ is a cofibration and $F$ is fibrant, the map $s^*$ is a fibration in $\Ch(\Ab)$.
	So $s^*$ is a quasi-isomorphism in $\Ch_{\geq 0}(\Ab)$ if and only if $s^*$ is trivial fibration in $\Ch_{\geq 0}(\Ab)$, and that is the case if and only if $s^*$ has the right lifting property with respect to $I_{\Ch_{\geq 0}}$.
	For every $\iota : A\rightarrow B$ in $I_{\Ch_{\geq 0}}$ we have that the following diagram has a lift
	$$\xymatrix{A \ar[d]_{\iota} \ar[r] & \mathrm{map}^{\Ch(\Ab)}(Y,F) \ar[d]^{s^*} \\
		B \ar[r] \ar@{..>}[ur] &  \mathrm{map}^{\Ch(\Ab)}(X,F)} $$
	in $\Ch_{\geq 0}(\Ab)$ if and only if the following diagram has a lift	$$\xymatrix{A\otimes Y \underset{A \otimes X}{\coprod} B \otimes X \ar[d]_{\iota \square s} \ar[r] & F \ar[d] \\
		B \otimes Y \ar[r] \ar@{..>}[ur] &  0 }$$
	in $\Ch(\PshA)$. So $F$ is fibrant in the local projective model structure if and only if $F \rightarrow 0$ has the right lifting property with respect to $S \square I_{\Ch_{\geq 0}}$.
\end{proof}

\begin{lem}
	The local model structure on $\Ch(\PshA)$ is weakly finitely generated. A set of weakly generating trivial cofibrations is given by $J_{\local}^{\prime} := J_{\proj} \cup (S \square I_{\Ch_{\geq 0}})$.
\end{lem}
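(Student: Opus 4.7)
The plan is to verify the three conditions of Definition \ref{defweakfingen} in order. Take $I_{\local} := I_{\glob}$ as the generating cofibrations. For $J_{\local}$, start with a set of generating trivial cofibrations produced by the Bousfield localization construction of \cite[Theorem 4.1.1]{hirschhorn2003model} and enlarge it (if necessary) to contain the proposed set $J_{\local}^{\prime} := J_{\proj} \cup (S \square I_{\Ch_{\geq 0}})$. To make this enlargement legitimate, one first checks that every map in $J_{\local}^{\prime}$ is indeed a trivial cofibration in the local structure: the maps in $J_{\proj}$ are trivial cofibrations in the stronger projective structure, and for $s\in S$, $\iota \in I_{\Ch_{\geq 0}}$ the pushout-product $s \square \iota$ is a trivial cofibration by the pushout-product axiom, since the local projective structure is monoidal (previous lemma), $s$ is a trivial cofibration in that structure by construction, and $\iota$ is a cofibration.

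The size conditions are then routine. Condition (1) holds because the domains and codomains of $I_{\glob}$ are of the form $\Cor(-,X) \otimes S^n\bb Z$ and $\Cor(-,X) \otimes D^n\bb Z$, which are finitely presented. Condition (2) is automatic. For the finite-presentation part of condition (3), the domains and codomains of $J_{\proj}$ are visibly finitely presented, and for the maps in $S \square I_{\Ch_{\geq 0}}$ one uses the finite presentation of $\Cor(-,\emptyset)$, of $s_Q$ and $t_Q$ for every elementary Nisnevich square $Q$ (built into the construction of $S$), and of the objects of $\Ch_{\geq 0}(\Ab)$ appearing in $I_{\Ch_{\geq 0}}$, together with the fact that finite presentation is preserved by pushouts and tensor products with such finitely presented chain complexes of abelian groups.

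The heart of the proof is the lifting statement in condition (3): if $f : A \rightarrow B$ has the right lifting property with respect to $J_{\local}^{\prime}$ and $B$ is locally fibrant, then $f$ is a local fibration. Since $f$ has RLP with respect to $J_{\proj}$, it is a projective fibration. Since $B$ is locally fibrant, Lemma \ref{SsquareI} tells us that $B \rightarrow 0$ has RLP with respect to $S \square I_{\Ch_{\geq 0}}$. Given any $g \in S \square I_{\Ch_{\geq 0}}$ and a square testing RLP of $A \rightarrow 0$ against $g$, one composes with $f$, lifts through $B \rightarrow 0$ to obtain a diagonal in the square for $f$, then applies the hypothesized RLP of $f$ with respect to $g$ to produce the desired lift. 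Hence $A \rightarrow 0$ also has RLP with respect to $S \square I_{\Ch_{\geq 0}}$, so by Lemma \ref{SsquareI} once more, $A$ is locally fibrant. Finally, by \cite[Proposition 3.3.16]{hirschhorn2003model}, in a left Bousfield localization a fibration between fibrant objects is the same as a fibration in the original model structure; applied to $f$, this yields that $f$ is a local fibration.

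The only subtle point I anticipate is the transfer of the right lifting property from $f$ to $A \rightarrow 0$, which is the small diagram chase sketched in the previous paragraph; everything else is a bookkeeping exercise using standard properties of Bousfield localization and the preceding lemmas.
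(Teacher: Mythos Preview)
Your proposal is correct and matches the paper's proof essentially step for step: verify that $J'_{\local}$ consists of local trivial cofibrations via monoidality, check the finite presentation conditions, and for condition (3) use RLP against $J_{\proj}$ to get a projective fibration, then Lemma \ref{SsquareI} twice together with \cite[Proposition 3.3.16]{hirschhorn2003model}. The ``subtle diagram chase'' you flag is simply the statement that the class of maps with RLP against a given set is closed under composition, which the paper invokes in one line; there is no additional subtlety there.
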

\begin{proof}
	The domains and codomains from $J_{\local}^{\prime}$ are clearly finitely presented.
	
	All morphisms from $J_{\proj}$ are local projective trivial cofibrations. Since $S$ consists out of cofibrations that are $S$-local equivalences, it consists out of local projective trivial cofibrations. Since the local projective model structure is monoidal, it follows that $S \square I_{\Ch_{\geq 0}}$ consists out of local projective trivial cofibrations. So all morphisms from $J_{\local}^{\prime}$ are trivial cofibrations in the local projective model structure, so $J_{\local}^{\prime} \subseteq J_{\local}$ for a suitable choice of $J_{\local}$. 
	
	Let $f : A \rightarrow B$ be a map in $\Ch(\PshA)$, where $B$ is fibrant in the local projective model structure and $f$ satisfies the right lifting property with respect to $J_{\local}^{\prime} = J_{\proj} \cup (S \square I_{\Ch_{\geq 0}})$.
	Then $f$ satisfies the right lifting property with respect to $J_{\proj}$, so $f$ is a fibration in the standard projective model structure. Since $f: A \rightarrow B$ and $B \rightarrow 0$ satisfy the right lifting property with respect to $S \square I_{\Ch_{\geq 0}}$, also the composition $A \rightarrow 0$ satisfies the right lifting property with respect to $S \square I_{\Ch_{\geq 0}}$. By Lemma \ref{SsquareI} it follows that $A$ is fibrant in the local projective model structure.
	From \cite[Proposition 3.3.16]{hirschhorn2003model} it follows that $f$ is a fibration in the local projective model structure. So the local projective model structure on $\Ch(\PshA)$ is weakly finitely generated with $J_{\local}^{\prime}$ as the set of \wgtc.
\end{proof}

We next want to transfer the local projective model structure along the adjunction
$$L_{\nis}: \Ch(\PshA) \rightleftarrows \Ch(\ShvA) : U_{\nis}.$$

\begin{dfn}
	Given a model category $M$ and an adjunction $L : M \rightleftarrows N: R$ we say that the \textit{left transferred model structure along $L$ exists} if there is a model structure on $N$ such that a morphism $f$ in $N$ is a weak equivalence (resp. fibration) if and only if $R(f)$ is a weak equivalence (resp. fibration) in $M$.
\end{dfn}
\begin{rem}
	Let $M$ be a model category and $L : M \rightleftarrows N: R$ an adjunction.
	If the left transferred model structure along $L$ exists, then the adjunction $L : M \rightleftarrows N: R$ is a Quillen adjunction.
	If $M$ is cofibrantly generated with generating cofibrations $I$ and generating trivial cofibrations $J$ and if $L(I)$ and $L(J)$ permit the small object argument in $N$, then $L(I)$ is a set of generating cofibrations and $L(J)$ is a set of generating trivial cofibrations for $N$.
\end{rem}

We next want to show that the left transferred model structure along $L_{\nis} : \Ch(\PshA) \rightarrow \Ch(\ShvA)$ exists.

\begin{lem} \label{Uaccessible}
	The forgetful functor $U_{\nis} : \Ch(\ShvA) \rightarrow \Ch(\PshA) $ preserves filtered colimits.
\end{lem}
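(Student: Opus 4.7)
The plan is to reduce to the underlying abelian categories and then use the axiomatic description of Nisnevich sheaves on $\Cor$ given in Definition~\ref{tak}.

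First, since filtered colimits in $\Ch(\PshA)$ and in $\Ch(\ShvA)$ are computed degreewise, and since $U_{\nis} : \Ch(\ShvA) \to \Ch(\PshA)$ is the degreewise application of the forgetful functor $U_{\nis} : \ShvA \to \PshA$, it suffices to prove the corresponding statement for the abelian categories: the forgetful functor $\ShvA \to \PshA$ preserves filtered colimits. This is the only real step.

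Next, I would recall the general description of filtered colimits in a category of sheaves. The forgetful functor $U_{\nis}$ is right adjoint to the sheafification $L_{\nis}$, so colimits in $\ShvA$ are constructed by first taking the colimit in $\PshA$ and then applying $L_{\nis}$. Consequently, $U_{\nis}$ preserves a given colimit precisely when the presheaf colimit is already a sheaf. So, given a filtered diagram $\{F_i\}_{i\in\cc I}$ in $\ShvA$, it suffices to show that the presheaf $F := \colim_{\cc I} U_{\nis}F_i$ lies in the essential image of $U_{\nis}$.

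Now I would verify the sheaf condition using axiom (2) of Definition~\ref{tak}: a $\Cor$-presheaf is a Nisnevich sheaf exactly when it vanishes on $\emptyset$ and sends every elementary Nisnevich square to a short exact sequence. Each $F_i$ satisfies both conditions, so for every elementary Nisnevich square we have short exact sequences
$$0 \to F_i(X) \to F_i(U) \oplus F_i(X') \to F_i(U') \to 0$$
and $F_i(\emptyset)=0$. Since filtered colimits in $\Ab$ are exact and commute with finite direct sums, evaluating at each $X\in\Smk$ and passing to the colimit over $\cc I$ yields the short exact sequence
$$0 \to F(X) \to F(U) \oplus F(X') \to F(U') \to 0,$$
and $F(\emptyset) = \colim_{\cc I} F_i(\emptyset) = 0$. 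Hence $F$ is a Nisnevich sheaf; by axiom (3) its $\Cor$-presheaf structure (automatic since we computed in $\PshA$) is the unique one compatible with sheafification, and therefore $F \cong U_{\nis}(\colim_{\cc I} F_i)$ in $\PshA$.

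There is no essential obstacle here: the argument rests entirely on exactness of filtered colimits in $\Ab$ together with the axiomatic characterization of Nisnevich sheaves built into Definition~\ref{tak}. If anything, the only care needed is to ensure that the transfer structure on the colimit is not lost, which is handled cleanly by the uniqueness clause in axiom (3).
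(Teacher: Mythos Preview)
Your reduction to the abelian categories and the overall strategy---show that the presheaf colimit of sheaves is already a sheaf---are sound, but the key step contains two errors. First, axiom~(2) of Definition~\ref{tak} is not a characterization of Nisnevich sheaves: it asserts that the \emph{representable} sheaves $\Cor(-,X)_{\nis}$ sit in a short exact sequence for each elementary Nisnevich square; it says nothing about which $\Cor$-presheaves are sheaves. Second, the correct Brown--Gersten characterization (due to Morel--Voevodsky) is that an additive presheaf $F$ is a Nisnevich sheaf precisely when $F(\emptyset)=0$ and $F$ takes elementary Nisnevich squares to \emph{cartesian} squares, equivalently when the sequence $0 \to F(X) \to F(U)\oplus F(X') \to F(U')$ is \emph{left} exact; the last map is not surjective in general. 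Once you replace your appeal to axiom~(2) by this standard characterization and drop the surjectivity claim, your argument goes through unchanged, since filtered colimits in $\Ab$ preserve kernels and finite direct sums. The invocation of axiom~(3) at the end is unnecessary: the colimit is computed in $\PshA$, so it already carries a $\Cor$-presheaf structure, and being a Nisnevich sheaf is a condition on the underlying presheaf on $\Smk$.

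For comparison, the paper takes a different and more elementary route: it checks the classical sheaf condition directly by taking a Nisnevich covering $\{Y_j \to X\}$, refining it to a \emph{finite} subcovering (Nisnevich coverings on quasi-compact schemes admit finite refinements), and then using filteredness of the index category to lift a finite family of compatible sections to a single stage $A_k$ where they can be glued. Your Brown--Gersten approach, once corrected, is arguably cleaner and avoids the explicit gluing argument, but it relies on an external characterization rather than on first principles.
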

\begin{proof}
	This follows from the fact that every covering in the Nisnevich topology has a finite subcovering.
	To spell it out in more detail, let $I$ be a filtered diagram and $A_{(-)} : I \rightarrow \ShvA$ a functor.
	Let $A := \colim{i \in I} U_{\nis}(A_i)$.
	We need to show that the canonical map $$A \rightarrow U_{\nis}(\colim{i \in I} A_i)$$
	is an isomorphism. If we apply $L_{\nis}$ to this map then it clearly becomes an isomorphism in $\ShvA$. Also the presheaf $U_{\nis}(\colim{i \in I} A_i)$ is a sheaf. To prove the lemma, 
	it now suffices to show that the presheaf $A$ is a sheaf.
	
	Take a Nisnevich covering $\{Y_j \rightarrow X\}_{j \in J}$, and compatible sections $s_j \in A(Y_j)$. 
	Since every covering has a finite subcovering we can assume without loss of generality that the index set $J$ is finite.
	Now for each $j \in J$, there exists some $i_j \in I$ so that $s_j$ is the restriction of some section $t_{i,j} \in U_{\nis}(A_{i_j})(Y_j)$ along the canonical map $U_{\nis}(A_{i_j}) \rightarrow A$. Since $I$ is a filtered category, we can find a single $k \in I$ such that every $s_j$ is the restriction of some section $t_j \in U_{\nis}(A_k)(Y_j)$ along the map $U_{\nis}(A_k) \rightarrow A$.
	Since $A_k$ is a sheaf we can glue together all the sections $t_j$ into a single section $t \in U_{\nis}(A_k)(X)$. If we include $t$ into the colimit $\colim{i \in I} U_{\nis}(A_i)(Y_j)$ then we get a section $s \in A(X)$ which is a unique gluing of all the $s_j$. So $A$ is a sheaf, and $U_{\nis}$ preserves filtered colimits.
\end{proof}

\begin{cor}\label{Lnisfinpres}
	$L_{\nis} : \Ch(\PshA) \rightarrow \Ch(\ShvA)$ preserves finitely presented objects.
\end{cor}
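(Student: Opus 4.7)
The plan is to exploit the adjunction $L_{\nis} \dashv U_{\nis}$ together with the fact, just established in Lemma \ref{Uaccessible}, that the right adjoint $U_{\nis}$ preserves filtered colimits. This is the standard mechanism by which a left adjoint preserves compactness.

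Concretely, let $X \in \Ch(\PshA)$ be finitely presented and let $Y_{(-)} : I \to \Ch(\ShvA)$ be a filtered diagram. I would chain together four natural isomorphisms of Hom sets:
\begin{align*}
\Hom_{\Ch(\ShvA)}(L_{\nis}(X), \colim{i\in I} Y_i) &\cong \Hom_{\Ch(\PshA)}(X, U_{\nis}(\colim{i\in I} Y_i)) \\
&\cong \Hom_{\Ch(\PshA)}(X, \colim{i\in I} U_{\nis}(Y_i)) \\
&\cong \colim{i\in I} \Hom_{\Ch(\PshA)}(X, U_{\nis}(Y_i)) \\
&\cong \colim{i\in I} \Hom_{\Ch(\ShvA)}(L_{\nis}(X), Y_i).
\end{align*}
The first and last isomorphisms are the adjunction; the second isomorphism is exactly Lemma \ref{Uaccessible}; the third isomorphism is the hypothesis that $X$ is finitely presented in $\Ch(\PshA)$. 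One then checks that this composite isomorphism agrees with the canonical map coming from the universal property of the colimit, which is immediate from naturality of the adjunction unit/counit.

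There is no real obstacle here — the argument is purely formal once Lemma \ref{Uaccessible} is in hand. The only thing to be slightly careful about is that "finitely presented" in this paper means compact with respect to filtered colimits, so the conclusion follows directly from the displayed chain of isomorphisms.
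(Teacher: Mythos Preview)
Your proposal is correct and follows exactly the same approach as the paper: both use the adjunction $L_{\nis}\dashv U_{\nis}$, Lemma~\ref{Uaccessible} to commute $U_{\nis}$ past the filtered colimit, and then the finite presentation of $X$, assembled into the same four-step chain of isomorphisms.
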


\begin{proof}
	Let $X \in \Ch(\PshA)$ be finitely presented. Let $I$ be a filtered diagram, and let $A_{(-)} : I \rightarrow \Ch(\ShvA)$ be a functor. Then using Lemma \ref{Uaccessible} we get
	\begin{align*}
		\Hom_{\Ch(\ShvA)}(L_{\nis}X, \colim{i \in I} A_i) & \cong \Hom_{\Ch(\PshA)}(X, U_{\nis} \colim{i \in I} A_i) \overset{\ref{Uaccessible}}{\cong} \\ \Hom_{\Ch(\PshA)}(X, \colim{i \in I} U_{\nis} A_i) & \cong \colim{i \in I} \Hom_{\Ch(\PshA)}(X,  U_{\nis} A_i) \cong\\
		\colim{i \in I} \Hom_{\Ch(\ShvA)}(L_{\nis}X,  A_i) &
	\end{align*}
	so $L_{\nis}X$ is finitely presented.
\end{proof}

\begin{lem}\label{localmodelstructureexists}
	For the local projective model structure on $\Ch(\PshA)$, the left transferred model structure along $L_{\nis} : \Ch(\PshA) \rightarrow \Ch(\ShvA)$ exists.
\end{lem}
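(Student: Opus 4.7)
The plan is to apply a standard transfer principle for cofibrantly generated model categories (see e.g.~\cite[\S 11.3]{hirschhorn2003model}) to the adjunction $L_{\nis} : \Ch(\PshA) \rightleftarrows \Ch(\ShvA) : U_{\nis}$, with $\Ch(\PshA)$ endowed with the local projective model structure. Two hypotheses need checking: \emph{(a)} smallness, i.e.\ the sets $L_{\nis}(I_{\local})$ and $L_{\nis}(J_{\local})$ permit the small object argument in $\Ch(\ShvA)$; and \emph{(b)} acyclicity, i.e.\ every relative $L_{\nis}(J_{\local})$-cell complex is sent by $U_{\nis}$ to a local quasi-isomorphism in $\Ch(\PshA)$.

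Hypothesis \emph{(a)} is immediate: since the local projective model structure is weakly finitely generated, the domains and codomains of $I_{\local}$ and $J_{\local}$ may be chosen finitely presented, and by Corollary~\ref{Lnisfinpres} their images under $L_{\nis}$ remain finitely presented in $\Ch(\ShvA)$, in particular small. For hypothesis \emph{(b)}, the crux is analysing a single pushout
$$
\xymatrix{L_{\nis}A \ar[r] \ar[d]_{L_{\nis}j} & X \ar[d] \\ L_{\nis}B \ar[r] & Y}
$$
with $j : A \to B$ a local projective trivial cofibration. By Lemma~\ref{degreesplitmono}, $j$ is a degreewise split monomorphism with degreewise projective cokernel $Q := B/A$, and by Lemma~\ref{stalkwiseqis} the Nisnevich stalks of $Q$ are acyclic. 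Since sheafification is exact and additive, $L_{\nis}j$ is likewise a degreewise split monomorphism with cokernel $L_{\nis}Q$, whose Nisnevich stalks are still acyclic (stalks are preserved by sheafification). Pushouts in $\Ch(\ShvA)$ are computed degreewise, and pushouts of split monomorphisms of abelian groups remain split monomorphisms with the same cokernel; hence $X \to Y$ is degreewise split injective with cokernel $L_{\nis}Q$. Passing to Nisnevich stalks yields a degreewise split short exact sequence in $\Ch(\Ab)$, and since $L_{\nis}Q$ is stalkwise acyclic, the homology long exact sequence shows $U_{\nis}X \to U_{\nis}Y$ is a Nisnevich-stalkwise quasi-isomorphism, hence a local quasi-isomorphism by Lemma~\ref{stalkwiseqis}.

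Transfinite composites will be handled using Lemma~\ref{Uaccessible}: $U_{\nis}$ preserves filtered colimits, Nisnevich stalks commute with filtered colimits, and filtered colimits of quasi-isomorphisms of chain complexes of abelian groups remain quasi-isomorphisms. Combined with the two-out-of-three property for local quasi-isomorphisms, this completes the verification of \emph{(b)}, and the transfer principle then produces the desired model structure on $\Ch(\ShvA)$. The main obstacle is the pushout analysis in the acyclicity check, where the combination of the degreewise splitting from Lemma~\ref{degreesplitmono} and the exactness of Nisnevich sheafification is exactly what permits the reduction to a stalkwise statement in $\Ch(\Ab)$.
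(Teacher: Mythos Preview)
Your proposal is correct and follows essentially the same route as the paper: apply Hirschhorn's transfer theorem, verify smallness, then verify acyclicity by analysing a single pushout and invoking closure under transfinite composition via Lemma~\ref{Uaccessible}.

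Two small remarks. First, your smallness argument is slightly imprecise: weakly finitely generated only guarantees that the domains and codomains of $I_{\local}$ and of the subset $J'_{\local}$ are finitely presented, not those of the full $J_{\local}$ (which comes from Hirschhorn's Bousfield-localization machinery and may be larger). The paper sidesteps this entirely by observing that $\Ch(\ShvA)$ is a Grothendieck category, so every object is small and the small object argument applies automatically. Second, your pushout analysis leans on the degreewise-split structure from Lemma~\ref{degreesplitmono} to identify the cokernel of $X\to Y$ with $L_{\nis}Q$; the paper instead uses the short exact sequence $0\to L_{\nis}A\to L_{\nis}B\oplus X\to Y\to 0$ arising from the abelian pushout and runs the long exact sequence in stalkwise homology. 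These are equivalent arguments, and both ultimately reduce to the fact that $f$ is a stalkwise quasi-isomorphism.
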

\begin{proof}
	We use \cite[Theorem 11.3.2]{hirschhorn2003model}.
	Since $\Ch(\ShvA)$ is a Grothendieck category~\cite[Proposition~3.4]{AlGG}, 
	every object is small, so $I_{\local}$ and $J_{\local}$ permit the small object argument. 
	
	Next, we need to show that $U_{\nis}$ takes relative $L_{\nis}(J_{\local})$-complexes to stalkwise quasi-isomorphisms in $\Ch(\PshA)$.
	Since $U_{\nis}$ preserves filtered colimits, it commutes with transfinite compositions. Also, 
	stalkwise quasi-isomorphisms are closed under transfinite composition. It therefore suffices to show that $U_{\nis}$ takes any pushout of a map from $L_{\nis}(J_{\local})$ to a stalkwise quasi-isomorphism.
	
	Let $f : A \rightarrow B$ be a map in $J_{\local}$, and consider a pushout of the form
	$$\xymatrix{L_{\nis}A \ar[r]^{L_{\nis}f} \ar[d] & L_{\nis}B \ar[d]\\
		X \ar[r]^g & Y } $$
	We need to show that $U_{\nis}g$ is a stalkwise quasi-isomorphism.
	Since $\Ch(\ShvA)$ is an abelian category, this pushout gives rise to a short exact sequence in $\Ch(\ShvA)$
	$$0 \rightarrow L_{\nis}A \rightarrow L_{\nis}B \oplus X \rightarrow Y \rightarrow 0.$$
	For every point $x$ of the Nisnevich site, we get a short exact sequence on stalks
	$$0 \rightarrow A_x \rightarrow B_x \oplus X_x \rightarrow Y_x \rightarrow 0$$
	in $\Ch(\Ab)$. This short exact sequence of chain complexes induces a long exact sequence on homology groups
	$$\cdots \rightarrow H_{n+1}(Y_x) \rightarrow H_n(A_x) \rightarrow H_n(B_x) \oplus H_n(X_x) \rightarrow H_n(Y_x) \rightarrow H_{n-1}(A_x) \rightarrow \cdots $$
	Since $f$ is in $J_{\local}$, it is a stalkwise quasi-isomorphism, so the map $H_n(A_x) \rightarrow H_n(B_x)$ is an isomorphism. This then implies that $H_n(X_x) \rightarrow H_n(Y_x)$ is also an isomorphism, so $g : X \rightarrow Y$ is a stalkwise quasi-isomorphism.
	
	Therefore the transferred model structure on $\Ch(\ShvA)$ exists, with 
	generating cofibrations $L_{\nis}(I_{\local})$ and generating trivial 
	cofibrations $L_{\nis}(J_{\local})$, and the adjunction $L_{\nis} : \Ch(\PshA) \leftrightarrows \Ch(\ShvA) : U_{\nis}$ is a Quillen adjunction.
\end{proof}

\begin{lem}\label{weakfingentransfer}
	Let $M$ be a model category that is weakly finitely generated with weakly generating trivial cofibrations $J^\prime_M$, and let $L: M \rightleftarrows N : R$ be an adjunction, such that the left transferred model structure along $L$ exists. Assume that $L$ preserves small objects and finitely presented objects. Then the transferred model structure on $N$ is weakly finitely generated, and $L(J^\prime_M)$ is a set of weakly generating trivial cofibrations for $N$.
\end{lem}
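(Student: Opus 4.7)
The plan is to verify the three conditions of Definition \ref{defweakfingen} directly, taking as the generating (trivial) cofibrations of the transferred structure on $N$ the images $L(I_M)$ and $L(J_M)$, where $I_M$ and $J_M$ are generating cofibrations and generating trivial cofibrations of $M$. The candidate set of weakly generating trivial cofibrations for $N$ will be $J^\prime_N := L(J^\prime_M)$.

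Conditions (1) and (2) are essentially immediate from the hypotheses. Since $L$ preserves finitely presented objects, the domains and codomains of maps in $L(I_M)$ are finitely presented in $N$, verifying (1). Likewise, the domains of maps in $L(J_M)$ are small in $N$ because $L$ preserves small objects, verifying (2). By the same reasoning applied to $J^\prime_M$, the domains and codomains of maps in $L(J^\prime_M)$ are finitely presented, so the finite-presentation portion of (3) is also automatic.

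The main content of the proof is the lifting part of condition (3). Let $f\colon A\to B$ be a morphism in $N$ such that $B$ is fibrant and $f$ has the right lifting property with respect to $L(J^\prime_M)$. By the adjunction $L\dashv R$, the map $R(f)\colon R(A)\to R(B)$ has the right lifting property with respect to $J^\prime_M$ in $M$. Since the transferred model structure on $N$ is defined so that $f$ is a fibration (resp.\ weak equivalence) iff $R(f)$ is, the assumption that $B$ is fibrant in $N$ is precisely the statement that $R(B)$ is fibrant in $M$. Invoking the hypothesis that $M$ is weakly finitely generated with $J^\prime_M$ as weakly generating trivial cofibrations, it follows that $R(f)$ is a fibration in $M$. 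By the defining property of the transferred model structure, $f$ is then a fibration in $N$, establishing (3).

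I do not anticipate a serious obstacle: the argument reduces entirely to pushing the lifting data across the adjunction and quoting the defining property of the transferred model structure. The only subtlety worth noting is that condition (3) of Definition \ref{defweakfingen} requires $B$ fibrant (not $A$), and the adjunction converts this to the analogous requirement on $R(B)$, which is exactly what is needed to apply the weak finite generation of $M$. Hence the transferred model structure on $N$ is weakly finitely generated with $L(J^\prime_M)$ as a set of weakly generating trivial cofibrations.
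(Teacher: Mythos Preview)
Your proof is correct and follows essentially the same argument as the paper: verify conditions (1), (2), and the finite-presentation part of (3) directly from the hypothesis that $L$ preserves small and finitely presented objects, then establish the lifting part of (3) by passing across the adjunction, using that $R(B)$ is fibrant in $M$ to invoke the weakly generating property of $J'_M$, and concluding via the definition of the transferred structure.
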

\begin{proof}
	Let $I_M$ denote a set of generating cofibrations and $J_M$ denote a set of generating trivial cofibrations for $M$.
	Then by definition of the transferred model structure, $L(I_M)$ is a set of generating cofibrations and $L(J_M)$ is a set of generating trivial cofibrations for $N$.
	
	Since $L$ preserves small objects and finitely presented objects, the domains and codomains from $L(I_M)$ and $L(J^\prime_M)$ are finitely presented, and the domains from $L(J_M)$ are small.
	
	Take $f : A \rightarrow B$ in $N$ with $B$ fibrant and $f$ having the right lifting property with respect to $L(J^\prime_M)$. To show the lemma we now just have to show that $f$ is a fibration in $N$. By adjunction $R(f)$ has the right lifting property with respect to $J^\prime_M$. Since $R: N \rightarrow M$ is a right Quillen functor and $B$ is fibrant in $N$ we know that $R(B)$ is fibrant in $M$. Since $J^\prime_M$ is a set of weakly generating trivial cofibrations for $M$ it now follows that $R(f)$ is a fibration in $M$. From the definition of the transferred model structure it follows that $f$ is a fibration in $N$. Therefore $L(J^\prime_M)$ is a set of weakly generating trivial cofibrations for $N$.
\end{proof}

\begin{cor}\label{transfereddweaklyfingen}
	The model category $\Ch(\ShvA)$ is weakly finitely generated, with $L_{\nis}(J^\prime_{\local})$ as a set of weakly generating trivial cofibrations.
\end{cor}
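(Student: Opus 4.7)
The statement is a direct application of Lemma \ref{weakfingentransfer} to the adjunction $L_{\nis}: \Ch(\PshA) \rightleftarrows \Ch(\ShvA) : U_{\nis}$, where $\Ch(\PshA)$ carries the local projective model structure and $\Ch(\ShvA)$ carries the left transferred model structure. The plan is simply to check each hypothesis of that lemma in turn.

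First, one verifies that $\Ch(\PshA)$ with the local projective model structure is weakly finitely generated with $J^\prime_{\local}$ as a set of weakly generating trivial cofibrations; this was established in the previous two lemmas of the section. Second, the existence of the left transferred model structure along $L_{\nis}$ is exactly Lemma \ref{localmodelstructureexists}. Third, one must check that $L_{\nis}$ preserves finitely presented objects and small objects.

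The preservation of finitely presented objects is precisely the content of Corollary \ref{Lnisfinpres}, which in turn relied on the fact that $U_{\nis}$ commutes with filtered colimits (Lemma \ref{Uaccessible}), a consequence of the quasi-compactness of the Nisnevich topology. The preservation of small objects is automatic in the present setting: since $\Ch(\ShvA)$ is a Grothendieck category by \cite[Proposition~3.4]{AlGG}, every object of $\Ch(\ShvA)$ is small, so in particular every object in the image of $L_{\nis}$ is small.

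With all the hypotheses of Lemma \ref{weakfingentransfer} verified, the conclusion follows: the transferred model structure on $\Ch(\ShvA)$ is weakly finitely generated, and the image $L_{\nis}(J^\prime_{\local})$ serves as a set of weakly generating trivial cofibrations. There is no genuine obstacle here; the corollary is essentially a bookkeeping consequence of the results accumulated in the section, and the only substantive input is the smallness preservation, which is free because the target is Grothendieck.
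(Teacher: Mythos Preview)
Your proposal is correct and follows essentially the same approach as the paper: verify that $L_{\nis}$ preserves finitely presented objects via Corollary~\ref{Lnisfinpres}, note that smallness is automatic because $\Ch(\ShvA)$ is Grothendieck, and invoke Lemma~\ref{weakfingentransfer}. The paper's proof is simply a terser version of what you wrote.
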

\begin{proof}
	By Lemma \ref{Lnisfinpres} we know that $L_{\nis}$ preserves finitely presented objects. It also preserves small objects, because all objects in $\Ch(\ShvA)$ are small.
	The result now follows from Lemma \ref{weakfingentransfer}.
\end{proof}

There is a symmetric monoidal 
structure on $\Ch(\ShvA)$ defined by $X \otimes Y := L_{\nis} (U_{\nis}(X) \otimes U_{\nis}(Y))$. With respect to this monoidal structure the adjunction $L_{\nis}: \Ch(\PshA) \rightleftarrows \Ch(\ShvA) : U_{\nis}$ 
is a monoidal adjunction. This means that the left adjoint $L_{\nis}$ is strong monoidal, 
while the right adjoint $U_{\nis}$ is lax monoidal.
We use the following lemma to make $\Ch(\ShvA)$ into a monoidal model category in the sense of \cite[Definition 3.1]{schwede2000algebras}.

\begin{lem}
	Let $M, N$ be closed symmetric monoidal categories,
	and let $L: M \leftrightarrows N : R$ be a monoidal adjunction.
	Let $M$ be equipped with a cofibrantly generated monoidal model structure with generating cofibrations $I$ and generating trivial cofibrations $J$. Assume that the left transferred model structure along $L: M \rightarrow N$ exists and that $L(I)$ and $L(J)$ permit the small object argument. Furthermore assume that the monoidal unit $\mathbbm{1}_M$ is cofibrant in $M$. Then the left
	transferred model structure on $N$ is a monoidal model structure and the unit $\mathbbm{1}_N$ is cofibrant.
\end{lem}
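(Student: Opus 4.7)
The plan is to establish the two ingredients separately: cofibrancy of the unit, and the pushout--product axiom. Both follow formally from the strong monoidality of $L$ and the fact that, by construction of the transferred model structure, $L$ is a left Quillen functor.

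For the unit, since $L \dashv R$ is a monoidal adjunction, the left adjoint $L$ is strong monoidal, so $\mathbbm{1}_N \cong L(\mathbbm{1}_M)$. By hypothesis $\mathbbm{1}_M$ is cofibrant in $M$, and by construction the transferred model structure on $N$ has $L(I)$ as generating cofibrations, so $L$ preserves cofibrant objects. Hence $\mathbbm{1}_N$ is cofibrant.

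For the pushout--product axiom, I would first reduce to checking the axiom on generating (trivial) cofibrations. This reduction is standard (see e.g.\ \cite[Lemma~4.2.4]{hovey1999model} or Hirschhorn's monoidal version): because the pushout--product functor $(-)\square(-)$ preserves colimits in each variable (as tensoring with a fixed object is a left adjoint in a closed monoidal category), the class of maps for which the pushout--product axiom holds is closed under transfinite compositions, pushouts, and retracts in each variable. Thus it suffices to verify the axiom for $f \in I \cup J$, $g \in I \cup J$, where here $I$ and $J$ denote the generating cofibrations and trivial cofibrations of $M$, and the generating (trivial) cofibrations of $N$ are $L(I)$ and $L(J)$.

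The key computation is then the identification $L(f) \square L(g) \cong L(f \square g)$: because $L$ is strong monoidal, $L(X)\otimes L(Y)\cong L(X\otimes Y)$ naturally, and because $L$ is a left adjoint it preserves pushouts, so the pushout defining $L(f)\square L(g)$ in $N$ is the $L$-image of the pushout defining $f\square g$ in $M$. Since $M$ is a monoidal model category, $f\square g$ is a cofibration in $M$, and trivial whenever $f$ or $g$ is. Applying the left Quillen functor $L$ sends this to a cofibration in $N$ (respectively a trivial cofibration), completing the verification. The main point requiring care is the natural identification $L(f)\square L(g)\cong L(f\square g)$; but once strong monoidality is invoked together with the fact that $L$ commutes with the relevant pushouts, the rest is formal.
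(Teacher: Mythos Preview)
Your proof is correct and follows essentially the same approach as the paper's: reduce the pushout--product axiom to the generating (trivial) cofibrations via Hovey's lemma, use that a strong monoidal left adjoint satisfies $L(f)\square L(g)\cong L(f\square g)$, and conclude because $L$ is left Quillen; the unit argument is also identical. The only cosmetic difference is that the paper invokes \cite[Corollary~4.2.5]{hovey2007model} and checks only $L(I)\square L(I)$ and $L(J)\square L(I)$, whereas you check all pairs from $L(I)\cup L(J)$, which is slightly more than necessary but harmless.
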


\begin{proof}
	Let $I$ be the generating cofibrations of $M$, and let $J$ be the generating trivial cofibrations of $M$.
	Then $L(I)$ is a set of generating cofibrations and $L(J)$ is a set of generating trivial cofibrations for $N$.
	Given two morphisms $f, g$, we write $f \square g$ to denote the pushout-product of $f$ and $g$.
	To verify the pushout-product axiom for the transferred model structure on $N$, it suffices by \cite[Corollary 4.2.5]{hovey2007model}  to show that $L(I) \square L(I)$ consists  out of cofibrations, and $L(J) \square L(I)$ consists out of trivial cofibrations.
	
	Since $L$ is a strong monoidal left adjoint functor, it preserves pushout products, in the sense that for all morphisms $f: A \rightarrow B$ and $g: C \rightarrow D$ in $M$ we have a commutative diagram in which the vertical maps are isomorphisms:
	$$ \xymatrix{ L(A \otimes D \underset{A \otimes C}{\coprod} B \otimes C) \ar[rr]^{L(f \square g)} \ar[d]^{\sim}  & & L(B \otimes D) \ar[d]^{\sim} \\ 
		L(A) \otimes L(D) \underset{L(A) \otimes L(C)}{\coprod} L(B) \otimes L(C) \ar[rr]^(0.65){L(f) \square L(g)} & &L(B) \otimes L(D)  }$$
	This can also be expressed by saying that $L(f \square g) \cong L(f) \square L(g)$ in the arrow category $\mathsf{Arr}(N)$.
	
	So any morphism in $L(I) \square L(I)$, respectively $L(J) \square L(I)$, is isomorphic to a morphism in $L(I \square I)$, respectively $L(J \square I)$, in the arrow category  $\mathsf{Arr}(N)$.
	Since $M$ is a monoidal model category, all morphisms from $I \square I$, respectively $J \square I$, are cofibrations, respectively trivial cofibrations.
	Since $L: M\rightarrow N$ is a left Quillen functor it preserves cofibrations and trivial cofibrations.
	Since cofibrations and trivial cofibrations are closed under isomorphisms in $\mathsf{Arr}(N)$ it follows that $L(I) \square L(I)$ consists out of cofibrations and $L(J) \square L(I)$ consists out of trivial cofibrations.
	So $N$ satisfies the pushout-product axiom.
	
	Since $\mathbbm{1}_M$ is cofibrant in $M$ and $L$ is a left Quillen functor, $L(\mathbbm{1}_M)$ is cofibrant in $N$. Since $L$ is strong monoidal $L(\mathbbm{1}_M) \cong \mathbbm{1}_N$, so $\mathbbm{1}_N$ is cofibrant in $N$.
	This in particular implies that $N$ is a monoidal model category.
\end{proof}

We will now prove some lemmas to show that $\Ch(\ShvA)$ satisfies the monoid axiom.

\begin{lem} \label{cokerfree}
	If $f \in J^\prime_{\local}$ then $\coker(f) \in \Ch(\PshA)$ is a bounded chain complex and degreewise free.
\end{lem}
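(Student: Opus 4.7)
The plan is to handle the two parts of $J^\prime_{\local} = J_{\proj} \cup (S \square I_{\Ch_{\geq 0}})$ separately, and in both cases to reduce to showing the cokernel is concentrated in finitely many degrees and is degreewise a finite direct sum of representables $\Cor(-,X)$. For a generator $f : 0 \to \Cor(-,X) \otimes D^n\bb Z$ in $J_{\proj}$ this is immediate: the cokernel is the target itself, which is $\Cor(-,X)$ in degrees $n$ and $n+1$ and zero elsewhere.

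For the pushout-product case, my first step is the general identification $\coker(s \square \iota) \cong \coker(s) \otimes \coker(\iota)$ for $s : A \to B$ in $S$ and $\iota : C \to D$ in $I_{\Ch_{\geq 0}}$. This is a standard double-cokernel computation in any closed symmetric monoidal abelian category: both sides are the cokernel of the map $s \otimes \id_D \oplus \id_B \otimes \iota : A \otimes D \oplus B \otimes C \to B \otimes D$. This reduces the claim to verifying that $\coker(\iota)$ is a bounded degreewise free complex of abelian groups and that $\coker(s)$ is a bounded chain complex in $\PshA$ that is degreewise a finite direct sum of representables; because $\Cor(-,X) \otimes \bb Z^{\oplus k} \cong \Cor(-,X)^{\oplus k}$, the tensor product of two such complexes is again bounded and degreewise a finite direct sum of representables.

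The $\iota$ side is easy: $\coker(\iota)$ equals $S^{n+1}\bb Z$ or $S^0\bb Z$, in each case $\bb Z$ concentrated in a single degree. On the $s$ side, the cokernel of $0 \to \Cor(-,\emptyset)[n]$ is $\Cor(-,\emptyset)$ in degree $n$, so the only substantive work is the case $s = p_Q[n]$ attached to an elementary Nisnevich square $Q$.

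For this last case I would unwind Definition~\ref{localprojdef} using a concrete model of the mapping cylinder in $\Ch(\PshA)$, for instance $\cyl(g : Y \to Z) = Y \oplus Z \oplus Y[1]$ with the standard differential. A direct bookkeeping then shows that $C$, $s_Q$ and $t_Q$ are bounded complexes whose components in each degree are finite direct sums of representables, and that $\coker(p_Q) \cong \Cor(-,X) \oplus s_Q[1]$ as graded objects, which has the required properties. The main obstacle here is purely this explicit mapping-cylinder bookkeeping: no deeper input is needed beyond the observation that everything in $S$ and the auxiliary constructions around it is obtained by finitely many shifts, direct sums, and pushouts starting from representables and bounded free $\bb Z$-chain complexes.
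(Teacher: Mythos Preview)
Your proof is correct and follows essentially the same route as the paper's: split into the $J_{\proj}$ and $S \square I_{\Ch_{\geq 0}}$ cases, reduce the pushout-product case to analysing $\coker(s)$ for $s \in S$, and handle $s = p_Q$ by explicitly unwinding the mapping-cylinder construction. The one place you improve on the paper's exposition is that you make the identification $\coker(s \square \iota) \cong \coker(s) \otimes \coker(\iota)$ explicit, whereas the paper simply asserts ``it suffices to show that $h$ has a bounded and degreewise free cokernel'' without justifying this reduction; your double-cokernel remark fills that gap cleanly. You also cover the generator $0 \to S^0\bb Z$ of $I_{\Ch_{\geq 0}}$, which the paper's proof omits to mention.
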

\begin{proof}
	Take $f \in J^\prime_{\local}$. Then $f \in J_{\proj}$ or $f \in S \square I_{\Ch_{\geq 0}}$. If $f \in J_{\proj}$, then $$\coker(f) = \Cor(-,X) \otimes D^n\bb Z$$ for some $X \in \Smk, n \in \bb Z$, and that is clearly bounded and free.
	If $f \in S \square I_{\Ch_{\geq 0}}$, then $f=g \square h$ for some $g \in I_{\Ch_{\geq 0}}$ and some $h \in S$. Since $g$ is just a map of the form $S^n \bb Z \rightarrow D^n \bb Z$ for some $n \geq 0$, it suffices to show that $h$ has a bounded and degreewise free cokernel. Up to a shift, $h$ is either the morphism $0 \rightarrow \Cor(-,\emptyset)$ or $h$ is a morphism of the form $s_Q \rightarrow t_Q$ for some Nisnevich square $Q \in \mathcal{Q}$. The cokernel of $0 \rightarrow \Cor(-,\emptyset)$ is clearly bounded and free.
	So assume now that $h$ is of the form $s_Q \rightarrow t_Q$ for some Nisnevich square $Q \in \mathcal{Q}$, of the form
	$$\xymatrix{U^\prime \ar[r] \ar[d] & X^\prime \ar[d] \\ 
		U \ar[r]  & X} $$
	Recall from Definition \ref{localprojdef} that $s_Q$ is defined via the pushout square
	$$\xymatrix{\Cor(-,U^\prime) \ar[r] \ar[d]& C \ar[d]\\
		\Cor(-,U) \ar[r] & s_Q }$$
	where $C$ is the mapping cylinder of $\Cor(-,U^\prime) \rightarrow \Cor(-,X^\prime)$.
	By the usual construction of mapping cylinders \cite[1.5.5]{weibel1996introduction} we have in each individual degree $n$ an equality $$C_n = \Cor(-,U^\prime)_n \oplus \Cor(-,U^\prime)_{n-1} \oplus \Cor(-,X^\prime)_n$$ and the canonical map $\Cor(-,U^\prime) \rightarrow C$ is in each individual degree $n$ a coproduct inclusion.
	
	Thus the pushout defining $s_Q$ is a pushout of bounded and degreewise free complexes along a morphism which is degreewise a coproduct inclusion. This then implies that $s_Q$ is bounded and degreewise free.
	
	Next, recall that $t_Q$ is defined as the mapping cylinder of $s_Q \rightarrow \Cor(-,X)$. Thus the canonical map $h : s_Q \rightarrow t_Q$ is also a degreewise coproduct inclusion between bounded and degreewise free objects. This then implies that $\coker(h)$ is bounded and degreewise free.
	
	And then it follows that $\coker(f)$ is bounded and degreewise free.
\end{proof}

\begin{lem}\label{ftimesZinjectiveqis}
	If $f \in J^\prime_{\local}$ and
	$Z \in \Ch(\PshA)$, then $f \otimes Z$ is a local quasi-isomorphism and a monomorphism in $\Ch(\PshA)$.
	
\end{lem}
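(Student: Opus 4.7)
The plan is to use the structural information from Lemmas~\ref{degreesplitmono} and~\ref{cokerfree}: $f$ is a degreewise split monomorphism whose cokernel is bounded and degreewise free. This reduces both conclusions to flatness properties of $\coker(f)$.

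For the monomorphism part, since $f \in J^\prime_{\local}$ is in particular a cofibration in the projective model structure (the local projective model structure being a left Bousfield localization of the projective one), Lemma~\ref{degreesplitmono} yields that $f$ is a degreewise split monomorphism. Split monomorphisms are preserved under tensoring with any object, so each component of $f \otimes Z$ is a direct sum of split monomorphisms in $\PshA$, hence itself a split monomorphism. Thus $f \otimes Z$ is degreewise split mono, in particular a monomorphism, and moreover the short exact sequence $0 \to A \to B \to \coker(f) \to 0$ (writing $f : A \to B$) remains degreewise split after tensoring with $Z$, so $\coker(f \otimes Z) \cong \coker(f) \otimes Z$.

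It therefore remains to show that $\coker(f) \otimes Z$ is locally acyclic. By Lemma~\ref{cokerfree}, $\coker(f)$ is bounded and degreewise a direct sum of representables $\Cor(-,X)$. Representables are projective in $\PshA$, hence flat by Lemma~\ref{projectiveimpliesflat}, so $\coker(f)$ is degreewise flat and a flat complex by Lemma~\ref{degreewiseflat}. As in the proof of monoidality of the local projective model structure, the sheafification $L_{\nis}(\coker(f))$ is then flat in $\Ch(\ShvA)$. On the other hand, $f$ is a trivial cofibration in the local projective model structure, so it is a local quasi-isomorphism by Lemma~\ref{stalkwiseqis}; combined with $f$ being a monomorphism this forces $\coker(f)$ to be locally acyclic, equivalently $L_{\nis}(\coker(f))$ is acyclic in $\Ch(\ShvA)$.

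To conclude, I would use that a bounded, degreewise flat complex of sheaves is K-flat, so tensoring with it models the derived tensor product: since $L_{\nis}$ is strong monoidal, $L_{\nis}(\coker(f) \otimes Z) \cong L_{\nis}(\coker(f)) \otimes L_{\nis}(Z)$, and because $L_{\nis}(\coker(f))$ is K-flat and quasi-isomorphic to $0$, this is quasi-isomorphic to $0 \otimes^{\mathbf{L}} L_{\nis}(Z) = 0$. Hence $L_{\nis}(\coker(f) \otimes Z)$ is acyclic in $\Ch(\ShvA)$, equivalently $\coker(f) \otimes Z$ is locally acyclic, and therefore $f \otimes Z$ is a local quasi-isomorphism. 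The main obstacle is precisely this final step, where one needs a K-flatness-type argument to transport acyclicity of $L_{\nis}(\coker(f))$ through the tensor product with an arbitrary complex $L_{\nis}(Z)$ that is not itself known to be flat; the boundedness of $\coker(f)$ coming from Lemma~\ref{cokerfree} is what makes this work.
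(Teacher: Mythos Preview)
Your proof is correct and takes a genuinely different, more conceptual route than the paper. Both arguments begin identically: the monomorphism claim via Lemma~\ref{degreesplitmono}, the reduction to showing $C \otimes Z$ is locally acyclic where $C = \coker(f)$ is bounded, degreewise free, and locally acyclic (Lemma~\ref{cokerfree}). From there the paths diverge. The paper proceeds by hand: first it treats the case where $Z$ is concentrated in a single degree by choosing a free resolution of $Z$, forming the double complex $F_\bullet \otimes C$, and running both double-complex spectral sequences (invoking an argument in the style of \cite[Corollary~2.3]{suslin2000bloch} to control the Nisnevich homology of the columns); it then reduces the general case to this one via the good truncations $\tau_k(Z)$ and a further convergent spectral sequence. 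Your argument instead passes to sheaves once and for all, observing that $L_{\nis}(C)$ is a bounded complex of flat objects in $\ShvA$ (each degree being a direct sum of representable sheaves $\Cor(-,X)_{\nis}$, which are flat because $L_{\nis}$ is exact and strong monoidal and $U_{\nis}$ is left exact), hence K-flat; since $L_{\nis}(C)$ is also acyclic, tensoring with it kills everything, and $L_{\nis}(C \otimes Z) \cong L_{\nis}(C) \otimesShv L_{\nis}(Z)$ is acyclic. This is exactly the right abstraction: the paper's two spectral-sequence arguments are, in effect, an unpacked proof that bounded degreewise-flat complexes are K-flat, carried out in the specific Grothendieck category at hand. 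Your version is shorter and makes the role of boundedness transparent; the paper's version has the virtue of being entirely self-contained and not invoking the K-flat formalism. One small point worth making explicit in your write-up is why each $\Cor(-,X)_{\nis}$ is flat in $\ShvA$ (you gesture at the monoidality proof, but that passage only asserts flatness of $L_{\nis}K$ as a complex; the degreewise statement is what you actually need for the standard ``bounded degreewise flat $\Rightarrow$ K-flat'' induction).
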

\begin{proof}
	We can calculate $f \otimes Z$ in degree $n \in \bb Z$ by
	$$(f \otimes Z)_n = \underset{i+j = n}{\bigoplus} f_i \otimes Z_j.$$
	By Lemma \ref{degreesplitmono} each $f_i$ is a split monomorphism.
	Then also every $f_i \otimes Z_j$ is a split monomorphism, so their direct sum is a split monomorphism.
	So $f \otimes Z$ is a monomorphism.
	We now just need to show that $f \otimes Z$ is a local quasi-isomorphism. Since it is already a monomorphism, we now just need to show that $\coker(f \otimes Z)$ is locally acyclic.
	Let $C := \coker(f)$.
	By Lemma \ref{cokerfree} the complex $C$ is bounded and degreewise free. Since $f$ is a local quasi-isomorphism, we know that $C$ is locally acyclic.
	Also we have an isomorphism $\coker(f \otimes Z) \cong \coker(f) \otimes Z = C \otimes Z$.
	So to prove the lemma we now just need to show the following claim:
	
	If $C \in \Ch(\PshA)$ is bounded, degreewise free and locally acyclic, then $C \otimes Z$ is locally acyclic.
	
	We will first show this claim for the case where $Z$ is concentrated in degree 0. So we assume $Z \in \PshA$.
	We claim that $C \otimes Z$ is locally acyclic.

	Take a free resolution of $Z$ in $\PshA$
	$$\dots \rightarrow F_2  \rightarrow F_1 \rightarrow F_0 \rightarrow Z \rightarrow 0. $$
	We can tensor this resolution with $C$ to get the following double complex
	$$\xymatrix{ &   \dots\ar[d] &  \dots\ar[d] & \dots \ar[d] \\
		\dots \ar[r] & F_1 \otimes C_1 \ar[r] \ar[d] & F_0 \otimes C_1 \ar[r] \ar[d] & Z \otimes C_1 \ar[d] \\
		\dots\ar[r]  & F_1 \otimes C_0  \ar[r]\ar[d]  & F_0 \otimes C_0 \ar[r]\ar[d] & Z \otimes C_0 \ar[d] \\
		&  \dots& \dots & \dots } $$
	Denote this double complex by $D_{\bullet,\bullet}$.
	
	Since $C$ is degreewise free, by Lemma \ref{projectiveimpliesflat} each $C_i$ is also flat, so each row is exact. This then means that the horizontal homology of $D_{\bullet,\bullet}$ vanishes. So we have for all $q \in \bb Z$, $$H_{\mathrm{hor},q}(D_{\bullet,\bullet}) = 0$$ in $\Ch(\PshA)$.
	
	Associated to the double complex $D$ we have a spectral sequence in $\PshA$ computing the homology of the total complex \cite{nlab:spectral_sequence}.
	$$E^2_{p,q} = H_{\mathrm{vert},p}(H_{\mathrm{hor},q}(D_{\bullet,\bullet} ) ) \implies H_{p+q}(\mathrm{Tot}(D_{\bullet,\bullet})) $$
	Since $H_{\mathrm{hor},q}(D_{\bullet,\bullet}) = 0$ it follows that $H_{p+q}(\mathrm{Tot}(D_{\bullet,\bullet})) = 0$.
	
	If this homology vanishes, then it also locally vanishes. So if $L_{\nis}(D_{\bullet,\bullet})$ denotes the sheafification of $D_{\bullet,\bullet}$, and if $H^{\nis}$ denotes Nisnevich homology sheaves in $\ShvA$, then we have for all $p.q \in \bb Z$ that $H^{\nis}_{p+q}(\mathrm{Tot}(L_{\nis}(D_{\bullet,\bullet})) = 0$.
	
	By mirroring the double complex $L_{\nis}(D_{\bullet,\bullet})$ and then using the double complex spectral sequence in the Grothendieck category $\ShvA$, we get another spectral sequence computing the same homology
	$$E^2_{p,q} = H^{\nis}_{\mathrm{hor},p}(H^{\nis}_{\mathrm{vert},q}(L_{\nis}(D_{\bullet,\bullet}) ) ) \implies H^{\nis}_{p+q}(\mathrm{Tot}(L_{\nis}(D_{\bullet,\bullet}))).$$
	Since $C$ is bounded, degreewise free and locally acyclic, and since each $F_i$ is free, we can use an argument similar to \cite[Corollary 2.3]{suslin2000bloch} to show for every $q\geq 0$ that $$H^{\nis}(L_{\nis}(F_{q} \otimes C)) = 0.$$
	This then means that the Nisnevich homology of all vertical columns of $L_{\nis}(D_{\bullet,\bullet})$ in positive degree vanishes. So for $q \neq 0$ and $p \in \bb Z$ we have
	$$ H^{\nis}_{\mathrm{vert},q}(L_{\nis}(D_{\bullet,\bullet}) )_p = H^{\nis}_p(L_{\nis}(F_{q-1} \otimes C )) =  0.$$
	Here we consider the $L_{\nis}(Z\otimes C_i)$ column of $L_{\nis}(D_{\bullet,\bullet})$ to be in degree 0.
	
	Thus the spectral sequence $E^2_{p,q} = H^{\nis}_{\mathrm{hor},p}(H^{\nis}_{\mathrm{vert},q}(L_{\nis}(D_{\bullet,\bullet})) ) $ stabilizes at the second page, and consists only of a single column whose terms are $H^{\nis}_p(L_{\nis}(Z\otimes C))$.
	Since the spectral sequence converges against $H^{\nis}_{p+q}(\mathrm{Tot}(L_{\nis}(D_{\bullet,\bullet}))) = 0$ it follows that  $H^{\nis}_p(L_{\nis}(Z\otimes C)) = 0 $ for every $p$, so the chain complex $Z \otimes C$ is locally acyclic.
	
	So we have now shown the lemma in the case where $Z$ is concentrated in degree $0$.
	Let us show the lemma in full generality. Namely, let $C$ be bounded, degreewise free and locally acyclic, and let  $Z\in \Ch(\PshA)$ be any chain complex.
	We claim that $C \otimes Z$ is locally acyclic.
	
	For every $k \in \bb Z$, let $\tau_k(Z)$ denote the following truncated chain complex
	$$\dots \rightarrow Z_{k+3} \overset{\partial_Z^{k+3}}{\rightarrow} Z_{k+2} \overset{\partial_Z^{k+2}}{\rightarrow} Z_{k+1} \rightarrow \ker(\partial_Z^{k}) \rightarrow 0,$$
	where $\ker(\partial_Z^{k})$ is in degree $k$. The chain complex $\tau_k(Z)$ is $k$-connected.
	
	For every $k \in \bb Z$ there is a canonical map $\phi_k: \tau_k(Z) \rightarrow \tau_{k-1}(Z)$ with $\phi_{k,i} = id_{Z_i}$ for all $i \geq k+1$, as shown in this diagram
	$$\xymatrix{\dots \ar[r] \ar[d]& Z_{k+2} \ar[r]^{\partial_Z^{k+2}} \ar[d]& Z_{k+1} \ar[r]^{\partial_Z^{k+1}} \ar[d]& \ker(\partial_Z^{k}) \ar[r] \ar[d]& \ar[d] 0\\
		\dots \ar[r]& Z_{k+2} \ar[r]^{\partial_Z^{k+2}} & Z_{k+1} \ar[r]^{\partial_Z^{k+1}} & Z_{k} \ar[r]^{\partial_Z^{k}}& \ker(\partial_Z^{k-1})    } $$
	In $\Ch(\PshA)$ we can consider the $\bb Z$-indexed diagram
	$$\dots \rightarrow \tau_{k+1}(Z) \rightarrow \tau_k(Z) \rightarrow \tau_{k-1}(Z) \rightarrow \cdots $$
	The colimit of this diagram is obviously $Z$.
	In particular $C \otimes Z \cong \colim{k \in \bb Z} (C \otimes \tau_k(Z))$.
	
	Since filtered colimits in $\Ch(\PshA)$ preserve local quasi-isomorphisms, we know that filtered colimits of locally acyclic objects are locally acyclic. So to show that $C \otimes Z$ is locally acyclic, we now just need to show that each $C \otimes \tau_k(Z)$ is locally acyclic. Let $k \in \bb Z$ be arbitrary.
	We have a distinguished triangle in $\Ch(\PshA)$ 
	$$\tau_{k+1}(Z)[-k] \rightarrow \tau_k(Z)[-k] \rightarrow H_k(Z) \rightarrow  \tau_{k+1}(Z)[1-k]$$
	where $H_k(Z) \in \PshA$ is regarded as a chain complex concentrated in degree $0$.
	So if we consider the following diagram in $D(\PshA)$
	$$ \dots \rightarrow \tau_{k+i}(Z)[-k] \rightarrow \dots  \rightarrow \tau_{k+1}(Z)[-k] \rightarrow \tau_k(Z)[-k] $$
	then for every $i \in \bb N$, the $i$-th morphism in the sequence has a cofiber isomorphic to $H_{k+i}(Z)[i]$.
	Also the $i$-th term in the sequence $\tau_{k+i}(Z)[-k]$ is $i$-connected.
	By Lemma \ref{projectiveimpliesflat} we know that $C$ is degreewise flat. So if we tensor the above diagram with $C$ we get a diagram
	$$ \dots \rightarrow C \otimes \tau_{k+i}(Z)[-k] \rightarrow \dots \rightarrow C \otimes \tau_{k+1}(Z)[-k] \rightarrow C \otimes \tau_k(Z)[-k]$$
	in which the $i$-th morphism has a cofiber isomorphic to $C \otimes H_{k+i}(Z)[i]$.
	From \cite[Corollary 6.1.1]{friedlander2002spectral} we get a strongly convergent spectral sequence
	$$E^2_{pq} = H^{\nis}_{p+q}(C \otimes H_{k+q}(Z)[q]) \implies H_{p+q}^{\nis}(C \otimes \tau_k(Z)[-k]).$$
	Since $H_{k+q}(Z)[q]$ is concentrated in a single degree, we know that $C \otimes H_{k+q}(Z)[q]$ is locally acyclic.
	So $H^{\nis}_{p+q}(C \otimes H_{k+q}(Z)[q])= 0$, and then the spectral sequence implies that $H_{p+q}^{\nis}(C \otimes \tau_k(Z)[-k])=0$, hence $C \otimes \tau_k(Z)[-k]$ is locally acyclic.
	Then also $C \otimes \tau_k(Z)$ is locally acyclic, and then also the colimit $C \otimes Z \cong \colim{k \in \bb Z} (C \otimes \tau_k(Z))$ is locally acyclic, which then proves the entire lemma.
\end{proof}

\begin{lem}\label{checkmonoidonJ}
	Let $M$ be a monoidal model category that is weakly finitely generated. Denote the set of weakly generating trivial cofibrations by $J^\prime$.
	
	Then the monoid axiom for $M$ can be checked on $J^\prime$.
	This means with the notations from \cite{schwede2000algebras}, that if every element of $(J^\prime \otimes M)\mathrm{-cof}_{\mathrm{reg}}$ is a weak equivalence then $M$ satisfies the monoid axiom.
\end{lem}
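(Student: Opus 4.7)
The plan is to adapt the classical Schwede--Shipley argument~\cite[Lem.~3.5]{schwede2000algebras}, which reduces the verification of the monoid axiom from all trivial cofibrations to the generating set $J$, by extending that reduction one step further from $J$ to the smaller weakly generating set $J^\prime$.

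I would start with an arbitrary $g : X \to Y$ in $(J \otimes M)\mathrm{-cof}_{\mathrm{reg}}$ and apply the small object argument with respect to $J^\prime \otimes M$, which is legitimate because the domains of $J^\prime$ are finitely presented and hence small. This produces a factorization $g = p \circ i$ with $i \in (J^\prime \otimes M)\mathrm{-cof}_{\mathrm{reg}}$ and $p$ having right lifting property with respect to $J^\prime \otimes M$. By the standing hypothesis, $i$ is a weak equivalence, so by two-out-of-three it would suffice to prove that $p$ is a weak equivalence.

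In particular $p$ has RLP with respect to $J^\prime \otimes \mathbbm{1}_M = J^\prime$, so the defining property of weakly finitely generated model categories makes $p$ a fibration whenever the codomain $Y$ is fibrant. The non-fibrant case would be handled by first composing with a fibrant replacement $r : Y \to Y^{\mathrm{fib}}$, applying the same procedure to $r \circ g$, and descending back via two-out-of-three on the trivial cofibration $r$.

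The hard part will be closing the argument without circularity: knowing $p$ is a fibration with RLP with respect to $J^\prime$ does not by itself force $p$ to be a weak equivalence. To break the loop, I would process $g$ stage by stage along its defining transfinite composition $X = X_0 \to X_1 \to \cdots \to X_\lambda = Y$, factoring each $j_\alpha : A_\alpha \to B_\alpha$ via the small object argument on $J^\prime$ as $A_\alpha \to A'_\alpha \to B_\alpha$. Tensoring with $Z_\alpha$ makes the first factor $A_\alpha \otimes Z_\alpha \to A'_\alpha \otimes Z_\alpha$ land in $(J^\prime \otimes M)\mathrm{-cof}_{\mathrm{reg}}$, so it is a weak equivalence by hypothesis; after a fibrant replacement of $B_\alpha$ the second factor is likewise a weak equivalence. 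Closure of weak equivalences under the pushouts and transfinite compositions involved in constructing $g$ would then deliver the conclusion.
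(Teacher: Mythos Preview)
Your proposal has two genuine gaps, both in the final paragraph.

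First, the sentence ``after a fibrant replacement of $B_\alpha$ the second factor is likewise a weak equivalence'' does not do what you need. Factoring $j_\alpha\in J$ via the small object argument on $J'$ gives $A_\alpha\to A'_\alpha\to B_\alpha$ with the second factor in $\mathrm{RLP}(J')$; this becomes a fibration only when $B_\alpha$ is fibrant, and in any case you need $p_\alpha\otimes Z_\alpha$ (not $p_\alpha$) to be a weak equivalence, which does not follow from knowing $p_\alpha$ is a trivial fibration.

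Second, and more seriously, ``closure of weak equivalences under the pushouts \ldots\ involved in constructing $g$'' is false: weak equivalences are not stable under pushout in a general model category. Even if you succeeded in proving each $j_\alpha\otimes Z_\alpha$ is a weak equivalence, you cannot conclude that its pushout is one. This is not a technicality---the monoid axiom is \emph{precisely} the assertion that these particular pushouts are weak equivalences, so invoking pushout-stability here is circular.

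The paper avoids both problems by working with classes closed under pushout rather than with weak equivalences. It first proves that any trivial cofibration $f:A\to B$ with fibrant codomain lies in $J'\text{-cof}$ (factor via the small object argument on $J'$; the second factor is a fibration since $B$ is fibrant, and then $f$ is a retract of the first factor). For arbitrary $f$, compose with a fibrant replacement $g:B\to B^f$; both $g$ and $gf$ now lie in $J'\text{-cof}$, hence $g\otimes Z$ and $gf\otimes Z$ lie in $(J'\otimes M)\text{-cof}$. In the extended pushout diagram the pushouts $k$ and $kh$ then also lie in $(J'\otimes M)\text{-cof}$, because \emph{this} class is pushout-stable; by hypothesis (and retracts) $k$ and $kh$ are weak equivalences, so $h$ is one by two-out-of-three. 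Transfinite compositions are then handled by \cite[Lemma~3.5]{dundas2003enriched}. The essential move you are missing is to stay inside $(J'\otimes M)\text{-cof}$ as long as possible and only pass to weak equivalences at the very end.
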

\begin{proof}
	Before verifying the monoid axiom we first show that every trivial cofibration with fibrant codomain lies in  $J^\prime\mathrm{-cof}$.
	
	Let $f : A \overset{\sim}{ \rightarrowtail} B$ be a trivial cofibration with fibrant codomain $B$. We claim that $f$ lies in $J^\prime\mathrm{-cof}$.
	According to the small object argument \cite[Lemma 2.1]{schwede2000algebras} we can factor $f$ as $f = qi$ with $q \in \mathrm{RLP}(J^\prime)$ and $i \in J^\prime\mathrm{-cof}_{\mathrm{reg}}$.
	$$\xymatrix{A \ar[dr]_i \ar[rr]^f & & B \\
		& Z \ar[ur]_{q} &}  $$
	Since $q$ has a fibrant codomain and $q \in \mathrm{RLP}(J^\prime)$ it follows that $q$ is a fibration.
	Then $f$ has the left lifting property against $q$ so by \cite[Lemma 1.1.9]{hovey2007model} $f$ is a retract of $i$. Since $i \in J^\prime\mathrm{-cof}_{\mathrm{reg}}$ this implies $f \in J^\prime\mathrm{-cof}$.
	
	Now we start verifying the monoid axiom. Assume every element of $(J^\prime \otimes M)\mathrm{-cof}_{\mathrm{reg}}$ is a weak equivalence.
	Let $f : A \overset{\sim}{ \rightarrowtail} B$ be any trivial cofibration, let $Z \in M$ be any object and consider an arbitrary pushout diagram of the form
	$$\xymatrix{A \otimes Z \ar[d] \ar[r]^{f \otimes Z} &\ar[d] B \otimes Z \\
		X \ar[r]^h & Y} $$
	We claim that $h$ is a weak equivalence.
	Since $M$ is weakly finitely generated, we know by \cite[Lemma 3.5]{dundas2003enriched} that transfinite compositions of weak equivalences are weak equivalences in $M$. So if we show that $h$ is a weak equivalence, then this immediately implies the monoid axiom.
	
	Denote the terminal object of $M$ by $1$. Factor the map $B \rightarrow 1$ into a trivial cofibration followed by a fibration.
	We then have a trivial cofibration $g : B \overset{\sim}{ \rightarrowtail} B^f$ with $B^f$ fibrant.
	Then both $g : B \rightarrow B^f$ and $gf : A \rightarrow B^f$ are trivial cofibrations with fibrant codomain.
	So $g$ and $gf$ both lie in $J^\prime\mathrm{-cof}$.
	Then $Z \otimes g$ and $Z \otimes gf$ lie in $Z \otimes (J^\prime\mathrm{-cof})$.
	By a simple argument using the adjunction $- \otimes Z \dashv \mathrm{Hom}(Z,-)$  one can show that $Z \otimes (J^\prime\mathrm{-cof}) \subseteq (Z \otimes J^\prime)\mathrm{-cof}$.
	So $Z \otimes g$ and $Z \otimes gf$ lie in $(Z \otimes J^\prime)\mathrm{-cof}$, and thus also in $(M \otimes J^\prime)\mathrm{-cof}$.
	
	Consider the pushout diagram
	$$\xymatrix{A \otimes Z \ar[d] \ar[r]^{f \otimes Z} &\ar[d] B \otimes Z \ar[r]^{g \otimes Z} & B^f \otimes Z \ar[d]  \\
		X \ar[r]^h & Y \ar[r]^(0.4)k & (B^f \otimes Z) \underset{B \otimes Z}{\coprod} Y  } $$
	Since $g \otimes Z$ and $gf \otimes Z$ lie in $( J^\prime \otimes M)\mathrm{-cof}$, and
	since $(J^\prime\otimes M)\mathrm{-cof}$ is stable under pushouts, it follows that $k$ and $kh$ also lie in  $(J^\prime\otimes M)\mathrm{-cof}$. 
	By \cite[Lemma 2.1]{schwede2000algebras} this means that $k$ and $kh$ are retracts of morphisms from $(J^\prime\otimes M)\mathrm{-cof}_{\mathrm{reg}}$. Since we assume that all morphisms from $(J^\prime\otimes M)\mathrm{-cof}_{\mathrm{reg}}$ are weak equivalences, and since weak equivalences are stable under retracts, it follows that $k$ and $kh$ are weak equivalences. Then by $2$-of-$3$ also $h$ is a weak equivalence.
	This then proves the monoid axiom for $M$.
\end{proof}

\begin{lem}\label{lemmaChShvAMonoidAxiom}
	$\Ch(\ShvA)$ satisfies the monoid axiom in the sense of~\cite{schwede2000algebras}.
\end{lem}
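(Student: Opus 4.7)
The plan is to combine the monoid-axiom reduction principle of Lemma~\ref{checkmonoidonJ} with the concrete injectivity/acyclicity input of Lemma~\ref{ftimesZinjectiveqis}, transferred from $\Ch(\PshA)$ to $\Ch(\ShvA)$ via the strong monoidal Quillen adjunction $L_{\nis}\dashv U_{\nis}$. Since $\Ch(\ShvA)$ is a monoidal, weakly finitely generated model category (Corollary~\ref{transfereddweaklyfingen}) with weakly generating trivial cofibrations $L_{\nis}(J'_{\local})$, Lemma~\ref{checkmonoidonJ} reduces the monoid axiom to showing that every morphism in $\bigl(L_{\nis}(J'_{\local})\otimes\Ch(\ShvA)\bigr)\text{-cof}_{\mathrm{reg}}$ is a weak equivalence in $\Ch(\ShvA)$.

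The first step will be to analyze a single generator. Given $f\in J'_{\local}$ and $Z\in\Ch(\ShvA)$, the definition of the tensor product on $\Ch(\ShvA)$ and the fact that $L_{\nis}$ is strong monoidal give a canonical isomorphism
$$L_{\nis}(f)\otimes_{\Sh} Z \;\cong\; L_{\nis}\bigl(f\otimes_{\Psh} U_{\nis}(Z)\bigr).$$
By Lemma~\ref{ftimesZinjectiveqis}, $f\otimes_{\Psh} U_{\nis}(Z)$ is both a monomorphism and a local quasi-isomorphism in $\Ch(\PshA)$. Because $L_{\nis}$ is exact, it preserves monomorphisms, and by definition of the transferred model structure together with Lemma~\ref{stalkwiseqis} it sends local quasi-isomorphisms to weak equivalences in $\Ch(\ShvA)$. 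Hence each generator $L_{\nis}(f)\otimes_{\Sh} Z$ is a monomorphism which is simultaneously a quasi-isomorphism in the Grothendieck category $\Ch(\ShvA)$.

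The second step will be to show that this combined property (monomorphism plus quasi-isomorphism in a Grothendieck abelian category) is closed under the operations occurring in $(-)\text{-cof}_{\mathrm{reg}}$, namely pushouts and transfinite compositions. Pushouts preserve monomorphisms in a Grothendieck category, and if $A\hookrightarrow B$ is a monomorphism with acyclic cokernel, then the cokernel of any pushout $X\to X\cup_A B$ equals $\coker(A\to B)$, hence is again acyclic; so the pushout is again a monomorphism and a quasi-isomorphism. For transfinite compositions, filtered colimits in $\Ch(\ShvA)$ are exact and preserve homology sheaves, so a transfinite composition of monomorphism-quasi-isomorphisms is again a monomorphism-quasi-isomorphism.

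Putting these pieces together, every element of $\bigl(L_{\nis}(J'_{\local})\otimes\Ch(\ShvA)\bigr)\text{-cof}_{\mathrm{reg}}$ is a quasi-isomorphism, i.e.\ a weak equivalence, and the monoid axiom follows from Lemma~\ref{checkmonoidonJ}. The main technical obstacle has already been absorbed into Lemma~\ref{ftimesZinjectiveqis} (the spectral sequence argument showing that tensoring a bounded, degreewise free, locally acyclic complex with any presheaf complex yields a locally acyclic complex); the remaining work here is essentially formal manipulation of the monoidal adjunction and standard stability properties of monomorphisms and quasi-isomorphisms in a Grothendieck category.
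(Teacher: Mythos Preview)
Your proposal is correct and follows essentially the same route as the paper: reduce via Lemma~\ref{checkmonoidonJ} and Corollary~\ref{transfereddweaklyfingen} to the generators $L_{\nis}(J'_{\local})\otimes\Ch(\ShvA)$, use the strong monoidality of $L_{\nis}$ together with Lemma~\ref{ftimesZinjectiveqis} to see each generator is an injective quasi-isomorphism, and then invoke the stability of injective quasi-isomorphisms under pushouts and transfinite compositions in a Grothendieck category. The paper spells out the step ``$L_{\nis}$ sends local quasi-isomorphisms to weak equivalences'' via the unit $\eta$ of $L_{\nis}\dashv U_{\nis}$ and 2-out-of-3, but this is exactly what your appeal to Lemma~\ref{stalkwiseqis} and the definition of the transferred model structure amounts to.
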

\begin{proof}
	By Lemmas \ref{checkmonoidonJ} and \ref{transfereddweaklyfingen} it suffices to check the monoid axiom on the set $L_{\nis}(J^\prime_{\local})$.
	
	Take $f: A \rightarrow B$, with $f \in L_{\nis}(J^\prime_{\local})$ and take $Z \in \Ch(\ShvA)$. We claim that $f \otimesShv Z$ is an injective quasi-isomorphism.
	Since $\ShvA$ is a Grothendieck category, we know that injective quasi-isomorphisms in $\Ch(\ShvA)$ are stable under pushouts and transfinite compositions. So if we show that $f \otimesShv Z$ is an injective quasi-isomorphism, then this proves the entire monoid axiom. 
	
	If $f \in L_{\nis}(J^\prime_{\local})$, then there exists $f^\prime : A^\prime \rightarrow B^\prime$ with $f^\prime \in J^\prime_{\local}$ and $L_{\nis}(f^\prime) = f$.
	By Lemma \ref{ftimesZinjectiveqis} we know $f^\prime \otimesPsh U_{\nis}Z$ is an injective local quasi-isomorphism in $\Ch(\PshA)$.
	Since $L_{\nis}$ is strongly monoidal we have an isomorphism of arrows
	$$L_{\nis}(f^\prime \otimesPsh U_{\nis}Z) \cong L_{\nis}(f^\prime) \otimesShv L_{\nis}U_{\nis}Z \cong f \otimesShv Z$$
	So we just need to show that $L_{\nis}(f^\prime \otimesPsh U_{\nis}Z)$ is an injective quasi-isomorphism.
	
	Since $f^\prime \otimesPsh U_{\nis}Z$ is injective, and the sheafification functor $L_{\nis}$ is exact, we know that $L_{\nis}(f^\prime \otimesPsh U_{\nis}Z)$ is injective. So we now just need to show that $L_{\nis}(f^\prime \otimesPsh U_{\nis}Z)$ is a quasi-isomorphism.
	By definition of the transferred model structure on $\Ch(\ShvA)$ we thus need to show that $U_{\nis}L_{\nis}(f^\prime \otimesPsh U_{\nis}Z)$ is a local quasi-isomorphism in $\Ch(\PshA)$.
	
	We have a commutative diagram, where $\eta$ is the unit of the adjunction $L_{\nis} \dashv U_{\nis}$:
	$$\xymatrix{U_{\nis}L_{\nis}(A^\prime \otimesPsh U_{\nis}Z) \ar[rrr]^{U_{\nis}L_{\nis}(f^\prime \otimesPsh U_{\nis}Z)} & & & U_{\nis}L_{\nis}(B^\prime \otimesPsh U_{\nis}Z) \\ \ar[u]^{\eta}
		A^\prime \otimesPsh U_{\nis}Z \ar[rrr]^{f^\prime \otimesPsh U_{\nis}Z}  & & &  \ar[u]^{\eta} B^\prime \otimesPsh U_{\nis}Z
	}  $$
	The diagram commutes by the naturality of $\eta$.
	Since $\eta$ is stalkwise an isomorphism, it is by Lemma \ref{stalkwiseqis} in particular a local quasi-isomorphism in $\Ch(\PshA)$.
	
	Since $f^\prime \otimesPsh U_{\nis}Z$ is also a local quasi-isomorphism, it follows from the $2$-of-$3$-property that $U_{\nis}L_{\nis}(f^\prime \otimesPsh U_{\nis}Z)$ is a local quasi-isomorphism. So $f \otimes Z \cong L_{\nis}(f^\prime \otimesPsh U_{\nis}Z)$ is an injective quasi-isomorphism, and this concludes the proof of the lemma.
\end{proof}

\begin{lem}\label{lemmaChShvAProper}
	$\Ch(\ShvA)$ is strongly left proper in the sense of \cite[Definition 4.6]{dundas2003enriched}.
\end{lem}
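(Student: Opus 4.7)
The plan is to verify the conditions of \cite[Definition 4.6]{dundas2003enriched} by exploiting the abelian category structure of $\Ch(\ShvA)$ together with the exactness of the sheafification functor $L_{\nis}$. The essential point is that cofibrations in the transferred model structure are monomorphisms whose cokernels behave well both locally (stalkwise) and with respect to the tensor product.

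First, I would show that every cofibration in $\Ch(\ShvA)$ is a (degreewise split) monomorphism. By Lemma \ref{degreesplitmono}, every cofibration in the standard projective model structure on $\Ch(\PshA)$ is a degreewise split monomorphism; this property is inherited by cofibrations in the local projective model structure. Since $L_{\nis} : \Ch(\PshA) \to \Ch(\ShvA)$ is exact and strong monoidal, applying $L_{\nis}$ to any element of $I_{\local}$ yields a monomorphism in $\Ch(\ShvA)$. Because monomorphisms in the Grothendieck category $\ShvA$ are stable under pushouts, transfinite compositions and retracts, every cofibration in $\Ch(\ShvA)$ is a monomorphism.

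Next, for ordinary left properness, consider a pushout square
$$\xymatrix{A \ar[d]_{w} \ar[r]^{i} & C \ar[d]^{k} \\ B \ar[r]^{j} & D}$$
in $\Ch(\ShvA)$ with $i$ a cofibration and $w$ a weak equivalence (that is, a stalkwise quasi-isomorphism after applying $U_{\nis}$). Since $i$ is a monomorphism in the abelian category $\Ch(\ShvA)$, so is $j$, and the pushout induces an isomorphism $\coker(i) \cong \coker(j)$. Taking stalks at a Nisnevich point $x$ and comparing the long exact homology sequences of $0 \to A_x \to C_x \to \coker(i)_x \to 0$ and $0 \to B_x \to D_x \to \coker(j)_x \to 0$ via the five-lemma, using that $w_x$ is a quasi-isomorphism, shows that $k_x$ is a quasi-isomorphism. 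Hence $k$ is a weak equivalence and $\Ch(\ShvA)$ is left proper.

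Finally, for the additional ``strong'' compatibility in \cite[Definition 4.6]{dundas2003enriched}, one must check that tensoring with a fixed object interacts well with pushouts of weak equivalences. This reduces to showing that for every trivial cofibration $f$ and every object $Z \in \Ch(\ShvA)$, the morphism $f \otimes Z$ remains a (trivial) cofibration whose pushouts preserve stalkwise quasi-isomorphisms. By Lemma \ref{ftimesZinjectiveqis}, this holds for the weakly generating trivial cofibrations $J^\prime_{\local}$ of $\Ch(\PshA)$, and the strong monoidality and exactness of $L_{\nis}$ together with the argument of Lemma \ref{lemmaChShvAMonoidAxiom} propagate the conclusion to $\Ch(\ShvA)$. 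The main obstacle is matching this verification to the precise formulation in \cite[Definition 4.6]{dundas2003enriched}, but once the monomorphism property for cofibrations and the cokernel preservation under $L_{\nis}$ are in place, the remaining checks are standard five-lemma and monoid-axiom style arguments that have already been carried out in the preceding lemmas.
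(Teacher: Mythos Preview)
Your parts 1 and 2 are on the right track and essentially recover ordinary left properness: cofibrations in $\Ch(\ShvA)$ are monomorphisms, and in a Grothendieck category pushouts along monomorphisms preserve quasi-isomorphisms (your five-lemma argument on stalks is a valid way to see this). The gap is in part 3, where you misidentify what ``strongly left proper'' requires. The condition in \cite[Definition 4.6]{dundas2003enriched} is that for every \emph{cofibration} $f$ and every object $Z$, cobase change along $Z\otimes f$ preserves weak equivalences. You instead argue about \emph{trivial} cofibrations and invoke Lemma \ref{ftimesZinjectiveqis} and the monoid-axiom argument; but those lemmas concern $J'_{\local}$ and give you nothing about $Z\otimes f$ when $f$ is a plain cofibration that is not a weak equivalence. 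So the reduction you propose does not address the statement at hand.

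What is actually needed is an upgrade of your part 1: not merely that cofibrations are monomorphisms, but that $Z\otimes f$ is a (degreewise) monomorphism for every cofibration $f$ and every $Z$. Tensoring with an arbitrary $Z$ does not preserve monomorphisms, so your part 1 does not immediately yield this. The paper handles it by working at the level of generating cofibrations: each map in $L_{\nis}(I_{\glob})$ is a degreewise \emph{split} monomorphism, hence so is $Z\otimes(-)$ applied to it; the class of degreewise split monomorphisms is closed under pushout, transfinite composition and retract, so $Z\otimes f$ is a degreewise split monomorphism for every cofibration $f$. Then the same Grothendieck-category principle you used in part 2 finishes the proof. Once you replace your part 3 with this argument, the proof goes through.
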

\begin{proof}
	For any Grothendieck category $\cc B$, quasi-isomorphisms in $\Ch(\cc B)$ are stable under pushouts along degreewise monomorphisms. So to show that $\Ch(\ShvA)$ is strongly left proper we just need to show that for any cofibration $f$ and any object $Z \in \Ch(\ShvA)$ the map $Z \otimes f$ is a degreewise monomorphism.
	The set $L_{\nis}(I_{\proj})$ is a set of generating cofibrations for $\Ch(\ShvA)$ so we have $f \in L_{\nis}(I_{\proj})-\cof$.
	Then $$Z \otimes f \in (Z \otimes L_{\nis}(I_{\glob}))-\cof.$$
	All morphisms from $L_{\nis}(I_{\glob})$ are degreewise split monomorphisms, so all morphisms from $Z \otimes L_{\nis}(I_{\glob})$ are degreewise split monomorphisms, and this implies that all morphisms from $(Z \otimes L_{\nis}(I_{\glob}))-\cof$ are degreewise split monomorphisms. So $Z \otimes f$ is a degreewise split monomorphism.
	Therefore $\Ch(\ShvA)$ is strongly left proper.
\end{proof}

\section{Statements of the Main theorems} \label{ThmStatement}

From now on we will additionally assume that $\Cor$ satisfies the cancellation property in the sense of Definition \ref{corproperties}.
We define a $\ShvA$-enriched category $\Sm$, by letting the objects of $\Sm$ be smooth schemes over $k$, and by defining $$\Sm(X,Y) := \inthom{\ShvA}{\Cor(-,X)_{\nis}}{\Cor(-,Y)_{\nis}}.$$

We have a $\ShvA$-enriched inclusion functor $I : \Sm \rightarrow \ShvA$ defined on objects by $I(X) := \Cor(-,X)_{\nis}$, and which is defined on morphism objects as the identity $\Sm(X,Y) = \inthom{\ShvA}{I(X)}{I(Y)}$.

Let $\mathscr{C}$ be the full enriched subcategory of $\Sm$ consisting of the objects $\Gmt{n}$ where $n \in \bb Z_{\geq 0}$.

We write $\otimesShv$ for the tensor product of $\ShvA$,
and $\otimesDay$ for the Day convolution product on $[\Sm,\ShvA]$ or $[\cc C, \ShvA]$, as defined in \cite{day1970closed}:
$$(F\otimesDay G)(c) = \overset{(a,b) \in \Sm \otimes \Sm}{\int} \Sm(a \times b, c) \otimesShv F(a) \otimesShv G(b).  $$

The Grothendieck category of enriched functors $[\Sm, \ShvA]$ is tensored and cotensored over $\ShvA$ by $\otimesShv$. Given any enriched functor $F : \Sm \rightarrow \ShvA$ and $X \in \ShvA$ we can form an enriched functor $F \otimesShv X$, given by $$F \otimesShv X (U) := F(U) \otimesShv X.$$
If $X$ is representable by a scheme $U$, so that $X = \Cor(-,U)_{\nis}$, then we write $F \otimesShv U$ for $F \otimesShv X$.

The monoidal structure on $\ShvA$ induces a monoidal structure on $\Sm$ via the following easy lemma.

\begin{lem}
	Let $\mathscr{V}$ be a symmetric monoidal closed category. Let $\mathscr{C}$ be a full $\mathscr{V}$-subcategory of $\mathscr{V}$, such that $\mathbbm{1}_{\mathscr{V}}$ is isomorphic to an object of $\mathscr{C}$, and for every $X, Y \in \mathscr{C}$ the monoidal product $X \otimes Y$ is isomorphic to an object of $\mathscr{C}$.
	Then $\mathscr{C}$ can be made into a symmetric monoidal $\mathscr{V}$-category such that the inclusion functor $\mathscr{C} \rightarrow \mathscr{V}$ is strong monoidal.
\end{lem}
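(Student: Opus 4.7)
The plan is to build the monoidal structure on $\mathscr{C}$ by transport of structure along the chosen isomorphisms to $\mathscr{V}$; the lemma is the standard fact that a full subcategory closed under the tensor product and containing (an isomorph of) the unit inherits a monoidal structure, adapted to the enriched setting.

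First, I would invoke the axiom of choice to fix, for each ordered pair $(X,Y)$ of objects of $\mathscr{C}$, an object $X\otimes_{\mathscr{C}}Y\in\mathscr{C}$ together with an isomorphism $\phi_{X,Y}:X\otimes_{\mathscr{C}}Y\xrightarrow{\sim}X\otimes_{\mathscr{V}}Y$ in the underlying category of $\mathscr{V}$ (which exists by hypothesis, since $X\otimes_{\mathscr{V}}Y$ is isomorphic to some object of $\mathscr{C}$, and since $\mathscr{C}$ is a \emph{full} $\mathscr{V}$-subcategory so this isomorphism automatically lies in $\mathscr{C}$). Similarly, pick a unit object $I_{\mathscr{C}}\in\mathscr{C}$ and an isomorphism $\iota:I_{\mathscr{C}}\xrightarrow{\sim}\mathbbm{1}_{\mathscr{V}}$.

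Next, I would promote the assignment $(X,Y)\mapsto X\otimes_{\mathscr{C}}Y$ to a $\mathscr{V}$-functor $\otimes_{\mathscr{C}}:\mathscr{C}\otimes\mathscr{C}\to\mathscr{C}$. Because $\mathscr{C}$ is full in $\mathscr{V}$, its hom-objects coincide with those of $\mathscr{V}$, and the action on morphism objects is forced: given the $\mathscr{V}$-functor $\otimes_{\mathscr{V}}:\mathscr{V}\otimes\mathscr{V}\to\mathscr{V}$, the $\mathscr{V}$-natural morphism $\mathscr{V}(X,X')\otimes\mathscr{V}(Y,Y')\to\mathscr{V}(X\otimes_{\mathscr{V}}Y,X'\otimes_{\mathscr{V}}Y')$ is transported through conjugation by $\phi_{X,Y}$ and $\phi_{X',Y'}^{-1}$ to a map $\mathscr{C}(X,X')\otimes\mathscr{C}(Y,Y')\to\mathscr{C}(X\otimes_{\mathscr{C}}Y,X'\otimes_{\mathscr{C}}Y')$. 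Functoriality (associativity and unitality of composition) follows because the $\phi$'s cancel telescopically. I would then define the associator, unitors and braiding on $\mathscr{C}$ by pulling back those of $\mathscr{V}$ through $\phi$ and $\iota$; for instance the associator $\alpha^{\mathscr{C}}_{X,Y,Z}$ is the unique isomorphism in $\mathscr{C}$ fitting into the square with the $\mathscr{V}$-associator and the isomorphisms built out of $\phi$.

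The coherence axioms (pentagon, triangle, hexagon) for $\mathscr{C}$ then reduce, after inserting $\phi$'s and $\iota$'s everywhere, to the coherence axioms for $\mathscr{V}$, since the $\phi_{X,Y}$'s cancel in pairs along each side of the diagram. The inclusion functor $\mathscr{C}\to\mathscr{V}$ becomes strong monoidal by equipping it with the coherence isomorphisms $\phi_{X,Y}:X\otimes_{\mathscr{C}}Y\xrightarrow{\sim}X\otimes_{\mathscr{V}}Y$ and $\iota:I_{\mathscr{C}}\xrightarrow{\sim}\mathbbm{1}_{\mathscr{V}}$; the hexagon and triangle axioms for a strong monoidal functor hold tautologically by our definition of the associator, unitors and braiding of $\mathscr{C}$.

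The one thing to take care with is that all the diagrams asserting monoidal coherence are diagrams in $\mathscr{V}$-$\mathbf{Cat}$, not in ordinary $\mathbf{Cat}$; but because $\mathscr{C}$ is a full $\mathscr{V}$-subcategory, every diagram that is required to commute in $\mathscr{C}$ commutes there as soon as its image under the fully faithful inclusion $\mathscr{C}\hookrightarrow\mathscr{V}$ commutes in $\mathscr{V}$, which it does by construction. The main (very minor) obstacle is therefore purely bookkeeping: organising the $\phi$'s so that the $\mathscr{V}$-enriched coherence reduces cleanly to the $\mathscr{V}$-enriched coherence already available in $\mathscr{V}$; no nontrivial homotopical input is needed.
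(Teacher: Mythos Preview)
Your proof is correct and follows essentially the same transport-of-structure idea as the paper. The paper organises it slightly differently: it first passes to the replete closure $\overline{\mathscr{C}}$ (the full $\mathscr{V}$-subcategory on all objects isomorphic to something in $\mathscr{C}$), observes that there the monoidal structure of $\mathscr{V}$ restricts \emph{strictly}, and then invokes the equivalence $\mathscr{C}\simeq\overline{\mathscr{C}}$ in $\mathscr{V}\text{-}\mathbf{CAT}$ to pull the structure back---this packages your explicit $\phi$-bookkeeping into a single appeal to transfer along an equivalence, but the content is identical.
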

\begin{proof}
	Let $\overline{\cc C}$ be the full $\cc V$-subcategory of $\cc V$ on all those objects which have the property of being isomorphic to some object of $\cc C$. Then $\mathbbm{1} \in \overline{\cc C}$, and for all $X, Y \in \overline{\cc C}$ we have $X \otimes Y \in \overline{\cc C}$.
	So the functor $\otimes : \cc V \times \cc V \rightarrow \cc V$ restricts to a functor $\otimes : \overline{\cc C} \times \overline{\cc C} \rightarrow \overline{\cc C}$.
	For all $X, Y, Z \in \overline{\cc C}$ we have coherence isomorphisms $$\ell_X : \mathbbm{1} \otimes X \overset{\sim}{\rightarrow} X$$
	$$\rho_X : X \otimes \mathbbm{1}  \overset{\sim}{\rightarrow} X$$
	$$\phi_{X,Y} : X \otimes Y  \overset{\sim}{\rightarrow} Y \otimes X$$
	$$\alpha_{X,Y,Z} : (X \otimes Y) \otimes Z  \overset{\sim}{\rightarrow} X \otimes (Y \otimes Z) $$
	in $\mathscr{V}$.
	The domains and codomains of all these coherence isomorphisms lie in $\overline{\cc C}$. Since $\overline{\cc C}$ is a full subcategory of $\mathscr{V}$, all these coherence isomorphisms lie in $\overline{\cc C}$. Obviously these coherences isomorphisms in $\overline{\cc C}$ still make exactly the same diagrams commute as in $\mathscr{V}$. So $\overline{\cc C}$ is a symmetric monoidal $\mathscr{V}$-category, and the inclusion $\overline{\cc C} \rightarrow \cc V$ is a strict monoidal $\cc V$-functor.
	
	We have an inclusion $\cc V$-functor $\cc C \rightarrow \overline{\cc C}$. This functor is essentially surjective, and it is the identity on morphism objects. This then implies that $\cc C \rightarrow \overline{\cc C}$ is an equivalence in the $2$-category $\cc V-CAT$, and we then get an induced symmetric monoidal $\cc V$-category structure on $\cc C$.
\end{proof}

\begin{cor}
	$\Sm$ and $\cc C$ are symmetric monoidal $\ShvA$-categories.
\end{cor}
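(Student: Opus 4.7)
The plan is to deduce the corollary directly from the preceding lemma by exhibiting $\Sm$ and $\cc C$ as full $\ShvA$-subcategories of $\ShvA$ that are closed (up to isomorphism) under the monoidal product of $\ShvA$ and contain an object isomorphic to the monoidal unit. Since the hypotheses of the lemma are invariant under replacing a $\cc V$-subcategory by an equivalent one, it suffices to check these closure properties on the image under $I$ of each of $\Sm$ and $\cc C$ inside $\ShvA$.

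First I would observe that the enriched functor $I:\Sm\to\ShvA$, $X\mapsto\cc A(-,X)_{\nis}$, is fully faithful by construction, since $\Sm(X,Y)$ was \emph{defined} as $\inthom{\ShvA}{I(X)}{I(Y)}$; hence $\Sm$ is (up to equivalence) a full $\ShvA$-subcategory of $\ShvA$, and the same is true for $\cc C$ as a full $\ShvA$-subcategory of $\Sm$, and thus of $\ShvA$. Next, I would check the two hypotheses of the lemma. For the unit: the Day convolution monoidal unit of $\ShvA$ is $\cc A(-,\spec k)_{\nis}=I(\spec k)$, and $\spec k = \Gmt{0}\in\cc C\subseteq\Sm$. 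For closure under tensor product: because $\Cor$ is symmetric monoidal with $X\otimes Y\cong X\times Y$ and the graph functor $\Gamma:\Smk\to\Cor$ is strong monoidal, the representable Nisnevich sheaves satisfy
\[
I(X)\otimesShv I(Y)\;\cong\;I(X\times Y)
\]
via the Day convolution, so for $X,Y\in\Smk$ we stay inside (the image of) $\Sm$, and for $X=\Gmt{n}$, $Y=\Gmt{m}$ we have $X\times Y\cong\Gmt{n+m}$, which lies in $\cc C$.

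With these two closure properties verified, the preceding lemma applied to the inclusions $\cc C\hookrightarrow\ShvA$ and $\Sm\hookrightarrow\ShvA$ equips both $\cc C$ and $\Sm$ with symmetric monoidal $\ShvA$-category structures in such a way that the inclusions into $\ShvA$, and in particular the inclusion $\cc C\hookrightarrow\Sm$, become strong monoidal $\ShvA$-functors.

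I do not anticipate any real obstacle here: the only mildly delicate point is to make sure that the isomorphism $I(X)\otimesShv I(Y)\cong I(X\times Y)$ is used correctly, so that one is genuinely inside the image of $\Sm$ (resp.\ $\cc C$) and can invoke the lemma. Everything else is formal coherence from the cited lemma.
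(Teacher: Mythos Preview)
Your proposal is correct and follows essentially the same approach as the paper: both verify the hypotheses of the preceding lemma by checking that the monoidal unit of $\ShvA$ is $I(\spec k)$ and that $I(X)\otimesShv I(Y)\cong I(X\times Y)$ via Day convolution, then specialize to $\cc C$ using $\Gmt{n}\times\Gmt{m}\cong\Gmt{n+m}$. The paper is slightly more explicit about the intermediate step through presheaves (using that sheafification is strong monoidal and that representables are multiplicative for the presheaf Day convolution), but this is exactly the content of your phrase ``via the Day convolution.''
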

\begin{proof}
	The unit of $\ShvA$ is isomorphic to $\Cor(-,pt)_{\nis}$.
	We claim that for all $X, Y \in \Sm$ we have an isomorphism $$\Cor(-,X)_{\nis} \otimesShv \Cor(-,Y)_{\nis} \cong \Cor(-,X \times Y)_{\nis}.$$
	This isomorphism is constructed as follows.
	The sheafification functor $(-)_{\nis}: \PshA\rightarrow \ShvA$ is strongly monoidal, so if $\otimesPsh$ denotes the presheaf tensor product, then we have a natural isomorphism
	$\Cor(-,X)_{\nis} \otimesShv \Cor(-,Y)_{\nis} \cong (\Cor(-,X) \otimesPsh \Cor(-,Y))_{\nis}.$
	The presheaf tensor product $\otimesPsh$ is a Day convolution with respect to the monoidal structure on $\Cor$. The monoidal structure on $\Cor$ is given on objects by the cartesian product on $\Smk$.
	By general properties of Day convolution we have an isomorphism of presheaves
	$\Cor(-,X) \otimesPsh \Cor(-,Y) \cong \Cor(-,X \times Y)$
	and thus an isomorphism of sheaves $\Cor(-,X)_{\nis} \otimesShv \Cor(-,Y)_{\nis} \cong \Cor(-,X \times Y)_{\nis}.$
	The previous lemma now implies that $\Sm$ is a symmetric monoidal $\ShvA$-category.
	Since $\Cor(-,pt)_{\nis} = \Cor(-,\Gmt{0})_{\nis}$ and $\Cor(-,\Gmt{n})_{\nis} \otimesShv \Cor(-,\Gmt{m})_{\nis} \cong \Cor(-,\Gmt{n+m})_{\nis}$ it also follows that $\cc C$ is a symmetric monoidal $\ShvA$-category.
\end{proof}

Since $\ShvA$ is a closed symmetric monoidal Grothendieck category, and $\Sm$ is a monoidal $\ShvA$-category, we can apply \cite[Theorem 5.5]{garkusha2019derived} to get a weakly finitely generated monoidal model structure on $\Ch([\Sm,\ShvA])$, where the weak equivalences are the pointwise quasi-isomorphisms and the fibrations are the pointwise fibrations.
We will say that an object $F \in \Ch([\Sm,\ShvA])$ is \textit{locally fibrant} if it is fibrant in this model category.
The homotopy category of this model category is the derived category $D([\Sm, \ShvA])$ of the Grothendieck category $[\Sm, \ShvA]$.

We write $\otimesL$ for the derived tensor product on $D([\Sm, \ShvA])$. Since the model structure on $\Ch([\Sm,\ShvA])$ is monoidal by \cite[Theorem 5.5]{garkusha2019derived}, we can compute this derived tensor product by using cofibrant replacements in $\Ch([\Sm,\ShvA])$.
Also note that every representable functor $\Sm(X,-) : \Sm \rightarrow \ShvA$ is cofibrant in $\Ch([\Sm,\ShvA])$, because it is isomorphic to the cofibrant object $\Sm(X,-)\otimesShv pt$.
We similarly have a weakly finitely generated monoidal model structure on $\Ch([\cc C, \ShvA])$, whose homotopy category is $D([\cc C, \ShvA])$. 

We now define two families of morphisms in the enriched functor category $[\cc C, \ShvA]$.
The first family of morphisms we call $\mathbb{A}^1_1$, and it consists of the morphisms $$\cc C(\Gmt{n},-) \otimesShv \bb A^1 \rightarrow \cc C(\Gmt{n},-) \otimesShv pt$$ induced by the projection map $\bb A^1 \rightarrow pt$ for every $n \in \bb Z_{\geq 0}$.

The second family of morphisms, denoted by $\tau$, consists for every $n \in \bb N$ of the morphism $$\tau_n : [\Gmn{n+1},\embed{-}]\otimesShv \Gmn{1} \rightarrow [\Gmn{n},I(-)]$$ where for every $U \in \Smk$ the map $[\Gmn{n+1},\embed{U}]\otimesShv \Gmn{n+1} \overset{\tau_n}{\rightarrow} I(U)$ in $\ShvA$ is given by the counit of the adjunction $- \otimesShv \Gmn{n+1} \dashv [\Gmn{n+1},-]$.
We also sometimes write $\Sm(\Gmn{n+1},-)$ or $\cc C(\Gmn{n+1},-)$ for $[\Gmn{n+1},\embed{-}]$, even though 
$\Gmn{n+1}$ is not in $\Sm$ or $\cc C$ strictly speaking.

The domains and codomains of all these morphisms are compact in the derived category $D([\cc C, \ShvA])$ according to \cite[Theorem 6.2]{garkusha2019derived}.

Let $\sim_{\mathscr{C}}$ be the union of both of these classes of morphisms \label{simCdef} $$\sim_{\mathscr{C}} = \mathbb{A}^1_1 + \tau$$ considered as a class of morphisms in $[\mathscr{C}, \ShvA]$.

\begin{dfn}\label{strongSlocal}
	Let $\cc B$ be any small $\ShvA$-enriched category.
	
	We can consider $\Ch([\cc B, \ShvA])$ to be a $\Ch(\ShvA)$-enriched category, and denote the morphism objects by $\homShv(A,B) \in \Ch(\ShvA)$. These morphism objects are defined on $Z \in \Smk$ by
	$$\homShv(A,B)(Z) := \mathrm{map}^{\mathrm{Ch(Ab)}}(A \otimesShv Z, B) \in \Ch(\Ab)$$
	where $\mathrm{map}^{\mathrm{Ch(Ab)}}$ refers to morphism objects of the $\Ch(\Ab)$-enriched category $\Ch([\cc B, \ShvA])$. 
	Given an object $F \in \Ch([\cc B, \ShvA])$ and a class of morphisms $S$ in $\Ch([\cc B, \ShvA])$, we say that $F$ is \textit{enriched $S$-local} if for every $f : A \rightarrow B$ in $S$ we have a quasi-isomorphism of complexes of sheaves
	$$\homShv(B,F) \rightarrow \homShv(A,F) $$
	in $\Ch(\ShvA)$.
	Furthermore say that $F \in \Ch([\cc B, \ShvA])$ is \textit{strictly $S$-local} if its pointwise locally fibrant replacement $F^f$ in $\Ch([\cc B, \ShvA])$ is enriched $S$-local.
\end{dfn}

\begin{lem} \label{localobjectcomparison}
	Let $\cc B$ be a small monoidal $\ShvA$-enriched category, and $S$ a set of morphisms in $\Ch([\cc B, \ShvA])$.
	Define a new set of morphisms $$\widehat{S} :=\{ (f \otimesShv Z)[n] \mid n \in \bb Z, Z \in \Smk , f \in S \} $$
	in $D([\cc B, \ShvA])$.
	
	Let $F \in \Ch([\cc B, \ShvA])$ be locally fibrant, and assume that all domains and codomains from $S$ are cofibrant.in the local model structure.
	Then $F$ is strictly $S$-local in the sense of Definition \ref{strongSlocal} if and only if $F$ is $\widehat{S}$-local in $D([\cc B, \ShvA])$ in the usual sense, i.e. if and only if for all $g: C \rightarrow D, g \in \widehat{S}$ we have an isomorphism of abelian groups $$g^*: \mathrm{Hom}_{D([\cc B, \ShvA])}(D,F) \rightarrow \mathrm{Hom}_{D([\cc B, \ShvA])}(C,F).$$
\end{lem}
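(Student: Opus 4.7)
Both conditions will be interpreted as statements about derived Hom groups in $D([\cc B, \ShvA])$ and shown to coincide. The first step is to identify, for $A$ cofibrant in the local model structure on $\Ch([\cc B, \ShvA])$, $Z \in \Smk$, $n \in \bb Z$, and $F$ locally fibrant, the cohomology of the enriched mapping complex with derived Hom:
\[
H^{-n}\bigl(\homShv(A, F)(Z)\bigr) \cong \mathrm{Hom}_{D([\cc B, \ShvA])}\bigl((A \otimesShv Z)[n], F\bigr).
\]
This is the standard identification of derived maps with enriched maps between cofibrant and fibrant objects in a $\Ch(\Ab)$-enriched model category (the enrichment on $\Ch([\cc B, \ShvA])$ is provided by \cite[Theorem 5.5]{garkusha2019derived}), once one observes that $A \otimesShv Z$ is again cofibrant because $-\otimesShv Z$ is left Quillen.

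Next I would show that the complex of sheaves $\homShv(A, F) \in \Ch(\ShvA)$ is itself locally fibrant in the sense of Definition \ref{localprojdef}, and similarly for $\homShv(B, F)$. This is an enriched SM7-type statement relating the local projective model structure on $\Ch(\ShvA)$ constructed in Section \ref{sectionChShvAModel} with the local model structure on $\Ch([\cc B, \ShvA])$ from \cite[Theorem 5.5]{garkusha2019derived}; concretely, it reduces to checking that $\homShv(A, F)$ sends elementary Nisnevich squares to homotopy pullback squares (the Brown--Gersten criterion of Lemma \ref{lemmaChPshALocalFibrant}), and sends $\emptyset$ to an acyclic complex. Once this is known, a morphism in $\Ch(\ShvA)$ between locally fibrant complexes is a quasi-isomorphism if and only if it is a sectionwise quasi-isomorphism at every $Z \in \Smk$: in one direction this is the Nisnevich hypercohomology spectral sequence $E_2^{p,q} = H^p_{\nis}(Z, H^q(-))$, and in the other it follows by passing to stalks, which are filtered colimits of sections, combined with Lemma \ref{stalkwiseqis}.

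Putting these together yields the chain of equivalences: $\homShv(f, F)$ is a quasi-isomorphism in $\Ch(\ShvA)$ iff every $\homShv(f, F)(Z)$ is a quasi-isomorphism of complexes of abelian groups, iff for every $Z \in \Smk$ and every $n \in \bb Z$ the induced map
\[
\mathrm{Hom}_{D([\cc B, \ShvA])}\bigl((B \otimesShv Z)[n], F\bigr) \to \mathrm{Hom}_{D([\cc B, \ShvA])}\bigl((A \otimesShv Z)[n], F\bigr)
\]
is an isomorphism of abelian groups, which is precisely the $\widehat S$-locality of $F$.

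The main obstacle I expect is the second step, namely establishing that $\homShv(A, F)$ is locally fibrant in $\Ch(\ShvA)$. This is essentially a Brown--Gersten descent statement for the presheaf $Z \mapsto \mathrm{map}^{\Ch(\Ab)}(A \otimesShv Z, F)$ and requires unwinding the compatibility between the monoidal local projective model structure of Section \ref{sectionChShvAModel} and the enriched model structure on $\Ch([\cc B, \ShvA])$; the detection of $\Ch(\ShvA)$-quasi-isomorphisms between locally fibrant complexes on sections, and the translation into derived Hom groups, is then routine.
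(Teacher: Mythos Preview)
Your proposal is correct and follows essentially the same approach as the paper's proof: identify $H_n(\homShv(A,F)(Z))$ with $\mathrm{Hom}_{D([\cc B,\ShvA])}((A\otimesShv Z)[-n],F)$, show that $\homShv(A,F)$ and $\homShv(B,F)$ are locally fibrant so that the quasi-isomorphism in $\Ch(\ShvA)$ can be tested sectionwise, and then translate back and forth. The only small difference is that for the local fibrancy step you propose verifying the Brown--Gersten criterion of Lemma~\ref{lemmaChPshALocalFibrant}, whereas the paper argues more directly via the lifting property: for a local trivial cofibration $h:X\to Y$ and cofibrant $B$, the map $B\otimesShv h$ is again a trivial cofibration (by the monoidal model structure), so $\homShv(B,F)\to 0$ has the right lifting property against $h$ by adjunction.
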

\begin{proof}
	Suppose $F$ is strictly $S$-local. Then for every $f : A \rightarrow B$, $f \in S$ we have a quasi-isomorphism of complexes of sheaves
	$$f^*: \homShv(B,F) \rightarrow \homShv(A,F) $$
	in $\Ch(\ShvA)$.
	
	We claim  that $\homShv(B,F)$ is locally fibrant.
	In fact if we have a local trivial cofibration $h : X \rightarrow Y$, then a diagram
	$$\xymatrix{X \ar[d]_h \ar[r] & \homShv(B,F) \ar[d] \\ Y \ar[r] \ar@{..>}[ur]  & 0} $$
	has a lift, by adjunction if and only if
	$$\xymatrix{B \otimesShv X \ar[d]_{B \otimesShv h} \ar[r] & F \ar[d] \\ B \otimesShv Y \ar[r] \ar@{..>}[ur]  & 0} $$
	has a lift.
	But since $B$ is cofibrant, then $B \otimesShv h$ is still a trivial cofibration. Since $F$ is locally fibrant the map $F \rightarrow 0$ is a local fibration, so the lift exists.
	Therefore $\homShv(B,F)$ and similarly $\homShv(A,F)$ are 
	locally fibrant. We see that the quasi-isomorphism 
	$$f^*: \homShv(B,F) \rightarrow \homShv(A,F)$$ is sectionwise a quasi-isomorphism.
	
	This means that for every $n \in \bb Z$ we have an isomorphism of homology presheaves
	$$H_n(\homShv(B,F))\rightarrow H_n(\homShv(A,F)).$$
	Therefore for every $Z \in \Smk$ one has 
	$$H_n(\homShv(B,F))(Z) \cong \mathrm{Hom}_{D([\cc B, \ShvA])}((B \otimesShv Z)[-n],F).$$
	It follows that $F$ is $\widehat{S}$-local in $D([\cc B, \ShvA])$.
	
	Conversely, assume that $F$ is  $\widehat{S}$-local in $D([\cc B, \ShvA])$.
	Then for every $f: A \rightarrow B$ in $S$ the map
	$$f^*: \homShv(B,F) \rightarrow \homShv(A,F) $$ is a sectionwise quasi-isomorphism, because for every $n \in \bb Z$ and $Z \in \Smk$ the map
	$$H_n(f)(Z) : H_n(\homShv(B,F))(Z) \rightarrow H_n(\homShv(A,F))(Z)  $$
	is isomorphic to the map
	\footnotesize
	$$(f \otimesShv Z)[-n]^* : \mathrm{Hom}_{D([\cc B, \ShvA])}((B \otimesShv Z)[-n],F) \rightarrow \mathrm{Hom}_{D([\cc B, \ShvA])}((A \otimesShv Z)[-n],F)  $$
	\normalsize
	and since $(f \otimesShv Z)[-n] \in \widehat{S}$ and $F$ is $\widehat{S}$-local this map is an isomorphism.
	So $F$ is strictly $S$-local if and only if $F$ is $\widehat{S}$-local in $D([\cc B, \ShvA])$.
\end{proof}

We can localize the compactly generated triangulated category $D([\mathscr{C}, \ShvA])$ with respect to the family of morphisms between compact objects $\widehat{\sim_{\cc C}}$. 
\begin{defs}\label{simcdef}
	We write $D([\mathscr{C}, \ShvA])/ \sim_{\mathscr{C}}$ for the localized compactly generated triangulated category.
	Furthermore we write $DM_{\Cor}[\cc C]$ for the full triangulated subcategory of $D([\cc C, \ShvA])$ consisting of the strictly  $\sim_{\mathscr{C}}$-local objects.
	
	It follows from Lemma \ref{localobjectcomparison} that the category $D([\mathscr{C}, \ShvA])/ \sim_{\mathscr{C}}$ is equivalent to $DM_{\Cor}[\cc C]$.
\end{defs}

\begin{dfn}\label{defcancellation}
	An enriched functor $F : \cc C \rightarrow \Ch(\ShvA)$ or $F : \Sm \rightarrow \Ch(\ShvA)$ is said to \textit{satisfy cancellation}, if for every $ n \geq 0$ the canonical map $F(\Gmn{n}) \rightarrow [\Gmn{1},F(\Gmn{n+1})]$ is a local quasi-isomorphism.
\end{dfn}

Note that an enriched functor $F$ satisfies cancellation if and only if it is enriched $\tau$-local.

\begin{defs}
	Let $F \in \Ch([\cc C, \ShvA])$. We say that $F$ is \textit{$\sim_{\mathscr{C}}$-fibrant} if it is pointwise locally fibrant in $\Ch([\cc C, \ShvA])$ and strictly  $\sim_{\mathscr{C}}$-local.
\end{defs}
Note that $F$ is strictly $\sim_{\mathscr{C}}$-local if and only if it is strictly $\bb A^1_1$-local and satisfies cancellation. 

Our first theorem is that there is a canonical equivalence of compactly generated triangulated categories
$$D([\mathscr{C}, \ShvA])/  \sim_{\mathscr{C}} \overset{\sim}{\rightarrow} DM_{\Cor}.$$
The equivalence is constructed as follows.
For an enriched functor $F : \mathscr{C} \rightarrow \Ch(\ShvA)$ and $k \in \mathbb{N}$ define $$F(\Gmn{k}) := F(\Gmt{k}) / \underset{i=0}{\overset{k+1}{\sum}} \im(F(\iota_{i,k})).$$
There is an isomorphism of categories $\Ch([\cc C, \ShvA]) \cong [\cc C, \Ch(\ShvA)]$ by \cite[Theorem 5.4]{garkusha2019derived}. For this reason we will often implicitly pass back and forth between those categories without mentioning it.

Let $\SpGm(\ShvA)$ be the category of $\Gmn{1}$-spectra in $\ShvA$.
Define $$ev_{\Gm} : \Ch([\mathscr{C}, \ShvA]) \rightarrow \SpGm(\Ch(\ShvA))$$ by taking $F \in \Ch([\mathscr{C}, \ShvA])$
(regarding it as an enriched functor $F : \cc C \rightarrow \Ch(\ShvA)$) to the $\Gmn{1}$-spectrum $(F(\Gmn{n}))_{n\in\mathbb{N}}$. 
We construct the structure maps $$F(\Gmn{k}) \otimesShv \Gmn{1} \rightarrow F(\Gmn{k+1})$$ by applying the tensor-hom adjunction to
$$\Gmn{1} \rightarrow [\Gmn{n},\Gmn{n+1}] \rightarrow [F(\Gmn{n}), F(\Gmn{n+1})].$$
This functor sends quasi-isomorphisms in $\Ch([\mathscr{C}, \ShvA])$ to stable motivic equivalences in $\SpGm(\Ch(\ShvA))$, so it induces a functor $ev_{\Gm} :  D([\mathscr{C}, \ShvA]) \rightarrow DM_{\Cor}$.
This functor can then be restricted to the full triangulated subcategory $DM_{\Cor}[\cc C] \subseteq D([\mathscr{C}, \ShvA])$.
We are now in a position to formulate the following theorem.

\begin{thm} \label{cthm}
	The functor
	$$ev_{\Gm} : DM_{\Cor}[\cc C] \rightarrow DM_{\Cor}$$
	is an equivalence of compactly generated triangulated categories.
	In particular there is an equivalence
	$$D([\mathscr{C}, \ShvA])/\sim_{\cc{C}} \overset{\sim}{\rightarrow} DM_{\Cor}.$$
\end{thm}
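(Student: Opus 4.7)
The plan is to apply the standard criterion for an equivalence of compactly generated triangulated categories: I will verify that $ev_{\Gm}$ preserves small coproducts, carries a set of compact generators of $DM_{\Cor}[\cc C]$ to a set of compact generators of $DM_{\Cor}$, and is fully faithful on those generators. Coproduct preservation is straightforward: the evaluations $F \mapsto F(\Gmt{n})$ are pointwise, the degeneracy quotients $F(\Gmn{n}) = F(\Gmt{n})/\sum\im F(\iota_{i,n-1})$ are colimits, and assembly into a $\Gm$-spectrum commutes with coproducts.

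For a set of compact generators of $DM_{\Cor}[\cc C]$, I take the Bousfield-reflection-images of the representables $A_{n,X} := \cc C(\Gmt{n},-) \otimesShv \Cor(-,X)_{\nis}$ for $n \geq 0$ and $X \in \Smk$, together with their shifts. These are compact in $D([\cc C,\ShvA])$ by the enriched Yoneda lemma combined with the weakly finitely generated model structure of Section~\ref{sectionChShvAModel}, and compactness is preserved under Bousfield localization to $DM_{\Cor}[\cc C]$ because $\sim_{\cc C}$ consists of morphisms between compact objects. On the right side, Definition~\ref{homologydef} shows that $DM_{\Cor}$ is compactly generated by the Tate twists of motives $M_{\Cor}(X) \otimes \Gmn{-n}$ for $X \in \Smk$, $n \geq 0$, up to shifts. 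After stabilization the $\Gm$-spectrum $ev_{\Gm}(A_{n,X})$ is identified, thanks to the cancellation property of $\Cor$, with $M_{\Cor}(X) \otimes \Gmn{-n}$, so the generating sets are matched under $ev_{\Gm}$.

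For full faithfulness on generators, the universal property of Bousfield localization combined with Lemma~\ref{localobjectcomparison} reduces $\Hom_{DM_{\Cor}[\cc C]}(A_{n,X}, B)$ for $B$ strictly $\sim_{\cc C}$-local to $H_0(B(\Gmt{n})(X))$ via the enriched Yoneda lemma. On the other hand, by Definition~\ref{homologydef},
$$\Hom_{DM_{\Cor}}\!\left(ev_{\Gm}(A_{n,X}),\, ev_{\Gm}(B)\right) = \colim{r}\, \Hom_{DM^{\eff}_{\Cor}}\!\left(I(X) \otimes \Gmn{n+r},\, B(\Gmn{r})\right),$$
and the $\tau$-locality of $B$ makes each transition map in this system an isomorphism, collapsing the colimit to its $r=0$ term $\Hom_{DM^{\eff}_{\Cor}}(I(X) \otimes \Gmn{n}, B(\Gmn{0}))$. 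By iterated cancellation this equals $\Hom_{DM^{\eff}_{\Cor}}(I(X), B(\Gmt{n}))$, and by the $\bb A^1_1$-locality combined with the strict $V$-property this in turn equals $H_0(B(\Gmt{n})(X))$, matching the computation on the left. The main technical obstacle is to make this chain of identifications precise and to check that the natural map induced by $ev_{\Gm}$ on $\Hom$-groups really coincides with the composite above: this will require careful coordination of the cancellation property (Definition~\ref{corproperties}(5)), the strict $V$-property (Definition~\ref{corproperties}(4)), the explicit strictly local model structure constructed in Section~\ref{sectionChShvAModel}, and verification that for strictly $\sim_{\cc C}$-local $F$ the projection $F(\Gmt{n}) \to F(\Gmn{n})$ to the degeneracy quotient is a quasi-isomorphism.
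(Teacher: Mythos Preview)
Your overall strategy matches the paper's: both apply the standard detection criterion (Lemma~\ref{garkushaLemma}) by exhibiting compact generators on each side and checking full faithfulness on them. However, there is a concrete error in your implementation that would make the argument break down as written.

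The problem is the conflation of $\Gmt{n}$ and $\Gmn{n}$. You take generators $A_{n,X}=\cc C(\Gmt{n},-)\otimesShv I(X)$ indexed by the \emph{torus} $\Gmt{n}$, and then assert that $ev_{\Gm}(A_{n,X})\cong M_{\Cor}(X)\otimes\Gmn{-n}$. This is false: by Lemma~\ref{binomial} one has $\cc C(\Gmt{n},-)\cong\bigoplus_{i=0}^n\binom{n}{i}\cc C(\Gmn{i},-)$, so $ev_{\Gm}(A_{n,X})$ is a direct sum of several Tate twists of $M_{\Cor}(X)$, not a single one. Correspondingly, the Yoneda computation on the source gives $H_0(B(\Gmt{n})(X))=\bigoplus_i\binom{n}{i}H_0(B(\Gmn{i})(X))$, whereas your right-hand computation, if carried out with $\Gmn{n}$ as written in the displayed colimit, produces only $H_0(B(\Gmn{n})(X))$. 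The ``verification'' you flag at the end---that the projection $F(\Gmt{n})\to F(\Gmn{n})$ is a quasi-isomorphism for strictly $\sim_{\cc C}$-local $F$---is exactly the statement that would reconcile the two, and it is simply false.

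The paper avoids this by passing immediately (via Lemma~\ref{binomial}) to the generators $\cc C(\Gmn{k},-)\otimesShv I(X)$, then identifying these in $D([\cc C,\ShvA])/\sim_{\cc C}$ with the explicit strictly $\sim_{\cc C}$-local objects $[\Gmn{k},\cc M_{\Cor}(X)]$ (Lemma~\ref{GmtimesXcomputation}); on the target side these are sent by $ev_{\Gm}$ to $\Omega_{\Gm}^k$ of the fibrant motive (Lemma~\ref{evGmn}), and the Hom-computation becomes a clean match $H_n(\cc Y(\Gmn{k})(X))$ on both sides. Your proof can be repaired in the same way: replace $\Gmt{n}$ by $\Gmn{n}$ throughout the generator argument, using Lemma~\ref{binomial} to see that the two families generate the same localizing subcategory.
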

The proof of this theorem is given in Section \ref{FirstStep}.
To state our next result we now define some additional classes of morphisms in $D([\Sm,\ShvA])$.
Firstly, in $\Ch([\Sm,\ShvA])$ let $\bb A^1_1$ denote the class of morphisms
$$\Sm(U,-) \otimesShv \bb A^1 \rightarrow \Sm(U,-) $$
for $U \in \Sm$,
and let $\tau$ denote the class of morphisms
$$ \tau_n : [\Gmn{n+1},I(-)] \otimesShv \Gmn{1} \rightarrow [\Gmn{n},-]$$
just like in $\Ch([\cc C, \ShvA])$.
By $\mathbb{A}^1_2$ we mean the family consisting for every $Y \in \Smk$ of the morphism $$\Sm(Y,-) \rightarrow \Sm(Y \times \mathbb{A}^1,-). $$ 
The family of morphisms $Nis$ is defined as follows.
For every elementary Nisnevich square $$\xymatrix{ U^\prime \ar[r]_{\beta} \ar[d]^{\alpha} & X^\prime \ar[d]_{\gamma}\\
	U \ar[r]^{\delta} & X }$$ 
in $\Smk$, we have a square  in $\Ch([\Sm,\ShvA])$ $$ \xymatrix{\Sm(U^\prime,-) & \ar[l]^{\beta^*} \Sm(X^\prime,-) \\
	\Sm(U,-) \ar[u]_{\alpha^*}& \ar[l]_{\delta^*} \Sm(X,-) \ar[u]^{\gamma^*}}$$
It induces a map of chain complexes $p :  \hocofib(\gamma^*) \rightarrow \hocofib(\alpha^*)$, where $\hocofib$ refers to the naive mapping cone chain complex.
The family $Nis$ consists of all the morphisms $p$ for every elementary Nisnevich square.
Denote by $\sim$ the union of all the four morphism sets defined above. Namely, \label{simdef}
$$\sim := \mathbb{A}^1_1 + \tau + \mathbb{A}^1_2 + Nis.$$

\begin{dfn} \label{nisdef}
	A functor $F \in \Ch([\Sm,\ShvA])$ is said to \textit{satisfy Nisnevich excision} if it sends elementary Nisnevich squares in	$\Smk$ to homotopy cartesian squares in $\Ch(\ShvA)$.
	
	Note that we consider here covariant Nisnevich excision in the $\Sm$-variable, rather than contravariant Nisnevich excision in the $\Cor$-variable.
\end{dfn}

\begin{lem}\label{NisLemma}
	Let $F \in \Ch([\Sm,\ShvA])$ be a functor.
	Then $F$ satisfies Nisnevich excision if and only if $F$ is enriched $Nis$-local.
\end{lem}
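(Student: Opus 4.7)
My plan is to translate the enriched $Nis$-locality condition into the Nisnevich excision condition using the enriched Yoneda lemma together with the behaviour of the internal hom $\homShv(-,F)$ on naive mapping cones.

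First, I would invoke the enriched Yoneda lemma: for every $Y \in \Smk$, there is a natural isomorphism $\homShv(\Sm(Y,-), F) \cong F(Y)$ in $\Ch(\ShvA)$, and this identification is compatible with morphisms in $\Sm$ in the obvious way. Second, for a morphism $g : A \to B$ in $\Ch([\Sm, \ShvA])$, the naive mapping cone fits in a degreewise-split short exact sequence
$$0 \to B \to \hocofib(g) \to A[1] \to 0.$$
Since $\homShv(-,F)$ is additive and preserves degreewise-split exact sequences, applying it yields a degreewise-split short exact sequence
$$0 \to \homShv(A,F)[-1] \to \homShv(\hocofib(g), F) \to \homShv(B, F) \to 0$$
in $\Ch(\ShvA)$, identifying $\homShv(\hocofib(g), F)$ up to a natural shift as the naive mapping fiber of $\homShv(g, F)$.

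Taking $g = \alpha^*$ and $g = \gamma^*$ in turn, and combining with Yoneda, I would obtain identifications $\homShv(\hocofib(\alpha^*), F) \cong \mathrm{fib}(F(\alpha): F(U') \to F(U))$ and $\homShv(\hocofib(\gamma^*), F) \cong \mathrm{fib}(F(\gamma): F(X') \to F(X))$, compatible with a common shift. The map $p$ produced by the elementary Nisnevich square then induces, under $\homShv(-,F)$, precisely the natural comparison map between the vertical mapping fibers of the square that $F$ assigns to the Nisnevich square.

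Finally, I would use the standard fact about the stable model category $\Ch(\ShvA)$: a commutative square is homotopy cartesian (with respect to the local model structure) if and only if the induced map between the mapping fibers of its parallel vertical arrows is a local quasi-isomorphism. Consequently enriched $Nis$-locality of $F$ -- the quasi-isomorphism of the above comparison map for every elementary Nisnevich square -- is equivalent to $F$ sending each such square in $\Smk$ to a homotopy cartesian square in $\Ch(\ShvA)$, i.e., to Nisnevich excision. The main technical point will be ensuring that the non-derived internal hom $\homShv(-,F)$ applied to the naive mapping cone correctly computes the derived fiber; this is guaranteed by the degreewise-splitness of the cone short exact sequence, so the naive and derived fibers coincide up to natural quasi-isomorphism and the remainder of the argument is purely formal.
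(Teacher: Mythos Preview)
Your proposal is correct and follows essentially the same approach as the paper: both use the enriched Yoneda lemma to identify $\homShv(\Sm(Y,-),F)$ with $F(Y)$, then reduce the homotopy-cartesianness of the image square to the induced map on (co)fibers being a local quasi-isomorphism, and finally identify $\homShv(\hocofib(-),F)$ with the (co)fiber of the image map. Your treatment is slightly more explicit than the paper's in justifying why the non-derived $\homShv(-,F)$ interacts correctly with the naive mapping cone (via degreewise-splitness), which the paper leaves implicit.
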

\begin{proof}	
	By the $\Ch(\ShvA)$-enriched Yoneda lemma there is a natural isomorphism in $\Ch(\ShvA)$
	$$F(X) \cong \homShv(\Sm(X,-),F).$$ 
	So $$\xymatrix{ F(U^\prime) \ar[r]_{F(\beta)} \ar[d]^{F(\alpha)} & F(X^\prime) \ar[d]_{F(\gamma)}\\
		F(U) \ar[r]^{F(\delta)} & F(X) }$$ is homotopy cartesian if and only if  $$\xymatrix{ \homShv(\Sm(U^{\prime},-),F) \ar[r]_{\beta^{**}} \ar[d]^{\alpha^{**}} & \homShv(\Sm(X^{\prime},-),F) \ar[d]_{\gamma^{**}}\\
		\homShv(\Sm(U,-),F) \ar[r]^{\delta^{**}} & \homShv(\Sm(X,-),F) }$$ is homotopy cartesian. This is the case if and only if $\mathrm{hocofib}(\alpha^{**}) \rightarrow \mathrm{hocofib}(\gamma^*)$ is a local quasi-isomorphism. The latter 
	holds if and only if the induced morphism $$p^* : \homShv(\mathrm{hocofib}(\alpha^*),F) \rightarrow \homShv(\mathrm{hocofib}(\gamma^*),F)$$ is a local quasi-isomorphism, which means that $F$ is enriched $Nis$-local.
\end{proof}

\begin{defs}\label{defsimfibrant}
	Let $F \in \Ch([\Sm, \ShvA])$. We say that $F$ is \textit{$\sim$-fibrant} if it is pointwise locally fibrant in $\Ch([\Sm, \ShvA])$ and strictly $\sim$-local.
\end{defs}

\begin{dfn}
	Let $DM_{\Cor}[\Sm]$ be the full subcategory of $D([\Sm,\ShvA])$ of those complexes which satisfy the following properties:
	\begin{enumerate}
		\item For every $U \in \Sm$, the complex of sheaves $F(U)$ has $\bb A^1$-invariant cohomology sheaves.
		\item $F$ satisfies cancellation.
		\item $F$ is covariantly $\bb A^1$-invariant, in the sense that $F(U \times \bb A^1) \rightarrow F(U)$ is a local quasi-isomorphism.
		\item $F$ satisfies Nisnevich excision.
	\end{enumerate} 
	These properties are similar to the axioms (2)-(5) for special motivic $\Gamma$-spaces defined in \cite{garkusha2019framed}
	and axioms for framed spectral functors in the sense of~\cite[Section~6]{garkusha2018triangulated}.
	
\end{dfn}	

\begin{prop} \label{DMSmcomparison}
	The category $DM_{\Cor}[\Sm]$ is equal to the full subcategory of $D([\Sm,\ShvA])$ of those complexes $F$ which are strictly $\sim$-local.
	In particular, the inclusion from $DM_{\Cor}[\Sm]$ to $D([\Sm,\ShvA])$ induces an equivalence of triangulated categories $$DM_{\Cor}[\Sm] \overset{\sim}{\rightarrow} D([\Sm,\ShvA])/\sim.$$
\end{prop}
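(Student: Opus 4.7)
The plan is to identify each of the four conditions defining $DM_{\Cor}[\Sm]$ with strict locality with respect to exactly one of the four families $\bb A_1^1$, $\tau$, $\bb A_2^1$, $Nis$ comprising $\sim$. By Definition \ref{strongSlocal}, strict $\sim$-locality of $F$ means that a pointwise locally fibrant replacement $F^f$ of $F$ is enriched $\sim$-local, i.e., enriched-local with respect to each of the four families simultaneously. I first replace $F$ by $F^f$: all four of conditions (1)--(4) are invariants of the isomorphism class of $F$ in $D([\Sm,\ShvA])$, so this reduction is harmless, and henceforth $F$ may be assumed pointwise locally fibrant.

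Working with such $F$, I treat each family in turn. For $\bb A_1^1$, enriched Yoneda gives $\homShv(\Sm(U,-),F)\cong F(U)$, while the tensor--hom adjunction together with the identification $\Cor(-,\bb A^1)_{\nis}\otimesShv\Cor(-,X)_{\nis}\cong \Cor(-,X\times\bb A^1)_{\nis}$ identifies $\homShv(\Sm(U,-)\otimesShv \bb A^1,F)$ with the internal hom $[\bb A^1,F(U)]$ whose sections over $X$ equal $F(U)(X\times\bb A^1)$. Enriched $\bb A_1^1$-locality thus says that $F(U)\to[\bb A^1,F(U)]$ is a local quasi-isomorphism for every $U$, which on cohomology sheaves is precisely $\bb A^1$-invariance of each $H^n F(U)$, matching condition (1). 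For $\tau$, the remark just after Definition \ref{defcancellation} equates enriched $\tau$-locality with the cancellation property of condition (2). For $\bb A_2^1$, enriched Yoneda applied to both endpoints of $\Sm(Y,-)\to \Sm(Y\times\bb A^1,-)$ reduces enriched $\bb A_2^1$-locality to the assertion that $F(Y\times \bb A^1)\to F(Y)$ is a local quasi-isomorphism, which is condition (3). For $Nis$, Lemma \ref{NisLemma} directly equates enriched $Nis$-locality with Nisnevich excision, condition (4).

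Assembling these four equivalences yields the claimed description of $DM_{\Cor}[\Sm]$ as the full subcategory of strictly $\sim$-local complexes of $D([\Sm,\ShvA])$. For the ``in particular'' statement, I invoke standard localization theory for compactly generated triangulated categories: the excerpt observes that the domains and codomains of the morphisms in $\sim$ are compact in $D([\Sm,\ShvA])$, hence so are those of the twisted-and-shifted family $\widehat{\sim}$. By \cite{neeman2001triangulated}, the Verdier quotient $D([\Sm,\ShvA])/\sim$ is then equivalent to the reflective subcategory of $\widehat{\sim}$-local objects, which by Lemma \ref{localobjectcomparison} coincides with the strictly $\sim$-local objects, i.e., with $DM_{\Cor}[\Sm]$ by the first half.

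The step I expect to be the main obstacle is the $\bb A_1^1$ correspondence: one has to verify carefully that the chain-complex internal hom $[\bb A^1,F(U)]$ does yield the expected cohomology sheaves $[\bb A^1,H^n F(U)]$ on pointwise locally fibrant $F$, and that local quasi-isomorphism of the resulting complexes of sheaves really does reduce, cohomology-sheaf-wise, to the pointwise $\bb A^1$-invariance of condition (1). The remaining three matchings are essentially formal, reducing either to enriched Yoneda or to the already-established Lemma \ref{NisLemma}.
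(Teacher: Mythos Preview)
Your outline is essentially the same four-by-four matching the paper carries out, and for $\bb A_2^1$ and $Nis$ your argument is exactly the paper's. There is, however, a real gap in your reduction step. You assert that all four conditions (1)--(4) are invariants of the isomorphism class of $F$ in $D([\Sm,\ShvA])$, and then replace $F$ by $F^f$ once and for all. This is fine for (1), (3), (4), but fails for (2): cancellation asks that $F(\Gmn{n})\to[\Gmn{1},F(\Gmn{n+1})]$ be a local quasi-isomorphism, and the internal hom $[\Gmn{1},-]$ is only a right Quillen functor, so it does \emph{not} preserve local quasi-isomorphisms between arbitrary objects. Hence there is no reason for $[\Gmn{1},F(\Gmn{n+1})]\to[\Gmn{1},F^f(\Gmn{n+1})]$ to be a local quasi-isomorphism, and condition (2) is not a derived invariant on its own.

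The paper repairs this exactly where you would expect: it proves the equivalence ``(2) $\Leftrightarrow$ strict $\tau$-locality'' only \emph{conditionally} on strict $\bb A_1^1$-locality, using Morel's lemma \cite[Lemma~4.3.11]{morel2003introduction} to identify $H_n^{\nis}\bigl([\Gmn{1},X]\bigr)$ with $[\Gmn{1},H_n^{\nis}(X)]$ whenever $X$ has $\bb A^1$-invariant cohomology sheaves; this is precisely what makes $[\Gmn{1},-]$ preserve the relevant local quasi-isomorphisms. Since in both directions of the proposition condition (1) is available whenever you need (2), this conditional version suffices. So your plan is correct in spirit, but the $\tau$ step --- not the $\bb A_1^1$ step you flagged --- is where the genuine work lies, and your current justification for it (``derived invariant'') is false as stated.
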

\begin{proof}
	The proposition follows from the following four claims:
	\begin{enumerate}
		\item A functor $F$ is strictly $\bb A^1_1$-local if and only if for every $U \in \Smk$, the complex $F(U)$ has $\bb A^1$-invariant cohomology sheaves.
		\item A strictly $\bb A^1_1$-local functor $F$ satisfies cancellation if and only if it is strictly $\tau$-local.
		\item A functor $F$ is covariantly $\bb A^1$-invariant if and only if it is strictly $\bb A^1_2$-local.
		\item A functor $F$ satisfies Nisnevich excision if and only if it is strictly $Nis$-local.
	\end{enumerate}
	Here are the proofs for those claims.
	\begin{enumerate}
		\item $F$ is strictly $\bb A^1_1$-invariant if and only if for every $U \in \Smk$ the canonical map
		$$F^f(U) \rightarrow F^f(U)(\bb A^1 \times -)$$
		is a local quasi-isomorphism in $\Ch(\ShvA)$.
		Since $F^f(U)$ and $F^f(U)(\bb A^1 \times -)$ are locally fibrant in $\Ch(\ShvA)$, it follows that the above map is a local quasi-isomorphism if and only if it is a sectionwise quasi-isomorphism in $\Ch(\PshA)$. This is the case if and only if $F^f$ has $\bb A^1$-invariant cohomology presheaves in the sense that for each $n \in \bb Z$ the map
		$$H_n(F^f(U)) \rightarrow H_n(F^f(U) (\bb A^1\times -) ) = H_n(F^f(U))(\bb A^1\times -) $$ is an isomorphism in $\PshA$.
		This means that $F^f(U)$ is motivically fibrant, which is the case if and only if $F(U)$ is $\bb A^1$-local.
		By \cite[Theorem 6.2.7]{morel2005stable} this is the case if and only if $F(U)$ has $\bb A^1$-invariant cohomology sheaves.
		\item The Yoneda lemma implies that a functor $F$ satisfies cancellation if and only if it is enriched $\tau$-local. We now claim that a strictly $\bb A^1_1$-local functor $F$ is enriched $\tau$-local if and only if it is strictly $\tau$-local.
		Let $F$ be a strictly $\bb A^1_1$-local functor, and let $F^f$ be its pointwise local fibrant replacement.
		For every $U \in \Smk$ and $n \in \bb Z$, consider the following diagram in $\ShvA$
		$$ \xymatrix{H^{\nis}_n(\inthom{\Ch(\ShvA)}{\Gmn{1}}{F(U)}) \ar[r] \ar[d]&  \inthom{\ShvA}{\Gmn{1}}{H^{\nis}_n(F(U))}  \ar[d]
			\\
			H^{\nis}_n(\inthom{\Ch(\ShvA)}{\Gmn{1}}{F^f(U)})
			\ar[r] & \inthom{\ShvA}{\Gmn{1}}{H^{\nis}_n(F^f(U))}   } $$

		Since $F(U)$ and $F^f(U)$ have $\bb A^1$-invariant cohomology sheaves, it follows from \cite[Lemma 4.3.11]{morel2003introduction} that the two horizontal maps in the diagram are isomorphisms.
		Since the canonical map $F(U) \rightarrow F^f(U)$ is a local quasi-isomorphism, the map $H^{\nis}_n(F(U)) \rightarrow H^{\nis}_n(F^f(U))$ is an isomorphism in $\ShvA$, so the right vertical map in the above diagram is also an isomorphism.
		This implies the left vertical map in the diagram
		$$H^{\nis}_n(\inthom{\Ch(\ShvA)}{\Gmn{1}}{F(U)}) \rightarrow H^{\nis}_n(\inthom{\Ch(\ShvA)}{\Gmn{1}}{F^f(U)})$$
		is an isomorphism in $\ShvA$. Hence
		$$\inthom{\Ch(\ShvA)}{\Gmn{1}}{F(U)} \rightarrow \inthom{\Ch(\ShvA)}{\Gmn{1}}{F^f(U)}$$
		is a local quasi-isomorphism in $\Ch(\ShvA)$.
		
		Now consider the diagram in $\Ch(\ShvA)$.
		$$\xymatrix{ F(\Gmn{n}) \ar[r]\ar[d]& \inthom{\Ch(\ShvA)}{\Gmn{1}}{F(\Gmn{n+1})})  \ar[d] \\F^f(\Gmn{n}) \ar[r]& \inthom{\Ch(\ShvA)}{\Gmn{1}}{F^f(\Gmn{n+1})}) } $$
		The two vertical maps are local quasi-isomorphisms.
		
		$F$ is enriched $\tau$-local if and only if the upper horizontal map is a local quasi-isomorphism. This is the case if and only if the lower horizontal map is a quasi-isomorphism, and that is true if and only if $F$ is strictly $\tau$-local.
		\item From the Yoneda lemma it follows that a functor $F$ is covariantly $\bb A^1$-invariant if and only if it is enriched $\bb A^1_2$-local.
		And every functor $F$ is enriched $\bb A^1_2$-local if and only if it is strictly $\bb A^1_2$-local, because the relation $\bb A^1_2$ only affects the covariant $\Sm$-variable and is thus not affected by pointwise local fibrant replacement.
		More precisely, consider the following diagram, in which the vertical maps are local quasi-isomorphisms:
		$$\xymatrix{ F(X \times \bb A^1) \ar[d]_{\sim} \ar[r] & F(X) \ar[d]^{\sim} \\
			F^f(X \times \bb A^1) \ar[r] & F^f(X) }. $$
		$F$ is enriched $\bb A^1_2$-local if and only if the upper morphism is a local quasi-isomorphism, which is the case if and only if the lower morphism is a quasi-isomorphism, which is the case if and only if $F^f$ is enriched $\bb A^1_2$-local, which means that $F$ is strictly $\bb A^1_2$-local.
		\item
		By Lemma \ref{NisLemma} a functor $F$ satisfies Nisnevich excision if and only if it is enriched $Nis$-local. Just like for $\bb A^1_2$, since the relation $Nis$ only affects the covariant argument, it is not affected by pointwise local fibrant replacement, so that a functor $F$ is enriched $Nis$-local if and only if it is strictly $Nis$-local.
	\end{enumerate}
	This completes the proof.
\end{proof}

Next, the evaluation functor $$ev_{\Gm}: D([\Sm,\ShvA])/\sim \rightarrow DM_{\Cor}$$is defined as follows. We send $F \in D([\Sm,\ShvA])/\sim$ to $ev_{\Gm}(F^\prime)$, where the functor $ev_{\Gm} : D([\Sm,\ShvA]) \rightarrow DM_{\Cor}$ is the evaluation functor defined just like the one in Theorem \ref{cthm}, and $F^\prime$ is a $\sim$-fibrant replacement of $F$ in $\Ch([\Sm,\ShvA])$.

When $ev_{\Gm}$ is restricted to the subcategory $DM_{\Cor}[\Sm]$, it is the naive $\Gm$-evaluation functor $$ev_{\Gm}: DM_{\Cor}[\Sm] \rightarrow DM_{\Cor} $$ that sends $F$ to the $\Gm$-spectrum $(F(\Gmn{k}))_{k \geq 0}$.

For any pre-additive category $\mathcal{B}$ we denote by $\mathcal{B}[1/p]$ the pre-additive category where all hom-sets get tensored with $\bb Z[1/p]$. Explicitly, for $x, y \in \mathcal{B}$ we define
$$\mathcal{B}[1/p](x,y) := \mathcal{B}(x,y) \otimes \bb Z[1/p]. $$

Another main result of this thesis is as follows.
\begin{thm} \label{mainthm}
	Let $p$ be the exponential characteristic of $k$. After inverting $p$ the functor $ev_{\Gm}$ is an equivalence of compactly generated triangulated categories	$$ev_{\Gm}: (D([\Sm,\ShvA])/\sim)[1/p] \overset{\sim}{\rightarrow} DM_{\Cor}[1/p].$$
	In particular the naive $\Gm$-evaluation functor $$ev_{\Gm}: DM_{\Cor}[\Sm][1/p] \rightarrow DM_{\Cor}[1/p]$$ is an equivalence of compactly generated triangulated categories.
\end{thm}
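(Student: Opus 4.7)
The plan is to leverage Theorem~\ref{cthm} by passing through restriction along the inclusion $i\colon \cc C\hookrightarrow \Sm$, using the covariant localizations $\bb A^1_2$ and $Nis$ to ensure that a $\sim$-local functor on $\Sm$ is determined by its values on powers of $\Gm$. By Proposition~\ref{DMSmcomparison} the quotient $D([\Sm,\ShvA])/\sim$ is equivalent to the full subcategory $DM_{\Cor}[\Sm]\subset D([\Sm,\ShvA])$, while Theorem~\ref{cthm} identifies $DM_{\Cor}[\cc C]$ with $DM_{\Cor}$. Under these identifications the naive evaluation on $DM_{\Cor}[\Sm]$ factors as the restriction $i^*\colon DM_{\Cor}[\Sm]\to DM_{\Cor}[\cc C]$ followed by the equivalence $ev_{\Gm}\colon DM_{\Cor}[\cc C]\overset{\sim}{\to} DM_{\Cor}$ of Theorem~\ref{cthm}. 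It therefore suffices to prove that after inverting $p$ the restriction
$$ i^*\colon DM_{\Cor}[\Sm][1/p] \longrightarrow DM_{\Cor}[\cc C][1/p] $$
is an equivalence of compactly generated triangulated categories.

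First I would check that $i^*$ is well defined on these subcategories: since $\sim_{\cc C}$ consists precisely of the restrictions of the $\bb A^1_1$ and $\tau$ parts of $\sim$, strict $\sim$-locality restricts to strict $\sim_{\cc C}$-locality. The derived left Kan extension $i_!$ is its left adjoint, and by~\cite[Theorem~6.2]{garkusha2019derived} both sides are compactly generated with compact generators matching under $i_!$, so compact generation will pass along the proven equivalence automatically. To prove the equivalence itself I would invoke the recollement theorems of~\cite{garkusha2022recollements} together with the generalization of Röndigs--\O stv\ae r's theorem established in Section~\ref{ROSection}. The latter identifies the $\bb A^1_2$- and $Nis$-localized side of $\Ch([\Sm,\ShvA])$ with a sheaf-theoretic model after inverting $p$, from which one extracts that every strictly $\sim_{\cc C}$-local $G$ on $\cc C$ admits a strictly $\sim$-local extension $\tilde G$ on $\Sm$, unique up to quasi-isomorphism. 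This produces essential surjectivity of $i^*$, and the recollement machinery simultaneously forces the unit $G\to i^* i_! G$ and counit $i_! i^* F\to F$ to become isomorphisms on $\sim_{\cc C}$- and $\sim$-fibrant objects respectively, giving fully faithfulness.

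The main obstacle is producing the extension $\tilde G$: the naive left Kan extension $i_! G$ of a $\sim_{\cc C}$-local complex is in general neither Nisnevich-excisive nor covariantly $\bb A^1$-invariant on $\Sm$, so one has to Bousfield-localize $i_! G$ with respect to $\bb A^1_2$ and $Nis$ and then verify that this localization preserves strict $\bb A^1_1$-locality and cancellation. This is exactly the content of the generalization of Röndigs--\O stv\ae r of Section~\ref{ROSection}, and it is the step which forces the inversion of the exponential characteristic $p$, since the comparison with the sheaf model relies on Gabber--de Jong-type alteration arguments that are only available $p$-integrally. Once $\tilde G$ has been constructed and its properties verified, everything else---functoriality, the factorization of $ev_{\Gm}$ through $i^*$, and preservation of compact generators---follows formally from the recollement, completing the proof.
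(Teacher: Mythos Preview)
Your high-level architecture is correct and matches the paper: factor $ev_{\Gm}$ on $\Sm$ through restriction to $\cc C$ and invoke Theorem~\ref{cthm}. However, you have misidentified what Theorem~\ref{RO} actually says and, as a consequence, swapped the roles of the two key ingredients.

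Theorem~\ref{RO} does not ``identify the $\bb A^1_2$- and $Nis$-localized side with a sheaf-theoretic model''. It states that for a $\sim$-fibrant $F:\Sm\to\Ch(\ShvA)$ and $X\in\Smk$, the canonical map $ev_{\Gm}(F)\otimes\mathcal L(\Sigma^\infty_{S^1,\Gm}X_+)\to ev_{\Gm}(F(X\times-))$ is an isomorphism in $DM_{\Cor}[1/p]$. The paper uses this in Lemma~\ref{compactgenerationLemma} for \emph{conservativity}: if $ev_{\Gm}(F)\simeq 0$ then $ev_{\Gm}(F(X\times-))\simeq 0$ for every $X$, hence $F(X)\simeq 0$, hence $F\simeq 0$. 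In the Lemma~\ref{garkushaLemma} framework this is precisely the compact-generation condition on the target, i.e.\ the input corresponding to \emph{fully faithfulness} of your $i^*$, not essential surjectivity. The $p$-inversion enters here because Theorem~\ref{RO} rests on Riou's strong dualizability of $\Sigma^\infty X_+$ in $SH(k)[1/p]$ (Theorem~\ref{Riou}); this is where alterations appear, not in any sheaf-model comparison.

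Conversely, the step you call the ``main obstacle'' --- producing a $\sim$-local extension of a $\sim_{\cc C}$-local $G$ --- is \emph{not} handled by Bousfield-localizing $i_!G$ and hoping $\bb A^1_1$-locality and cancellation survive. The paper bypasses this entirely: it writes down explicit $\sim$-local objects $[\Gmn{n},\cc M_{\Cor}(X)]$ on $\Sm$ (Lemmas~\ref{enrichedmotivelocal} and~\ref{fibrant} for Nisnevich excision, plus the evident $\bb A^1_2$-invariance of motives), observes via Lemma~\ref{GmtimesXcomputation} that these are isomorphic to the generators $\cc C(\Gmn{n},-)\otimes X$, and deduces fully faithfulness of the inclusion $\Phi$ on generators directly. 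Your appeal to ``recollement machinery forcing the unit and counit to be isomorphisms'' is not a substitute for this computation; the recollement of~\cite{garkusha2022recollements} is used only to identify $T_{\cc C}$ with $D([\cc C,\ShvA])$ before any localization.
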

The proof of this theorem is given at the end of Section \ref{SecondStep}.

\section{Proof of Theorem \ref{cthm}} \label{FirstStep}

In this section we prove Theorem \ref{cthm}.

We will sometimes write $\mathscr{C}(\Gmn{k},-)$ for $[\Gmn{k},I(-)] = \inthom{\Ch(\ShvA)}{\Gmn{k}}{I(-)}$.
\begin{lem} \label{binomial}
	In $\ShvA$ we have an isomorphism
	$$I(\Gmt{k}) \cong \underset{i=0}{\overset{k}{\bigoplus}} \binom{k}{i} \Gmn{i}$$
	where $\binom{k}{i}$ is the binomial coefficient, and $\binom{k}{i} \Gmn{i} := \underset{j=1}{\overset{\binom{k}{i}}{\bigoplus}} \Gmn{i}$.
	
	In particular we have an isomorphism in $\Ch([\cc C, \ShvA])$ 
	$$\mathscr{C}(\Gmt{k},-) \cong \underset{i=0}{\overset{k}{\bigoplus}} \binom{k}{i}\mathscr{C}(\Gmn{k},-).$$ 
	
\end{lem}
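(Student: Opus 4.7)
The plan is to establish the result by first splitting off the case $k=1$ and then expanding a tensor power. The key observation is that the inclusion $\iota_{1,0}: \mathrm{pt} \to \Gm$ at the unit, together with the structure morphism $\Gm \to \mathrm{pt}$ in $\Smk$, induce morphisms $I(\mathrm{pt}) \rightleftarrows I(\Gm)$ in $\ShvA$ whose composite is the identity on $I(\mathrm{pt})$. Since $\Gmn{1}$ is by definition the cokernel of $I(\iota_{1,0})$, this splitting exhibits a direct sum decomposition
\[
I(\Gm) \;\cong\; I(\mathrm{pt}) \oplus \Gmn{1}
\]
in $\ShvA$, where $I(\mathrm{pt}) \cong \mathbbm{1}_{\ShvA}$ is the monoidal unit.

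Next, I would invoke the strong monoidality of $I: \Smk \to \ShvA$ (established earlier via the chain $\Smk \to \Cor \to \PshA \to \ShvA$, using the isomorphism $\Cor(-,X)_{\nis} \otimesShv \Cor(-,Y)_{\nis} \cong \Cor(-,X\times Y)_{\nis}$) to identify $I(\Gmt{k}) \cong I(\Gm)^{\otimes k}$. Substituting the above splitting and distributing the tensor product over direct sums yields an indexed decomposition
\[
I(\Gmt{k}) \;\cong\; \bigoplus_{S \subseteq \{1,\dots,k\}} (\Gmn{1})^{\otimes |S|},
\]
where the $S$-summand corresponds to taking $\Gmn{1}$ at each position in $S$ and the unit at positions outside $S$. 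Grouping by cardinality $i=|S|$ gives $\bigoplus_{i=0}^k \binom{k}{i} (\Gmn{1})^{\otimes i}$.

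The remaining step is to identify $(\Gmn{1})^{\otimes i}$ with $\Gmn{i}$. I would argue directly from the summand picture: under the above decomposition of $I(\Gmt{k})$, the image of $I(\iota_{i,k-1})$ is precisely the sum of those $S$-summands with $i \notin S$, because inserting a $1$ in the $i$-th coordinate factors the $i$-th copy of $I(\Gm)$ through its $I(\mathrm{pt})$-summand. Therefore
\[
\sum_{i=1}^{k} \im(I(\iota_{i,k-1})) \;=\; \bigoplus_{S \subsetneq \{1,\dots,k\}} (\Gmn{1})^{\otimes |S|},
\]
whose quotient in $I(\Gmt{k})$ leaves only the top summand $(\Gmn{1})^{\otimes k}$, proving $\Gmn{k} \cong (\Gmn{1})^{\otimes k}$. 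Combining the two isomorphisms yields the stated formula. The main subtlety will be verifying the summand-identification of the images $\im(I(\iota_{i,k-1}))$ cleanly; this is essentially bookkeeping but needs the splittings of step one to be used compatibly across all coordinates.

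Finally, the displayed corollary in $\Ch([\cc C, \ShvA])$ follows by applying the enriched hom $[-, I(-)]$ (which sends finite direct sums to finite direct sums since $\ShvA$ is additive) to the decomposition of $I(\Gmt{k})$, unfolding $\mathscr{C}(\Gmt{k},-) = [I(\Gmt{k}), I(-)]$ and $\mathscr{C}(\Gmn{i},-) = [\Gmn{i}, I(-)]$.
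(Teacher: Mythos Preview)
Your proposal is correct and follows essentially the same approach as the paper: split $I(\Gm)\cong I(\mathrm{pt})\oplus\Gmn{1}$ via the section $\Gm\to\mathrm{pt}$, then expand $I(\Gm)^{\otimes k}$ binomially. The only difference is that the paper simply asserts $\Gmn{k}\otimes\Gmn{1}\cong\Gmn{k+1}$ at the outset, whereas you derive $(\Gmn{1})^{\otimes k}\cong\Gmn{k}$ by identifying $\sum_i\im(I(\iota_{i,k-1}))$ with the non-top summands of the decomposition; your bookkeeping argument is a clean way to justify what the paper leaves implicit.
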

\begin{proof}
	First note that $\Gmn{k} \otimes \Gmn{1} \cong \Gmn{k+1}$, so $\Gmn{k} \cong (\Gmn{1})^{\otimes k}$.
	Also since the map $pt \overset{\iota_{1,1}}{\rightarrow} \Gmt{1}$ splits, the splitting lemma for abelian categories implies $I(\Gmt{1})\cong \Gmn{1} \oplus I(pt)$. The binomial theorem, applied to the semi-ring of isomorphism classes of the symmetric monoidal closed category $\ShvA$, then yields an isomorphism $$I(\Gmt{k}) \cong (\Gmn{1} \oplus pt)^{\otimes k} \cong \underset{i=0}{\overset{k}{\bigoplus}} \binom{k}{i} (\Gmn{1})^{\otimes i} \otimes pt^{\otimes k-i} \cong \underset{i=0}{\overset{k}{\bigoplus}} \binom{k}{i} \Gmn{i}$$
	as required.
\end{proof}

\begin{defs} \label{motiveDef}
	\begin{enumerate}
		\item 
		We define the \textit{Suslin complex functor} $$C_* : \Ch(\ShvA) \rightarrow \Ch(\ShvA)$$ by sending $F_\bullet \in \Ch(\ShvA)$ and $U \in \Smk$ to $$C_*(F_\bullet)(U) := \mathrm{Tot}(F_\bullet(\Delta_k^\bullet \times U)) \in \Ch(\Ab).$$
		Here $\mathrm{Tot}$ is the total complex functor and $\Delta^n_k = \mathrm{Spec}(k[t_0,\dots,t_n]/(t_0+\dots+t_n-1))$ is the algebraic simplex.
		\item
		For $X \in \Smk$ we define the \textit{$\Cor$-motive of $X$} to be $$M_{\Cor}(X) := C_*(I(X)) = C_*(\Cor(-,X)_{\nis}) $$ in $\Ch(\ShvA)$.
		\item
		The enriched functor $\cc M_{\Cor}(X) : \cc C \rightarrow \Ch(\ShvA)$ defined by $$\cc M_{\Cor}(X)(U):= M_{\Cor}(X \times U)$$ will be called the \textit{enriched $\Cor$-motive of $X$}.
		\item
		For $X \in \Smk$ we define its \textit{$\Gmn{1}$-suspension spectrum} $\Sigma^\infty_{\Gm} X_+ \in DM_{\Cor}$, by defining it in weight $n$ as $$ (\Sigma^\infty_{\Gm} X_+)(n) := \Gmn{n} \otimesShv I(X)$$ and equipping it with the obvious structure maps.
	\end{enumerate}
\end{defs}

If $F : \cc C \rightarrow \Ch(\ShvA)$ is an enriched functor, then we define $C_*F : \cc C \rightarrow \Ch(\ShvA)$ by $(C_*F)(U):=C_*(F(U))$.
The endofunctor $C_* : \Ch([\cc C,\ShvA]) \rightarrow \Ch([\cc C,\ShvA])$ preserves pointwise local quasi-isomorphisms, because $\Cor$ satisfies the strict $V$-property. Thus $C_*$ induces an endofunctor on the derived category $$C_* : D([\cc C,\ShvA]) \rightarrow D([\cc C,\ShvA]).$$
For $X \in \Smk$ we have the zero inclusion map $X \rightarrow \bb A^1_X$.
Let $\bb A^1_X/X \in \Ch(\PshA)$ denote the cokernel of the induced morphism $$\Cor(-,X) \rightarrow \Cor(-,\bb A^1_X).$$
Then $\bb A^1_X/X$ is cofibrant in $\Ch(\PshA)$ because it is a direct summand of the cofibrant object $\Cor(-,\bb A^1_X)$.
We write $(\bb A^1_X/X)_{\nis} \in \Ch(\ShvA)$ for the sheafification of $\bb A^1_X/X$.
Let $T_{\bb A^1_1} = \langle \cc C(U,-) \otimesShv \bb (\bb A^1_X/X)_{\nis} \mid U \in \cc C, X \in \Smk \rangle$ be the full triangulated subcategory of $D([\cc C,\ShvA])$ that is compactly generated by $\cc C(U,-) \otimesShv(\bb A^1_X/X)_{\nis}$.

\begin{lem} \label{kerC*}
	In $D([\cc C,\ShvA])$ we have that
	$\ker(C_*) = T_{\bb A^1_1}.$
\end{lem}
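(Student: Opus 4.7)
The plan is to prove the two inclusions $T_{\bb A^1_1} \subseteq \ker(C_*)$ and $\ker(C_*) \subseteq T_{\bb A^1_1}$ separately, using that both classes are localizing subcategories of $D([\cc C,\ShvA])$.

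First I would establish $T_{\bb A^1_1}\subseteq \ker(C_*)$. Since $C_*$ acts pointwise, is exact and coproduct-preserving, $\ker(C_*)$ is a localizing subcategory, so it suffices to show that each compact generator $\cc C(U,-) \otimesShv (\bb A^1_X/X)_{\nis}$ is killed by $C_*$. Evaluating at any $V\in \cc C$, this reduces to showing $C_*\bigl(\cc C(U,V)\otimesShv (\bb A^1_X/X)_{\nis}\bigr)\simeq 0$ in $D(\ShvA)$. The classical observation is that the Suslin complex of $(\bb A^1_X/X)_{\nis}$ is explicitly chain-contractible, the contraction being built from the $\bb A^1$-homotopy $(t,x)\mapsto tx$ connecting the identity on $\bb A^1$ to the zero map through the graph functor. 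The strict $V$-property then implies that the $\bb A^1$-contractible objects of $D(\ShvA)$ coincide with $\ker(C_*^{\ShvA})$ and form a tensor ideal, so tensoring this contractibility against $\cc C(U,V)$ preserves it, and the conclusion follows.

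Next I would handle $\ker(C_*)\subseteq T_{\bb A^1_1}$. Consider the natural transformation $\eta\colon \id\Rightarrow C_*$ induced by the inclusion of the $0$-simplex $\Delta^0\hookrightarrow \Delta^\bullet$, and claim that $\hocofib(\eta_F)\in T_{\bb A^1_1}$ for every $F\in D([\cc C,\ShvA])$. Given this, if $C_*F\simeq 0$ then the triangle $F\to 0 \to \hocofib(\eta_F)\to F[1]$ shows $F[1]\simeq\hocofib(\eta_F)\in T_{\bb A^1_1}$, so $F\in T_{\bb A^1_1}$. To verify the claim, I would reduce to the compact generators $\cc C(U,-)\otimesShv I(X)$, since the assignment $F\mapsto \hocofib(\eta_F)$ commutes with triangles, shifts and coproducts, and $T_{\bb A^1_1}$ is localizing. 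For such a generator, $\hocofib(\eta_F)$ can be analysed pointwise through the simplicial filtration on $C_*$: each subquotient arises from the algebraic simplex $\Delta^n\cong\bb A^n$, which differs from a point by the iterated application of $(\bb A^1/pt)$-type contractions, producing objects of $T_{\bb A^1_1}$.

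The principal obstacle is carrying out the pointwise analysis of $\hocofib(\eta_F)$ in the second inclusion: because $C_*$ does not commute with the Day convolution $\otimesShv$ on the nose, one cannot simply identify $\hocofib(\eta_{\cc C(U,-)\otimesShv I(X)})$ with $\cc C(U,-)\otimesShv(C_*I(X)/I(X))$, and instead one must work with the simplicial filtration levelwise and then assemble. A parallel subtlety in the first inclusion is that tensoring by $\cc C(U,-)$ preserves $\ker(C_*)$ precisely because of the tensor-ideal structure of $\bb A^1$-contractible complexes in $D(\ShvA)$, not from any strict commutation. Both technical points follow the patterns established in Voevodsky's original construction of $DM^{\eff}$ adapted to the enriched functor setting.
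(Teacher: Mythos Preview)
Your first inclusion follows the same outline as the paper's: reduce to the generators and show $C_*\bigl(\cc C(U,V)\otimesShv(\bb A^1_X/X)_{\nis}\bigr)\simeq 0$. The difference is that you invoke a tensor-ideal property of $\ker(C_*^{\ShvA})$ in $D(\ShvA)$, whereas the paper proves the needed vanishing directly by taking a free resolution of $\cc C(U,V)$, tensoring with the flat sheaf $\bb A^1_X/X$, and running two double-complex spectral sequences (as in Suslin--Voevodsky). Your tensor-ideal claim is correct, but it amounts to monoidality of the motivic model structure on $\Ch(\ShvA)$, which the paper has not set up; you should either cite that or supply the resolution argument.

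For the second inclusion your route diverges from the paper's and is considerably harder. The paper does \emph{not} try to place $\hocofib(\eta_F)$ into $T_{\bb A^1_1}$ by filtration. Instead it takes the Bousfield localization triangle $Y\to X\to LX$ with $Y\in T_{\bb A^1_1}$ and $LX\in T_{\bb A^1_1}^{\perp}$, applies $C_*$, uses the first inclusion to kill $C_*Y$, deduces $C_*LX=0$, and then observes that $T_{\bb A^1_1}$-locality means $LX$ is pointwise $\bb A^1$-local, whence $LX\simeq C_*LX=0$ and so $X\cong Y\in T_{\bb A^1_1}$. This is essentially two lines once the first inclusion is known.

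Your proposed filtration argument runs into a genuine difficulty you partially flag but do not resolve: the skeletal pieces of $C_*F$ at $V$ are of the form $\underline{\Hom}\bigl(I(\Delta^n),F(V)\bigr)$, i.e.\ they sit on the \emph{inner-hom} side of the adjunction, while the generators of $T_{\bb A^1_1}$ are of the \emph{tensor} form $\cc C(W,-)\otimesShv(\bb A^1_Y/Y)_{\nis}$. There is no evident way to express $\underline{\Hom}\bigl(I(\bb A^n)/I(pt),\,\cc C(U,V)\otimesShv I(X)\bigr)$ as built from such generators, and ``work levelwise and assemble'' does not address this mismatch. One can rescue the approach by instead checking $\hocofib(\eta_F)\in{}^{\perp}(T_{\bb A^1_1}^{\perp})$ and appealing to $T_{\bb A^1_1}={}^{\perp}(T_{\bb A^1_1}^{\perp})$, but unwinding that is exactly the localization-triangle argument the paper gives. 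I would recommend replacing your second step with the paper's method.
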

\begin{proof}
	Consider a generator $\cc C(U,-) \otimesShv (\bb A^1_X/X)_{\nis}$ of $T_{\bb A^1_1}$. We claim that it is in $\ker(C_*)$. 
	For this we need to show for every $V \in \cc C$ that $C_*(\cc C(U,V) \otimesShv \bb (\bb A^1_X/X)_{\nis})$ is locally quasi-isomorphic to $0$.
	Take a free resolution of $\cc C(U,V)$ in $\Ch(\PshA)$:
	$$\dots \rightarrow F_1 \rightarrow F_0 \rightarrow \cc C(U,V) \rightarrow 0$$
	
	The presheaf $\bb A^1_X/X$ is projective because it is a direct summand of $\Cor(-,\bb A^1_X)$, and hence it is also flat by Lemma \ref{projectiveimpliesflat}. 
	Thus the following sequence is exact
	$$\dots \rightarrow F_1 \otimesPsh \bb A^1_X/X \rightarrow F_0 \otimesPsh  \bb A^1_X/X \rightarrow \cc C(U,V) \otimesPsh \bb A^1_X/X  \rightarrow 0.$$
	It then also follows that the sequence is exact in $\Ch(\PshA)$ after applying $C_*$
	$$\dots\rightarrow  C_*( F_1 \otimesPsh \bb A^1_X/X) \rightarrow  C_*(F_0 \otimesPsh  \bb A^1_X/X) \rightarrow  C_*(\cc C(U,V) \otimesPsh \bb A^1_X/X)  \rightarrow 0.$$
	
	Since each individual entry of this sequence is a chain complex, we can regard it as a double complex. Let $D_{\bullet, \bullet}$ be the double complex 
	$$D_{p,q} := \begin{cases}
		C_*( F_{p-1} \otimesPsh \bb A^1_X/X)_q & p >0\\
		C_*( \cc C(U,V) \otimesPsh \bb A^1_X/X)_q& p=0\\
		0 & p < 0
	\end{cases} $$
	Then all horizontal homology groups of $D_{\bullet, \bullet}$ are zero. The double complex spectral sequence 
	$$E^2_{p,q} = H_{\mathrm{vert},p}(H_{\mathrm{hor},q}(D_{\bullet,\bullet} ) ) \implies H_{p+q}(\mathrm{Tot}(D_{\bullet,\bullet})) $$
	implies that $H_n(\mathrm{Tot}(D_{\bullet, \bullet})) = 0$.
	
	One can now check that $C_*( \bb A^1_X/X)$ is locally quasi-isomorphic to $0$ similarly to \cite[Proposition 1.11(1)]{suslin2000bloch}.
	It follows that every $C_*( F_q \otimesPsh \bb A^1_X/X)$ is locally quasi-isomorphic to $0$, because the $F_q$ are free and for all $Y \in \cc C$ we have $\Cor(-,Y) \otimesPsh \bb A^1_X/X \cong \bb A^1_{Y \times X}/Y \times X$. 
	
	By mirroring the double complex $D_{\bullet, \bullet}$, the double complex spectral sequence for sheaves and 
	the fact that $H_n(\mathrm{Tot}(D_{\bullet, \bullet})) = 0$ imply that $C_*( \cc C(U,V) \otimesPsh \bb A^1_X/X)$ is locally quasi-isomorphic to $0$.
	We argue here similarly to the proof of Lemma \ref{ftimesZinjectiveqis}.
	Then $C_*(( \cc C(U,V) \otimesPsh \bb A^1_X/X)_{\nis}) \cong C_*( \cc C(U,V) \otimesShv \bb (\bb A^1_X/X)_{\nis})$ is locally quasi-isomorphic to $0$.
	So $\cc C(U,-) \otimesShv \bb (\bb A^1_X/X)_{\nis}$ is in $\ker(C_*)$, as claimed. 
	
	Since $\ker(C_*)$ is a full triangulated subcategory and $T_{\bb A^1_1}$ is compactly
	generated by the $\cc C(U,-) \otimesShv \bb (\bb A^1_X/X)_{\nis}$ it follows that $T_{\bb A^1_1} \subseteq \ker(C_*)$.
	
	Now show the other inclusion.
	Let $X \in \ker(C_*)$.
	Using \cite[Section 5.6]{krause2008localization} and \cite[Proposition 4.9.1]{krause2008localization} we can construct a triangle in $D([\cc C,\ShvA])$
	$$Y \rightarrow X \rightarrow LX$$
	with $Y \in T_{\bb A^1_1}$ and $LX$ orthogonal to $T_{\bb A^1_1}$.
	Apply $C_*$ to the triangle to get
	$$C_*Y \rightarrow C_*X \rightarrow C_*LX.$$
	Since $X, Y \in \ker(C_*)$, we see that $C_*X = C_*Y = 0$, hence
	$C_*LX = 0$.
	
	Since $LX$ is orthogonal to $T_{\bb A^1_1}$, we can deduce that $LX$ is strictly $\bb A^1_1$-local, so that for all $U \in \cc C$ we have a quasi-isomorphism $LX(U)(\bb A^1 \times -) \rightarrow LX(U)$ in $\Ch(\ShvA)$.
	From this property it follows that the canonical map $LX(U) \rightarrow C_*LX(U)$ is a quasi-isomorphism in $\Ch(\ShvA)$.
	Since $C_*LX = 0$ this implies $LX = 0$ in $D([\cc C,\ShvA])$.
	But if $LX = 0$, then the map $Y \rightarrow X$ is an isomorphism in $D([\cc C,\ShvA])$ and then $X \in T_{\bb A^1_1}$.
	So $T_{\bb A^1_1}= \ker(C_*)$.
\end{proof}

Let $D([\cc C, \ShvA])/T_{\bb A^1_1}$ denote the quotient of $D([\cc C, \ShvA])$ by the triangulated subcategory $T_{\bb A^1_1}$.
By Lemma \ref{localobjectcomparison} $D([\cc C, \ShvA])/T_{\bb A^1_1}$ is  equivalent to the full subcategory of $D([\cc C, \ShvA])$ 
consisting of strictly $\bb A^1_1$-local objects.

\begin{lem}
	Let $L : D([\cc C, \ShvA]) \rightarrow D([\cc C, \ShvA])$ be the $T_{\bb A^1_1}$-localization endofunctor, which is the composite of the quotient functor
	$D([\cc C, \ShvA]) \rightarrow D([\cc C, \ShvA])/T_{\bb A^1_1}$
	and the inclusion of $T_{\bb A^1_1}$-local objects
	$D([\cc C, \ShvA])/T_{\bb A^1_1} \rightarrow D([\cc C, \ShvA]).$
	Then the functor $L$ is naturally isomorphic to the endofunctor $C_*: D([\cc C, \ShvA]) \rightarrow D([\cc C, \ShvA])$.	
\end{lem}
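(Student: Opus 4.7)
The plan is to verify that the Suslin complex functor $C_\ast$ already satisfies the two defining properties of the localization $L$: for every $X\in D([\cc C,\ShvA])$, the object $C_\ast X$ is $T_{\bb A^1_1}$-local, and the natural map $\eta_X\colon X\to C_\ast X$ has cone in $T_{\bb A^1_1}$. Once these are established, the universal property of the Bousfield localization produces a unique natural isomorphism $C_\ast\cong L$ compatible with the units.

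First I would construct $\eta\colon \id\Rightarrow C_\ast$. Viewing $C_\ast F$ as the totalization of the double complex $F_q(\Delta_k^p\times -)$, the inclusion of the column $p=0$ gives a chain map $F\to C_\ast F$ that is natural in $F\in\Ch([\cc C,\ShvA])$; since $C_\ast$ preserves local quasi-isomorphisms (this is a consequence of the strict $V$-property of $\Cor$, just as used in Lemma \ref{kerC*}), it descends to a natural transformation on $D([\cc C,\ShvA])$.

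Next I would verify that $C_\ast X$ is $T_{\bb A^1_1}$-local. By Lemma \ref{localobjectcomparison} and the characterization of strictly $\bb A^1_1$-local objects established in the proof of Proposition \ref{DMSmcomparison}, it suffices to show that for every $U\in\cc C$ the complex of sheaves $(C_\ast X)(U)$ has $\bb A^1$-invariant Nisnevich cohomology sheaves. This is exactly what the strict $V$-property of $\Cor$ delivers: the cohomology presheaves of $C_\ast X(U)$ are $\bb A^1$-invariant in the sense of Voevodsky, and their associated Nisnevich sheaves are then strictly $\bb A^1$-invariant.

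For the second property, applying the triangulated functor $C_\ast$ to the triangle $X\xrightarrow{\eta_X} C_\ast X\to \mathrm{Cone}(\eta_X)$ and using Lemma \ref{kerC*} reduces the claim $\mathrm{Cone}(\eta_X)\in T_{\bb A^1_1}$ to showing that $C_\ast\eta_X\colon C_\ast X\to C_\ast C_\ast X$ is a local quasi-isomorphism, i.e.\ that $C_\ast$ is idempotent on $C_\ast X$. This in turn follows from a general principle: if $Y\in\Ch([\cc C,\ShvA])$ is such that each $Y(U)$ has $\bb A^1$-invariant cohomology sheaves, then $\eta_Y\colon Y\to C_\ast Y$ is already a local quasi-isomorphism. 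I would prove this by the double complex spectral sequence for $C_\ast Y$, combined with iterated $\bb A^1$-invariance applied to the pullbacks along the projections $\Delta_k^n\times U\to U$ (each $\Delta_k^n\cong\bb A^n$); the $E_2$-page collapses to $H^\ast(Y(U))$ concentrated on the column $p=0$, and the edge morphism coincides with $\eta_Y$. Applying this with $Y=C_\ast X$ (which is $\bb A^1_1$-local by the previous step) gives the required idempotence.

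The main conceptual step — and the one requiring the strict $V$-property most essentially — is the collapse of the spectral sequence in the idempotence argument; everything else is formal manipulation of localizing subcategories once $C_\ast$ is known to both preserve local quasi-isomorphisms and land in $\bb A^1_1$-local objects.
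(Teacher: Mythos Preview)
Your proposal is correct and uses the same core ingredients as the paper: Lemma~\ref{kerC*} together with the fact that the unit $\eta_Y\colon Y\to C_*Y$ is a local quasi-isomorphism whenever $Y$ is strictly $\bb A^1_1$-local. The organization differs, however. You verify directly that $C_*$ satisfies the two defining properties of the Bousfield localization (image is local, cone of the unit lies in $T_{\bb A^1_1}$), whereas the paper starts from the localization triangle $Y\to X\to LX$ with $Y\in T_{\bb A^1_1}$, applies $C_*$, and reads off $C_*X\cong C_*LX\cong LX$ using $C_*Y=0$ and the same idempotence fact applied to the already-local object $LX$. The paper's route is slightly more economical: it never needs to check separately that $C_*X$ is $T_{\bb A^1_1}$-local, since that is inherited from the isomorphism $C_*X\cong LX$. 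Your route has the minor advantage of making explicit that $C_*$ lands in $\bb A^1_1$-local objects, a fact the paper uses elsewhere anyway.
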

\begin{proof}
	For every $X \in D([\cc C, \ShvA])$ we have an exact triangle in $D([\cc C, \ShvA])$
	$$Y \rightarrow X \rightarrow LX$$
	with $Y \in \ker(L) = T_{\bb A^1_1}$.
	We can apply $C_*$ to this triangle to get another triangle in $D([\cc C, \ShvA])$
	$$C_*Y \rightarrow C_*X \rightarrow C_*LX.$$
	Since $Y \in T_{\bb A^1_1}$ and by Lemma \ref{kerC*} $T_{\bb A^1_1} = \ker(C_*)$ we know that $C_*Y = 0$ in $D([\cc C, \ShvA])$. So we get an isomorphism $$C_*X \cong C_*LX$$ in $D([\cc C, \ShvA])$.
	Since the map $X \rightarrow LX$ is functorial in $X \in D([\cc C, \ShvA])$, it follows that also the map $C_*X \rightarrow C_*LX$ is functorial in $X$.
	Therefore the isomorphism $C_*X \cong C_*LX$ is functorial in $X$.
	Since $LX$ is strictly $\bb A^1_1$-invariant we have a natural quasi-isomorphism 
	$LX \cong LX(\bb A^1 \times -) $ in $\Ch(\ShvA)$.
	This then implies that for every $n \in \bb N$ we also have a natural quasi-isomorphism $LX \cong LX(\Delta_k^n \times -) .$
	It now follows from the definition of $C_*$ that we have a natural isomorphism 
	$LX \cong C_*LX $
	in $D([\cc C, \ShvA])$.
	And then we have isomorphisms
	$$C_*X \cong C_*LX  \cong LX$$
	natural in $X$, which proves the lemma.
\end{proof}

\begin{defs}\label{motivicequivdef}
	We say that a morphism $f: X \rightarrow Y$ in $\Ch(\ShvA)$ is a \textit{motivic equivalence} if and only if $f$ is an isomorphism in $DM_{\Cor}^{\eff}$.
	Note that $f$ in $\Ch(\ShvA)$ is a motivic equivalence if and only if $C_*(f)$ is a local quasi-isomorphism in $\Ch(\ShvA)$.
	
	Similarly, we say that a morphism $f: \mathcal{X} \rightarrow \mathcal{Y}$ in $\Ch([\cc C,\ShvA])$ is a \textit{motivic equivalence} if it is an isomorphism in $D([\cc C,\ShvA])/T_{\bb A^1_1}$.
\end{defs}

From the previous lemma we can deduce:
\begin{cor} \label{moteqC}
	A morphism $f: \mathcal{X} \rightarrow \mathcal{Y}$ in $\Ch([\cc C,\ShvA])$ is a motivic equivalence if and only if $C_*(f)$ is a pointwise local quasi-isomorphism in $\Ch([\cc C, \ShvA])$.
\end{cor}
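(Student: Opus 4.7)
The plan is to derive the corollary as a direct consequence of the preceding lemma, which identifies the localization functor $L : D([\cc C, \ShvA]) \rightarrow D([\cc C, \ShvA])$ killing $T_{\bb A^1_1}$ with the Suslin complex endofunctor $C_*$ up to natural isomorphism. The key observation is that, by the definition of motivic equivalence given in Definition \ref{motivicequivdef}, $f : \cc X \rightarrow \cc Y$ is a motivic equivalence if and only if its image in $D([\cc C, \ShvA])/T_{\bb A^1_1}$ is an isomorphism. Since $L$ factors through the quotient functor followed by the fully faithful inclusion of $T_{\bb A^1_1}$-local objects, and since the inclusion reflects isomorphisms, this is equivalent to requiring that $L(f)$ be an isomorphism in the ambient derived category $D([\cc C, \ShvA])$.

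Combining this with the natural isomorphism $L \cong C_*$ from the previous lemma, one immediately concludes that $f$ is a motivic equivalence if and only if $C_*(f)$ is an isomorphism in $D([\cc C, \ShvA])$. The final step is the standard observation that a morphism between objects of $\Ch([\cc C, \ShvA])$ is an isomorphism in the derived category $D([\cc C, \ShvA])$ precisely when it is a weak equivalence in the projective model structure on $\Ch([\cc C, \ShvA])$ constructed via \cite[Theorem~5.5]{garkusha2019derived}, whose weak equivalences are exactly the pointwise local quasi-isomorphisms. Putting the chain of equivalences together yields the corollary.

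I do not expect any real obstacle here, since every step is either a formal consequence of the preceding lemma or a standard fact about derived categories of Grothendieck categories of enriched functors. The one point worth stating carefully is the naturality of the isomorphism $L \cong C_*$ in the variable $X$, but that was already established in the previous lemma; the proof of the corollary is then essentially a one-line deduction and can be given in just a few sentences.
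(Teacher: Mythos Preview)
Your proposal is correct and matches the paper's intended argument: the paper simply states that the corollary follows from the preceding lemma identifying $L \cong C_*$, and your write-up merely spells out that deduction (motivic equivalence $\Leftrightarrow$ isomorphism in the quotient $\Leftrightarrow$ $L(f)$ an isomorphism in $D([\cc C,\ShvA])$ $\Leftrightarrow$ $C_*(f)$ an isomorphism there $\Leftrightarrow$ $C_*(f)$ a pointwise local quasi-isomorphism). There is nothing to add.
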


\begin{lem}
	For every $X \in \Smk$ the canonical map
	$I(X \times -) \rightarrow \cc M_{\Cor}(X)$
	is a motivic equivalence in $\Ch([\cc C,\ShvA])$.
	This means it is an isomorphism in $D([\cc C, \ShvA])/T_{\bb A^1_1}$. In particular it is also an isomorphism in $D([\cc C, \ShvA])/\sim_{\cc{C}}.$
\end{lem}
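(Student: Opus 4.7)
The plan is to reduce the claim to a statement about chain complexes of sheaves via Corollary \ref{moteqC}, and then use the strict $V$-property together with the idempotence (up to local quasi-isomorphism) of the Suslin complex functor $C_*$.

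First, denote the canonical map by $\eta_X : I(X \times -) \to \cc M_{\Cor}(X) = C_*(I(X \times -))$ in $\Ch([\cc C, \ShvA])$. By Corollary \ref{moteqC}, $\eta_X$ is a motivic equivalence if and only if $C_*(\eta_X)$ is a pointwise local quasi-isomorphism. Evaluating at $U \in \cc C$, we must show that the canonical map
$$C_*(I(X \times U)) \longrightarrow C_*\bigl(C_*(I(X \times U))\bigr)$$
is a local quasi-isomorphism in $\Ch(\ShvA)$. Thus the problem reduces to showing that $C_*$ is idempotent on $I(X \times U)$ up to local quasi-isomorphism.

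Second, I would argue this idempotence using the strict $V$-property. By construction, the homology presheaves of $C_*(I(X \times U))$ are $\bb A^1$-invariant $\Cor$-presheaves (since they come from the simplicial object $\Cor(\Delta_k^\bullet \times -, X \times U)$, whose contractibility in the $\bb A^1$-direction is standard). Applying the strict $V$-property, the associated Nisnevich sheaves are strictly $\bb A^1$-invariant, which means $C_*(I(X \times U))$ has strictly $\bb A^1$-invariant Nisnevich cohomology sheaves. For any such complex $F$, each of the projections $F \to F(\Delta_k^n \times -)$ is a local quasi-isomorphism, and a standard hypercohomology/spectral-sequence argument then shows that $F \to C_*(F)$ is itself a local quasi-isomorphism. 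Applied to $F = C_*(I(X \times U))$, this gives exactly the required local quasi-isomorphism, proving that $\eta_X$ is a motivic equivalence.

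Finally, for the concluding sentence, I would observe that since $\sim_{\cc C}$ contains the class $\bb A^1_1$, the subcategory $T_{\bb A^1_1}$ is contained in the triangulated subcategory generated by $\widehat{\sim_{\cc C}}$. Consequently the Verdier quotient factors as
$$D([\cc C, \ShvA]) \longrightarrow D([\cc C, \ShvA])/T_{\bb A^1_1} \longrightarrow D([\cc C, \ShvA])/\sim_{\cc C},$$
so any isomorphism in the first quotient descends to an isomorphism in the second. The main potential obstacle is the implicit use of the fact that $F \to C_*(F)$ is a local quasi-isomorphism whenever $F$ has strictly $\bb A^1$-invariant cohomology sheaves; this is precisely where the strict $V$-property (rather than just the $V$-property) enters, as otherwise one would only control presheaf-level $\bb A^1$-invariance and not the Nisnevich hypercohomology needed to conclude.
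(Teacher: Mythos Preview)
Your proof is correct and follows the paper's strategy: reduce via Corollary~\ref{moteqC} to showing that the map $C_*(I(X\times U)) \to C_*C_*(I(X\times U))$ is a local quasi-isomorphism, then invoke the idempotence of $C_*$. The paper dispatches this last step in one line (``this is clearly an isomorphism''), tacitly relying on the immediately preceding lemma identifying $C_*$ with the $T_{\bb A^1_1}$-localization functor $L$, which is idempotent; you instead unpack the idempotence by hand.

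Two small points. First, the map produced by applying $C_*$ to $\eta_X$ is $C_*(\eta_{I(X\times U)})$, the inclusion of the $q=0$ row of the bicomplex $(p,q)\mapsto I(X\times\Delta^q\times\Delta^p\times U)$; your argument (``$F\to C_*(F)$ for $F=C_*(I(X\times U))$'') is literally about $\eta_{C_*(I(X\times U))}$, the $p=0$ column inclusion. These are distinct maps, but the swap automorphism of the bisimplicial object interchanges them, so one is a local quasi-isomorphism iff the other is; alternatively, the identification $C_*\cong L$ makes both maps invertible at once. Second, your detour through the strict $V$-property is unnecessary here: the homology \emph{presheaves} of $C_*(I(X\times U))$ are already $\bb A^1$-invariant by the standard simplicial-homotopy argument, which immediately gives that $F\to F(\Delta^n_k\times-)$ is a \emph{sectionwise} quasi-isomorphism, and the spectral-sequence step goes through without ever passing to sheaves or invoking strict $\bb A^1$-invariance. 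The strict $V$-property is genuinely needed elsewhere in the paper (e.g.\ to know that $C_*$ lands in $\bb A^1$-local objects of $D(\ShvA)$), but not for this particular lemma.
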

\begin{proof}
	By Corollary \ref{moteqC} we just need to show for every $U \in \Smk$ that $C_*(I(X \times U)) \rightarrow C_*(M_{\Cor}(X \times U))$ 
	is a local quasi-isomorphism in $\Ch(\ShvA)$.
	From the definition of $M_{\Cor}$ we know that $M_{\Cor}(X \times U) = C_*(I(X \times U))$.
	So the above map is equal to the canonical map
	$C_*(I(X \times U)) \rightarrow C_*C_*(I(X \times U))$
	and this is clearly an isomorphism.
\end{proof}

\begin{lem}\label{enrichedmotivelocal}
	The enriched motive functor $\cc M_{\Cor}(X)$ is strictly $\bb A^1_1$-local and strictly $\tau$-local. So $\cc M_{\Cor}(X)$ is an object of $DM_{\Cor}[\cc C]$.
\end{lem}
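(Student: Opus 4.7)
The plan is to separately verify strict $\bb A^1_1$-locality and strict $\tau$-locality of $\cc M_{\Cor}(X)$, leaning on the reformulations already carried out inside the proof of Proposition~\ref{DMSmcomparison} and on the two structural assumptions imposed on $\Cor$: the strict $V$-property and the cancellation property (which are in force from the beginning of Section~\ref{ThmStatement}).

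For strict $\bb A^1_1$-locality, claim (1) in the proof of Proposition~\ref{DMSmcomparison} reduces the task to showing that for every $U \in \cc C$ the complex $\cc M_{\Cor}(X)(U) = C_*(I(X \times U))$ has $\bb A^1$-invariant cohomology sheaves. This is the classical Suslin argument: the cosimplicial structure of $\Delta_k^\bullet$ produces a canonical chain homotopy showing that the cohomology presheaves of $C_*(I(X \times U))$ are $\bb A^1$-invariant $\Cor$-presheaves of abelian groups. Applying the strict $V$-property of $\Cor$ then upgrades this to the statement that the associated Nisnevich cohomology sheaves are strictly $\bb A^1$-invariant, and in particular $\bb A^1$-invariant, which is what is required.

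For strict $\tau$-locality, since strict $\bb A^1_1$-locality is already in hand, claim (2) in the proof of Proposition~\ref{DMSmcomparison} reduces the problem to verifying that $\cc M_{\Cor}(X)$ satisfies cancellation in the sense of Definition~\ref{defcancellation}: that for each $n \geq 0$ the canonical map $\cc M_{\Cor}(X)(\Gmn{n}) \to [\Gmn{1}, \cc M_{\Cor}(X)(\Gmn{n+1})]$ is a local quasi-isomorphism. Using Lemma~\ref{binomial} together with the defining formula $F(\Gmn{k}) := F(\Gmt{k})/\sum_i \im(F(\iota_{i,k-1}))$ and the exactness properties of $C_*$, one identifies $\cc M_{\Cor}(X)(\Gmn{k})$ with $C_*(I(X)\otimesShv \Gmn{k})$ (the quotient precisely picks off the top weight summand in the binomial decomposition of $I(X\times\Gmt{k})$). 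The cancellation map of interest thus becomes the natural map $C_*(I(X)\otimesShv\Gmn{n}) \to [\Gmn{1}, C_*(I(X)\otimesShv\Gmn{n+1})]$, and evaluating Nisnevich hypercohomology on an arbitrary $Y \in \Smk$ expresses the statement that this is a local quasi-isomorphism as the bijectivity of the maps $\beta^{p,q}\colon H^{p,q}_{\Cor}(Y,X) \to H^{p+1,q+1}_{\Cor}(Y\wedge\Gmn{1},X)$ of Definition~\ref{corproperties}(5). This is precisely the assumed cancellation property of $\Cor$.

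The main obstacle is the $\tau$-locality step, and within it the bookkeeping required to identify $\cc M_{\Cor}(X)(\Gmn{k})$ with $C_*(I(X)\otimesShv\Gmn{k})$ via Lemma~\ref{binomial} and to translate the desired local quasi-isomorphism into the statement of cancellation for $\Cor$. The $\bb A^1_1$-step, by contrast, is entirely routine once one appeals to the reformulation supplied by Proposition~\ref{DMSmcomparison} and the strict $V$-property.
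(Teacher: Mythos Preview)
Your proposal is correct and follows essentially the same route as the paper's proof: the paper tersely invokes the $\bb A^1$-invariance of $C_*(\Cor(-,X)_{\nis})$ for strict $\bb A^1_1$-locality, then the cancellation property of $\Cor$ together with item~(2) of the proof of Proposition~\ref{DMSmcomparison} for strict $\tau$-locality, and you have simply unpacked these steps in more detail (including the identification of $\cc M_{\Cor}(X)(\Gmn{k})$ with $C_*(I(X)\otimesShv\Gmn{k})$ via Lemma~\ref{binomial} and the translation into the $\beta^{p,q}$ maps). One small remark: claims~(1) and~(2) of Proposition~\ref{DMSmcomparison} are stated for functors on $\Sm$ rather than on $\cc C$, but the arguments carry over verbatim, and indeed the paper itself invokes item~(2) in exactly this way.
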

\begin{proof}
	The strict $\bb A^1_1$-locality follows from the $\bb A^1$-invariance of $C_*(\Cor(-,X)_{\nis})$.
	The cancellation property of $\Cor$ (see Definition \ref{corproperties}) implies that $M_{\Cor}(X \times -)$ satisfies cancellation.
	Similarly to item $(2)$ of the proof of Proposition \ref{DMSmcomparison}, this implies $M_{\Cor}(X \times -)$ is strictly $\tau$-local.
\end{proof}

The previous two lemmas together imply that $\cc M_{\Cor}(X)$ is a strictly $\sim_{\cc{C}}$-local replacement of $I(X \times -)$ in $\Ch([\cc C,\ShvA])$.

\begin{lem}\label{Gmpreserveslocal}
	If $f: X \rightarrow Y$ is a local quasi-isomorphism in $\Ch(\ShvA)$, and $X, Y \in \Ch(\ShvA)$ have $\bb A^1$-invariant cohomology sheaves, then the map
	$$f_*: \inthom{\Ch(\ShvA)}{\Gmn{k}}{X} \rightarrow \inthom{\Ch(\ShvA)}{\Gmn{k}}{Y} $$
	is also a local quasi-isomorphism in $\Ch(\ShvA)$.
	In particular, the functor
	$$\inthom{\Ch(\ShvA)}{\Gmn{k}}{-} : \Ch([\cc C, \ShvA]) \rightarrow \Ch([\cc C, \ShvA])$$
	preserves pointwise local quasi-isomorphisms between strictly $\bb A^1_1$-local objects.
\end{lem}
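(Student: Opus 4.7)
The plan is to reduce the statement to the known fact (cited already in the proof of item (2) of Proposition \ref{DMSmcomparison}, namely \cite[Lemma 4.3.11]{morel2003introduction}) that, for a complex with strictly $\bb A^1$-invariant cohomology sheaves, the Nisnevich cohomology sheaves commute with $\inthom{\ShvA}{\Gmn{1}}{-}$. Because $\Cor$ is a strict $V$-category of correspondences, every $\bb A^1$-invariant Nisnevich $\Cor$-sheaf is automatically strictly $\bb A^1$-invariant, so the hypothesis that $X, Y$ have $\bb A^1$-invariant cohomology sheaves is enough to apply Morel's lemma.

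First I would establish, for every $n \in \bb Z$ and every complex $C \in \Ch(\ShvA)$ with $\bb A^1$-invariant cohomology sheaves, a natural isomorphism
$$H^{\nis}_n(\inthom{\Ch(\ShvA)}{\Gmn{k}}{C}) \;\cong\; \inthom{\ShvA}{\Gmn{k}}{H^{\nis}_n(C)}$$
by induction on $k$. The case $k=1$ is exactly Morel's Lemma 4.3.11, applied by first invoking the strict $V$-property to upgrade $\bb A^1$-invariance to strict $\bb A^1$-invariance. For the inductive step one uses the isomorphism of functors
$$\inthom{\Ch(\ShvA)}{\Gmn{k}}{-} \;\cong\; \inthom{\Ch(\ShvA)}{\Gmn{1}}{\inthom{\Ch(\ShvA)}{\Gmn{k-1}}{-}},$$
observing that $\inthom{\ShvA}{\Gmn{k-1}}{H^{\nis}_n(C)}$ is again $\bb A^1$-invariant (internal homs out of $\Gmn{k-1}$ preserve $\bb A^1$-invariance at the level of sheaves), so the induction hypothesis ensures $\inthom{\Ch(\ShvA)}{\Gmn{k-1}}{C}$ again has $\bb A^1$-invariant cohomology sheaves and Morel's lemma may be reapplied.

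Given this isomorphism, the first statement is immediate: for the map $f : X \to Y$ of the lemma, the induced maps $H^{\nis}_n(f) : H^{\nis}_n(X) \to H^{\nis}_n(Y)$ are isomorphisms in $\ShvA$ (since $f$ is a local quasi-isomorphism), so applying the functor $\inthom{\ShvA}{\Gmn{k}}{-}$ yields isomorphisms
$$\inthom{\ShvA}{\Gmn{k}}{H^{\nis}_n(X)} \;\overset{\sim}{\to}\; \inthom{\ShvA}{\Gmn{k}}{H^{\nis}_n(Y)}.$$
Transporting through the natural isomorphism from the first step identifies this with $H^{\nis}_n(f_*)$, which is therefore an isomorphism for every $n$; hence $f_*$ is a local quasi-isomorphism.

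For the ``in particular'' statement, I would argue pointwise: a pointwise local quasi-isomorphism $f : \cc F \to \cc G$ between strictly $\bb A^1_1$-local objects of $\Ch([\cc C,\ShvA])$ gives, for each $U \in \cc C$, a local quasi-isomorphism $f_U : \cc F(U) \to \cc G(U)$ between complexes of sheaves with $\bb A^1$-invariant cohomology sheaves (by the characterization of strict $\bb A^1_1$-locality established in item (1) of the proof of Proposition \ref{DMSmcomparison}). The first part then shows that $\inthom{\Ch(\ShvA)}{\Gmn{k}}{f_U}$ is a local quasi-isomorphism for every $U$, which is exactly the statement that $\inthom{\Ch(\ShvA)}{\Gmn{k}}{-}$ preserves pointwise local quasi-isomorphisms between strictly $\bb A^1_1$-local objects. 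The only real subtlety, and the step that deserves care, is the inductive construction of the natural isomorphism in the first step, since one must verify both its naturality and that the intermediate sheaf $\inthom{\ShvA}{\Gmn{k-1}}{H^{\nis}_n(C)}$ remains $\bb A^1$-invariant so that Morel's lemma reapplies.
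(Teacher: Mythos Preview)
Your proposal is correct and follows essentially the same approach as the paper: both arguments hinge on the natural isomorphism $H_n^{\nis}(\inthom{\Ch(\ShvA)}{\Gmn{k}}{C}) \cong \inthom{\ShvA}{\Gmn{k}}{H_n^{\nis}(C)}$ derived from \cite[Lemma 4.3.11]{morel2003introduction}, and then conclude via the obvious commuting square. The only difference is that the paper invokes Morel's lemma directly for general $k$ without further comment, whereas you spell out the induction on $k$ and verify that the intermediate sheaf stays $\bb A^1$-invariant; this is a harmless and arguably welcome elaboration rather than a different route.
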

\begin{proof}
	It follows from \cite[Lemma 4.3.11]{morel2003introduction} that for every $X$ with $\bb A^1$-invariant cohomology sheaves and for every $n \in \bb Z$, we have a natural isomorphism $$H_n^{\nis}(\inthom{\Ch(\ShvA)}{\Gmn{k}}{X}) \cong \inthom{\ShvA}{\Gmn{k}}{H_n^{\nis}(X)}$$
	in $\ShvA$.
	So if $f : X \rightarrow Y$ is a local quasi-isomorphism between objects with $\bb A^1$-invariant cohomology sheaves, then we have for every $n \in \bb Z$ a commutative diagram
	$$\xymatrix{H_n^{\nis}(\inthom{\Ch(\ShvA)}{\Gmn{k}}{X}) \ar[rr]^{H_n^{\nis}(f_*)} \ar[d]^\sim & & H_n^{\nis}(\inthom{\Ch(\ShvA)}{\Gmn{k}}{Y}) \ar[d]^\sim \\ \inthom{\ShvA}{\Gmn{k}}{H_n^{\nis}(X)} \ar[rr]^{H_n^{\nis}(f)_*} & &  \inthom{\ShvA}{\Gmn{k}}{H_n^{\nis}(Y)}  }$$
	in $\ShvA$. Since $f$ is a local quasi-isomorphism, the lower horizontal map is an isomorphism. Therefore the upper horizontal map is an isomorphism.
	Then $f_*: \inthom{\Ch(\ShvA)}{\Gmn{k}}{X} \rightarrow \inthom{\Ch(\ShvA)}{\Gmn{k}}{Y} $
	is also a local quasi-isomorphism in $\Ch(\ShvA)$.
\end{proof}

\begin{lem} \label{GmWeq}
	The functors
	$$\inthom{\Ch(\ShvA)}{\Gmn{k}}{-} : \Ch(\ShvA) \rightarrow \Ch(\ShvA)$$ and
	$$\inthom{\Ch(\ShvA)}{\Gmt{k}}{-} : \Ch(\ShvA) \rightarrow \Ch(\ShvA)$$ preserve motivic equivalences.
\end{lem}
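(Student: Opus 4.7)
The plan is to establish an explicit interchange
\[
C_* \inthom{\Ch(\ShvA)}{A}{-} \;\cong\; \inthom{\Ch(\ShvA)}{A}{C_*(-)}
\]
of endofunctors of $\Ch(\ShvA)$ for $A \in \{\Gmn{k}, I(\Gmt{k})\}$, after which the lemma will follow from a short diagram chase combined with Lemma~\ref{Gmpreserveslocal}.

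The starting point is the Day convolution identity $\inthom{\ShvA}{I(W)}{F}(U) = F(U \times W)$ for $W \in \Smk$ and $F \in \ShvA$. From this we read off
\[
C_*\inthom{\Ch(\ShvA)}{I(W)}{F}(U) \;=\; \mathrm{Tot}\bigl(F(\Delta_k^\bullet \times U \times W)\bigr) \;=\; \inthom{\Ch(\ShvA)}{I(W)}{C_*F}(U),
\]
giving a natural equality $C_*\inthom{\Ch(\ShvA)}{I(W)}{-} = \inthom{\Ch(\ShvA)}{I(W)}{C_*(-)}$ on $\Ch(\ShvA)$. Specialising $W = \Gmt{k}$ handles the $\Gmt{k}$ case of the interchange. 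For $\Gmn{k}$, Lemma~\ref{binomial} exhibits $\Gmn{k}$ as a direct summand of $I(\Gmt{k})$ cut out by an idempotent $e$ in $\ShvA$; since the identification above is natural in the first argument, restricting along $e$ yields the analogous natural isomorphism $C_*\inthom{\Ch(\ShvA)}{\Gmn{k}}{-} \cong \inthom{\Ch(\ShvA)}{\Gmn{k}}{C_*(-)}$.

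Granted the interchange, let $f : X \to Y$ be a motivic equivalence and consider the naturality square
\[
\xymatrix{
\inthom{\Ch(\ShvA)}{\Gmn{k}}{X} \ar[r] \ar[d] & \inthom{\Ch(\ShvA)}{\Gmn{k}}{Y} \ar[d] \\
\inthom{\Ch(\ShvA)}{\Gmn{k}}{C_*X} \ar[r] & \inthom{\Ch(\ShvA)}{\Gmn{k}}{C_*Y}
}
\]
induced by $\mathrm{id} \to C_*$. By definition $C_*f$ is a local quasi-iso and, by the strict $V$-property of $\Cor$, both $C_*X$ and $C_*Y$ have $\bb A^1$-invariant cohomology sheaves; Lemma~\ref{Gmpreserveslocal} then forces the bottom arrow to be a local quasi-iso, a fortiori a motivic equivalence. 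By $2$-of-$3$ for motivic equivalences it suffices to show that for every $X$ the vertical map $\inthom{\Ch(\ShvA)}{\Gmn{k}}{X \to C_*X}$ is a motivic equivalence, i.e.\ becomes a local quasi-iso after applying $C_*$. Using the interchange, this is identified with $\inthom{\Ch(\ShvA)}{\Gmn{k}}{C_*X \to C_*C_*X}$, which is a local quasi-iso by a second application of Lemma~\ref{Gmpreserveslocal}, together with the standard fact that $Y \to C_*Y$ is a local quasi-iso whenever $Y$ has $\bb A^1$-invariant cohomology sheaves. The $\Gmt{k}$ case is identical, using the $W=\Gmt{k}$ interchange directly.

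The only delicate point I foresee is the direct-summand transfer from $I(\Gmt{k})$ to $\Gmn{k}$, which requires the interchange to be natural in the first variable under arbitrary morphisms in $\ShvA$, not merely along representables in the image of $I : \Smk \to \ShvA$. This however is manifest from its construction, which simply commutes $\mathrm{Tot}$ past the restriction-of-arguments formula for $\inthom$, so the idempotent cutting out $\Gmn{k}$ commutes with the interchange and one passes to summands without obstruction.
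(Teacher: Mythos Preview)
Your proof is correct and rests on the same two ingredients as the paper's: the interchange $C_*\inthom{\Ch(\ShvA)}{\Gmn{k}}{-} \cong \inthom{\Ch(\ShvA)}{\Gmn{k}}{C_*(-)}$ and Lemma~\ref{Gmpreserveslocal}. You give a fuller justification of the interchange than the paper, which simply asserts that the relevant vertical maps are isomorphisms.

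The one difference is in the diagram chase. The paper works directly with the square
\[
\xymatrix{
C_*\inthom{\Ch(\ShvA)}{\Gmn{k}}{A} \ar[r]^{C_*(f_*)} \ar[d]^{\sim} & C_*\inthom{\Ch(\ShvA)}{\Gmn{k}}{B} \ar[d]^{\sim} \\
\inthom{\Ch(\ShvA)}{\Gmn{k}}{C_*A} \ar[r]^{(C_*f)_*} & \inthom{\Ch(\ShvA)}{\Gmn{k}}{C_*B}
}
\]
where the verticals are the interchange isomorphisms; a single application of Lemma~\ref{Gmpreserveslocal} to the bottom row then shows $C_*(f_*)$ is a local quasi-isomorphism, finishing the argument. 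Your route via the naturality square for $\mathrm{id}\to C_*$ and $2$-of-$3$ is slightly longer: you invoke Lemma~\ref{Gmpreserveslocal} twice (once for the bottom row, once for $C_*X \to C_*C_*X$) and need the auxiliary fact that $Y\to C_*Y$ is a local quasi-iso for $\bb A^1$-local $Y$. Nothing is wrong with this, but the paper's square is the more economical packaging of the same idea.
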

\begin{proof} 
	Let $f : A \rightarrow B$ be a motivic equivalence in $\Ch(\ShvA)$. Consider the diagram
	$$\xymatrix{  C_*\inthom{\Ch(\ShvA)}{\Gmn{k}}{A} \ar[r]^{C_*(f_*)} \ar[d]^{\sim}& C_*\inthom{\Ch(\ShvA)}{\Gmn{k}}{B} \ar[d]^{\sim} \\
		\inthom{\Ch(\ShvA)}{\Gmn{k}}{C_*A} \ar[r]^{(C_*f)_*} & \inthom{\Ch(\ShvA)}{\Gmn{k}}{C_*B}  }. $$
	The vertical maps are isomorphisms.
	Since $f$ is a motivic equivalence we know that $C_*(f)$ is a local equivalence. Since $C_*A$ and $C_*B$ have $\bb A^1$-invariant cohomology sheaves it follows by Lemma \ref{Gmpreserveslocal} that
	the bottom horizontal map
	$(C_*f)_*$ is a local equivalence.
	This implies that the upper horizontal map $C_*(f_*)$ is a local equivalence, and hence
	$f_* : \inthom{\Ch(\ShvA)}{\Gmn{k}}{A} \rightarrow \inthom{\Ch(\ShvA)}{\Gmn{k}}{B}$
	is a motivic equivalence.
	The second claim for $\Gmt{k}$ can be deduced from the claim for $\Gmn{k}$ by using Lemma \ref{binomial}.
\end{proof}

Let $D([\cc C, \ShvA])/\tau$ denote the localization of $D([\cc C, \ShvA])$ at the family of morphisms $\widehat{\tau}$. By Lemma \ref{localobjectcomparison} it is equivalent to the full subcategory of $D([\cc C, \ShvA])$ of those functors which are strictly $\tau$-local.

We will now prove some lemmas about $D([\mathscr{C}, \ShvA])/\tau$, which show that $\cc C(\Gmn{k},-)$ is a strongly dualizable object.

The model category $\Ch([\cc C, \ShvA])$ can be Bousfield localized along the family of morphisms $\hat{\tau}$, where just like 
Lemma \ref{localobjectcomparison}, the family $\hat{\tau}$  is defined as
$$\hat{\tau} := \{ (f \otimes Z)[n] | f \in \tau, Z \in \Smk, n \in \bb Z\}.$$ 
The homotopy category of this Bousfield localization is the derived category $D([\mathscr{C}, \ShvA])/\tau$.

\begin{lem}\label{taumonoidal}
	The left Bousfield localization of $\Ch([\cc C, \ShvA])$ along $\widehat{\tau}$ is a monoidal model category.
	In particular, the category $D([\cc C, \ShvA])/\tau$ is closed symmetric monoidal and its tensor product  $\otimesL$ coincides with the tensor product in $D([\cc C, \ShvA])$.
\end{lem}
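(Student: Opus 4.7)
The strategy is to verify the pushout-product axiom for the $\widehat{\tau}$-localized model structure, from which the monoidal structure on $D([\cc C, \ShvA])/\tau$ follows, and then separately deduce the coincidence of derived tensor products. The proof will proceed in three steps.

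First, I would invoke the standard criterion for monoidality of a left Bousfield localization (in the weakly finitely generated setting): it suffices to check that for every $f \in \widehat{\tau}$ and every domain or codomain $K$ of a generating cofibration of $\Ch([\cc C, \ShvA])$, the Day-convolution $f \otimesDay K$ is a $\widehat{\tau}$-local weak equivalence. By \cite[Theorem 5.5]{garkusha2019derived} the generating cofibrations have the form $\cc C(\Gmt{k},-) \otimesShv i$ with $i$ a generating cofibration of $\Ch(\ShvA)$. Because the class $\widehat{\tau}$ is already closed under shifts and under external tensoring with $Z \in \Smk$, and because external tensoring by a fixed object of $\Ch(\ShvA)$ commutes with Day convolution in the sense that $(F \otimesShv Z) \otimesDay G \cong (F \otimesDay G) \otimesShv Z$, the problem collapses to showing that $\tau_n \otimesDay \cc C(\Gmt{k},-)$ is a $\widehat{\tau}$-local weak equivalence for all $n,k \geq 0$.

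Second, I would compute this Day convolution explicitly. Using Lemma~\ref{binomial} to write $\cc C(\Gmt{k},-) \cong \bigoplus_{i=0}^{k} \binom{k}{i}[\Gmn{i}, I(-)]$, and the fact that $\otimesDay$ preserves direct sums in each variable, one reduces to identifying $[\Gmn{n+1},I(-)] \otimesDay [\Gmn{i},I(-)]$ and $[\Gmn{n},I(-)] \otimesDay [\Gmn{i},I(-)]$. The key natural isomorphism $[\Gmn{a},I(-)] \otimesDay [\Gmn{b},I(-)] \cong [\Gmn{a+b},I(-)]$ follows from the Yoneda-type formula for Day convolution with representables together with $I(\Gmt{a}) \otimesShv I(\Gmt{b}) \cong I(\Gmt{a+b})$ from the monoidal structure on $\cc C$. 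Under these identifications $\tau_n \otimesDay \cc C(\Gmt{k},-)$ becomes a direct sum $\bigoplus_i \binom{k}{i}\,\tau_{n+i}$, and each summand lies in $\tau$, so it is a $\widehat{\tau}$-local weak equivalence; direct sums of such equivalences are $\widehat{\tau}$-local equivalences.

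Third, for the ``in particular'' statement, the identity functor $\Ch([\cc C,\ShvA]) \to L_{\widehat\tau}\Ch([\cc C,\ShvA])$ is a left Quillen functor because localization preserves cofibrations and enlarges the class of weak equivalences, hence also of trivial cofibrations. Consequently any cofibrant replacement in the pre-localized model structure remains cofibrant after localization, so the derived tensor product $\otimesL$ on $D([\cc C,\ShvA])/\tau$ is computed by the same formula as in $D([\cc C,\ShvA])$ and the two agree.

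The main technical obstacle is the second step, specifically the identification $[\Gmn{a},I(-)] \otimesDay [\Gmn{b},I(-)] \cong [\Gmn{a+b},I(-)]$, since $\Gmn{a}$ is not literally an object of $\cc C$. The way around this is to realize $[\Gmn{a},I(-)]$ as a canonical direct summand of the genuine representable $\cc C(\Gmt{a},-)$ via Lemma~\ref{binomial}, apply the standard representable identity $\cc C(\Gmt{a},-) \otimesDay \cc C(\Gmt{b},-) \cong \cc C(\Gmt{a+b},-)$, and extract the matching summands on each side using Vandermonde's identity $\binom{a+b}{l} = \sum_{i+j=l}\binom{a}{i}\binom{b}{j}$.
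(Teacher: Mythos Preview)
Your proposal is correct and follows the same overall architecture as the paper: both invoke White's criterion for monoidality of a Bousfield localization, reduce to checking that $\tau_n \otimesDay (\cc C(\Gmt{k},-)\otimesShv X)$ is a $\widehat{\tau}$-local equivalence, and use the binomial splitting of Lemma~\ref{binomial}. The difference lies in where the splitting is applied. The paper first identifies the Day-convolved map as
\[
\cc C(\Gmn{n+1}\times\Gmt{k},-)\otimesShv\Gmn{1}\otimesShv X \longrightarrow \cc C(\Gmn{n}\times\Gmt{k},-)\otimesShv X,
\]
then \emph{tests against} a $\tau$-fibrant $F$ via Yoneda, and applies Lemma~\ref{binomial} on the target side to reduce to the cancellation property of $F$. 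You instead apply Lemma~\ref{binomial} on the source side, splitting $\cc C(\Gmt{k},-)$ first and identifying $\tau_n\otimesDay[\Gmn{i},I(-)]$ with $\tau_{n+i}$. Your route is more algebraic and avoids fibrant objects entirely; the paper's route has the advantage that it never needs to identify the Day-convolved map \emph{as a morphism}, only to evaluate it against $F$, where naturality does all the work.

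One point to sharpen: your Vandermonde argument establishes $[\Gmn{a},I(-)]\otimesDay[\Gmn{b},I(-)]\cong[\Gmn{a+b},I(-)]$ only at the level of objects, and somewhat circuitously. A cleaner way is to note that $[\Gmn{a},I(-)]$ is the image of the idempotent on $\cc C(\Gmt{a},-)$ induced by the projection $I(\Gmt{a})\twoheadrightarrow\Gmn{a}\hookrightarrow I(\Gmt{a})$; since these projections are multiplicative (the tensor product of the top-summand projections is again the top-summand projection, because $\Gmn{a}\otimesShv\Gmn{b}\cong\Gmn{a+b}$), the Day-convolved idempotent is exactly the one cutting out $[\Gmn{a+b},I(-)]$. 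This same idempotent calculus, carried along the map $\tau_n$, gives the morphism-level identification $\tau_n\otimesDay[\Gmn{i},I(-)]\cong\tau_{n+i}$ that your argument needs but does not explicitly justify.
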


\begin{proof}
	We apply \cite[Theorem B]{white2014monoidal}. Cofibrant objects in $\Ch([\cc C, \ShvA])$ are flat, so the theorem is applicable.
	The domains and codomains of the generating cofibrations of $\Ch([\cc C,\ShvA])$ are of the form $\cc C(\Gmt{k},-) \otimesShv X$ for $k \in \bb N, X \in \Smk$.
	For $n \in \bb N$, let $\tau_n$ be the morphism $$\cc C(\Gmn{n+1},-) \otimesShv \Gmn{1} \overset{\tau_n}{\rightarrow} C(\Gmn{n},-).$$
	We need to show that for every $n,m,k \in \bb N, X, Z \in \Smk$ that $$(\tau_n \otimesShv Z)[m] \otimesL (\cc C(\Gmt{k},-) \otimesShv X)$$ is a $\widehat{\tau}$-local equivalence in $D([\cc C, \ShvA])$.
	
	Since all involved objects are cofibrant we have
	$$(\tau_n \otimesShv Z)[m] \otimesL (\cc C(\Gmt{k},-) \otimesShv X) \cong (\tau_n \otimesShv Z)[m] \otimesDay (\cc C(\Gmt{k},-) \otimesShv X).$$
	Also we have
	$$(\tau_n \otimesShv Z)[m] \otimesDay (\cc C(\Gmt{k},-) \otimesShv X) \cong (\tau_n \otimesDay (\cc C(\Gmt{k},-) \otimesShv (X \times Z)))[m]$$
	so it suffices to show  for every $n ,k \in \bb N, X\in \Smk$ that every shift of $\tau_n \otimesDay (\cc C(\Gmt{k},-) \otimesShv X)$
	is a $\widehat{\tau}$-local equivalence.
	This morphism is then equal to the composite
	\begin{multline*}
		(\cc C(\Gmn{n+1},-) \otimesShv \Gmn{1}) \otimesDay (\cc C(\Gmt{k},-) \otimesShv X) \cong \cc C(\Gmn{n+1} \times \Gmt{k},-) \otimesShv \Gmn{1} \otimesShv X \rightarrow\\ \rightarrow \cc C(\Gmn{n}\times\Gmt{k},-) \otimesShv X .
	\end{multline*}
	To show that it is a $\widehat{\tau}$-local equivalence, let  $F \in \Ch([\cc C, \ShvA])$ be a $\tau$-fibrant object, i.e. a functor that is locally fibrant and satisfies cancellation in the sense that
	$F(\Gmn{n}) \rightarrow F(\Gmn{n+1})(\Gmn{1}\times -)$
	is a local quasi-isomorphism. Since both sides are locally fibrant, it is also a sectionwise quasi-isomorphism.
	
	We now just need to show for all $m \in \bb Z$ that
	\begin{multline*}
		\mathrm{Hom}_{D([\cc C, \ShvA])}(\cc C(\Gmn{n}\times\Gmt{k},-) \otimesShv X , F[m])  \rightarrow\\ \rightarrow
		\mathrm{Hom}_{D([\cc C, \ShvA])}(\cc C(\Gmn{n+1} \times \Gmt{k},-) \otimesShv \Gmn{1} \otimesShv X , F[m])	
	\end{multline*}
	is an isomorphism in $\Ab$.
	
	Since $F[m]$ is locally fibrant and $\cc C(\Gmn{n}\times\Gmt{k},-) \otimesShv X$ and $\cc C(\Gmn{n+1} \times \Gmt{k},-) \otimesShv \Gmn{1} \otimesShv X $ are cofibrant, this is isomorphic to the arrow
	\begin{multline*}
		\mathrm{Hom}_{K([\cc C, \ShvA])}(\cc C(\Gmn{n}\times\Gmt{k},-) \otimesShv X , F[m])  \rightarrow\\ \rightarrow
		\mathrm{Hom}_{K([\cc C, \ShvA])}(\cc C(\Gmn{n+1} \times \Gmt{k},-) \otimesShv \Gmn{1} \otimesShv X , F[m]).	
	\end{multline*}
	And this is isomorphic to the following arrow between homology groups
	$$H_m(F(\Gmn{n}\times\Gmt{k})(X)) \rightarrow H_m(F( \Gmn{n+1} \times\Gmt{k})(X \times \Gmn{1})).$$
	So we just need to show that the following arrow is a quasi-isomorphism.
	$$F(\Gmn{n}\times\Gmt{k})(X) \rightarrow F( \Gmn{n+1} \times\Gmt{k})(X \times \Gmn{1}) $$
	Lemma \ref{binomial} implies $F(\Gmn{n} \times \Gmt{k} ) \cong \underset{i=0}{\overset{k}{\bigoplus}}\binom{k}{i} F(\Gmn{n+i})$.
	We have to show that the map
	$\underset{i=0}{\overset{k}{\bigoplus}}\binom{k}{i} F(\Gmn{n+i})(X) \rightarrow \underset{i=0}{\overset{k}{\bigoplus}}\binom{k}{i} F(\Gmn{n+1+i})(X \times \Gmn{1}) $
	is a quasi-isomorphism.
	This follows from the fact that 
	$F(\Gmn{n}) \rightarrow F(\Gmn{n+1})(\Gmn{1}\times -)$
	is a sectionwise quasi-isomorphism for any $n \in \bb Z$.
\end{proof}

\begin{lem}
	The enriched functor $\mathscr{C}(\Gmn{1},-) : \cc C \rightarrow \ShvA$ is invertible in $D([\mathscr{C}, \ShvA])/\tau$ with respect to $\otimesL$, and its inverse is $I \otimesShv \Gmn{1}$.
\end{lem}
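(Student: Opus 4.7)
The plan is to produce an isomorphism $\mathscr{C}(\Gmn{1},-) \otimesL (I(-) \otimesShv \Gmn{1}) \cong I(-)$ in $D([\mathscr{C}, \ShvA])/\tau$. Since Lemma \ref{taumonoidal} endows this localization with a symmetric monoidal structure (whose unit is the representable $\mathscr{C}(pt,-)$, and $\mathscr{C}(pt,a) = [\mathbbm{1}_{\ShvA}, I(a)] \cong I(a)$ identifies it with $I$), such an isomorphism forces $I(-) \otimesShv \Gmn{1}$ to be an inverse of $\mathscr{C}(\Gmn{1},-)$.

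The first ingredient is the general identity
$$F \otimesDay (I \otimesShv X) \cong F \otimesShv X, \qquad F \in [\mathscr{C}, \ShvA],\; X \in \ShvA,$$
which I would establish by a direct coend computation: expanding
$$(F \otimesDay (I \otimesShv X))(c) = \int^{a,b} \mathscr{C}(a \otimes b, c) \otimesShv F(a) \otimesShv \mathscr{C}(pt, b) \otimesShv X$$
and carrying out the coend over $b$ via co-Yoneda (using that $pt$ is the monoidal unit of $\mathscr{C}$) yields $\int^a \mathscr{C}(a,c) \otimesShv F(a) \otimesShv X$, which collapses by a second co-Yoneda to $F(c) \otimesShv X$. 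Specializing to $F = \mathscr{C}(\Gmn{1},-)$ and $X = \Gmn{1}$ gives a natural isomorphism
$$\mathscr{C}(\Gmn{1},-) \otimesDay (I \otimesShv \Gmn{1}) \cong \mathscr{C}(\Gmn{1},-) \otimesShv \Gmn{1}.$$

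Next, since $\Gmn{0} = I(pt) = \mathbbm{1}_{\ShvA}$ we have $[\Gmn{0}, I(-)] = I(-)$, so the structure map $\tau_0 : \mathscr{C}(\Gmn{1},-) \otimesShv \Gmn{1} \rightarrow I(-)$ is by definition in $\tau$ and hence is inverted in $D([\mathscr{C},\ShvA])/\tau$. To pass from $\otimesDay$ to $\otimesL$ for this computation, I would verify that both $\mathscr{C}(\Gmn{1},-) = [\Gmn{1}, I(-)]$ and $I \otimesShv \Gmn{1}$ are cofibrant in $\Ch([\mathscr{C},\ShvA])$: the first is a direct summand of the representable $\mathscr{C}(\Gmt{1},-)$ under the splitting $I(\Gmt{1}) \cong \Gmn{1} \oplus I(pt)$ used in Lemma \ref{binomial}, and the second is the tensoring of the representable $I = \mathscr{C}(pt,-)$ with $\Gmn{1} \in \ShvA$, the latter itself a summand of the cofibrant object $I(\Gm)$. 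Composing the displayed isomorphism with $\tau_0$ then gives $\mathscr{C}(\Gmn{1},-) \otimesL (I \otimesShv \Gmn{1}) \cong I$ in the localization.

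I expect the main technical obstacle to be the careful verification of the Day/tensoring identity $F \otimesDay (I \otimesShv X) \cong F \otimesShv X$: this relies on Fubini for enriched coends together with the compatibility between the $\ShvA$-tensoring on $[\mathscr{C}, \ShvA]$ and the Day convolution. Everything else (the cofibrancy of the two factors and the final invertibility argument, which in a symmetric monoidal category follows formally from an isomorphism of $X \otimes Y$ with the unit) is routine once this identity is in hand.
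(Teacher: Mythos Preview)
Your proposal is correct and follows essentially the same approach as the paper: both arguments verify cofibrancy of the two factors via the splitting $I(\Gmt{1}) \cong \Gmn{1} \oplus I(pt)$, reduce $\mathscr{C}(\Gmn{1},-) \otimesDay (I \otimesShv \Gmn{1})$ to $\mathscr{C}(\Gmn{1},-) \otimesShv \Gmn{1}$ by a coend manipulation (you use two applications of co-Yoneda directly, the paper pulls out $\Gmn{1}$ as a left adjoint and then uses that $I$ is the Day unit---these are the same computation), and then invoke $\tau_0$ to identify the result with $I$ in the localization.
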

\begin{proof}
	The enriched functor $\mathscr{C}(\Gmt{1},-)$ is cofibrant in $\Ch([\cc C, \ShvA])$, because it is representable.
	The enriched functor $\mathscr{C}(\Gmn{1},-)$ is a direct summand of $\mathscr{C}(\Gmt{1},-)$, so $\mathscr{C}(\Gmn{1},-)$ is also cofibrant.
	For every cofibrant $F \in \Ch([\cc C, \ShvA])$ we therefore have $\mathscr{C}(\Gmn{1},-) \otimesL F \cong \mathscr{C}(\Gmn{1},-) \otimesDay F. $
	Now let $F := I \otimesShv \Gmn{1}$, i.e. $F$ is the enriched functor defined by $F(X) := I(X) \otimesShv \Gmn{1}.$
	This functor $F$ is cofibrant, because it is a direct summand of  $\cc C(pt,-)\otimesShv \Gmt{1}$.
	
	We now show that there is an isomorphism
	$$\mathscr{C}(\Gmn{1},-) \otimesDay (I \otimesShv \Gmn{1}) \cong (\mathscr{C}(\Gmn{1},-) \otimesDay I) \otimesShv \Gmn{1}.$$
	It explicitly looks as follows. $\Gmn{1} \otimesShv -$ is a left adjoint, so it preserves all coends, so
	\begin{multline*}
		(\mathscr{C}(\Gmn{1},-) \otimesDay (I \otimesShv \Gmn{1}))(c) = \overset{(a,b) \in \cc C \otimes \cc C}{\int} \cc C(a \times b, c) \otimesShv  \mathscr{C}(\Gmn{1},a) \otimesShv I(b) \otimesShv \Gmn{1} \cong \\ \cong \Gmn{1} \otimesShv  \overset{(a,b) \in \cc C \otimes \cc C}{\int} \cc C(a \times b, c) \otimesShv  \mathscr{C}(\Gmn{1},a) \otimesShv I(b) \cong \Gmn{1} \otimesShv (\mathscr{C}(\Gmn{1},-) \otimesDay I)
	\end{multline*}
	
	Now $I$ is the monoidal unit of $\otimesDay$, so
	$ \mathscr{C}(\Gmn{1},-) \otimesDay F \cong \mathscr{C}(\Gmn{1},-) \otimesShv \Gmn{1}$.
	Finally, the morphism $\tau$ gives an isomorphism 
	$ \mathscr{C}(\Gmn{1},-) \otimesShv \Gmn{1} \rightarrow I$
	in the derived category  $D([\mathscr{C}, \ShvA])/\tau$.
	So we ultimately get an isomorphism 
	$\mathscr{C}(\Gmn{1},-) \otimesL F \cong I $ in $D([\mathscr{C}, \ShvA])/\tau$, which shows that $\mathscr{C}(\Gmn{1},-)$ is invertible.
\end{proof}

Since $I \otimesShv \Gmn{1}$ is invertible, we also have that $I \otimesShv \Gmn{k}$ is invertible, because due to the isomorphism $I \otimesShv \Gmn{k+1} \cong (I \otimesShv \Gmn{k}) \otimesDay (I \otimesShv \Gmn{1})$ it is a product of invertible objects.
The inverse of $I \otimesShv \Gmn{k}$ is $\mathscr{C}(\Gmn{k},-)$.
Also note that in every symmetric closed monoidal category, every $\otimes$-invertible object is strongly dualizable.
So $\mathscr{C}(\Gmn{k},-)$ is strongly dualizable in $D([\cc C, \ShvA])/\tau$.

Since finite sums of strongly dualizable objects are strongly dualizable, and since Lemma \ref{binomial} says that $\cc C(\Gmt{k},-) $ is a finite sum of $\cc C(\Gmn{i},-) $, we get the following corollary.
\begin{cor}
	For all $k \in \bb N$ the enriched functors $\cc C(\Gmt{k},-) $ and $\cc C(\Gmn{k},-)$ are strongly dualizable in $D([\mathscr{C}, \ShvA])/\tau$ with duals $I \otimesShv \Gmt{k}$ and $I \otimesShv \Gmn{k}$ respectively.
\end{cor}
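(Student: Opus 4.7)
The plan is to deduce strong dualizability from the invertibility established in the previous lemma, using the standard fact (valid in any closed symmetric monoidal category) that every $\otimes$-invertible object is strongly dualizable, with dual given by its tensor inverse. By Lemma \ref{taumonoidal} the tensor product on $D([\cc C,\ShvA])/\tau$ agrees with the derived Day convolution $\otimesL$ on $D([\cc C,\ShvA])$, so duality computations may be carried out at the level of the Day convolution of cofibrant representatives.

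First I would treat $\cc C(\Gmn{k},-)$. Using the isomorphism $\Gmn{k+1} \cong \Gmn{k}\otimes\Gmn{1}$ and the fact that representable enriched functors on monoidal $\ShvA$-categories satisfy $\cc C(a\otimes b,-)\cong \cc C(a,-)\otimesDay \cc C(b,-)$, a short induction on $k$ yields
\[
\cc C(\Gmn{k},-) \;\cong\; \cc C(\Gmn{1},-)^{\otimesDay k}
\quad\text{and}\quad
I\otimesShv\Gmn{k} \;\cong\; (I\otimesShv\Gmn{1})^{\otimesDay k}
\]
as cofibrant objects of $\Ch([\cc C,\ShvA])$, hence in $D([\cc C,\ShvA])/\tau$ as well. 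Since $\cc C(\Gmn{1},-)$ and $I\otimesShv\Gmn{1}$ are mutually inverse by the previous lemma, their $k$-fold tensor powers are mutually inverse, so $\cc C(\Gmn{k},-)$ is $\otimes$-invertible in $D([\cc C,\ShvA])/\tau$ with inverse $I\otimesShv\Gmn{k}$. Invertibility implies strong dualizability, so this settles the claim for $\cc C(\Gmn{k},-)$.

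For $\cc C(\Gmt{k},-)$ I would invoke Lemma \ref{binomial}, which gives an isomorphism
\[
\cc C(\Gmt{k},-) \;\cong\; \bigoplus_{i=0}^{k}\binom{k}{i}\,\cc C(\Gmn{i},-),
\]
and the dual statement
\[
I\otimesShv\Gmt{k} \;\cong\; \bigoplus_{i=0}^{k}\binom{k}{i}\,(I\otimesShv\Gmn{i}).
\]
Since strong dualizability is preserved under finite direct sums in any additive closed symmetric monoidal category (with the dual of a finite direct sum being the direct sum of the duals), and each summand $\cc C(\Gmn{i},-)$ is strongly dualizable by the previous step with dual $I\otimesShv\Gmn{i}$, we conclude that $\cc C(\Gmt{k},-)$ is strongly dualizable in $D([\cc C,\ShvA])/\tau$ with dual $I\otimesShv\Gmt{k}$.

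The only real point of care is to verify that the unit and counit produced from the invertibility datum in the previous lemma, once tensored up to the $k$-th power and combined with the additivity of duals, indeed match the evaluation and coevaluation expected for the Hom-pairings $\cc C(\Gmn{k},-)\otimesL(I\otimesShv\Gmn{k})\to \mathbbm{1}$ and its sum-indexed refinement; this is formal but should be spelled out to justify the identification of the specific duals claimed. No step is really hard; the main obstacle is bookkeeping, namely keeping track of the cofibrancy of each representative so that $\otimesL$ can be replaced by $\otimesDay$ throughout the calculation.
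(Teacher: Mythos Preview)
Your proposal is correct and follows essentially the same approach as the paper: invertibility of $\cc C(\Gmn{1},-)$ from the preceding lemma is tensored up to obtain invertibility (hence strong dualizability) of $\cc C(\Gmn{k},-)$ with dual $I\otimesShv\Gmn{k}$, and then Lemma~\ref{binomial} together with closure of strong dualizability under finite sums handles $\cc C(\Gmt{k},-)$. The only cosmetic difference is that the paper phrases the induction on the side of $I\otimesShv\Gmn{k}$ rather than $\cc C(\Gmn{k},-)$, but this is immaterial.
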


The model category $\Ch([\cc C, \ShvA])$ can be Bousfield localized along the family of morphisms $\widehat{\sim_{\cc{C}}}$.
The homotopy category of this Bousfield localization is the derived category $D([\mathscr{C}, \ShvA])/\sim_{\cc{C}}$.

\begin{lem}
	The left Bousfield localization of $\Ch([\cc C, \ShvA])$ along $\widehat{\sim_{\cc{C}}}$ is a monoidal model category.
	In particular, the category $D([\cc C, \ShvA])/\sim_{\cc{C}}$ is closed symmetric monoidal and its tensor product  $\otimesL$ coincides with the tensor product in $D([\cc C, \ShvA])$.
\end{lem}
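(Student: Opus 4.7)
The plan is to invoke \cite[Theorem B]{white2014monoidal}, exactly as in Lemma~\ref{taumonoidal}. The hypothesis on flat cofibrant objects is already ensured by the underlying monoidal model structure on $\Ch([\cc C,\ShvA])$ from \cite[Theorem~5.5]{garkusha2019derived}, so the only thing to verify is that for every morphism $f$ in $\widehat{\sim_{\cc C}}$ and every cofibrant object $K$, the morphism $f \otimesDay K$ is a $\widehat{\sim_{\cc C}}$-local equivalence.

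The argument is a reduction to the two ingredients making up $\sim_{\cc C} = \bb A^1_1 + \tau$. It suffices to test $f \otimesDay K$ on generating cofibrations, i.e.\ $K$ of the form $\cc C(\Gmt{k},-)\otimesShv X$ for $k\geq 0$, $X\in\Smk$, and on $f$ either a shift-and-tensor $(g \otimesShv Z)[m]$ of a morphism $g$ of $\bb A^1_1$ type or of $\tau$ type. The $\tau$ case is precisely Lemma~\ref{taumonoidal}; what remains is the $\bb A^1_1$ case, which is a direct computation.

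For the $\bb A^1_1$ case, take $g : \cc C(\Gmt{n},-) \otimesShv \bb A^1 \to \cc C(\Gmt{n},-)$. Since Day convolution of representables satisfies $\cc C(\Gmt{n},-) \otimesDay \cc C(\Gmt{k},-) \cong \cc C(\Gmt{n+k},-)$ in the cofibrant replacement, and since $\otimesShv$ commutes with the Day convolution in the $\ShvA$-variable, one obtains an isomorphism of arrows
\[
\bigl((g \otimesShv Z)[m]\bigr) \otimesDay \bigl(\cc C(\Gmt{k},-)\otimesShv X\bigr) \;\cong\; \bigl(\cc C(\Gmt{n+k},-) \otimesShv \bb A^1 \otimesShv X \otimesShv Z\bigr)[m] \to \bigl(\cc C(\Gmt{n+k},-) \otimesShv X \otimesShv Z\bigr)[m],
\]
and the right-hand arrow is again of the form $(g'\otimesShv W)[m]$ with $g'\in\bb A^1_1$ and $W\in\Smk$, hence belongs to $\widehat{\bb A^1_1}\subseteq \widehat{\sim_{\cc C}}$.

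With the pushout-product axiom in the localized model structure established, the resulting closed symmetric monoidal structure on $D([\cc C,\ShvA])/\sim_{\cc C}$ is computed via cofibrant replacement, and since cofibrant replacements and the derived tensor product are already the same as in $D([\cc C,\ShvA])$ (the cofibrations are unchanged by Bousfield localization), $\otimesL$ on the quotient agrees with $\otimesL$ on $D([\cc C,\ShvA])$. The main, though mild, obstacle is just the bookkeeping needed to rewrite $(g \otimesShv Z)[m] \otimesDay (\cc C(\Gmt{k},-)\otimesShv X)$ as a morphism of the expected form; once that is done, the $\bb A^1_1$ part is immediate and the $\tau$ part has already been handled.
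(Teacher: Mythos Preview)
Your proposal is correct and follows essentially the same approach as the paper: both invoke \cite[Theorem~B]{white2014monoidal}, split $\widehat{\sim_{\cc C}}$ into the $\widehat{\tau}$ part (handled by Lemma~\ref{taumonoidal}) and the $\widehat{\bb A^1_1}$ part, and for the latter compute the Day convolution with a generating cofibrant object $\cc C(\Gmt{k},-)\otimesShv X$ to see that the resulting morphism lies again in $\widehat{\bb A^1_1}$. The only cosmetic difference is that the paper writes the representable as $\cc C(U,-)$ for $U\in\cc C$ rather than $\cc C(\Gmt{n},-)$, and notes explicitly that cofibrancy lets one replace $\otimesL$ by $\otimesDay$.
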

\begin{proof}
	Similarly to Lemma \ref{taumonoidal}, we apply \cite[Theorem B]{white2014monoidal}.	
	The domains and codomains of the generating cofibrations of $\Ch([\cc C,\ShvA])$ are of the form $\cc C(\Gmt{k},-) \otimesShv X$ for $k \in \bb N, X \in \Smk$.
	We need to show for $f$ in $\widehat{\sim_{\cc{C}}}$ that all $f \otimesL \cc C(\Gmt{k},-) \otimesShv X$ are $\widehat{\sim_{\cc{C}}}$-local equivalences.
	If $f \in \widehat{\tau}$, then we know this from the proof of Lemma \ref{taumonoidal}.
	So assume that $f \in \widehat{\bb A^1_1}$, so that $f$ is of the form
	$$(\cc C(U,-) \otimesShv \bb A^1) \otimes Z [n] \rightarrow \cc C(U,-) \otimes Z [n] $$
	for some $U \in \cc C$.
	Since all involved objects are cofibrant we know that $$f \otimesL \cc C(\Gmt{k},-) \otimesShv X = f \otimesDay \cc C(\Gmt{k},-) \otimesShv X.$$
	
	So $f$ is isomorphic to
	$$(\cc C(U\times \Gmt{k},-) \otimesShv \bb A^1) \otimes (X \times Z) [n] \rightarrow \cc C(U \times \Gmt{k},-) \otimes (X \times Z) [n]  $$
	and this morphism lies again in $\widehat{\bb A^1_1}$. In particular it is a $\widehat{\bb A^1_1}$-local equivalence, and therefore also a $\widehat{\sim_{\cc{C}}}$-local equivalence.
\end{proof}

\begin{lem} \label{GmtimesXcomputation}
	There is an isomorphism in $D([\mathscr{C}, \ShvA])/\sim_{\mathscr{C}}$:
	$$\mathscr{C}(\Gmn{k}, -) \otimesShv X \cong [\Gmn{k}, \cc M_{\Cor}(X)].$$
\end{lem}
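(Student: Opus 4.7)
The plan is to chain together three natural isomorphisms in the closed symmetric monoidal triangulated category $D([\cc C,\ShvA])/\sim_{\cc C}$, realising both sides as the derived internal hom $[I\otimesShv\Gmn{k},\cc M_{\Cor}(X)]$.

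First I would rewrite the left side using that the monoidal unit of the Day convolution on $[\cc C,\ShvA]$ is $I$ itself (since $\cc C(pt,-)=[I(pt),I(-)]\cong I(-)$). A direct coend computation then shows that $H\otimesDay(I\otimesShv Y)\cong H\otimesShv Y$ naturally in $H\in[\cc C,\ShvA]$ and $Y\in\ShvA$. Applied to $H=\cc C(\Gmn{k},-)$ and $Y=I(X)$, and combined with the motivic equivalence $I(X\times-)\to\cc M_{\Cor}(X)$ established just before Lemma~\ref{enrichedmotivelocal}, this yields $\cc C(\Gmn{k},-)\otimesShv X\cong\cc C(\Gmn{k},-)\otimesL\cc M_{\Cor}(X)$ in $D([\cc C,\ShvA])/\sim_{\cc C}$.

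Next I would invoke the invertibility of $\cc C(\Gmn{k},-)$ with inverse $I\otimesShv\Gmn{k}$, established in the preceding corollary (and inherited by the further localization $D([\cc C,\ShvA])/\sim_{\cc C}$). In any closed symmetric monoidal category, tensoring with an invertible object $A$ is naturally isomorphic to the internal hom out of its inverse $A^{-1}$; this gives $\cc C(\Gmn{k},-)\otimesL\cc M_{\Cor}(X)\cong[I\otimesShv\Gmn{k},\cc M_{\Cor}(X)]$. The pointwise tensor--hom adjunction in $\ShvA$, together once more with the unit property $H\otimesDay(I\otimesShv Y)\cong H\otimesShv Y$, then identifies $[I\otimesShv Y,G]\cong[Y,G(-)]$, the pointwise internal hom, which in our situation is exactly $[\Gmn{k},\cc M_{\Cor}(X)]$.

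The delicate point I expect is the last identification: one must check that the point-set internal hom $[\Gmn{k},\cc M_{\Cor}(X)]$ already represents the derived internal hom, without an additional fibrant replacement of $\cc M_{\Cor}(X)$. This reduces to showing that $[\Gmn{k},-]$ preserves local quasi-isomorphisms out of $\cc M_{\Cor}(X)$. Since $\Gmn{k}$ is a cofibrant object of $\ShvA$ (a direct summand of the representable sheaf $I(\Gmt{k})$ by Lemma~\ref{binomial}) and each $\cc M_{\Cor}(X)(U)=C_*I(X\times U)$ has $\mathbb{A}^1$-invariant cohomology sheaves, Lemma~\ref{Gmpreserveslocal} applies and closes this gap, completing the chain of isomorphisms.
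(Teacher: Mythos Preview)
Your proposal is correct and follows essentially the same route as the paper. The only cosmetic difference is the order of operations: you first replace $I\otimesShv X$ by $\cc M_{\Cor}(X)$ via the motivic equivalence and then apply invertibility of $\cc C(\Gmn{k},-)$, whereas the paper first passes to the derived internal hom $\underline{\Hom}(I\otimesShv\Gmn{k},I\otimesShv X)$ via strong dualizability and then replaces the target by $\cc M_{\Cor}(X)$; both arguments rest on the same three ingredients (invertibility/duality of $\cc C(\Gmn{k},-)$, the strict $\sim_{\cc C}$-locality of $\cc M_{\Cor}(X)$, and Lemma~\ref{Gmpreserveslocal}).
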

\begin{proof}
	We have $\cc C(\Gmn{k},-) \otimesShv X \cong \cc C(\Gmn{k},-) \otimesL ( I\otimesShv X)$
	Since $\cc C(\Gmn{k},-)$ is strongly dual to $I\otimesShv \Gmn{k}$ with respect to $\otimesL$ in $D([\cc C, \ShvA])/\sim_{\cc{C}}$ we get that
	$$\cc C(\Gmn{k},-) \otimesL ( I\otimesShv X) \cong \inthom{D([\cc C, \ShvA])/\sim_{\cc C}}{I \otimesShv \Gmn{k}}{I\otimesShv X} .$$
	By Lemma \ref{enrichedmotivelocal} the functor $M_{\Cor}(X \times -)$ is strictly $\sim_{\cc{C}}$-local.
	Since $I \otimesShv \Gmn{k}$ is cofibrant we can therefore compute the above internal hom as
	$$\inthom{D([\cc C, \ShvA])/\sim_{\cc C}}{I \otimesShv \Gmn{k}}{I\otimesShv X} \cong \inthom{D([\cc C, \ShvA])}{I \otimesShv \Gmn{k}}{M_{\Cor}(X \times -)}.$$
	Let $M_{\Cor}(X \times -)^f$ be a pointwise local fibrant replacement of $M_{\Cor}(X \times -)$ in $\Ch([\cc C, \ShvA])$.
	Then $M_{\Cor}(X \times -)^f$ is $\sim_{\cc{C}}$-fibrant and we have an isomorphism in $\Ch([\cc C, \ShvA])$.
	$$\inthom{D([\cc C, \ShvA])}{I \otimesShv \Gmn{k}}{M_{\Cor}(X \times -)} \cong[\Gmn{k},M_{\Cor}(X \times -)^f] \cong [\Gmn{k},M_{\Cor}(X \times -)].$$
	The last isomorphism follows from the fact that due to Lemma \ref{Gmpreserveslocal} the functor $[\Gmn{k},-]$ preserves local quasi-isomorphisms between strictly $\bb A^1_1$-local objects.
\end{proof}

\begin{lem} \label{compactgeneration}
	$DM_{\Cor}[\cc C]$ is compactly generated by the set $\{[\Gmn{k}, \cc M_{\Cor}(X)] \mid k \in \mathbb{N}, X \in \Smk \}.$
\end{lem}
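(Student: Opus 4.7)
The plan is to transfer compact generation from the unlocalized derived category $D([\cc C, \ShvA])$ to its Bousfield localization, and then interpret the result through the equivalence with $DM_{\Cor}[\cc C]$ (Definition \ref{simcdef}).

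First, I would recall that the derived category $D([\cc C, \ShvA])$ of the Grothendieck category of enriched functors is compactly generated; by \cite[Theorem 6.2]{garkusha2019derived} the representables tensored with schemes, $\{\cc C(\Gmt{k},-)\otimesShv X \mid k \in \bb N,\ X \in \Smk\}$, form a set of compact generators. Invoking Lemma \ref{binomial}, each $\cc C(\Gmt{k},-)$ splits as a finite sum of the $\cc C(\Gmn{i},-)$, so equivalently the set $\{\cc C(\Gmn{k},-)\otimesShv X \mid k \in \bb N,\ X \in \Smk\}$ is a set of compact generators of $D([\cc C, \ShvA])$.

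Next, observe that every morphism in $\widehat{\sim_{\cc C}}$ is a morphism between compact objects (as already remarked after the definition of $\sim_{\cc C}$). Hence the thick localizing subcategory generated by the cones of $\widehat{\sim_{\cc C}}$ is compactly generated, and by the standard Neeman--Krause machinery (see e.g. Neeman, \emph{Triangulated Categories}) the Verdier quotient $D([\cc C, \ShvA])/\sim_{\cc C}$ is again compactly generated, with compact generators given by the images of the $\cc C(\Gmn{k},-) \otimesShv X$ under the localization functor. Under the equivalence $D([\cc C, \ShvA])/\sim_{\cc C} \simeq DM_{\Cor}[\cc C]$ of Definition \ref{simcdef} (which is a consequence of Lemma \ref{localobjectcomparison}), this image is identified with the strictly $\sim_{\cc C}$-local replacement of $\cc C(\Gmn{k},-) \otimesShv X$.

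Finally, Lemma \ref{GmtimesXcomputation} provides the crucial identification $\cc C(\Gmn{k},-) \otimesShv X \cong [\Gmn{k}, \cc M_{\Cor}(X)]$ in $D([\cc C,\ShvA])/\sim_{\cc C}$, and Lemma \ref{enrichedmotivelocal} together with Lemma \ref{Gmpreserveslocal} ensures that $[\Gmn{k}, \cc M_{\Cor}(X)]$ is itself strictly $\sim_{\cc C}$-local (the $\bb A^1_1$-locality and cancellation transfer through the internal hom $[\Gmn{k},-]$ between $\bb A^1$-invariant targets). Combining these three steps yields the claim. The only step that requires care is invoking the correct version of the Neeman--Krause theorem on localization at compact morphisms, but since $\widehat{\sim_{\cc C}}$ consists manifestly of maps between compact objects, this is standard and presents no real obstacle.
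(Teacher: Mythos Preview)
Your proposal is correct and follows essentially the same approach as the paper: both use \cite[Theorem 6.2]{garkusha2019derived} and Lemma \ref{binomial} to obtain compact generators $\cc C(\Gmn{k},-)\otimesShv X$ of $D([\cc C,\ShvA])$, pass to the localization $D([\cc C,\ShvA])/\sim_{\cc C}$, invoke Lemma \ref{GmtimesXcomputation} to identify these with $[\Gmn{k},\cc M_{\Cor}(X)]$, and use Lemmas \ref{enrichedmotivelocal} and \ref{Gmpreserveslocal} to verify strict $\sim_{\cc C}$-locality. The only cosmetic difference is the order of the two verifications (membership in $DM_{\Cor}[\cc C]$ versus generation), which is immaterial.
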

\begin{proof}
	Let us first show that $[\Gmn{k},\cc M_{\Cor}(X)]$ is an object of $DM_{\Cor}[\cc C]$.	
	By Lemma \ref{enrichedmotivelocal} the functor $\cc M_{\Cor}(X)$ is strictly $\bb A^1_1$-local and strictly $\tau$-local. So if $\cc M_{\Cor}(X)^f$ is a locally fibrant replacement of $\cc M_{\Cor}(X)$, then $\cc M_{\Cor}(X)^f$ is enriched $\bb A^1_1$-local and satisfies cancellation.
	Since it is enriched $\bb A^1_1$-local, for every $U \in \Smk$ the complex $M_{\Cor}(X \times U)^f$ is motivically fibrant in $\Ch(\ShvA)$. Since $\Gmn{k}$ is cofibrant in $\Ch(\ShvA)$, it follows that $[\Gmn{k},M_{\Cor}(X \times U)^f]$ is motivically fibrant in $\Ch(\ShvA)$. This then implies that $[\Gmn{k},\cc M_{\Cor}(X)^f]$ is enriched $\bb A^1_1$-local.
	Since $\cc M_{\Cor}(X)^f$ satisfies cancellation, it also follows that $[\Gmn{k},\cc M_{\Cor}(X)^f]$ satisfies cancellation.
	
	By Lemma \ref{Gmpreserveslocal} the functor $[\Gmn{k},-]$ preserves local equivalences between strictly $\bb A^1_1$-local objects. Hence it follows that $[\Gmn{k},\cc M_{\Cor}(X)^f]$ is a local fibrant replacement of $[\Gmn{k},\cc M_{\Cor}(X)]$. Thus $[\Gmn{k},\cc M_{\Cor}(X)]$ is strictly $\bb A^1_1$-local and strictly $\tau$-local. So $[\Gmn{k},\cc M_{\Cor}(X)]$ is in $DM_{\Cor}[\cc C]$.
	
	Let us now show that the objects $[\Gmn{k},\cc M_{\Cor}(X)]$ compactly generate $DM_{\Cor}[\cc C]$.
	According to \cite[Theorem 6.2]{garkusha2019derived} the category $D([\mathscr{C}, \ShvA])$ is a compactly generated triangulaged category, that is compactly generated by the set $\{\mathscr{C}(c, -) \otimesShv g_i \mid c \in \cc C, i \in I\},$ where $\{g_i \mid i \in I\}$ is a set of compact generators of 
	$D(\ShvA)$.
	
	Since $\ShvA$ is compactly generated by sheaves of the form $I(X)$ for $X \in \Smk$, we conclude that $D([\mathscr{C}, \ShvA])$, and hence also $D([\mathscr{C}, \ShvA])/\sim_{\mathscr{C}}$, are compactly generated by the set $\{\mathscr{C}(\Gmt{k}, -) \otimesShv \embed{X} \mid k \in \mathbb{N}, X \in \Smk \}$.
	By Lemma \ref{binomial} the enriched functor $\mathscr{C}(\Gmt{k}, -)$ is a direct sum of functors of the form $\mathscr{C}(\Gmn{k}, -)$. So we conclude that  
	$\{\mathscr{C}(\Gmn{k}, -) \otimesShv \embed{X} \mid k \in \mathbb{N}, X \in \Smk \}$is a set of compact generators of $D([\mathscr{C}, \ShvA])/\sim_{\mathscr{C}}$.
	
	Since $\{\mathscr{C}(\Gmn{k}, -) \otimesShv \embed{X} \mid k \in \mathbb{N}, X \in \Smk \}$ is a set of compact generators of the triangulated category $D([\mathscr{C}, \ShvA])/\sim_{\mathscr{C}}$ we now get that by Lemma \ref{GmtimesXcomputation} that $ \{[\Gmn{k}, \cc M_{\Cor}(X)] \mid k \in \mathbb{N}, X \in \Smk \}$ is a set of compact generators of $D([\mathscr{C}, \ShvA])/\sim_{\mathscr{C}}$.
	
	Now each functor $[\Gmn{k}, \cc M_{\Cor}(X)]$ is in $DM_{\Cor}[\cc C]$. 
	We remarked in Definition \ref{simcdef} that the canonical map $DM_{\Cor}[\cc C] \rightarrow D([\mathscr{C}, \ShvA])/\sim_{\mathscr{C}}$ is an equivalence. Therefore it follows that
	$\{[\Gmn{k}, \cc M_{\Cor}(X)] \mid k \in \mathbb{N}, X \in \Smk \}$ is a set of compact generators of $DM_{\Cor}[\cc C]$.
\end{proof}

\begin{lem} \label{evGmn}
	For every $k \in \bb N$ and $X \in \Smk$ the canonical map
	$$ ev_{\Gm}([\Gmn{k},\cc M_{\Cor}(X)]) \rightarrow \Omega_{\Gm}^k ev_{\Gm}(\cc M_{\Cor}(X)^f) $$
	is a levelwise local quasi-isomorphism in $\SpGm(\Ch(\ShvA))$, where $\cc M_{\Cor}(X)^f$ is a pointwise  local fibrant replacement of $\cc M_{\Cor}(X)$.
\end{lem}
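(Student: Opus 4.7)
The plan is to reduce the statement to a levelwise check and apply Lemma \ref{Gmpreserveslocal}. Unpacking the definitions, at level $n \geq 0$ the map of $\Gm$-spectra becomes
\[
[\Gmn{k}, \cc M_{\Cor}(X)(\Gmn{n})] \longrightarrow [\Gmn{k}, \cc M_{\Cor}(X)^f(\Gmn{n})],
\]
obtained by applying the internal hom $[\Gmn{k}, -]$ to the component at $\Gmn{n}$ of the pointwise local fibrant replacement $\cc M_{\Cor}(X) \to \cc M_{\Cor}(X)^f$; compatibility with the respective structure maps follows from the naturality of $[\Gmn{k}, -]$.

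First I would verify that the map $\cc M_{\Cor}(X)(\Gmn{n}) \to \cc M_{\Cor}(X)^f(\Gmn{n})$ is a local quasi-isomorphism in $\Ch(\ShvA)$. Since $F(\Gmn{n})$ is by definition the quotient of $F(\Gmt{n})$ by the sum of images of $F(\iota_{i, n-1})$ and the inclusions $\iota_{i, n-1}$ admit retractions in $\Smk$, the splitting behind Lemma \ref{binomial} identifies both $\cc M_{\Cor}(X)(\Gmn{n})$ and $\cc M_{\Cor}(X)^f(\Gmn{n})$ as compatible direct summands of $\cc M_{\Cor}(X)(\Gmt{n})$ and $\cc M_{\Cor}(X)^f(\Gmt{n})$. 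Because the map on $\Gmt{n}$ is a local quasi-isomorphism by the definition of pointwise local fibrant replacement, and local quasi-isomorphisms are stable under retracts, the induced map on the $\Gmn{n}$-summand is also a local quasi-isomorphism.

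Second I would apply Lemma \ref{Gmpreserveslocal}. The complex $\cc M_{\Cor}(X)(\Gmt{n}) = C_*(\Cor(-, X \times \Gmt{n})_{\nis})$ has $\bb A^1$-invariant cohomology sheaves by the strict $V$-property of $\Cor$, so its direct summand $\cc M_{\Cor}(X)(\Gmn{n})$ does too; the same holds for $\cc M_{\Cor}(X)^f(\Gmn{n})$ since local fibrant replacement preserves cohomology sheaves. Lemma \ref{Gmpreserveslocal} then ensures that $[\Gmn{k}, -]$ sends the local quasi-isomorphism of the previous step to a local quasi-isomorphism, finishing the levelwise check. The main obstacle is the bookkeeping required to match the structure maps of $ev_{\Gm}([\Gmn{k}, \cc M_{\Cor}(X)])$ (constructed via enriched functoriality in $\cc C$) with those of $\Omega^k_{\Gm} ev_{\Gm}(\cc M_{\Cor}(X)^f)$ (constructed via the adjunction $- \otimesShv \Gmn{1} \dashv [\Gmn{1}, -]$); once this identification of levels is in place, the substance of the argument is the direct-summand computation together with Lemma \ref{Gmpreserveslocal}.
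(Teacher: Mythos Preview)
Your proposal is correct and follows essentially the same route as the paper: identify the level-$n$ component of the canonical map as $[\Gmn{k},-]$ applied to the local quasi-isomorphism $\cc M_{\Cor}(X)(\Gmn{n})\to \cc M_{\Cor}(X)^f(\Gmn{n})$, and then invoke Lemma~\ref{Gmpreserveslocal} using that both sides have $\bb A^1$-invariant cohomology sheaves. The only difference in emphasis is that the paper first records that $ev_{\Gm}(\cc M_{\Cor}(X)^f)$ is a levelwise motivically fibrant $\Omega_{\Gm}$-spectrum (via Lemma~\ref{enrichedmotivelocal}) in order to justify the naive levelwise description of $\Omega_{\Gm}^k$, whereas you treat this identification as bookkeeping and instead spell out the direct-summand argument for passing from $\Gmt{n}$ to $\Gmn{n}$; the substance of both arguments is the same.
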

\begin{proof}
	
	Let $M_{\Cor}(X \times -)^f$ be a locally fibrant replacement of $M_{\Cor}(X \times -)$.
	By Lemma \ref{enrichedmotivelocal} we know that $M_{\Cor}(X \times -)^f$ is enriched $\bb A^1_1$-local and enriched $\tau$-local.
	So $M_{\Cor}(X \times -)^f$ is pointwise $\bb A^1$-invariant and satisfies cancellation.
	Since $M_{\Cor}(X \times -)^f$ is pointwise $\bb A^1$-invariant
	it follows that $ev_{\Gm}(M_{\Cor}(X \times -)^f)$ is levelwise motivically fibrant.
	Since $M_{\Cor}(X \times -)^f$ satisfies cancellation, we see that $ev_{\Gm}(M_{\Cor}(X \times -)^f)$ is an $\Omega_{\Gm}$-spectrum.
	So $ev_{\Gm}(M_{\Cor}(X \times -)^f)$ is stably motivically fibrant in $\SpGm(\Ch(\ShvA))$, and hence $\Omega_{\Gm}^k ev_{\Gm}(M_{\Cor}(X \times -)^f)$ can be computed in weight $n$ as
	$$\Omega_{\Gm}^k ev_{\Gm}(M_{\Cor}(X \times -)^f)(n) = [\Gmn{k}, M_{\Cor}(X \times \Gmn{n})^f].$$
	But that is also the $n$-th weight of $ev_{\Gm}([\Gmn{k},M_{\Cor}(X \times -)^f])$.
	So the canonical map
	$$ ev_{\Gm}([\Gmn{k},M_{\Cor}(X \times -)]) \rightarrow \Omega_{\Gm}^k ev_{\Gm}(M_{\Cor}(X \times -)) $$
	is isomorphic to
	$$ ev_{\Gm}([\Gmn{k},M_{\Cor}(X \times -)]) \rightarrow ev_{\Gm}([\Gmn{k},M_{\Cor}(X \times -)^f]).$$
	This is a levelwise local quasi-isomorphism in $\SpGm(\Ch(\ShvA))$, because due to Lemma \ref{Gmpreserveslocal} the functor $[\Gmn{k},-]$ preserves local quasi-isomorphisms between strictly $\bb A^1_1$-local objects.
\end{proof}

To prove Theorem \ref{cthm} and show that the functor $ ev_{\Gm}: DM_{\Cor}[\cc C] \rightarrow DM_{\Cor} $ is an equivalence, we will use \cite[Lemma 4.8]{garkusha2022recollements}, which says the following:
\begin{lem} \label{garkushaLemma}
	
	Let $A$, $B$ be compactly generated triangulated categories. Let $\Sigma$ be a set of compact generators in $A$. Let $F: A \rightarrow B$ be a triangulated functor such that
	
	1. The collection $\{F(X)|X \in \Sigma\}$ is a set of compact generators in $B$
	
	2. For all $X, Y \in \Sigma$ and $n \in \mathbb{Z}$ the map
	$$F_{X,Y[n]} : \mathrm{Hom}_A(X,Y[n]) \rightarrow \mathrm{Hom}_B(F(X),F(Y)[n])$$ is an isomorphism.
	
	Then $F$ is an equivalence of triangulated categories.
\end{lem}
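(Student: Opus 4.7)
The plan is to verify separately that $F$ is fully faithful and essentially surjective, in each case by a standard localizing-subcategory argument on $A$ and $B$.

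For fully faithfulness, I would fix $X\in\Sigma$ and consider the full subcategory $\mathcal{L}_X\subseteq A$ of those $Y$ for which $F_{X[m],Y[n]}$ is an isomorphism for all integers $m,n$. Hypothesis (2) gives $\Sigma\subseteq\mathcal{L}_X$. The five lemma shows $\mathcal{L}_X$ is closed under distinguished triangles, and because $X$ is compact in $A$ and $F(X)$ is compact in $B$, both sides of the comparison map turn coproducts in the second argument into direct sums of abelian groups, so $\mathcal{L}_X$ is closed under coproducts as well. Thus $\mathcal{L}_X$ is a localizing subcategory containing the generators $\Sigma$, forcing $\mathcal{L}_X=A$. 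Now fix an arbitrary $Y\in A$ and consider the full subcategory $\mathcal{K}_Y\subseteq A$ of those $X$ for which $F_{X[m],Y[n]}$ is an isomorphism for all $m,n$. The first step shows $\Sigma\subseteq\mathcal{K}_Y$, and the same closure properties give $\mathcal{K}_Y=A$. Together these yield fully faithfulness of $F$.

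For essential surjectivity, I first want to establish that $F$ preserves arbitrary coproducts. Given a family $\{Y_i\}_{i\in I}$ in $A$, the canonical comparison morphism $\bigoplus_i F(Y_i)\to F(\bigoplus_i Y_i)$ in $B$ can be tested after applying $\mathrm{Hom}_B(F(X)[n],-)$ for $X\in\Sigma$ and $n\in\mathbb{Z}$, since $F(\Sigma)$ is a generating set in $B$. Compactness of $F(X)$ in $B$, fully faithfulness of $F$ (from the previous step), compactness of $X$ in $A$, and fully faithfulness once more together identify this test functor, applied to the comparison map, with the identity on $\bigoplus_i\mathrm{Hom}_A(X[n],Y_i)$. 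Hence $F$ preserves coproducts. The essential image of $F$ is therefore closed under both coproducts and distinguished triangles; it is a localizing subcategory of $B$ containing the compact generating set $F(\Sigma)$, and so it must equal all of $B$. This gives essential surjectivity and completes the proof.

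The only genuinely delicate point is the interaction between coproduct-preservation and fully faithfulness: the argument for coproduct-preservation of $F$ uses fully faithfulness, so I must arrange matters so that fully faithfulness is fully established at the level of \emph{Hom}-isomorphism subcategories \emph{before} it is invoked to deduce coproduct-preservation and hence essential surjectivity. This is why I would carry out the entire first paragraph (purely inside $A$) before turning to the image in $B$ in the second paragraph; everything else is a routine five-lemma-plus-compactness calculation.
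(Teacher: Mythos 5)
There is a genuine circularity in your argument for fully faithfulness. When showing $\mathcal{L}_X$ is closed under coproducts you assert that ``both sides of the comparison map turn coproducts in the second argument into direct sums of abelian groups.'' The source side $\operatorname{Hom}_A(X[m],\bigoplus_i Y_i[n])$ does collapse by compactness of $X$; but the target side is $\operatorname{Hom}_B(F(X)[m],F(\bigoplus_i Y_i)[n])$, and compactness of $F(X)$ only turns this into $\bigoplus_i\operatorname{Hom}_B(F(X)[m],F(Y_i)[n])$ if one already knows $F(\bigoplus_i Y_i)\cong\bigoplus_i F(Y_i)$. That is precisely the coproduct-preservation you establish only in your second paragraph, and you establish it there by invoking the fully-faithfulness that $\mathcal{L}_X=A$ was supposed to deliver. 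The closure of $\mathcal{K}_Y$ under coproducts in the first variable has the same defect (there $F(\bigoplus_i X_i)$ versus $\bigoplus_i F(X_i)$ appears in the source of the Hom). So neither $\mathcal{L}_X$ nor $\mathcal{K}_Y$ is known to be localizing, and the generator argument does not close the loop.

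This is not cosmetic: without a coproduct-preservation hypothesis the statement is false. Take $A=B=D(k)$ for a field $k$, $\Sigma=\{k\}$, and let $F$ be the triangulated endofunctor induced termwise by the exact additive endofunctor $V\mapsto V^{**}$ of $k$-vector spaces. On compacts $F$ is naturally isomorphic to the identity, so conditions (1) and (2) hold; but for an infinite set $I$ the canonical map $\bigoplus_I k\to F(\bigoplus_I k)=(\bigoplus_I k)^{**}$ is injective and far from surjective on $H^0$, so $F$ is not full, let alone an equivalence. The paper merely quotes \cite[Lemma~4.8]{garkusha2022recollements} without reproving it, but the usable form of this recognition principle carries the extra hypothesis that $F$ preserves small coproducts (equivalently, has a right adjoint, by Brown representability). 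That hypothesis is what breaks your circle, and it is harmlessly available in the paper's application since $ev_{\Gm}$ is built out of colimit-preserving evaluation functors. With it assumed from the start, your first paragraph goes through as written, and the second paragraph simplifies since coproduct-preservation no longer needs to be \emph{derived} before concluding essential surjectivity.
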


We are now in a position to prove the main result of this section.
\begin{proof}[Proof of Theorem \ref{cthm}]
	
	We use Lemma \ref{garkushaLemma}.
	Here $A = DM_{\Cor}[\cc C]$ and $B= DM_{\Cor}$ are in fact compactly generated triangulated categories.
	One set of compact generators of $DM_{\Cor}$ is given by $\{ \Omega_{\Gm}^kev_{\Gm}(\cc M_{\Cor}(X)^f) | k \in \mathbb{N}, X \in \Smk  \}$, where $\cc M_{\Cor}(X)^f$ is a pointwise local fibrant replacement of $\cc M_{\Cor}(X)$.
	By Lemma \ref{compactgeneration} the set $$\Sigma := \{[\Gmn{k}, \cc M_{\Cor}(X)] | k \in \mathbb{N}, X \in \Smk \}$$ is a set of compact generators of $DM_{\Cor}[\cc C]$.
	This is the set of compact generators to which we want to apply Lemma \ref{garkushaLemma}.
	We now check the two conditions of that lemma.
	
	To show the first condition we use Lemma \ref{evGmn}:
	For every $A \in \Sigma$ we have an isomorphism
	$$ev_{\Gm}(A)  = ev_{\Gm}([\Gmn{k}, \cc M_{\Cor}(X)]) \overset{\ref{evGmn}}{\cong} \Omega_{\Gm}^kev_{\Gm}(\cc M_{\Cor}(X)^f)$$
	which is one of the compact generators of $DM_{\Cor}$.
	So $$\{ev_{\Gm}(A) | A \in \Sigma \} = \{ \Omega_{\Gm}^kev_{\Gm}(\cc M_{\Cor}(X)^f) | k \in \mathbb{N}, X \in \Smk  \}$$
	which shows condition 1.
	
	Let us now check condition 2.
	Take $\mathcal{X}, \mathcal{Y} \in DM_{\Cor}[\cc C]$ and $n \in \bb Z$. We have to show that $\mathrm{Hom}_{DM_{\Cor}[\cc C]}(\mathcal{X}, \mathcal{Y}[n]) \cong  \mathrm{Hom}_{DM_{\Cor}}(ev_{\Gm}(\mathcal{X}), ev_{\Gm}(\mathcal{Y})[n]).$
	Since $\Sigma$ compactly generates
	$DM_{\Cor}[\cc C]$ it suffices to show this for the case $\mathcal{X} \in \Sigma$. So assume without loss of generality that $\mathcal{X} \in \Sigma$ is of the form $[\Gmn{k}, \cc M_{\Cor}(X)]$ for some $X \in \Smk$ and $k \in \mathbb{N}$.
	Furthermore, we may assume without loss of generality that $\mathcal{Y}$ is $\sim_{\cc C}$-fibrant. So $\mathcal{Y}$ is pointwise motivically fibrant and satisfies cancellation. Then we have with Lemma \ref{GmtimesXcomputation} that
	\begin{multline*}
		\hspace{-10pt}\mathrm{Hom}_{DM_{\Cor}[\cc C]}([\Gmn{k}, \cc M_{\Cor}(X)], \mathcal{Y}[n]) \overset{\ref{GmtimesXcomputation}}{\cong}
		\mathrm{Hom}_{D([\mathscr{C}, \ShvA]) / \sim_{\mathscr{C}}}(\mathscr{C}(\Gmn{k},-) \otimesShv X, \mathcal{Y}[n]) =\\ = \mathrm{Hom}_{D([\mathscr{C}, \ShvA]) / \sim_{\mathscr{C}}}(\mathscr{C}(\Gmn{k},-), \inthom{\ShvA}{\embed{X}}{\mathcal{Y}}[n]) = H_n(\mathcal{Y}(\Gmn{k})(X)).	
	\end{multline*}
	By Lemma \ref{evGmn} we have an isomorphism $$ ev_{\Gm}([\Gmn{k},\cc M_{\Cor}(X)]))  \overset{\ref{evGmn}}{\cong} \Omega^k_{\Gm}ev_{\Gm}(\cc M_{\Cor}(X)^f).$$
	Since $\mathcal{Y}$ satisfies cancellation, $ev_{\Gm}(\mathcal{Y})$ is an $\Omega_{\Gm}$-spectrum, hence $ev_{\Gm}(\mathcal{Y}) \cong \Omega^k_{\Gm} ev_{\Gm}(\mathcal{Y})(k).$
	Since $\mathcal{Y}$ is pointwise motivically fibrant, it follows that $ev_{\Gm}(\mathcal{Y})$ is stably motivically fibrant in $DM_{\Cor}$.
	Therefore,
	\begin{align*}
		\mathrm{Hom}_{DM_{\Cor}}(ev_{\Gm}([\Gmn{k},\cc M_{\Cor}(X)]), ev_{\Gm}(\mathcal{Y})[n]) \cong \\
		\mathrm{Hom}_{DM_{\Cor}}(\Omega^k_{\Gm}ev_{\Gm}(\cc M_{\Cor}(X)^f), ev_{\Gm}(\mathcal{Y})[n]) \cong \\
		\mathrm{Hom}_{DM_{\Cor}}(\Omega^k_{\Gm}ev_{\Gm}(\cc M_{\Cor}(X)^f), \Omega^k_{\Gm} ev_{\Gm}(\mathcal{Y})(k)[n]) \cong \\
		\mathrm{Hom}_{DM_{\Cor}}(ev_{\Gm}(\cc M_{\Cor}(X)^f),  ev_{\Gm}(\mathcal{Y})(k)[n]) \cong\\
		\mathrm{Hom}_{DM_{\Cor}}(\Sigma_{\Gm}^\infty X_+,  ev_{\Gm}(\mathcal{Y})(k)[n])
		\cong H_n(\mathcal{Y}(\Gmn{k})(X)) .
	\end{align*}
	We use here the fact that $ev_{\Gm}(\cc M_{\Cor}(X)^f)$ is a stably motivically fibrant replacement of $\Sigma_{\Gm}^\infty X_+$.
	We have verified all the conditions of Lemma \ref{garkushaLemma}.
	So $ev_{\Gm}: DM_{\Cor}[\cc C] \rightarrow DM_{\Cor}$ is an equivalence of triangulated categories.
	In particular, we have a zig-zag of equivalences
	$$D([\mathscr{C}, \ShvA]) / \sim_{\mathscr{C}} \overset{\sim}{\leftarrow} DM_{\Cor}[\cc C] \overset{\sim}{\rightarrow} DM_{\Cor} .$$ This completes the proof of Theorem \ref{cthm}.	
\end{proof}

\section{Converting motivic equivalences to local equivalences}\label{mottolocalsection}

Our next goal is to prove  Theorem \ref{mainthm}.
To this end, we prove some lemmas in this and the next section.
Theorem \ref{mottolocal} from this section will be crucial for proving Theorem \ref{RO}, and Theorem \ref{RO} will be crucial for proving Theorem \ref{mainthm}.

Let $\mathscr{M}$ be the category of motivic spaces and $f\mathscr{M}$ the category of finitely presented motivic spaces defined in \cite{dundas2003motivic}.
Then $\mathscr{M}$ has a motivic model structure, as defined in \cite[Theorem 2.12]{dundas2003motivic}. The weak equivalences in this model structure are called motivic equivalences.

Given a $\Ch(\ShvA)$-enriched functor $G: \Sm \rightarrow \Ch(\ShvA)$,
we can extend $G$ to a (non-enriched) functor $\hat{G}: f\mathscr{M} \rightarrow \Ch(\ShvA)$ in the following way.
We can apply $G$ levelwise to simplicial objects to get a functor $$G^{\Delta^{op}} : \Delta^{op}\Sm \rightarrow \Delta^{op}\Ch(\ShvA) .$$ 

For a finite pointed set $n_+=\{0,\dots,n\}$ and $U \in \Smk$ we write $n_+ \otimes U$ for the $n$-fold coproduct $\underset{i=1}{\overset{n}{\coprod}} U$.
We first extend it to $G: f\mathscr{M} \rightarrow \Delta^{op}\Ch(\ShvA)$ by
$$G(A) := \underset{(\Delta[n]\times U)_+ \rightarrow A^c}{\mathrm{colim}} G^{\Delta^{op}}(\Delta[n]_+ \otimes U),$$
where $A^c$ is a cofibrant replacement of $A$ in $f\cc M$.
We then compose it with the Dold-Kan correspondence $$DK^{-1}: \Delta^{op}\Ch(\ShvA) \rightarrow \Ch_{\geq 0}(\Ch(\ShvA))$$ and the total complex functor $$\mathrm{Tot} :  \Ch_{\geq 0}(\Ch(\ShvA)) \rightarrow \Ch(\ShvA),\quad\mathrm{Tot}(X)_n := \underset{k+l=n}{\bigoplus} X_{k,l},$$ to obtain a functor 
$$\hat{G}: f\mathscr{M} \rightarrow \Ch(\ShvA)$$
\begin{equation}\label{hatdef}
	\hat{G}(A) := \mathrm{Tot}(DK^{-1}( \underset{(\Delta[n]\times U)_+ \rightarrow A^c}{\mathrm{colim}} G^{\Delta^{op}}(\Delta[n]_+ \otimes U)  )).
\end{equation}
Note that for $U \in \Smk$ we have $\hat{G}(U_+) \cong G(U)$.

Throughout this section let $F : \Sm \rightarrow \Ch(\ShvA)$ be an enriched functor that is $\sim$-fibrant in $\Ch([\Sm,\ShvA])$.
This means that $F$ is pointwise locally fibrant, satisfies Nisnevich excision in the sense of Definition \ref{nisdef}, and for every $X \in \Smk$ there are natural quasi-isomorphisms
$F(X \times \mathbb{A}^1) \rightarrow F(X)$,
$F(\Gmn{n}) \rightarrow [\Gmn{1},F(\Gmn{n+1})]$
in $\Ch(\ShvA)$, and for every $X, U \in \Smk$ a natural quasi-isomorphism
$$F(X)(U) \rightarrow F(X)(U \times \mathbb{A}^1)$$
in $\Ch(\Ab)$.
By the above construction we can extend $F$ to a functor $\hat{F} : f\mathscr{M} \rightarrow \Ch(\ShvA)$.

In this section we prove the following theorem.

\begin{thm} \label{mottolocal}
	$\hat{F}$ sends motivic equivalences in $f\mathscr{M}$ to local quasi-isomorphisms in $\Ch(\ShvA)$.
\end{thm}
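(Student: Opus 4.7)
The plan is to reduce to generating motivic equivalences. The motivic model structure on $f\mathscr M$ from~\cite{dundas2003motivic} is a left Bousfield localization of a ``global'' projective model structure on simplicial presheaves with respect to three basic classes of maps: simplicial trivial cofibrations, the maps $p_Q : s_Q \to t_Q$ associated to elementary Nisnevich squares $Q$, and the $\mathbb{A}^1$-projections $(X\times \mathbb{A}^1)_+ \to X_+$. Since local quasi-isomorphisms in $\Ch(\ShvA)$ are closed under pushouts (along degreewise split monomorphisms), transfinite compositions, retracts and the two-out-of-three property, it suffices to check that $\hat F$ sends each of these three classes of generating trivial cofibrations to local quasi-isomorphisms in $\Ch(\ShvA)$. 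Indeed, any motivic equivalence in $f\mathscr M$ can then be detected on mapping complexes into a motivically fibrant target, and this is what the three conditions encode.

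First I would handle simplicial weak equivalences. The functor $\hat F$ is built from the levelwise extension $F^{\Delta^{op}}$, followed by a coend over $(\Delta[n] \times U)_+ \to A^c$, then by $DK^{-1}$ and the total complex. Both $DK^{-1}$ and $\mathrm{Tot}$ preserve levelwise quasi-isomorphisms of simplicial objects in $\Ch(\ShvA)$. Since $F$ is pointwise locally fibrant, the coend computes the homotopy colimit up to local quasi-isomorphism, so $\hat F$ carries simplicial equivalences between cofibrant objects to local quasi-isomorphisms. The role of the cofibrant replacement $A^c$ is precisely to avoid pathologies here.

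Next, for an elementary Nisnevich square $Q$, the hypothesis that $F$ satisfies Nisnevich excision (Definition~\ref{nisdef}) says that $F$ sends $Q$ to a homotopy cartesian square in $\Ch(\ShvA)$. Unwinding the construction of $s_Q$ and $t_Q$ from Definition~\ref{localprojdef} as iterated homotopy pushouts of representables, and using that $\hat F$ commutes with finite homotopy colimits of representable pieces (by the previous paragraph), one identifies $\hat F(p_Q)$ up to local quasi-isomorphism with the induced map from the homotopy pushout of $F(U^\prime) \to F(U), F(X^\prime)$ to $F(X)$; Nisnevich excision makes this a local quasi-isomorphism. For the $\mathbb{A}^1$-projections, $\hat F((X\times \mathbb{A}^1)_+) \cong F(X\times \mathbb{A}^1) \to F(X) \cong \hat F(X_+)$ is a local quasi-isomorphism by the covariant $\mathbb{A}^1$-invariance of $F$ coming from strict $\mathbb{A}^1_2$-locality.

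The main obstacle I expect is the first step, namely establishing that the coend construction defining $\hat F$ is homotopy invariant in the input $A^c$. One has to verify that the coend really computes the left Kan extension up to local quasi-isomorphism, and that passing from $A$ to a cofibrant replacement $A^c$ does not alter $\hat F$ up to local quasi-isomorphism. Once this technical ``realization lemma'' is in place, the other two cases are direct translations of the $\sim$-fibrancy hypotheses on $F$, and the theorem follows by assembling the three verifications via the generating trivial cofibrations of the motivic model structure on $f\mathscr M$.
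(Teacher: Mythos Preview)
Your high-level strategy---reduce to a generating set $J_{mot}$ for the motivic trivial cofibrations and then run a small object argument---is exactly what the paper does. But two concrete pieces are missing and they are the entire technical content of the proof.

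First, the generating sets $J_{\mathbb{A}^1}$ and $J_{nis}$ in $f\mathscr M$ are not the bare maps $(X\times\mathbb{A}^1)_+\to X_+$ and $s_Q\to t_Q$: they are the pushout products of these maps with the boundary inclusions $\partial\Delta[n]_+\hookrightarrow\Delta[n]_+$. So it is not enough to check that $\hat F(s_Q)\to\hat F(t_Q)$ and $\hat F((X\times\mathbb A^1)_+)\to\hat F(X_+)$ are local quasi-isomorphisms; you must also show that $\hat F$ takes the pushout product
\[
\Delta[n]_+\wedge A \;\underset{\partial\Delta[n]_+\wedge A}{\coprod}\;\partial\Delta[n]_+\wedge B \;\longrightarrow\; \Delta[n]_+\wedge B
\]
to a local quasi-isomorphism whenever $\hat F(A)\to\hat F(B)$ is one. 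The paper does this via two lemmas: one showing that $\hat F$ sends \emph{elementary pushout squares} (pushouts along termwise coprojections in $\Delta^{op}\Sm$) to homotopy pushout squares in $\Ch(\ShvA)$, and one showing that smashing with a finite simplicial set commutes with $\hat F$ up to natural chain homotopy equivalence. Your sketch does not account for this pushout-product shape, and the closure properties you invoke (pushouts along degreewise split monomorphisms, etc.) apply in $\Ch(\ShvA)$ only \emph{after} you know $\hat F$ of the generators are local quasi-isomorphisms and that $\hat F$ of the relevant cofibrations are degreewise split monos---neither of which you establish.

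Second, your ``realization lemma'' is the key step, but the justification you give (``$F$ is pointwise locally fibrant, so the coend computes the homotopy colimit'') is not how it is proved and is not obviously true as stated. The paper's argument is explicit: because $F$ is $\ShvA$-enriched it sends finite coproducts of schemes to direct sums, so $F^{\Delta^{op}}(K_+\otimes X_\bullet)\cong \mathbb Z^{(K)}\otimes F^{\Delta^{op}}(X_\bullet)$ in $\Delta^{op}\Ch(\ShvA)$; then the Eilenberg--Zilber/Alexander--Whitney deformation retraction gives a natural chain homotopy equivalence $\hat F(K_+\wedge A)\simeq \mathbb Z[K]\otimes\hat F(A)$. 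This is what makes both the simplicial case ($\Lambda^r[n]\hookrightarrow\Delta[n]$) and the pushout-product argument above go through. You should replace the vague homotopy-colimit claim with this concrete Eilenberg--Zilber statement.
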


The proof is like that of \cite[Theorem 4.2]{garkusha2019framed} and requires several lemmas.

\begin{lem} \label{Fcoprod}
	Let $H : \Sm \rightarrow \ShvA$ be a $\ShvA$-enriched functor.
	Then $H(\emptyset) \cong 0$
	and for all $U, V \in \Sm$
	$H(U \coprod V) \cong H(U) \oplus H(V)$
	in $\ShvA$.
	In particular, if $G : \Sm \rightarrow \Ch(\ShvA)$ is a $\Ch(\ShvA)$-enriched functor we have $G(\emptyset) \cong 0$ and $G(U \coprod V) \cong G(U) \oplus G(V)$ in $\Ch(\ShvA)$. 
\end{lem}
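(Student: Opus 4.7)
The plan is to first establish the claim for the canonical embedding $I: \Sm \rightarrow \ShvA$ (i.e., $I(\emptyset) = 0$ and $I(U \coprod V) \cong I(U) \oplus I(V)$ in $\ShvA$), and then promote it to an arbitrary $\ShvA$-enriched functor by using that such a functor must preserve the enriched biproduct structure.

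For the first step, I invoke axiom (2) of Definition \ref{tak}. The square
$$\xymatrix{\emptyset \ar[r] \ar[d] & V \ar[d] \\ U \ar[r] & U \coprod V}$$
is an elementary Nisnevich square in $\Smk$, so the axiom provides a short exact sequence
$$0 \to \Cor(-,\emptyset)_{\nis} \to \Cor(-,U)_{\nis} \oplus \Cor(-,V)_{\nis} \to \Cor(-, U \coprod V)_{\nis} \to 0$$
in $\ShvA$, together with the equality $\Cor(-,\emptyset)_{\nis} = 0$. This immediately yields $I(\emptyset) \cong 0$ and $I(U \coprod V) \cong I(U) \oplus I(V)$, concretely witnessed by the inclusion morphisms $\iota_U, \iota_V$ and matching projections $p_U, p_V$ in $\ShvA$ that satisfy the standard biproduct identities $p_X \iota_X = 1$, $p_X \iota_Y = 0$ (for $X \neq Y$), and $\iota_U p_U + \iota_V p_V = 1_{U \coprod V}$.

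For the second step I use that $\Sm(X,Y) = \inthom{\ShvA}{I(X)}{I(Y)}$, so the morphisms $\iota_U, \iota_V, p_U, p_V$ are global sections of the relevant $\Sm$-hom objects, and the biproduct identities above hold in the $\ShvA$-enriched category $\Sm$. An $\ShvA$-enriched functor $H$ comes with morphisms $H_{X,Y}: \Sm(X,Y) \to \inthom{\ShvA}{H(X)}{H(Y)}$ that respect composition, units, and the additive structure inherited from $\ShvA$; consequently these equations are preserved after applying $H$. Hence $H(\iota_U), H(\iota_V), H(p_U), H(p_V)$ realize $H(U \coprod V)$ as the biproduct $H(U) \oplus H(V)$ in $\ShvA$. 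Similarly, $1_{\emptyset} \in \Sm(\emptyset,\emptyset) = 0$ is the zero element, and enriched functoriality sends it to $1_{H(\emptyset)}$, forcing $1_{H(\emptyset)} = 0$ and therefore $H(\emptyset) \cong 0$.

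The chain complex version for $G: \Sm \rightarrow \Ch(\ShvA)$ follows by the same argument with $\Ch(\ShvA)$ in place of $\ShvA$, or equivalently by applying the $\ShvA$-enriched case degreewise after composing with the inclusion $\ShvA \hookrightarrow \Ch(\ShvA)$ as complexes concentrated in degree zero. The main (minor) obstacle is making the ``enriched functors preserve biproducts'' step clean: this is standard once one observes that in the additive enrichment the biproduct is characterized by an equational system in the hom-objects, and enriched functoriality transfers such equations verbatim.
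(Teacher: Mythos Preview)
Your proof is correct but follows a genuinely different route from the paper. The paper expresses $H(U)$ via the enriched co-Yoneda lemma as a coend $\int^{X}H(X)\otimes\Sm(X,U)$, then plugs in the decomposition $\Cor(-,U\coprod V)_{\nis}\cong\Cor(-,U)_{\nis}\oplus\Cor(-,V)_{\nis}$ and distributes the coend over the direct sum. You instead establish the same decomposition of $I(U\coprod V)$ (via the elementary Nisnevich square on $\emptyset\to V$, $U\to U\coprod V$), and then invoke the equational characterisation of biproducts: since the underlying ordinary functor of any $\ShvA$-enriched functor is automatically $\Ab$-enriched (global sections of a morphism in $\ShvA$ are a group homomorphism), the biproduct identities $p_X\iota_X=1$, $p_X\iota_Y=0$, $\iota_Up_U+\iota_Vp_V=1$ transfer verbatim. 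Your argument is the classical ``additive functors preserve finite biproducts and zero objects'' reasoning and is arguably more elementary; the paper's co-Yoneda approach is more uniform in its handling of the enrichment but requires the reader to be comfortable with coend manipulations.
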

\begin{proof}
	By the $\ShvA$-enriched co-Yoneda lemma we can write $H$ as the following co-end: for $U \in \Sm$ we have
	$$H(U) \cong \overset{X \in \Sm}{\int}H(X) \otimes \Sm(X,U) \cong \overset{X \in \Sm}{\int}H(X) \otimes
	\Cor(-,U)_{\nis}(X\times -).$$
	By Definition \ref{tak} Axiom (3),  we have $\Cor(-,\emptyset)_{\nis} = 0$ and for all $U, V \in \Smk$, $$\Cor(-, U \coprod V)_{\nis} \cong \Cor(-,U)_{\nis} \oplus \Cor(-,V)_{\nis} .$$
	This implies that
	$$H(\emptyset) \cong \overset{X \in \Sm}{\int}H(X) \otimes \Cor(-,\emptyset)_{\nis}(X\times -) = \overset{X \in \Sm}{\int}H(X) \otimes 0 = 0 $$
	and for all $U, V \in \Smk$,
	\begin{align*}
		H(U \coprod V) \cong \overset{X \in \Sm}{\int}H(X) \otimes \Cor(-,U \coprod V)_{\nis}(X\times -) \cong \\ \overset{X \in \Sm}{\int}H(X) \otimes (\Cor(-,U)_{\nis}(X\times -) \oplus \Cor(-,V)_{\nis}(X\times -) )\cong \\ (\overset{X \in \Sm}{\int}H(X) \otimes \Cor(-,U)_{\nis}(X\times -)) \oplus(\overset{X \in \Sm}{\int}H(X) \otimes \Cor(-,V)_{\nis}(X\times -)) \cong\\ \cong H(U) \oplus H(V)	
	\end{align*}
	as required.
\end{proof}
\begin{cor} \label{gammacorollary}
	Let $G : \Sm \rightarrow \Ch(\ShvA)$ be a $\Ch(\ShvA)$-enriched functor. Then for every $n \in \bb N, U \in \Smk$ the canonical map
	$$G(n_+ \otimes U) = G(\underset{i=1}{\overset{n}{\coprod}} 1_+ \otimes U) \rightarrow \underset{i=1}{\overset{n}{\bigoplus}} G(1_+ \otimes U) = \underset{i=1}{\overset{n}{\bigoplus}} G(U) $$ is an isomorphism.
\end{cor}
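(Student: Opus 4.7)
The plan is to derive the corollary as an immediate consequence of Lemma \ref{Fcoprod} by induction on $n$. Since $n_+ \otimes U$ is, by definition, the $n$-fold coproduct $\coprod_{i=1}^n U$ in $\Smk$, the statement should follow from the fact that a $\Ch(\ShvA)$-enriched functor sends finite coproducts to finite direct sums, and this is precisely what Lemma \ref{Fcoprod} establishes in the binary case (together with the case of the empty coproduct).

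First I would handle the base cases $n=0$ and $n=1$ separately. For $n = 0$, the object $0_+ \otimes U$ is empty (it is the empty coproduct), so $G(0_+ \otimes U) = G(\emptyset) \cong 0$ by Lemma \ref{Fcoprod}, which matches the empty direct sum on the right-hand side. For $n = 1$, the map is the identity $G(U) \to G(U)$, which is trivially an isomorphism.

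For the inductive step, assuming the result for $n$, I would write $(n+1)_+ \otimes U \cong (n_+ \otimes U) \coprod U$ in $\Smk$ and apply Lemma \ref{Fcoprod} to obtain
\[
G((n+1)_+ \otimes U) \cong G(n_+ \otimes U) \oplus G(U) \cong \bigoplus_{i=1}^n G(U) \oplus G(U) \cong \bigoplus_{i=1}^{n+1} G(U),
\]
where the middle isomorphism uses the inductive hypothesis. The only thing left to verify is that the isomorphism so obtained coincides with the canonical map induced by the coproduct inclusions $U \hookrightarrow n_+ \otimes U$; this is a direct naturality check, since both the isomorphism of Lemma \ref{Fcoprod} and the inductive isomorphism are built functorially from the coproduct inclusions.

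I do not anticipate any real obstacle here: the corollary is essentially a formal bookkeeping statement, and the substantive content lies entirely in Lemma \ref{Fcoprod}. The only mild subtlety is cleanly identifying the map produced by the inductive construction with the canonical map from the statement, but this is immediate from the naturality of the decomposition $G(U \coprod V) \cong G(U) \oplus G(V)$ in $U$ and $V$.
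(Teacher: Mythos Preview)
Your proposal is correct and matches the paper's approach: the paper states the corollary immediately after Lemma \ref{Fcoprod} with no proof, treating it as the obvious $n$-fold iteration of the binary decomposition $G(U \coprod V) \cong G(U) \oplus G(V)$ together with $G(\emptyset) \cong 0$. Your inductive argument is exactly the standard way to make this explicit.
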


Recall that $\Delta^{op}\Ab$ is monoidal with respect to the degreewise tensor product, and $\Ch_{\geq 0}(\Ab)$ is monoidal with respect to the usual tensor product of chain complexes.
\begin{lem}\label{lemmaDoldKanTensor}
	The Dold-Kan equivalence
	$DK^{-1} : \Delta^{op}\Ab \rightarrow \Ch_{\geq 0}(\Ab)$
	preserves tensor products up to chain homotopy equivalence in the following sense.
	There are maps
	$$\nabla_{A,B} : DK^{-1}(A) \otimes DK^{-1}(B) \rightarrow DK^{-1}(A \otimes B) $$
	$$\Delta_{A,B} : DK^{-1}(A \otimes B) \rightarrow DK^{-1}(A) \otimes DK^{-1}(B) $$
	natural in $A, B$,
	such that $\Delta_{A,B} \circ \nabla_{A,B} = id_{DK^{-1}(A) \otimes DK^{-1}(B)}$, and there is a chain homotopy $\nabla_{A,B} \circ \Delta_{A,B} \sim id_{DK^{-1}(A \otimes B)}$. 
	This chain homotopy is natural in the following sense: for all maps $f : A \rightarrow A^\prime$, $g : B \rightarrow B^\prime$ the chain homotopy between the maps $DK^{-1}(f \otimes g) \circ \nabla_{A,B} \circ \Delta_{A,B} \sim DK^{-1}(f \otimes g)$ encoded by the diagram
	$$\xymatrix{DK^{-1}(A \otimes B) \ar[r]^(.4){\Delta}  \ar[d]_{DK^{-1}(f  \otimes g)} & DK^{-1}(A) \otimes DK^{-1}(B) \ar[r]^(.6){\nabla} & DK^{-1}(A \otimes B) \ar[d]^{DK^{-1}(f  \otimes g)}\\
		DK^{-1}(A^\prime \otimes B^\prime) \ar[r]^(.4){\Delta}  \ar@/_1.5pc/[rr]^{\simeq}_{id_{DK^{-1}(A^\prime \otimes B^\prime)}  }  & DK^{-1}(A^\prime) \otimes DK^{-1}(B^\prime) \ar[r]^(.6){\nabla} & DK^{-1}(A^\prime \otimes B^\prime) }$$
	is equal to the chain homotopy between the maps $DK^{-1}(f \otimes g) \circ \nabla_{A,B} \circ \Delta_{A,B} \sim DK^{-1}(f \otimes g)$ encoded by the diagram 
	\footnotesize
	$$\xymatrix{DK^{-1}(A \otimes B) \ar[r]^(.4){\Delta}  \ar@/_1.5pc/[rr]^{\simeq}_{id_{DK^{-1}(A \otimes B)} }  & DK^{-1}(A) \otimes DK^{-1}(B) \ar[r]^(.6){\nabla} & DK^{-1}(A \otimes B) \ar[rr]^{DK^{-1}(f \otimes g)} & & DK^{-1}(A^\prime \otimes B^\prime) }$$
	\normalsize
\end{lem}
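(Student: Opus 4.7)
The plan is to invoke the classical Eilenberg--Zilber theorem, which gives precisely the data the lemma asks for. I would first define $\Delta_{A,B}$ to be the Alexander--Whitney map: on an $n$-simplex $a \otimes b \in A_n \otimes B_n$ it sends $a \otimes b$ to $\sum_{p+q=n} d^{q}_{\mathrm{back}}(a) \otimes d^{p}_{\mathrm{front}}(b)$, where $d^q_{\mathrm{back}} = d_{p+1}\cdots d_{n}$ and $d^p_{\mathrm{front}} = d_0^p$, and then restrict this to the normalised complex $DK^{-1}$. Dually, I would define $\nabla_{A,B}$ to be the Eilenberg--MacLane shuffle map: on a pair of normalised chains $(a,b) \in DK^{-1}(A)_p \otimes DK^{-1}(B)_q$ it returns $\sum_{(\mu,\nu)} \mathrm{sgn}(\mu,\nu)\, s_\nu(a) \otimes s_\mu(b)$, the signed sum running over all $(p,q)$-shuffles. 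Both formulas are manifestly natural in $A$ and $B$, and a standard computation (or reference to Mac~Lane's Homology VIII.8 or Weibel 8.5) shows that each is a chain map, and that $\Delta_{A,B} \circ \nabla_{A,B} = \mathrm{id}$ on the nose.

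The nontrivial point is the chain homotopy $\nabla \circ \Delta \simeq \mathrm{id}$ together with the required naturality in the morphisms $f : A \to A'$ and $g : B \to B'$. Here I would use the method of acyclic models applied to the functor category $\mathrm{Fun}(\Delta^{op}\!\Ab \times \Delta^{op}\!\Ab, \Ch_{\geq 0}(\Ab))$, with models the pair of representables $(\mathbb{Z}[\Delta[p]], \mathbb{Z}[\Delta[q]])$ for $p,q \geq 0$. The two natural chain maps $\nabla \circ \Delta$ and $\mathrm{id}$ both land in a functor which is representable-acyclic in positive degree (the normalised chain complex of $\Delta[p] \times \Delta[q]$ is contractible above degree zero), and both agree in degree $0$. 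The acyclic models theorem therefore produces a chain homotopy $h_{A,B}$ between them which is natural in the pair $(A,B)$; this is precisely the naturality condition spelled out in the statement, since for any $f : A \to A'$, $g : B \to B'$ one has $DK^{-1}(f \otimes g) \circ h_{A,B} = h_{A',B'} \circ DK^{-1}(f \otimes g)$, and both diagrams in the statement compute the boundary of this single natural chain homotopy.

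The main obstacle, as always with acyclic models, is keeping track of what is actually natural. Concretely, one must apply acyclic models in the category of \emph{bifunctors} on $\Delta^{op}\Ab \times \Delta^{op}\Ab$ valued in $\Ch_{\geq 0}(\Ab)$, not merely on the category of individual simplicial abelian groups; otherwise the resulting homotopy is only natural after fixing one argument. Working bifunctorially requires choosing the ``bimodels'' $\mathbb{Z}[\Delta[p]] \boxtimes \mathbb{Z}[\Delta[q]]$ and verifying that the target bifunctor $(A,B) \mapsto DK^{-1}(A \otimes B)$ is acyclic on these bimodels, which follows from the fact that $DK^{-1}$ applied to a product of standard simplices is the normalised chain complex of a contractible simplicial set and hence acyclic in positive degree. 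Once this bifunctorial acyclicity is in hand, the standard inductive construction of the chain homotopy goes through unchanged and yields the naturality square displayed in the statement, completing the proof.
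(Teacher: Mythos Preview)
Your approach is correct and is a standard alternative to the paper's. The paper's proof simply cites the Eilenberg--Zilber/Alexander--Whitney deformation retraction for the existence of $\nabla$, $\Delta$, and the homotopy, and then for the naturality of the homotopy points to the explicit combinatorial formula for the Shih operator given in Gonz\'alez-D\'iaz--Real; since that formula is written entirely in terms of face and degeneracy maps, naturality in $A$ and $B$ is immediate by inspection. You instead obtain the homotopy abstractly via acyclic models on the bifunctor category, where naturality is built into the construction rather than verified after the fact.

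Both routes are classical. The paper's is more concrete and makes the homotopy explicit, which is occasionally useful downstream. Yours is cleaner conceptually but requires a word of care: the usual freeness hypothesis of acyclic models is stated for functors out of a category of \emph{sets} (or spaces), and the bifunctor $(A,B)\mapsto N(A\otimes B)_n$ on $\Delta^{\op}\Ab\times\Delta^{\op}\Ab$ is not literally ``free on models'' in that sense. One fixes this either by first running the argument on simplicial sets (where freeness is clear) and then extending by additivity and colimits, or by invoking the additive-functor version of acyclic models. You allude to this issue in your last paragraph, so the gap is not serious, but it should be made explicit.
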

\begin{proof}
	Everything except for the naturality of the chain homotopy follows from \cite{nlab:eilenberg-zilber/alexander-whitney_deformation_retraction}. The functor $DK^{-1}$ is the normalized Moore complex, the map $\Delta_{A,B}$ is the Alexander-Whitney map and $\nabla_{A,B}$ is the Eilenberg-Zilber map.
	In \cite[page 7]{gonzalez1999combinatorial} one can find explicit formulas for both of these maps, and one can also find an explicit formula for the chain homotopy $\nabla_{A,B} \otimes \Delta_{A,B} \sim id_{DK^{-1}(A \otimes B)}$, which is called the Shih operator in that paper.
	Using that explicit formula one can easily verify the naturality of the chain homotopy.
\end{proof}

Given a simplicial set $K \in \Delta^{op}\Set$ we can form the free simplicial abelian group $\bb Z^{(K)} \in \Delta^{op}\Ab$ and then apply the Dold-Kan equivalence $DK^{-1} : \Delta^{op}\Ab \rightarrow \Ch_{\geq 0}(\Ab)$ to get a chain complex which we will denote by $\bb Z[K]$:
\begin{equation} \label{ZKdefinition}
	\bb Z[K] := DK^{-1}(\bb Z^{(K)}) \in \Ch(\Ab).
\end{equation}
The chain complex $\bb Z[K]$ is degreewise free.
For example, with this notation $\bb Z[S^n]$ is the chain complex that is $\bb Z$ concentrated in homological degree $n$.

\begin{lem} \label{FKUeqKFU}
	Let $G : \Sm \rightarrow \Ch(\ShvA)$ be a $\Ch(\ShvA)$-enriched functor. For every finite simplicial set $K$ and every $A \in f\cc M$ we have a chain homotopy equivalence
	$$\hat{G}(K_+ \wedge A) \overset{\sim}{\rightarrow} \bb Z[K] \otimes \hat{G}(A)  $$
	in $\Ch(\ShvA)$ which is natural in $K$ and $A$. The chain homotopies here are also also natural in $K$ and $A$, just like the chain homotopy from Lemma \ref{lemmaDoldKanTensor}.
\end{lem}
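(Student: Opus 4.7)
The plan is to reduce the problem to a direct computation on basic cells of $f\cc M$, namely motivic spaces of the form $(L \times U)_+$ for $L$ a finite simplicial set and $U \in \Smk$, and then to extend to all $A \in f\cc M$ by cellular induction. For the basic case, I first unwind the definition \eqref{hatdef} of $\hat G$: the simplicial object in $\Ch(\ShvA)$ obtained by applying $G^{\Delta^{op}}$ to a cofibrant replacement of $K_+ \wedge U_+ \cong (K \times U)_+$ has $n$-simplices $G(K_n \otimes U)$, and Corollary \ref{gammacorollary} identifies this with $\bb Z^{(K_n)} \otimes G(U)$. Since the face and degeneracy maps come from those of $K$, the resulting simplicial object equals $\bb Z^{(K)} \otimes G(U)$, with $G(U)$ viewed as a constant in the simplicial direction.

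Because $G(U)$ is constant in the simplicial direction, a direct check shows that the normalized Moore construction commutes strictly with the tensor product: one has $DK^{-1}(\bb Z^{(K)} \otimes G(U)) \cong \bb Z[K] \otimes G(U)$ as double complexes, since the kernels defining the normalization are computed degreewise in the $G(U)$-coefficient. Taking total complex yields a natural isomorphism $\hat G(K_+ \wedge U_+) \cong \bb Z[K] \otimes G(U) \cong \bb Z[K] \otimes \hat G(U_+)$, which settles the case $A = U_+$. For $A = (L \times U)_+$ with $L$ a finite simplicial set, the same computation gives $\hat G(K_+ \wedge A) \cong \bb Z[K \times L] \otimes G(U)$ and $\bb Z[K] \otimes \hat G(A) \cong \bb Z[K] \otimes \bb Z[L] \otimes G(U)$. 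The Eilenberg--Zilber and Alexander--Whitney maps from Lemma \ref{lemmaDoldKanTensor}, applied to $\bb Z^{(K)}$ and $\bb Z^{(L)}$ and then tensored with $G(U)$, provide the required natural chain homotopy equivalence $\bb Z[K \times L] \otimes G(U) \simeq \bb Z[K] \otimes \bb Z[L] \otimes G(U)$ together with natural chain homotopies witnessing it.

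For a general $A \in f\cc M$, the cofibrant replacement $A^c$ is a cell complex built from basic cells $(\Delta[n] \times V)_+$. Both functors $\hat G(K_+ \wedge -)$ and $\bb Z[K] \otimes \hat G(-)$ preserve the relevant colimits: $K_+ \wedge -$ and $\bb Z[K] \otimes -$ are left adjoints, while $\hat G$ is itself defined via a colimit over the cells of $A^c$ and hence commutes with this cellular presentation. The equivalences on basic cells thus assemble into a natural chain homotopy equivalence $\hat G(K_+ \wedge A) \simeq \bb Z[K] \otimes \hat G(A)$ for all $A \in f\cc M$. The main obstacle is to verify that the chain homotopies glue coherently along the cellular filtration; this is precisely where the naturality clause in Lemma \ref{lemmaDoldKanTensor} becomes crucial, since it ensures that the local Eilenberg--Zilber homotopies assemble into a global chain homotopy with the required naturality in $K$ and $A$.
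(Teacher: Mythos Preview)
Your argument is essentially correct but takes a more roundabout route than the paper's. The paper avoids cellular induction entirely: it writes $A^c$ as a filtered colimit of simplicial schemes $X_i \in \Delta^{op}\Smk$, uses Corollary~\ref{gammacorollary} to identify $G^{\Delta^{op}}(K_+ \otimes X_i) \cong \bb Z^{(K)} \otimes G^{\Delta^{op}}(X_i)$ as simplicial objects in $\Ch(\ShvA)$, applies Eilenberg--Zilber (Lemma~\ref{lemmaDoldKanTensor}) once to this tensor product, and then passes to the filtered colimit, where natural chain homotopies assemble automatically. In effect the paper applies Alexander--Whitney to $\bb Z^{(K)}$ against the whole simplicial object $G^{\Delta^{op}}(X_i)$, not just against $\bb Z^{(L)}$ on individual basic cells; this treats every cofibrant $A^c$ uniformly and makes naturality in $A$ immediate. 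Your approach works too --- the gluing succeeds because $\hat G$, being a left Kan extension followed by cocontinuous functors, preserves colimits of cofibrant objects, and natural chain homotopies pass to colimits --- but the bookkeeping is heavier and naturality in $A$ has to be argued rather than coming for free. One small correction: your justification ``the kernels defining the normalization are computed degreewise in the $G(U)$-coefficient'' is not valid for arbitrary $G(U)$; what actually makes $DK^{-1}$ commute with tensoring by a constant object is the quotient description $N_n(A) = A_n / \sum_j \im(s_j)$, which is right exact in $A$.
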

\begin{proof}
	Since $\hat{G}(A)$ depends only on the cofibrant replacement $A^c$ of $A$, it suffices to show the claim for $A^c$.
	We can write $A^c$ as a filtered colimit of simplicial schemes
	$A^c = \colim{i \in I} X_i$
	for some $X_i \in \Delta^{op}\Smk$, and some filtered diagram $I$.
	Then also $K_+ \wedge A^c$ is cofibrant and we have $K_+ \wedge A^c = \colim{i \in I} (K_+ \wedge X_i)$.
	Let $G^{\Delta^{op}} : \Delta^{op}\Sm \rightarrow \Delta^{op}\ShvA$ be the functor that applies $G$ in each simplicial degree.
	It follows from Corollary \ref{gammacorollary} that for each $i \in I$ we have an isomorphism
	$$G^{\Delta^{op}}(K_+ \otimes X_i) \overset{\sim}{\rightarrow} \bb Z^{(K)} \otimes G^{\Delta^{op}}(X_i) $$
	in $\Delta^{op}\Ch(\ShvA)$, where $\bb Z^{(K)} \in \Delta^{op}\Ab $ is the simplicial free abelian group on $K$ and where the tensor product on the right side is degreewise the tensor product of $\Ch(\ShvA)$, i.e. for each $n \in \bb N$
	$$(\bb Z^{(K)} \otimes G^{\Delta^{op}}(X_i))_n := \bb Z^{(K)}_n \otimes G^{\Delta^{op}}(X_i)_n \in \Ch(\ShvA).$$
	
	It follows from Lemma \ref{lemmaDoldKanTensor}
	that the Dold-Kan correspondence $DK^{-1}: \Delta^{op}\Ch(\ShvA) \rightarrow \Ch_{\geq 0}(\Ch(\ShvA))$ preserves tensor products up to chain homotopy equivalence, and this chain homotopy equivalence is functorial.
	So the above isomorphism then implies that we have a natural chain homotopy equivalence
	$\hat{G}(K_+ \wedge X_{i,+}) \rightarrow \bb Z[K] \otimes \hat{G}(X_{i,+})$
	in $\Ch(\ShvA)$.
	Then we get a natural chain homotopy equivalence
	$$\hat{G}(K_+ \wedge A^c) = \colim{i \in I} \hat{G}(K_+ \wedge X_{i,+}) \rightarrow  \colim{i \in I} \bb Z[K] \otimes \hat{G}(X_{i,+}) \cong$$ $$ \cong \bb Z[K] \otimes \colim{i \in I} \hat{G}(X_{i,+}) = \bb Z[K] \otimes \hat{G}(A^c)$$
	in $\Ch(\ShvA)$.
\end{proof}

\begin{cor}\label{GABweq}
	Let $G : \Sm \rightarrow \Ch(\ShvA)$ be a $\Ch(\ShvA)$-enriched functor. Let $K$ be a finite simplicial set, and let $f : A \rightarrow B$ be a morphism in $f \cc M$ such that $\hat{G}(f)$ is a local quasi-isomorphism in $\Ch(\ShvA)$.
	Then the map $\hat{G}(K_+ \wedge f): \hat{G}(K_+ \wedge A) \rightarrow \hat{G}(K_+ \wedge B)$ is also a local quasi-isomorphism in $\Ch(\ShvA)$.
\end{cor}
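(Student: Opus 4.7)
The plan is to read this off directly from Lemma \ref{FKUeqKFU} by a two-out-of-three argument. First I would form the square
$$\xymatrix{\hat{G}(K_+ \wedge A) \ar[r] \ar[d]_{\hat{G}(K_+ \wedge f)} & \bb Z[K] \otimes \hat{G}(A) \ar[d]^{\bb Z[K] \otimes \hat{G}(f)} \\
\hat{G}(K_+ \wedge B) \ar[r] & \bb Z[K] \otimes \hat{G}(B)}$$
in $\Ch(\ShvA)$. By Lemma \ref{FKUeqKFU} the horizontal arrows are chain homotopy equivalences, and the naturality clause of that lemma, which is inherited from the explicit chain-homotopy formulas in Lemma \ref{lemmaDoldKanTensor}, makes the square commute up to a chain homotopy in $\Ch(\ShvA)$.

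Next I would argue that the right-hand vertical map is a local quasi-isomorphism. Since $\bb Z[K] = DK^{-1}(\bb Z^{(K)})$ with $K$ finite, the chain complex $\bb Z[K]$ is bounded and degreewise free (hence flat) in $\Ch(\Ab)$; by the same spectral-sequence argument as in the proof of Lemma \ref{ftimesZinjectiveqis}, or by simply passing to Nisnevich stalks where the tensor product becomes an honest tensor product with a flat bounded complex of abelian groups, the endofunctor $\bb Z[K] \otimes -$ on $\Ch(\ShvA)$ preserves local quasi-isomorphisms. Applying this to the hypothesis on $\hat{G}(f)$ shows that $\bb Z[K] \otimes \hat{G}(f)$ is a local quasi-isomorphism.

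Finally, chain homotopic maps in $\Ch(\ShvA)$ induce equal maps on Nisnevich homology sheaves $H^{\nis}_n$, so the square above commutes strictly after applying each $H^{\nis}_n$. The $2$-of-$3$ property applied to the three established local quasi-isomorphisms, namely the two horizontal chain-homotopy equivalences and the right vertical map $\bb Z[K] \otimes \hat{G}(f)$, then forces $\hat{G}(K_+ \wedge f)$ to be a local quasi-isomorphism as well. I do not anticipate any genuine obstacle here; all of the substantive content is packaged into Lemma \ref{FKUeqKFU}, and what remains is routine bookkeeping with flatness and homotopy-invariance of Nisnevich homology sheaves.
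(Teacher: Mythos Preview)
Your proposal is correct and follows essentially the same route as the paper: invoke Lemma \ref{FKUeqKFU} to replace $\hat{G}(K_+\wedge f)$ by $\bb Z[K]\otimes\hat{G}(f)$ up to natural chain homotopy equivalence, then use that $\bb Z[K]$ is degreewise free (hence flat) to see that $\bb Z[K]\otimes-$ preserves local quasi-isomorphisms. The paper phrases this more tersely, simply asserting that the two maps are chain homotopic and that flatness of $\bb Z[K]$ finishes the argument, whereas you spell out the commutative square and the 2-of-3 step explicitly; but the content is identical.
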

\begin{proof}
	By Lemma \ref{FKUeqKFU} the map
	$\hat{G}(K_+ \wedge f): \hat{G}(K_+ \wedge A) \rightarrow \hat{G}(K_+ \wedge B)$
	is chain homotopic to the map
	$\bb Z[K] \otimes \hat{G}(f): \bb Z[K] \otimes \hat{G}(A) \rightarrow \bb Z[K] \otimes \hat{G}(B) $
	in $\Ch(\ShvA)$.
	If $\hat{G}(f)$ is also a local quasi-isomorphism, then since $\bb Z[K]$ is degreewise flat, it follows that $\bb Z[K] \otimes \hat{G}(f)$ is also a local quasi-isomorphism. So $\hat{G}(K_+ \wedge f)$ is a local quasi-isomorphism in $\Ch(\ShvA)$.
\end{proof}

\begin{lem} \label{GKLweq}
	Let $G : \Sm \rightarrow \Ch(\ShvA)$ be a $\Ch(\ShvA)$-enriched functor.
	Let $K, L$ be finite simplicial sets, $A \in f\cc M$ and let $e: K \rightarrow L$ be a weak equivalence of simplicial sets. Then 
	$\hat{G}(e_+ \wedge A) :  \hat{G}(K_+ \wedge A) \rightarrow \hat{G}(L_+ \wedge A)$ is a sectionwise quasi-isomorphism in $\Ch(\ShvA)$.
\end{lem}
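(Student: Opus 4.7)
The plan is to use Lemma \ref{FKUeqKFU} to reduce the claim to the elementary fact that $\bb Z[e] : \bb Z[K] \to \bb Z[L]$ is a chain homotopy equivalence in $\Ch(\Ab)$. Applying Lemma \ref{FKUeqKFU} to the pairs $(K,A)$ and $(L,A)$ yields natural chain homotopy equivalences $\phi_K : \hat{G}(K_+ \wedge A) \xrightarrow{\simeq} \bb Z[K] \otimes \hat{G}(A)$ and $\phi_L : \hat{G}(L_+ \wedge A) \xrightarrow{\simeq} \bb Z[L] \otimes \hat{G}(A)$ in $\Ch(\ShvA)$. Naturality in $K$, which rests on the naturality of the Eilenberg--Zilber/Alexander--Whitney homotopies recorded in Lemma \ref{lemmaDoldKanTensor}, means that the square formed by $\phi_K$, $\phi_L$, $\hat{G}(e_+ \wedge A)$ and $\bb Z[e] \otimes \hat{G}(A)$ commutes up to chain homotopy. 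Since $\phi_K$ and $\phi_L$ are sectionwise chain homotopy equivalences, hence sectionwise quasi-isomorphisms, the 2-out-of-3 property applied sectionwise reduces the claim to showing that $\bb Z[e] \otimes \hat{G}(A)$ is a sectionwise quasi-isomorphism.

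For this, I would exploit finiteness: because $K$ and $L$ are finite simplicial sets, the complexes $\bb Z[K]$ and $\bb Z[L]$ are bounded and degreewise free (the normalised Moore complex of the free simplicial abelian group has as its $n$-th term the free abelian group on non-degenerate $n$-simplices). The map $\bb Z[e]$ is then a quasi-isomorphism between bounded complexes of free abelian groups. Its mapping cone $\cone(\bb Z[e])$ is therefore a bounded acyclic complex of free abelian groups, and any bounded acyclic complex of projectives splits, so $\cone(\bb Z[e])$ is contractible. Consequently $\bb Z[e]$ is itself a chain homotopy equivalence in $\Ch(\Ab)$.

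To conclude, I would tensor the chain homotopy inverse of $\bb Z[e]$, together with the witnessing chain homotopies, with $\hat{G}(A)(U)$ for each $U \in \Smk$. Since tensor products of abelian groups preserve chain homotopies, this produces a chain homotopy inverse and homotopies in $\Ch(\Ab)$ exhibiting $(\bb Z[e] \otimes \hat{G}(A))(U) = \bb Z[e] \otimes \hat{G}(A)(U)$ as a chain homotopy equivalence. In particular it is a quasi-isomorphism, so $\bb Z[e] \otimes \hat{G}(A)$ is a sectionwise quasi-isomorphism, and combining with the previous paragraph the result follows.

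I do not expect a serious obstacle in this lemma: the technical heart has already been shouldered by Lemmas \ref{FKUeqKFU} and \ref{lemmaDoldKanTensor}, and what is left is standard homological algebra of bounded complexes of projectives together with the routine observation that chain homotopies pass through tensor products. The only point that demands a moment of care is keeping track of the \emph{naturality} of the Eilenberg--Zilber homotopy when invoking Lemma \ref{FKUeqKFU} twice, so that the comparison square really commutes up to chain homotopy rather than only pointwise.
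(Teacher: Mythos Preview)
Your proposal is correct and follows essentially the same approach as the paper: both reduce via Lemma~\ref{FKUeqKFU} to showing that $\bb Z[e]\otimes \hat G(A)$ is a sectionwise quasi-isomorphism, and both then exploit that the cone of $\bb Z[e]$ is degreewise free and acyclic. The only cosmetic difference is in that last step: the paper invokes the monoid axiom for $\Ch(\Ab)$ to conclude $C\otimes D$ is acyclic, whereas you use boundedness (from finiteness of $K,L$) to argue the cone is contractible and hence $\bb Z[e]$ is a chain homotopy equivalence preserved by tensoring---an equally valid and slightly more elementary route. One small clarification: the comparison square actually commutes \emph{strictly}, since the Alexander--Whitney map in Lemma~\ref{lemmaDoldKanTensor} is a natural transformation; the naturality of the Shih homotopy is not needed for this square.
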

\begin{proof}
	If $e: K \rightarrow L$ is a weak equivalence of simplicial sets,
	then it follows from basic properties of the Dold-Kan equivalence that $\bb Z[e] : \bb Z[K] \rightarrow \bb Z[L]$ is a quasi-isomorphism in $\Ch(\Ab)$. Let $C:= \mathrm{Cone}(\bb Z[e]) \in \Ch(\Ab)$ be the homological mapping cone of $\bb Z[e]$. Since $\bb Z[e]$ is a quasi-isomorphism, we know that $C$ is acyclic. Since $\bb Z[K]$ and $\bb Z[L]$ are degreewise free, we know that $C$ is degreewise free. So $0 \rightarrow C$ is a trivial cofibration in the projective model structure on $\Ch(\Ab)$. Since the projective model structure on $\Ch(\Ab)$ satisfies the monoid axiom, then for every $D \in \Ch(\Ab)$ the chain complex $C \otimes D$ is acyclic. Since $C \otimes D$ is the mapping cone of $\bb Z[e] \otimes D$, then for every $D \in \Ch(\Ab)$ the map
	$\bb Z[e]  \otimes D : \bb Z[K]  \otimes D \rightarrow \bb Z[L] \otimes D$
	is a quasi-isomorphism in $\Ch(\Ab)$.
	
	By Lemma \ref{FKUeqKFU} 
	$\hat{G}(e_+ \wedge A) :  \hat{G}(K_+ \wedge A) \rightarrow \hat{G}(L_+ \wedge A)$
	is chain homotopic to the map
	$\bb Z[e] \otimes \hat{G}(A) : \bb Z[K] \otimes \hat{G}(A) \rightarrow \bb Z[L] \otimes \hat{G}(A) $
	in $\Ch(\ShvA)$. But this is a sectionwise quasi-isomorphism, because for every $V \in \Smk$ the map $$\bb Z[e]  \otimes \hat{G}(A)(V) : \bb Z[K]  \otimes \hat{G}(A)(V) \rightarrow \bb Z[L] \otimes \hat{G}(A)(V)$$
	is a quasi-isomorphism in $\Ch(\Ab)$, by the above argument with $D:=\hat{G}(A)(V)$.
\end{proof}

\begin{defs}
	\begin{enumerate}
		\item
		A map $e : A\rightarrow X$ in a category $\mathscr{D}$ is called a \textit{coprojection} if it is isomorphic to the coproduct inclusion $A \rightarrow A \coprod Y$ for some $Y\in \mathscr{D}$.
		\item
		A map $e : A\rightarrow X$ in $\Delta^{op}\mathscr{D}$ is called a \textit{termwise coprojection}, if for every $n \in \mathbb{N}$, the map in the $n$-th simplicial degree $e_n : A_n \rightarrow X_n$ is a coprojection in $\mathcal{D}$.
		\item
		A pushout square in $\Delta^{op}\mathscr{D}$ $$\xymatrix{A \ar[r]^e \ar[d] & B \ar[d] \\
			C \ar[r]^{e^\prime} & D}$$  is called an \textit{elementary pushout square}, if $e$ and $e^\prime$ are termwise coprojections.
	\end{enumerate}
\end{defs}

Recall that throughout this section $F : \Sm \rightarrow \Ch(\ShvA)$ is a $\sim$-fibrant enriched functor, and that we have above constructed a non-enriched functor $\hat{F} : f\cc M \rightarrow \Ch(\ShvA)$.

\begin{lem}
	$\hat{F}$ takes elementary pushout squares in $\Delta^{op}\Sm$ to homotopy pushout squares in $\Ch(\ShvA)$.
\end{lem}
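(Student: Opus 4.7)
The plan is to factor $\hat{F}$ as $\mathrm{Tot}\circ DK^{-1}\circ F^{\Delta^{op}}$ and to verify that each of the three constituent functors preserves the structure of a pushout square whose horizontal maps are (termwise) split monomorphisms; at the end, such a pushout in the abelian category $\Ch(\ShvA)$ is automatically a homotopy pushout.

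First, I would unpack the definition. Since $e : A \to B$ and $e' : C \to D$ are termwise coprojections, for each $n \in \mathbb{N}$ we can write $B_n \cong A_n \sqcup Y_n$, and because the pushout in $\Delta^{op}\Sm$ is formed levelwise (finite coproducts exist in $\Smk$), also $D_n \cong C_n \sqcup Y_n$ with $e'_n$ the corresponding coprojection. Applying $F$ in each simplicial degree and invoking Lemma \ref{Fcoprod}, which says that any $\ShvA$-enriched functor out of $\Sm$ sends coproducts to direct sums, one obtains decompositions $F(B_n)\cong F(A_n)\oplus F(Y_n)$ and $F(D_n)\cong F(C_n)\oplus F(Y_n)$ in $\Ch(\ShvA)$. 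Hence $F^{\Delta^{op}}$ sends the original square to a pushout in $\Delta^{op}\Ch(\ShvA)$ whose horizontal maps are termwise split monomorphisms with common cokernel $F^{\Delta^{op}}(Y)$.

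Next, I would apply $DK^{-1}$ and $\mathrm{Tot}$. The Dold--Kan functor is an additive equivalence of abelian categories, so it preserves pushouts and termwise split monomorphisms. Totalization commutes with colimits (in each homological degree it is a finite direct sum of entries of the underlying bicomplex) and also preserves degreewise splittings of horizontal arrows. For a simplicial scheme $X \in \Delta^{op}\Sm$, the colimit-of-representables formula \eqref{hatdef} defining $\hat{F}(X_+)$ collapses to $\mathrm{Tot}(DK^{-1}(F^{\Delta^{op}}(X)))$, since $X_+$ is itself cofibrant in $f\mathscr{M}$ and the identity $X_+ \to X_+$ is terminal in the relevant indexing diagram. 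Combining these steps, $\hat{F}$ sends the given square to a pushout in $\Ch(\ShvA)$ whose horizontal maps are degreewise split monomorphisms.

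Finally, such a pushout is automatically a homotopy pushout: the square fits into a degreewise split short exact sequence
$$0 \to \hat{F}(A) \to \hat{F}(B)\oplus\hat{F}(C) \to \hat{F}(D) \to 0,$$
which produces a distinguished triangle in $D(\ShvA)$. Equivalently, the canonical comparison from the homotopy pushout to the strict pushout is a quasi-isomorphism, so the square is homotopy cocartesian in $\Ch(\ShvA)$. The main technical obstacle is pinning down that $\hat{F}$ on simplicial schemes reduces to the naive levelwise construction $\mathrm{Tot}\circ DK^{-1}\circ F^{\Delta^{op}}$; once this cofinality point is settled, the rest of the argument is essentially formal additivity combined with the standard triangulated-category principle that pushouts along degreewise split monomorphisms are homotopy cocartesian.
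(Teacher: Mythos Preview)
Your argument is correct and follows essentially the same route as the paper: both use Lemma~\ref{Fcoprod} to convert termwise coprojections into split inclusions after applying $F$, then pass through $DK^{-1}$ and $\mathrm{Tot}$ degreewise. The only cosmetic difference is that the paper establishes the homotopy pushout by taking a cofibrant replacement of the cokernel, whereas you invoke the degreewise split short exact sequence directly; one small caveat is that your ``common cokernel $F^{\Delta^{op}}(Y)$'' is not literally $F$ applied to a simplicial object (the complements $Y_n$ need not assemble into one), but the cokernel still exists termwise in $\Delta^{op}\Ch(\ShvA)$ and your argument goes through unchanged.
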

\begin{proof}
	Take a pushout square in $\Sm$, along coprojections $e, e^\prime$ : $$\xymatrix{A \ar[r]^e \ar[d] & A \coprod X \ar[d] \\
		B \ar[r]^{e^\prime} & B \coprod X}$$ 
	We can apply $F$ to get a square in $\Ch(\ShvA)$: $$\xymatrix{F(A) \ar[r] \ar[d] & F(A \coprod X) \ar[d] \\
		F(B) \ar[r] & F(B \coprod X)}$$
	According to Lemma \ref{Fcoprod} this square is isomorphic to $$\xymatrix{F(A) \ar[r] \ar[d] & F(A) \oplus F(X) \ar[d] \\
		F(B) \ar[r] & F(B) \oplus F(X)}$$
	By taking a local cofibrant replacement $F(X)^c$ of $F(X)$ we see that this square is locally equivalent to $$\xymatrix{F(A) \ar[r] \ar[d] & F(A) \oplus F(X)^c \ar[d] \\
		F(B) \ar[r] & F(B) \oplus F(X)^c}$$
	This square is a homotopy pushout, because it is a strict pushout and $F(A) \rightarrow F(A) \oplus F(X)^c$ is a cofibration. So $F$ sends pushout squares along coprojections in $\Sm$ to homotopy pushout squares in $\Ch(\ShvA)$.
	
	If we have an elementary pushout square $Q$ in $\Delta^{op}\Sm$ then in every simplicial degree it will be a pushout along coprojections. Then $F(Q)$ will be a square in $\Delta^{op}\Ch(\ShvA)$ that is in every simplicial degree a homotopy pushout. After applying the Dold-Kan correspondence we will still have a degreewise homotopy pushout, and after applying the total complex functor we obtain a single homotopy pushout square in $\Ch(\ShvA)$.
	So $\hat{F}(Q)$ is a homotopy pushout square in $\Ch(\ShvA)$.
\end{proof}

The previous lemma immediately implies the following corollary.

\begin{cor}\label{Fpush}
	If we have  an elementary pushout square in $\Delta^{op}\Sm$,
	$$\xymatrix{A \ar[r]^e \ar[d] & B \ar[d] \\
		C \ar[r]^{e^\prime} & D}$$ and $\hat{F}(e)$ is a local quasi-isomorphism, then $\hat{F}(e^\prime)$ is a local quasi-isomorphism in $\Ch(\ShvA)$.
\end{cor}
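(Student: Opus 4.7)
The plan is to invoke the previous lemma directly and then apply the standard property of homotopy pushouts. By the previous lemma, applying $\hat{F}$ to the given elementary pushout square yields a homotopy pushout square in $\Ch(\ShvA)$:
$$\xymatrix{\hat{F}(A) \ar[r]^{\hat{F}(e)} \ar[d] & \hat{F}(B) \ar[d] \\
\hat{F}(C) \ar[r]^{\hat{F}(e^\prime)} & \hat{F}(D).}$$
Since $\Ch(\ShvA)$ is the chain complex category of a Grothendieck category, it is a stable model structure (local quasi-isomorphisms define a triangulated derived category $D(\ShvA)$), so homotopy pushouts and homotopy pullbacks coincide up to a shift, and a homotopy pushout square induces an isomorphism between the homotopy cofibers of its parallel arrows.

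Therefore, in $D(\ShvA)$ I get an induced isomorphism $\hocofib(\hat{F}(e)) \xrightarrow{\sim} \hocofib(\hat{F}(e^\prime))$. By hypothesis $\hat{F}(e)$ is a local quasi-isomorphism, so $\hocofib(\hat{F}(e)) \simeq 0$ in $D(\ShvA)$, and hence $\hocofib(\hat{F}(e^\prime)) \simeq 0$ as well. This forces $\hat{F}(e^\prime)$ to be an isomorphism in $D(\ShvA)$, i.e.\ a local quasi-isomorphism in $\Ch(\ShvA)$, completing the proof. There is no real obstacle here; the only substance lies in the previous lemma, which has already established that $\hat{F}$ sends elementary pushouts to homotopy pushouts.
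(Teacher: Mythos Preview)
Your proof is correct and matches the paper's approach: the paper simply states that the corollary is an immediate consequence of the previous lemma (that $\hat{F}$ sends elementary pushout squares to homotopy pushout squares), and your cofiber argument is exactly the standard way to unpack this implication in the stable setting of $\Ch(\ShvA)$. One could also phrase it without stability---in any model category a homotopy pushout square with one arrow a weak equivalence has the parallel arrow a weak equivalence---but your argument is perfectly fine.
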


With all of these lemmas established, we can now prove the main result of this section.

\begin{proof}[Proof of Theorem \ref{mottolocal}]
	Let $Q$ be an elementary Nisnevich square of the form $$ \xymatrix{U^\prime \ar[d] \ar[r]&  X^\prime \ar[d] \\ U \ar[r]& X }$$
	In the category of pointed simplicial Nisnevich sheaves $\mathscr{M} = \Shv(\Smk, \psSet)$ we can  factor the morphism $U^\prime_+ \rightarrow X^\prime_+$ by using the mapping cylinder $C := (U^\prime_+ \times \Delta[1]_+) \underset{U^\prime_+}{\coprod} X^\prime_+ $ to get a factorization $\xymatrix{U^\prime_+ \hspace{3pt} \ar@{>->}[r]& C \ar@{->>}[r]^(0.4){\sim} & X^\prime_+ }$ where the left map is a cofibration and the right map is a simplicial homotopy equivalence. We define $s(Q) := U_+ \underset{U^\prime_+}{\coprod} C$. We can similarly take a mapping cylinder $t(Q)$ of the map $s(Q) \rightarrow X_+$ to factor it into $\xymatrix{s(Q) \hspace{3pt} \ar@{>->}[r]& t(Q) \ar@{->>}[r]^(0.4){\sim} & X_+ }$ where the left map is a cofibration and the right map a simplicial homotopy equivalence. We also take the mapping cylinder $C_X$ of $(\mathbb{A}^1\times X)_+ \rightarrow X_+$ to factor it as  $\xymatrix{(\mathbb{A}^1\times X)_+ \hspace{3pt} \ar@{>->}[r]& C_X \ar@{->>}[r]^(0.4){\sim} & X_+ }.$
	
	Let $J_{mot} = J_{proj} \cup J_{\mathbb{A}^1} \cup J_{nis} $ where
	$$J_{proj} = \{\Lambda^r[n]_+ \wedge U_+ \rightarrow \Delta[n]_+ \wedge U_+ \mid U \in \Smk, n > 0, 0 \leq r \leq n  \}$$
	$$J_{\mathbb{A}^1} = \{ \Delta[n]_+ \wedge U\times \mathbb{A}^1_+ \underset{\partial\Delta[n]_+ \wedge U \times \mathbb{A}^1}{\coprod} \partial\Delta[n]_+ \wedge C_U \rightarrow \Delta[n]_+ \wedge C_U  \mid U \in \Smk \}$$
	$$J_{nis} = \{\Delta[n]_+ \wedge s(Q) \underset{\partial\Delta[n]_+ \wedge s(Q)}{\coprod} \partial\Delta[n]_+ \wedge t(Q) \rightarrow \Delta[n]_+ \wedge t(Q) \mid Q \in \mathcal{Q} \}$$
	where $\mathcal{Q}$ is the set of elementary Nisnevich squares.
	We claim that $\hat{F}$ sends all morphisms in $J_{mot}$ to local quasi-isomorphisms.
	Since $\Lambda^r[n] \rightarrow \Delta[n]$ is a weak equivalence of simplicial sets it follows by Lemma \ref{GKLweq} that $\hat{F}(\Lambda^r[n]_+ \wedge U_+) \rightarrow \hat{F}(\Delta[n]_+ \wedge U_+)$ is a local quasi-isomorphism, so $\hat{F}$ sends $J_{proj}$ to local quasi-isomorphisms.
	
	Note that $\hat{F}$ sends simplicial homotopy equivalences to chain homotopy equivalences, because $\hat{F}(\Delta[1]_+ \otimes A^c)$ is a cylinder object for $\hat{F}(A^c)$.
	Since we have a local quasi-isomorphism $\hat{F}(X \times \mathbb{A}^1) \rightarrow \hat{F}(X)$ and a simplicial homotopy equivalence $C_X \rightarrow X_+$ we have a local quasi-isomorphism $\hat{F}(X \times \mathbb{A}^1) \rightarrow \hat{F}(C_X)$.
	
	Similarly, since $F$ satisfies Nisnevich excision we have a local quasi-iso\-mor\-phism $\hat{F}(s(Q)) \rightarrow \hat{F}(t(Q))$.
	Let $f: A \rightarrow B$ be a morphism either of the form $s(Q) \rightarrow t(Q)$ or $(X \times \mathbb{A}^1)_+ \rightarrow C_X$, and let $e: K \rightarrow L$ be a cofibration of simplicial sets. Then $e$ is a termwise coprojection and $\hat{F}(f)$ is a local quasi-iso\-mor\-phism.
	Consider the diagram 
	$$\xymatrix{K_+ \wedge A \ar[r] \ar[d]^{a_0}&  L_+ \wedge A \ar[d]^{a_2} \ar@/^2.0pc/[ddr]^{a_1}\\
		K_+ \wedge B \ar[r] \ar[drr] & K_+ \wedge B \underset{K_+ \wedge A}{\coprod} L_+ \wedge A \ar[dr]^(0.6){a_3} \\
		& & L_+ \wedge B }$$
	Since $\hat{F}(f)$ is a local quasi-isomorphism, by Lemma \ref{GABweq} also the maps $\hat{F}(a_0) = \hat{F}(K_+ \wedge f)$ and $\hat{F}(a_1) = \hat{F}(L_+ \wedge f)$ are local quasi-isomorphisms. By Corollary \ref{Fpush} also $\hat{F}(a_2)$ is a local quasi-isomorphism. By the 
	$2$-out-of-$3$-property this then implies that also $\hat{F}(a_3)$ is a local quasi-isomorphism.
	So $\hat{F}$ sends all morphisms from $J_{mot}$ to local quasi-isomorphisms.
	Theorem \ref{mottolocal} now follows by a simple small object argument, exactly like in the proof of Theorem 4.2 from \cite{garkusha2019framed}. 
\end{proof}

\section{The Röndigs--\O stv\ae r Theorem} \label{ROSection}
Recall that the category of motivic spaces $\cc M = \Shv(\Smk, \psSet)$ is equipped with a projective motivic model structure. See \cite[Theorem 2.12]{dundas2003motivic} for details.
This model structure induces a stable motivic model structure on the category of $(S^1,\Gm)$-bispectra of motivic spaces $\Sp_{S^1,\Gm}(\cc M)$.
We also have a motivic model structure on $\Ch(\ShvA)$, given by taking the left Bousfield localization of the local model structure on $\Ch(\ShvA)$ along the motivic equivalences from Definition \ref{motivicequivdef}.
This motivic model structure induces a stable motivic model sturcture on the category of $\Gm$-spectra of chain complexes $\Sp_{\Gm}(\Ch(\ShvA))$.
The homotopy category of $\Sp_{S^1,\Gm}(\cc M)$ is $SH(k)$.
The homotopy category of $\Sp_{\Gm}(\Ch(\ShvA))$ is $DM_{\Cor}$.

There is a forgetful functor $\mathcal{U} : DM_{\Cor} \rightarrow SH(k)$
with a left adjoint $\mathcal{L} : SH(k) \rightarrow DM_{\Cor}.$
It can be described as follows.	
The functor $\mathcal{U}$ is the derived functor of the right Quillen functor
$$Sp_{\Gm}(\Ch(\ShvA)) \overset{J}{\rightarrow}
Sp_{\Gm,S^1}(\mathrm{Ch_{\geq0}}(\ShvA)) \overset{DK}{\rightarrow}$$ $$ Sp_{\Gm,S^1}(\Delta^{op}\ShvA) \overset{U}{\rightarrow} Sp_{\Gm,S^1}(\cc M).$$
	Here $J: \Ch(\ShvA) \rightarrow \Sp_{S^1}(\mathrm{Ch_{\geq0}}(\ShvA))$ is the right Quillen equivalence that is called $T$ in \cite[Section 3]{jardine2003presheaves}.
	If $\tau_{\geq 0} : \Ch(\ShvA) \rightarrow \mathrm{Ch_{\geq0}}(\ShvA)$ is the good truncation functor sending $A \in \Ch(\ShvA)$ to
	$$\dots \rightarrow A_2 \rightarrow A_1 \rightarrow \ker(A_0 \overset{\partial^0_A}{\rightarrow} A_{-1})$$
	in $\Ch_{\geq 0}(\ShvA)$,
	then $J$ is defined on $A$ by $J(A) = (\tau_{\geq 0}(A[n]))_{n \in \bb N} \in \Sp_{S^1}(\Ch_{\geq 0}(\ShvA))$.
	
	The functor $DK: \mathrm{Ch_{\geq0}}(\ShvA) \rightarrow \Delta^{op}\ShvA$ is the Dold Kan equivalence, whose $n$-simplices are given by $$DK(X)_n = \underset{\underset{\text{surjective}}{[n] \rightarrow [k]} }{\bigoplus} X_k.$$
	$U : \ShvA \rightarrow \cc M$ is the functor that forgets transfers and the abelian group structure.
	We define $\hat{U} := U \circ DK\circ J $, so that $\mathcal{U}$ is the right derived functor of $\hat{U}$.

	We write $\mathcal{L}: SH(k) \rightarrow DM_{\Cor}$ for the left adjoint of $\mathcal{U}$.
	The adjunction $\mathcal{L}:SH(k) \rightleftarrows DM_{\Cor}: \mathcal{U} $ is a monoidal adjunction, so that $\mathcal{U}$ is lax monoidal and $\mathcal{L}$ is strong monoidal.
Furthermore $\mathcal{U}$ is a conservative functor. This means that if $f$ is a morphism in $DM_{\Cor}$ such that $\mathcal{U}(f)$ is an isomorphism in $SH(k)$, then $f$ is an isomorphism in $DM_{\Cor}$.

In this section we prove the following theorem, which is reminiscient of the Röndigs-\O stv\ae r theorem \cite[Corollary 56]{rondigs2008modules}.
\begin{thm} \label{RO}
Let $F : \Sm \rightarrow \Ch(\ShvA)$ be an enriched functor that is $\sim$-fibrant in $\Ch([\Sm,\ShvA])/\sim$.
Then for every $X \in \Smk$, the canonical morphism
$$ ev_{\Gm}(F) \otimes \motive{X} \rightarrow ev_{\Gm} (F(X \times -))$$ is an isomorphism in $DM_{\Cor}[1/p]$, which is natural in $X$.
\end{thm}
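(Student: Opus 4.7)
The strategy is to identify both sides of the claimed isomorphism with values of a single stabilized functor and then verify the comparison on compact generators, invoking the projection formula after inverting $p$.

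First, I would extend $\hat F : f\mathscr{M} \to \Ch(\ShvA)$ from Section~\ref{mottolocalsection} bi-levelwise to a functor $\tilde F : \Sp_{S^1,\Gm}(f\mathscr{M}) \to \Sp_{\Gm}(\Ch(\ShvA))$, using the enriched structure maps of $F$ to construct the $\Gm$-direction bonding on the target side. Theorem~\ref{mottolocal} implies $\tilde F$ inverts $S^1$-stable motivic equivalences, while the $\tau$-fibrancy (cancellation) of $F$ ensures it inverts $\Gm$-stable equivalences; these together yield a triangulated, coproduct-preserving functor $\mathbf{L}\tilde F : SH(k) \to DM_{\Cor}$. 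On the compact generators $\Sigma^\infty_{S^1,\Gm} X_+$, direct inspection yields $\mathbf{L}\tilde F(\Sigma^\infty_{S^1,\Gm} X_+) \cong ev_{\Gm}(F(X\times -))$ naturally in $X$, since at bi-level $(m,n)$ the pointed space $S^m \wedge \Gmn{n} \wedge X_+$ is sent by $\hat F$ to $\mathbb{Z}[S^m] \otimes F(X \times \Gmn{n})$ via Lemma~\ref{FKUeqKFU}, which is precisely the $S^1$-loop of the $n$-th weight of $ev_{\Gm}(F(X\times -))$.

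On the other side, $E \mapsto ev_{\Gm}(F) \otimes \mathcal{L}(E)$, $SH(k) \to DM_{\Cor}$, is likewise triangulated and coproduct-preserving, and agrees with $\mathbf{L}\tilde F$ at the monoidal unit $E = S^0$ (both yield $ev_{\Gm}(F)$). The canonical morphism in the theorem upgrades to a natural transformation $ev_{\Gm}(F) \otimes \mathcal{L}(-) \Rightarrow \mathbf{L}\tilde F(-)$ of coproduct-preserving triangulated functors whose value at $E = \Sigma^\infty_{S^1,\Gm} X_+$ is exactly the map under consideration. The claim then reduces to showing this natural transformation is an equivalence on the compact generators of $SH(k)[1/p]$. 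Applying the conservative functor $\mathcal{U}[1/p]$ and using the projection formula $\mathcal{U}(A \otimes \motive{X}) \simeq \mathcal{U}(A) \wedge \Sigma^\infty_{S^1,\Gm} X_+$ in $SH(k)[1/p]$, this in turn reduces to identifying $\mathcal{U} \circ \mathbf{L}\tilde F$ with the functor $- \wedge \mathcal{U}(ev_{\Gm}(F))$ on $SH(k)$, which is immediate from the construction of $\tilde F$ as the levelwise application of $\hat F$ composed with $\hat U$.

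The main obstacle is establishing the projection formula $\mathcal{U}(A \otimes \motive{X}) \simeq \mathcal{U}(A) \wedge \Sigma^\infty_{S^1,\Gm} X_+$ in $SH(k)[1/p]$. This is exactly where inversion of the exponential characteristic $p$ enters: one must invoke the $p$-local rigidity of the transfer structure (a generalization of Suslin's rigidity to the correspondence setting $\Cor$ satisfying the strict $V$-property and cancellation), so that the adjunction $\mathcal{L} \dashv \mathcal{U}$ is sufficiently well-behaved after inverting $p$ and the generators of $SH(k)[1/p]$ lift compatibly to $DM_{\Cor}[1/p]$. Conservativity of $\mathcal{U}[1/p]$ then promotes the levelwise equivalence on $\mathcal{U}$-images to the claimed natural isomorphism in $DM_{\Cor}[1/p]$.
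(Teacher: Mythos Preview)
Your outline has the right architecture (pass through $SH(k)$, use a projection formula, then conservativity of $\mathcal{U}$), but the two decisive steps are misdiagnosed.

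First, the identification you label as ``immediate from the construction'', namely that $\mathcal{U}\circ \mathbf{L}\tilde F$ agrees with $-\wedge \mathcal{U}(ev_{\Gm}(F))$ on $SH(k)$, is precisely the hard step. Applying $\hat F$ levelwise to a bispectrum $E$ does \emph{not} formally produce $E \wedge \hat U(ev_{\Gm}F)$; the obstruction is that $\hat F$ does not commute with smash products. What is needed is exactly the assembly isomorphism $ev_{S^1,\Gm}(G)\wedge B \to ev_{S^1,\Gm}(G\circ(-\wedge B))$ for a motivic functor $G$ and a strongly dualizable $B$. This is Lemma~\ref{C56}, the R\"ondigs--\O stv\ae r result, which the paper invokes as an external input after verifying via Theorem~\ref{mottolocal} (Lemma~\ref{ROApplicable}) that the motivic functors $F^{\cc M}_n$ satisfy its hypotheses. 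You have not supplied an argument for this step; claiming it as immediate is a genuine gap.

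Second, your explanation of why $p$ must be inverted is off. The projection formula $\mathcal{U}(A\otimes \mathcal{L}(B))\simeq \mathcal{U}(A)\wedge B$ holds formally in any monoidal adjunction once $B$ is strongly dualizable (this is the reference to \cite[Chapter~7, Lemma~4.6]{bachmann2020milnor} in the paper), so no ``$p$-local rigidity'' or transfer argument is needed there. The role of inverting $p$ is Riou's theorem (Theorem~\ref{Riou}): it guarantees that $\Sigma^\infty_{S^1,\Gm}X_+$ is strongly dualizable in $SH(k)[1/p]$, which is what both Lemma~\ref{C56} and the projection formula require. Once you use Riou's theorem for dualizability and actually invoke Lemma~\ref{C56} rather than declaring its conclusion obvious, your outline becomes essentially the paper's proof (up to the bookkeeping with $(S^1_1,S^1_2,\Gm)$-trispectra in Lemma~\ref{evComparison}).
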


To prove \ref{RO} we will need several lemmas.
The most important lemma we will need is the following one from \cite[Corollary 56]{rondigs2008modules}:

\begin{lem}\label{C56}
	Let $X : f\mathscr{M} \rightarrow \mathscr{M}$ be a motivic functor that sends motivic equivalences between cofibrant objects to motivic equivalences. Let $B$ be a strongly dualizable object in $SH(k)[1/p]$. Then the canonical map of $(S^1,\Gm)$-bispectra $$ev_{S^1,\Gm}(X \wedge B) \rightarrow ev_{S^1,\Gm}(X \circ (-\wedge B))$$ is an isomoprhism in $SH(k)[1/p]$.
\end{lem}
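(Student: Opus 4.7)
The plan is to prove the lemma by a thick subcategory argument in $SH(k)[1/p]$, following the strategy of Röndigs--\O stv\ae r. Let $\mathcal{T}$ denote the full subcategory of $SH(k)[1/p]$ consisting of those objects $B$ for which the canonical map
$$\phi_B \colon ev_{S^1,\Gm}(X \wedge B) \longrightarrow ev_{S^1,\Gm}(X \circ (- \wedge B))$$
is an isomorphism. I will verify that $\mathcal{T}$ is a thick subcategory of $SH(k)[1/p]$ containing the suspension spectra of smooth schemes, and then conclude via the standard fact that, after inverting $p$, such suspension spectra generate the thick subcategory of strongly dualizable objects of $SH(k)[1/p]$.

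First, for $B = \Sigma^{\infty}_{S^1,\Gm} Y_+$ with $Y \in \Smk$, both $X \wedge B$ and $X \circ (- \wedge B)$ evaluate canonically to the bispectrum associated with $X(Y_+)$, and under this identification $\phi_B$ is the identity map; so $B \in \mathcal{T}$. Second, $\mathcal{T}$ is manifestly closed under retracts and under the bigraded suspensions $- \wedge S^{p,q}$, since $\phi_B$ is natural in $B$ and the constructions $X \wedge -$ and $X \circ (- \wedge -)$ commute, up to canonical isomorphism, with smashing by spheres. Third, $\mathcal{T}$ is closed under cofiber sequences: given a triangle $A \to B \to C$ in $SH(k)[1/p]$, one models the first map by a cofibration $A' \hookrightarrow B'$ of cofibrant objects of $f\mathscr{M}$ with cofiber $C'$ representing $C$; using that $X$ sends motivic equivalences between cofibrant objects to motivic equivalences, both $ev_{S^1,\Gm}(X \wedge -)$ and $ev_{S^1,\Gm}(X \circ (- \wedge -))$ take $A' \hookrightarrow B' \to C'$ to distinguished triangles in $SH(k)[1/p]$, and the five lemma shows that if $\phi_A$ and $\phi_B$ are isomorphisms then so is $\phi_C$.

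To conclude, I invoke the results of Voevodsky, Hoyois and Kelly which, after inverting the exponential characteristic $p$, identify the class of strongly dualizable objects of $SH(k)[1/p]$ with the thick subcategory generated by suspension spectra of smooth (even smooth projective) schemes. Since $\mathcal{T}$ contains all these suspension spectra and is thick, it contains every strongly dualizable $B$, proving the lemma.

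The main obstacle is the closure under cofiber sequences: one must arrange a cofiber sequence in $SH(k)[1/p]$ to come from an honest cofibration of cofibrant motivic spaces whose cofiber, after smashing with the generating bispectrum of spheres, is preserved by $X$ up to motivic equivalence. This is a standard but somewhat delicate model-categorical exercise, and is the place where the hypothesis that $X$ preserves motivic equivalences between cofibrant objects is used in an essential way.
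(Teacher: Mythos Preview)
The paper does not prove this lemma at all: it is quoted verbatim as \cite[Corollary~56]{rondigs2008modules} and used as a black box. So there is no ``paper's own proof'' to compare against beyond the citation. Your outline follows the standard thick-subcategory strategy one finds in R\"ondigs--\O stv\ae r, and the reduction step (strongly dualizable objects in $SH(k)[1/p]$ lie in the thick subcategory generated by $\Sigma^\infty_{S^1,\Gm}Y_+$ for $Y\in\Smk$) is correctly invoked.

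However, your base case is not correct as stated, and this is where the actual content of the lemma lives. For $B=\Sigma^\infty_{S^1,\Gm}Y_+$ the two sides of $\phi_B$ are, in bidegree $(m,n)$,
\[
X(S^m\wedge\Gm^{\wedge n})\wedge Y_+ \quad\text{and}\quad X(S^m\wedge\Gm^{\wedge n}\wedge Y_+),
\]
and $\phi_B$ is the \emph{assembly map} coming from the $\mathscr{M}$-enrichment of $X$, not the identity. Neither side is ``the bispectrum associated with $X(Y_+)$'' in any reasonable sense. Showing that this assembly map becomes an equivalence after stabilisation is precisely the non-trivial part of the R\"ondigs--\O stv\ae r argument; it does not follow formally from $X$ preserving motivic equivalences. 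The only $B$ for which $\phi_B$ is tautologically the identity is $B=\mathbb{S}$, and the thick subcategory generated by $\mathbb{S}$ alone is far too small.

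There is a second, related gap in your closure-under-cofibres step. You assert that $B\mapsto ev_{S^1,\Gm}(X\circ(-\wedge B))$ sends cofibre sequences to distinguished triangles, justifying this by the hypothesis that $X$ preserves motivic equivalences between cofibrant objects. But preserving weak equivalences does not imply preserving homotopy cofibres: a homotopy functor need not be excisive. One needs an additional argument (essentially that evaluation on spheres linearises $X$) to know that the right-hand side is exact in $B$, and this is again part of what R\"ondigs--\O stv\ae r actually prove. Without it, the five-lemma step does not go through.
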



The following theorem by Riou can be found in \cite[Appendix B, Corollary B.2]{levine2019algebraic}.
\begin{thm}\label{Riou}
If $U \in \Smk$, then $\Sigma^{\infty}_{S^1,\Gm}U_+$ is strongly dualizable in $SH(k)[1/p]$.
\end{thm}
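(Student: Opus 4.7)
The plan is to reduce the assertion to the case where $U$ is smooth and projective, where strong dualizability holds in $SH(k)$ already integrally, and then to descend the property from smooth projective schemes to arbitrary smooth $U$ after inverting $p$. Concretely, for $X$ smooth and projective of pure dimension $d$, Voevodsky's motivic Atiyah duality (also due to Hu and reformulated by Ayoub) exhibits $\Sigma^\infty_{S^1,\Gm} X_+$ as strongly dualizable with dual of the form $\Sigma^\infty_{S^1,\Gm} X_+ \wedge \mathrm{Th}(-T_X)$. Since the strongly dualizable objects of any closed symmetric monoidal triangulated category form a thick subcategory (closed under finite coproducts, retracts, cofibres and suspensions), it suffices to prove that $\Sigma^\infty_{S^1,\Gm} U_+$ lies in the thick subcategory of $SH(k)[1/p]$ generated by $\{\Sigma^\infty_{S^1,\Gm} X_+ \mid X \in \Smk \text{ smooth projective}\}$.

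I would establish this membership by Noetherian induction on $\dim U$. Choose a Nagata compactification $\bar U$ of $U$. By Gabber's prime-to-$p$ alteration theorem there exists a smooth projective $k$-scheme $X$ and a proper surjective, generically finite flat morphism $f \colon X \to \bar U$ of generic degree $d$ coprime to $p$. Shrinking, we find a dense open $V \subseteq U$ such that $f^{-1}V \to V$ is finite étale of degree $d$; since $d$ is a unit in $SH(k)[1/p]$, the étale transfer together with the composition formula $f_* f^* = d \cdot \mathrm{id}$ exhibits $\Sigma^\infty_{S^1,\Gm} V_+$ as a retract of $\Sigma^\infty_{S^1,\Gm} (f^{-1}V)_+$. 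The scheme $f^{-1}V$ is a smooth open in the smooth projective $X$, so the Gysin/localization triangle
\[
\Sigma^\infty_{S^1,\Gm} (f^{-1}V)_+ \longrightarrow \Sigma^\infty_{S^1,\Gm} X_+ \longrightarrow \mathrm{Th}(N_{X \setminus f^{-1}V / X})
\]
combined with the inductive hypothesis applied to the lower-dimensional complement (after a further alteration to smooth it) places $\Sigma^\infty_{S^1,\Gm}(f^{-1}V)_+$, and hence its retract $\Sigma^\infty_{S^1,\Gm} V_+$, inside the thick subcategory. Finally, the localization triangle for the smooth open $V \subseteq U$ with strictly lower-dimensional closed complement $U \setminus V$ closes the induction, the inductive hypothesis applying to $U \setminus V$ after a resolution by Gabber's theorem.

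The main obstacle is the bookkeeping of the nested induction: neither $U \setminus V$ nor $X \setminus f^{-1}V$ is smooth a priori, so one must iterate alterations and combine localization with Thom space (Gysin) triangles in $SH(k)$, all the while verifying that each Thom spectrum and transfer remains in the thick subcategory of strongly dualizable objects after inverting $p$. This combinatorial argument is exactly Riou's, carried out in detail in \cite[Appendix~B, Corollary~B.2]{levine2019algebraic}, so I would quote that appendix for the final technical verification rather than reproduce the induction explicitly.
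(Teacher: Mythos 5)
The paper does not prove this theorem at all; it simply quotes it as Riou's result from the cited appendix of Levine--Yang--Zhao--Riou, and your proposal likewise defers the technical verification to that same reference. Your sketch of the argument (Gabber's prime-to-$p$ alterations, retract via a transfer of degree coprime to $p$, Gysin/localization triangles, Noetherian induction inside the thick subcategory of dualizable objects) is a faithful outline of Riou's proof, so the two treatments coincide.
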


To apply Lemma \ref{C56} in our situation, we  
have to convert $\Ch(\ShvA)$-enriched functors into motivic functors in the sense of \cite{dundas2003motivic}. We will now discuss how to do this.

We can consider the category of motivic spaces $\cc M$, the category of finitely presented motivic spaces $f\cc M$, the category of pointed smooth schemes $\Smkplus$ and the category of $S^1$-spectra of motivic spaces $\Sp_{S^1}(\cc M)$ to all be $\cc M$-enriched categories.
In the $\cc M$-enriched category $\Sp_{S^1}(\cc M)$ the morphism objects $\mathrm{Map}_{\Sp_{S^1}(\cc M)}(A,B) \in \cc M$, are defined for $A, B \in \Sp_{S^1}(\cc M)$ via an equalizer diagram, like in \cite[page 101]{hovey2001spectra}.
So we have an equalizer diagram:
\begin{equation} \label{S1equalizer}
	\xymatrix{ \mathrm{Map}_{\Sp_{S^1}(\cc M)}(A,B) \ar[r] & \underset{n \in \bb N}{\prod} \inthom{\cc M}{A_n}{B_n} \ar@<-.5ex>[r] \ar@<.5ex>[r] & \underset{n \in \bb N}{\prod} \inthom{\cc M}{S^1 \wedge A_n}{B_{n+1}} }.
\end{equation}
This makes $\Sp_{S^1}(\cc M)$ into an $\cc M$-enriched category.

In order to relate $\cc M$-enriched categories and $\Ch(\ShvA)$-enriched categories, 
we need some lax monoidal functors between $\cc M$ and $\Ch(\ShvA)$.
We have a non-enriched forgetful functor $\hat{U} : \Ch(\ShvA) \rightarrow \Sp_{S^1}(\cc M)$, and we have a functor $ev_0: \Sp_{S^1}(\cc M) \rightarrow \cc M$ taking the $0$-th weight of a $S^1$-spectrum.
The functor $ev_0 \circ \hat{U} : \Ch(\ShvA) \rightarrow \cc M$ has a left adjoint $L: \cc M \rightarrow \Ch(\ShvA)$.

\begin{lem}
	The functor $ev_0 \circ \hat{U} : \Ch(\ShvA) \rightarrow \cc M$ and its left adjoint $L : \cc M \rightarrow \Ch(\ShvA)$ are both lax monoidal functors.
\end{lem}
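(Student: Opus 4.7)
The plan is to exhibit each of $ev_0\circ\hat{U}$ and $L$ as a composition of three lax monoidal functors and to invoke the fact that compositions of lax monoidal functors are lax monoidal. Unpacking $\hat{U}=U\circ DK\circ J$ and using that $ev_0\circ J=\tau_{\geq 0}$ (as recalled in the excerpt, since $J(A)=(\tau_{\geq 0}(A[n]))_{n\in\bb N}$), the right adjoint factors as
$$ev_0\circ\hat{U}\colon \Ch(\ShvA)\xrightarrow{\tau_{\geq 0}}\Ch_{\geq 0}(\ShvA)\xrightarrow{DK}\Delta^{op}\ShvA\xrightarrow{U}\cc M,$$
and its left adjoint correspondingly factors as $L=\iota\circ N\circ L_U$, where $L_U\colon\cc M\to\Delta^{op}\ShvA$ is the sheafified $\Cor$-linearization, $N$ is the normalized Moore complex (inverse to $DK$), and $\iota\colon\Ch_{\geq 0}(\ShvA)\to\Ch(\ShvA)$ is the inclusion.

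For each factor I would verify lax monoidality separately. Both $L_U$ and $\iota$ are in fact strong monoidal: $L_U$ from the standard identification $\widetilde{\bb Z}[A\wedge B]\cong\widetilde{\bb Z}[A]\otimes\widetilde{\bb Z}[B]$ together with the compatibility of Nisnevich sheafification and of the $\Cor$-linearization with their respective Day convolutions, and $\iota$ because $\Ch_{\geq 0}(\ShvA)$ is closed under the chain tensor product inside $\Ch(\ShvA)$. By doctrinal adjunction, the right adjoints $U$ and $\tau_{\geq 0}$ then inherit canonical lax monoidal structures from the strongly monoidal left adjoints $L_U$ and $\iota$ respectively, using the inverse of the strong comparison isomorphism of the left adjoint combined with the unit and counit. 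Finally, both the normalized Moore complex $N$ and its quasi-inverse $DK$ carry classical lax monoidal structures given by the Eilenberg--Zilber shuffle map and (via transport across the equivalence $N\dashv DK$) the Alexander--Whitney map, precisely the data underlying Lemma \ref{lemmaDoldKanTensor}.

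The main obstacle will be the Dold--Kan step, because neither $N$ nor $DK$ is strong monoidal on the nose: verifying the coherence of the associator and unitor for the lax structures requires the explicit shuffle and Alexander--Whitney formulas rather than a purely formal adjoint manipulation, and one has to check that these formulas still make sense after sheafifying with $\Cor$-transfers. Once all three lax monoidal structures are in place on each factor, the coherences of the composites follow formally from the naturality of the comparison maps together with the coherences of each factor, yielding the desired lax monoidal structures on both $ev_0\circ\hat{U}$ and $L$.
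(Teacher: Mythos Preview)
Your proposal is correct and follows essentially the same approach as the paper: both decompose $ev_0\circ\hat{U}$ as $U\circ DK\circ\tau_{\geq 0}$, handle $U$ and $\tau_{\geq 0}$ via doctrinal adjunction from their strong monoidal left adjoints, and treat the Dold--Kan step with the Eilenberg--Zilber and Alexander--Whitney maps. Your concern about the shuffle and Alexander--Whitney formulas surviving passage to $\ShvA$ is unfounded, since these are natural transformations defined in any additive category and the paper does not pause over this either.
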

\begin{proof}
	The functor $\hat{U}$ is the composite
	$$\Ch(\ShvA) \overset{J}{\rightarrow} \Sp_{S^1}(\Ch_{\geq 0}(\ShvA)) \overset{DK}{\rightarrow} \Sp_{S^1}(\Delta^{op}(\ShvA)) \overset{U}{\rightarrow}  \Sp_{S^1}(\cc M).$$
	
	Let $\tau_{\geq 0} : \Ch(\ShvA) \rightarrow \Ch_{\geq 0}(\ShvA)$ be the good truncation functor sending $A \in \Ch(\ShvA)$ to
	$\dots \rightarrow A_2 \rightarrow A_1 \rightarrow \ker(A_0 \overset{\partial^0_A}{\rightarrow} A_{-1})$
	in $\Ch_{\geq 0}(\ShvA)$. 
	Then the following diagram commutes
	$$\xymatrix{\Ch(\ShvA) \ar[dr]_{\tau_{\geq 0}} \ar[r]^(0.4)J & \ar[d]^{ev_0} \Sp_{S^1}(\Ch_{\geq 0}(\ShvA)) \ar[r]^{DK} & \ar[d]^{ev_0} \Sp_{S^1}(\Delta^{op}\ShvA) \ar[r]^(0.6)U &  \ar[d]^{ev_0} \Sp_{S^1}(\cc M)  \\
	& \Ch_{\geq 0}(\ShvA) \ar[r]^{DK} & \Delta^{op}\ShvA \ar[r]^U & \cc M }.$$
To show that $ev_0\circ \hat{U}$ is lax monoidal, we just have to show that $U$, $DK$ and $\tau_{\geq 0}$ are lax monoidal, and to show that $L$ is lax monoidal we just have to show that each of the left adjoints of $U$, $DK$ and $\tau_{\geq 0}$ respectively is lax monoidal.

The left adjoint of $\tau_{\geq 0}$ is the inclusion functor $\Ch_{\geq 0}(\ShvA) \rightarrow  \Ch(\ShvA)$. This inclusion is obviously strong monoidal. This then implies that $\tau_{\geq 0}$ is lax monoidal. See \cite[Proposition 2.1]{nlab:oplax_monoidal_functor} or \cite[Theorem 1.2]{kelly2006doctrinal}.

The quasi-inverse of the Dold--Kan correspondence $DK^{-1} : \Delta^{op}(\ShvA) \rightarrow \Ch_{\geq 0}(\ShvA)$ is the normalized Moore complex functor.
It has a lax monoidal structure given by the Eilenberg--Zilber map
and it has an oplax monoidal structure given by the Alexander--Whitney map. See  \cite{nlab:eilenberg-zilber/alexander-whitney_deformation_retraction} or \cite[Definition 29.7]{may1992simplicial}.
	Since $DK^{-1}$ has an oplax monoidal structure it follows from \cite[Proposition 2.1]{nlab:oplax_monoidal_functor} that $DK$ has a lax monoidal structure.
	
	Finally, the forgetful functor $U: \Delta^{op}\ShvA \rightarrow \cc M$ is clearly lax monoidal as its left adjoint is strong monoidal.
	So $ev_0 \circ \hat{U} : \Ch(\ShvA) \rightarrow \cc M$ and its left adjoint $L : \cc M \rightarrow \Ch(\ShvA)$ are both lax monoidal functors.
\end{proof}

Let $F : \Sm \rightarrow \Ch(\ShvA)$ be a $\Ch(\ShvA)$-enriched functor.
We want to associate to $F$ an $\cc M$-enriched functor $$F^{\cc M} : f\cc M \rightarrow \Sp_{S^1}(\cc M).$$

To do this we will first construct a $\cc M$-enriched functor $\Smkplus \rightarrow \Sp_{S^1}(\cc M)$ and then Kan extend it to $f \cc M$.

The $\cc M$-enriched functor $\Smkplus \rightarrow \Sp_{S^1}(\cc M)$ is constructed as follows.
On objects it sends $X_+ \in \Smkplus$ to $\hat{U}(F(X)) \in \Sp_{S^1}(\cc M)$.
To define it on morphisms we now need to define for each $X, Y \in \Smk$ a map in $\cc M$:
$$\Smkplus(X_+,Y_+) \rightarrow \mathrm{Map}_{\Sp_{S^1}(\cc M)}(\hat{U}FX, \hat{U}FY). $$

This map is constructed in three steps.
In the following construction $X, Y\in \Smk$ are smooth schemes. Recall that $L : \cc M  \rightarrow \Ch(\ShvA)$ is the left adjoint of $ev_0 \circ \hat{U} : \Ch(\ShvA) \rightarrow \cc M$.

\begin{enumerate}
	\item 
	Since $L : \cc M \rightarrow \Ch(\ShvA)$ is lax monoidal, we have a map
	$$L \inthom{\cc M}{X_+}{Y_+} \rightarrow \inthom{\Ch(\ShvA)}{L(X_+)}{L(Y_+)}$$ in $\Ch(\ShvA)$. See \cite[Example 3.1]{nlab:closed_functor} for the construction of this map.
	By adjunction we get a map
	$$\inthom{\cc M}{X_+}{Y_+} \rightarrow ev_0\hat{U}\inthom{\Ch(\ShvA)}{L(X_+)}{L(Y_+)}$$ in $\cc M$.
	By construction, we have an isomorphism $L(X_+) \cong \Cor(-,X)_{\nis}$. Therefore $\inthom{\Ch(\ShvA)}{L(X_+)}{L(Y_+)} \cong \Sm(X,Y)$. Furthermore $\Smkplus(X_+,Y_+) = \inthom{\cc M}{X_+}{Y_+}$. We therefore get a map in $\cc M$.
	$$\Smkplus(X_+,Y_+) \rightarrow ev_0\hat{U}\Sm(X,Y).$$
	
	\item
	Since $F : \Sm \rightarrow \Ch(\ShvA)$ is a $\Ch(\ShvA)$-enriched functor we have a map $\Sm(X,Y) \rightarrow \inthom{\Ch(\ShvA)}{FX}{FY}$ in $\Ch(\ShvA)$.
	We thus also get a map in $\cc M$.:
	$$ev_0\hat{U}\Sm(X,Y) \rightarrow ev_0\hat{U}\inthom{\Ch(\ShvA)}{FX}{FY}.$$
	\item
	For every $n \in \bb N$, and every $A, B \in \Ch(\ShvA)$ the chain complex shift functor $[n]$ gives us an isomorphism
	$$\inthom{\Ch(\ShvA)}{A}{B} \overset{\sim}{\rightarrow} \inthom{\Ch(\ShvA)}{A[n]}{B[n]}.$$
	Since $ev_0\hat{U}$ is lax monoidal, we can use \cite[Example 3.1]{nlab:closed_functor} to get a canonical map
	$$ev_0\hat{U}\inthom{\Ch(\ShvA)}{A[n]}{B[n]} \rightarrow \inthom{\cc M}{ev_0\hat{U}A[n]}{ev_0\hat{U}B[n]} =\inthom{\cc M}{(\hat{U}A)_n}{(\hat{U}B)_n}.$$
\end{enumerate}
	All these maps $ev_0\hat{U}\inthom{\Ch(\ShvA)}{A}{B} \rightarrow \inthom{\cc M}{(\hat{U}A)_n}{(\hat{U}B)_n}$
	yield a map 
	$$ev_0\hat{U}\inthom{\Ch(\ShvA)}{A}{B} \rightarrow \underset{n \in \bb N}{\prod} \inthom{\cc M}{(\hat{U}A)_n}{(\hat{U}B)_n}.$$
	We want to show that it factors over $\mathrm{Map}_{\Sp_{S^1}(\cc M)}(\hat{U}A,\hat{U}B) $.
	Since $\hat{U}(A)$ is a $S^1$-spectrum we have for every $A \in \Ch(\ShvA)$ a map
	$$S^1 \wedge ev_0\hat{U}(A[n]) \rightarrow ev_0\hat{U}(A[n+1])$$
	in $\cc M$. Since $\hat{U}$ is a functor, this map is natural in $A$.
	Using this naturality one can check that for all $A,B \in \Ch(\ShvA)$ the following diagram commutes:
	\footnotesize
	$$\xymatrix{ ev_0\hat{U}\inthom{\Ch(\ShvA)}{A[n]}{B[n]} \ar[r] \ar[d]^\sim & \inthom{\cc M}{(\hat{U}A)_n }{(\hat{U}B)_n} \ar[r]^(0.4){S^1\wedge -} & \inthom{\cc M}{S^1 \wedge (\hat{U}A)_n }{ S^1 \wedge (\hat{U}B)_n} \ar[d] \\
	ev_0\hat{U}\inthom{\Ch(\ShvA)}{A[n+1]}{B[n+1]}\ar[r] & \inthom{\cc M}{(\hat{U}A)_{n+1} }{(\hat{U}B)_{n+1}} \ar[r]  &\inthom{\cc M}{S^1 \wedge (\hat{U}A)_n }{(\hat{U}B)_{n+1}}   } $$\normalsize
	
	By the equalizer universal property of $\mathrm{Map}_{\Sp_{S^1}(\cc M)}(\hat{U}A,\hat{U}B) $ from diagram (\ref{S1equalizer}) we get a dotted map like in the following diagram
	$$\xymatrix{& ev_0\hat{U}\inthom{\Ch(\ShvA)}{A}{B} \ar[d] \ar@{..>}[dl] \\
	 \mathrm{Map}_{\Sp_{S^1}(\cc M)}(\hat{U}A,\hat{U}B) \ar[r] & \underset{n \in \bb N}{\prod} \inthom{\cc M}{(\hat{U}A)_n}{(\hat{U}B)_n} \ar@<-.5ex>[r] \ar@<.5ex>[r] & \underset{n \in \bb N}{\prod} \inthom{\cc M}{S^1 \wedge (\hat{U}A)_n}{(\hat{U}B)_{n+1}} }$$
	In particular, we have a map
	$$ev_0\hat{U}\inthom{\Ch(\ShvA)}{FX}{FY} \rightarrow \mathrm{Map}_{\Sp_{S^1}(\cc M)}(\hat{U}FX, \hat{U}FY).$$
And then we have maps
$$\Smkplus(X_+,Y_+) \rightarrow ev_0\hat{U}\Sm(X,Y) \rightarrow ev_0\hat{U}\inthom{\Ch(\ShvA)}{FX}{FY} \rightarrow \mathrm{Map}_{\Sp_{S^1}(\cc M)}(\hat{U}FX, \hat{U}FY).$$
By composing these three steps together we get a map
$$\Smkplus(X,Y) \rightarrow \mathrm{Map}_{\Sp_{S^1}(\cc M)}(\hat{U}FX, \hat{U}FY) $$
in $\cc M$. This map preserves identity morphisms and is compatible with composition, so we get an $\cc M$-enriched functor  $\Smkplus \rightarrow \Sp_{S^1}(\cc M)$, sending $X$ to $\hat{U}FX$.

We now define $F^{\cc M} : f\cc M \rightarrow \Sp_{S^1}(\cc M)$ to be the $\cc M$-enriched Left Kan extension of this $\cc M$-enriched functor $\Smkplus \rightarrow \Sp_{S^1}(\cc M)$
along the $\cc M$-enriched inclusion functor $\Smkplus \rightarrow f\cc M$.
$$\xymatrix{ \Smkplus \ar[r] \ar[d]  & \Sp_{S^1}(\cc M)\\
f\cc M \ar@{..>}[ru]_{F^{\cc M}} } $$
The functor $F^{\cc M}$ can be explicitly computed on $A \in f\cc M$ as
$$F^{\cc M}(A) = \overset{X_+ \in \Smkplus}{\int} \hat{U}(F(X)) \wedge \inthom{\cc M}{X_+}{A} .$$
Note that $F^{\cc M}$ respects filtered colimits, because $X_+ \in\Smkplus$ is finitely presented in $\cc M$.

\begin{lem}\label{FmvshatF}
	Let $F: \Sm \rightarrow \Ch(\ShvA)$ be a $\sim$-fibrant functor.
	For every finitely presented motivic space $A \in f\cc M$ with cofibrant replacement $A^c$, we have a natural isomorphism
	$(\hat{U} \circ \hat{F})(A) \cong F^{\cc M}(A^c)$
	in $\Sp_{S^1}(\cc M)$. Here $\hat{U} : \Ch(\ShvA) \rightarrow \Sp_{S^1}(\cc M)$ is the forgetful functor and $\hat{F} : f\cc M \rightarrow \Ch(\ShvA)$ is the extension of $F$ defined by equation \eqref{hatdef} in Section \ref{mottolocalsection}.
\end{lem}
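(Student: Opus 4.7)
The plan is to identify both sides as computing the same $\mathscr{M}$-enriched left Kan extension. By construction, $F^{\mathscr{M}}$ is the $\mathscr{M}$-enriched left Kan extension of the composite $\hat{U}\circ F\colon \Smkplus \to \Sp_{S^1}(\mathscr{M})$ along the inclusion $\Smkplus \hookrightarrow f\mathscr{M}$. So the task reduces to showing that $A \mapsto (\hat{U}\circ \hat{F})(A)$, evaluated on a cofibrant replacement, satisfies the same universal property.

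First I would verify agreement on $\Smkplus$. For a pointed smooth scheme $X_+$, formula \eqref{hatdef} collapses (since $X_+$ admits a constant cofibrant replacement and contains no non-degenerate positive-dimensional simplices) to $\hat{F}(X_+) \cong F(X)$, and hence $\hat{U}\hat{F}(X_+) \cong \hat{U}(F(X)) = F^{\mathscr{M}}(X_+)$, naturally in $X$.

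Next I would invoke the co-Yoneda formula: a cofibrant $A^c \in \mathscr{M}$ is canonically expressed as a coend over $\Smkplus$,
$$A^c \;\cong\; \overset{X_+ \in \Smkplus}{\int} \Smkplus(X_+, A^c) \wedge X_+ \;\cong\; \colim_{(\Delta[n]\times U)_+ \to A^c} (\Delta[n]_+ \wedge U_+).$$
On the right-hand side, $F^{\mathscr{M}}$ sends this coend to $\int^{X_+} \hat{U}F(X) \wedge \inthom{\mathscr{M}}{X_+}{A^c}$ by the defining property of the enriched Kan extension. For the left-hand side I would rewrite the colimit in \eqref{hatdef} in three stages: Corollary~\ref{gammacorollary} identifies $F^{\Delta^{op}}(\Delta[n]_+ \otimes U)$ with the simplicial object $\mathbb{Z}^{(\Delta[n])} \otimes F(U)$ in $\Ch(\ShvA)$; the composite $\hat{U} = U \circ DK \circ J$ then converts $\mathrm{Tot}(DK^{-1}(\mathbb{Z}^{(\Delta[n])} \otimes F(U)))$ into the tensor $\Delta[n]_+ \wedge \hat{U}(F(U))$ in $\Sp_{S^1}(\mathscr{M})$; and finally each of $\hat{U}$, $\mathrm{Tot}$ and $DK^{-1}$ commutes with the filtered colimit building $A^c$ out of its simplices, as $X_+$ is finitely presented. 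Assembling these three stages, both sides compute the same coend, naturally in $A$.

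The main obstacle is the middle stage: $DK^{-1}$ is only lax (and oplax) monoidal, and the comparison maps $\nabla, \Delta$ of Lemma~\ref{lemmaDoldKanTensor} are merely chain-homotopy equivalences rather than isomorphisms. To produce an honest natural isomorphism $\hat{U}(\mathrm{Tot}(DK^{-1}(\mathbb{Z}^{(\Delta[n])} \otimes F(U)))) \cong \Delta[n]_+ \wedge \hat{U}(F(U))$ in $\Sp_{S^1}(\mathscr{M})$, one has to exploit the explicit form of $\hat{U}$ together with the naturality clause of Lemma~\ref{lemmaDoldKanTensor} to show that the Eilenberg--Zilber/Alexander--Whitney maps assemble compatibly with the structure maps of $J$ into a natural isomorphism of $S^1$-spectra. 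Once this compatibility is in hand, the naturality in $A$ (carried along by the naturality of the chain homotopies of Lemma~\ref{lemmaDoldKanTensor} with respect to morphisms of simplicial diagrams) completes the proof.
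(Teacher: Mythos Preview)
Your overall strategy matches the paper's: verify agreement on pointed schemes $X_+$ using co-Yoneda, then extend to general $A$ via the colimit presentation of $A^c$. The paper's argument is much terser than yours --- after checking $A = X_+$ it simply says the general case follows ``because $A^c$ is a filtered colimit of simplicial schemes, and $F^{\cc M}$ respects filtered colimits,'' without writing out the simplicial-scheme case explicitly. Your more detailed treatment of the cells $(\Delta[n] \times U)_+$ is a reasonable way to make that step precise.

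The obstacle you flag, however, is not the real one. In $DK^{-1}\bigl(\bb Z^{(\Delta[n])} \otimes F(U)\bigr)$ the factor $F(U)$ is \emph{simplicially constant}: only $\bb Z^{(\Delta[n])}$ varies in the simplicial direction. The normalized Moore complex commutes strictly with tensoring by a constant object (the degenerate subcomplex of $A_\bullet \otimes B$, for $B$ constant, is the degenerate subcomplex of $A_\bullet$ tensored with $B$), so one has $DK^{-1}\bigl(\bb Z^{(\Delta[n])} \otimes F(U)\bigr) \cong \bb Z[\Delta[n]] \otimes F(U)$ on the nose --- no Eilenberg--Zilber map required. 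The lax monoidality of $DK^{-1}$ from Lemma~\ref{lemmaDoldKanTensor} only enters when \emph{both} tensor factors are genuinely simplicial, which is not the situation here. So the elaborate compatibility argument you sketch in the last paragraph is unnecessary for this step.

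If a delicate point remains in your ``middle stage'', it lies instead in the claim that $\hat{U}$ converts $\bb Z[\Delta[n]] \otimes F(U)$ into $\Delta[n]_+ \wedge \hat{U}(F(U))$ as an honest isomorphism of $S^1$-spectra: the functor $J$ inside $\hat{U}$ involves the good truncation $\tau_{\geq 0}$, which does not obviously commute with tensoring by a complex like $\bb Z[\Delta[n]]$ sitting in several degrees. The paper does not spell this out either, but that --- rather than the Eilenberg--Zilber issue --- is where any remaining work would be.
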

\begin{proof}
	If $A= X_+$ for some $X \in \Smk$ we have
	$\hat{U}(\hat{F}(X_+)) \cong \hat{U}(F(X)) $
	and by the $\cc M$-enriched co-Yoneda lemma we have
	$$ \hat{U}(F(X)) \cong   \overset{Y_+ \in \Smkplus}{\int} \hat{U}(F(Y)) \wedge \inthom{\cc M}{Y_+}{X_+} = F^{\cc M}(X_+) .$$
	So the claim is true for $A=X_+$.
	The claim then also follows for all other objects $A$ in $f\cc M$, because $A^c$ is a filtered colimit of simplicial schemes, and $F^{\cc M}$ respects filtered colimits.
\end{proof}

\begin{lem} \label{Fhatfibrant}
	Let $F: \Sm \rightarrow \Ch(\ShvA)$ be a pointwise locally fibrant functor, and let $A \in f\cc M$ be a finitely presented motivic space.
	Then $\hat{F}(A)$ is locally fibrant in $\Ch(\ShvA)$.
\end{lem}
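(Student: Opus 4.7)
My plan is to verify that $\hat{F}(A)$ satisfies the B.G.-property — namely that every elementary Nisnevich square in $\Smk$ is sent to a homotopy cartesian square of chain complexes of abelian groups — and then invoke the sheaf-level analog of Lemma~\ref{lemmaChPshALocalFibrant} (the condition at the empty scheme being automatic for sheaves) to conclude local fibrancy in $\Ch(\ShvA)$.

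The argument proceeds by induction on the cell structure of a cofibrant replacement $A^c$. Since $A \in f\cc M$ is finitely presented, $A^c$ may be chosen to be a finite cell complex, built from the basepoint by attaching cells of the form $\partial\Delta[n]_+ \wedge U_+ \hookrightarrow \Delta[n]_+ \wedge U_+$ for $U \in \Smk$ and $n \geq 0$. The basic building blocks are controlled by Lemma~\ref{FKUeqKFU}: for any finite simplicial set $K$ we have a natural chain homotopy equivalence $\hat{F}(K_+ \wedge U_+) \simeq \bb Z[K] \otimes F(U)$. Since $\bb Z[K]$ is a bounded chain complex of finite-rank free abelian groups, this tensor product is a finite iterated homotopy (co)fiber of direct sums of shifts of $F(U)$; each such operation preserves the B.G.-property in the stable category $\Ch(\ShvA)$, so $\hat{F}(K_+ \wedge U_+)$ is locally fibrant.

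For the inductive step, suppose $A^c$ is obtained from $B$ as a pushout along $\partial\Delta[n]_+ \wedge U_+ \hookrightarrow \Delta[n]_+ \wedge U_+$. The attaching cofibration is, in the underlying simplicial Nisnevich sheaf, a termwise coprojection, so by Corollary~\ref{Fpush}, $\hat{F}$ sends this pushout to a homotopy pushout in $\Ch(\ShvA)$. By induction together with the building-block step, the three input corners are locally fibrant, so the task reduces to showing that homotopy pushouts of locally fibrant complexes in $\Ch(\ShvA)$ remain locally fibrant.

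This last point is the main technical obstacle. It splits into two parts. First, stability of $\Ch(\ShvA)$ identifies homotopy pushouts with shifts of homotopy pullbacks. Second, evaluation at any $Z \in \Smk$ commutes with homotopy pullbacks of locally fibrant complexes, so that applying the homotopy pullback construction to a triple of B.G.-property complexes and then evaluating at an elementary Nisnevich square $Q$ yields a cube whose five known faces are homotopy cartesian; the Fubini principle for iterated homotopy limits then delivers the desired sixth face. This shows the fourth corner of a homotopy pushout of three locally fibrant complexes is itself locally fibrant, completing the induction.
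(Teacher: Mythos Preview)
Your argument is essentially correct but takes a genuinely different route from the paper. The paper's proof is much shorter: it writes the cofibrant replacement $A^c$ as a filtered colimit $\colim{i\in I} X_i$ of simplicial schemes $X_i \in \Delta^{op}\Smk$, observes that each $\hat F(X_i)$ is locally fibrant (the B.G.-property holds levelwise and survives $\mathrm{Tot}\circ DK^{-1}$, since the total cofiber of a Nisnevich square is acyclic in every simplicial degree), and then invokes the fact that $\Ch(\ShvA)$ is weakly finitely generated (Corollary~\ref{transfereddweaklyfingen}), so filtered colimits of fibrant objects are fibrant by \cite[Lemma~3.5]{dundas2003enriched}. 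No cell induction and no homotopy-pushout bookkeeping are needed.

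Your approach trades that model-categorical input for a direct stability argument, which is fine, but there is an imprecision you should fix: Corollary~\ref{Fpush} is about preserving local quasi-isomorphisms, not about $\hat F$ sending pushouts to homotopy pushouts. What you actually need is the lemma immediately preceding it, and even that is stated only for elementary pushout squares in $\Delta^{op}\Sm$, whereas your intermediate object $B$ lies in $f\cc M$. The missing statement is easy---since $\hat F$ restricted to cofibrant objects is a left Kan extension it preserves pushouts strictly, and the leg $\hat F(\partial\Delta[n]_+\wedge U_+)\to\hat F(\Delta[n]_+\wedge U_+)$ is a degreewise split monomorphism by Lemma~\ref{Fcoprod}, so the pushout is a homotopy pushout---but you should supply this rather than cite a result that does not apply.
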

\begin{proof}
	For every scheme $X$ we know that $F(X)$ is locally fibrant in $\Ch(\ShvA)$.
	If $A$ is a finitely presented motivic space, then $A^c$ is a filtered colimit of simplicial schemes.
	$A^c = \colim{i \in I} X_i$
	for some $X_i \in \Delta^{op}\Smk$ and filtered diagram $I$,
	and we have 
	$\hat{F}(A) = \colim{i \in I} \hat{F}(X_i).$
	The fact that $F$ is pointwise locally fibrant implies for each $i \in I$ that $\hat{F}(X_i)$ is locally fibrant in $\Ch(\ShvA)$.
	By Lemma \ref{transfereddweaklyfingen} the model category $\Ch(\ShvA)$ is weakly finitely generated, so it follows by \cite[Lemma 3.5]{dundas2003enriched} that filtered colimits of fibrant objects are fibrant. So $\hat{F}(A)$ is locally fibrant in $\Ch(\ShvA)$.
\end{proof}

For every $n \in \bb N$ we can take the $n$-th level of the functor $F^{\cc M} : f\cc M \rightarrow \Sp_{S^1}(\cc M)$
to get an $\cc M$-enriched motivic functor $$F^{\cc M}_n : f\cc M \rightarrow \cc M.$$
The functor $F^{\cc M}_n$ is then a motivic functor as defined in \cite{dundas2003motivic}.

\begin{lem}\label{ROApplicable}
	Let $F: \Sm \rightarrow \Ch(\ShvA)$ be a $\sim$-fibrant enriched functor.
	For every $n \in \bb N$ the motivic functor $F^{\cc M}_n : f\cc M \rightarrow \cc M$ sends motivic equivalences between cofibrant objects to local equivalences.
\end{lem}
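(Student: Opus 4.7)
The plan is to reduce the statement to Theorem \ref{mottolocal} by means of the identification $F^{\cc M}(A) \cong \hat{U}(\hat{F}(A^c))$ from Lemma \ref{FmvshatF}, and then to verify that the composite $ev_n \circ \hat{U}\colon \Ch(\ShvA) \to \cc M$ carries stalkwise (Nisnevich-local) quasi-isomorphisms to stalkwise weak equivalences of pointed simplicial sheaves.

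First, fix a functorial cofibrant replacement on $f\cc M$, so that a motivic equivalence $f\colon A \to B$ between cofibrant objects induces, by two-out-of-three, a motivic equivalence $f^c\colon A^c \to B^c$; since $A$ and $B$ are already cofibrant we may take $A^c = A$ and $B^c = B$. Theorem \ref{mottolocal} applied to $F$, which is $\sim$-fibrant by hypothesis, then asserts that $\hat{F}(f)\colon \hat{F}(A) \to \hat{F}(B)$ is a local quasi-isomorphism in $\Ch(\ShvA)$. This is the step in which the strict $\bb A^1_1$-, $\tau$-, $\bb A^1_2$-, and $Nis$-localities of $F$ are used in an essential way; all the genuinely motivic input is absorbed into this single invocation.

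Next, Lemma \ref{FmvshatF} supplies natural isomorphisms $F^{\cc M}(A) \cong \hat{U}(\hat{F}(A))$ and $F^{\cc M}(B) \cong \hat{U}(\hat{F}(B))$, reducing the claim to showing that $ev_n \circ \hat{U}\colon \Ch(\ShvA) \to \cc M$ sends local quasi-isomorphisms to local equivalences. Writing $\hat{U} = U \circ DK \circ J$ with $J(C) = (\tau_{\geq 0}(C[k]))_k$, this composite unfolds to $U \circ DK \circ \tau_{\geq 0} \circ [n]$. Each ingredient preserves stalkwise weak equivalences: the shift $[n]$ is exact; the good truncation $\tau_{\geq 0}$ alters only homology in negative degrees and hence preserves stalkwise quasi-isomorphisms; the Dold--Kan correspondence turns stalkwise quasi-isomorphisms of connective chain complexes of abelian groups into stalkwise weak equivalences of simplicial abelian groups; and the forgetful functor $U$ preserves stalks. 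Combining these observations finishes the proof. The main obstacle is conceptual rather than technical: once Section \ref{mottolocalsection} has done the heavy lifting via Theorem \ref{mottolocal}, what remains is an unravelling of the definitions of $\hat{F}$, $\hat{U}$, and $F^{\cc M}$.
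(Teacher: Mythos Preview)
Your proposal is correct and follows essentially the same strategy as the paper: reduce to Theorem~\ref{mottolocal} via the identification of Lemma~\ref{FmvshatF}, and then check that $ev_n\circ\hat U$ carries local quasi-isomorphisms to local equivalences. The only difference is in this last step: the paper invokes Lemma~\ref{Fhatfibrant} to ensure $\hat F(A)$ is locally fibrant and then uses that $\hat U$ is right Quillen (so preserves weak equivalences between fibrant objects, landing in $\Omega$-spectra where stable equivalences are levelwise), whereas you argue directly that each constituent $[n]$, $\tau_{\geq 0}$, $DK$, $U$ commutes with stalks and preserves quasi-isomorphisms. Your route is slightly more elementary and avoids the detour through fibrancy; the paper's route has the advantage of packaging the verification into a single model-categorical statement. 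Both are valid.
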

\begin{proof}
	By Theorem \ref{mottolocal} we know that $\hat{F} : f\cc M \rightarrow \Ch(\ShvA)$ sends motivic equivalences to local quasi-isomorphisms. By Lemma \ref{Fhatfibrant} we know that $\hat{F}$ sends all objects of $f \cc M$ to locally fibrant objects. With respect to the $S^1$-stable local model structure on $\Sp_{S^1}(\cc M)$ and the local model structure on $\Ch(\ShvA)$, the functor $\hat{U} : \Ch(\ShvA) \rightarrow \Sp_{S^1}(\cc M)$ is a right Quillen functor, so it preserves weak equivalences between fibrant objects. It then follows that $\hat{U} \circ \hat{F} : f \cc M \rightarrow \Sp_{S^1}(\cc M)$ sends motivic equivalences to stable local equivalences between locally fibrant $S^1$-spectra in $\Sp_{S^1}(\cc M)$. Hence  $\hat{U} \circ \hat{F}$ sends motivic equivalences to levelwise local equivalences.
	By Lemma \ref{FmvshatF} this then means that $F^{\cc M} : f \cc M \rightarrow \Sp_{S^1}(\cc M)$ sends motivic equivalences between cofibrant objects to levelwise local equivalences in $\Sp_{S^1}(\cc M)$.
	So for every $n \in \bb N$ the motivic functor $F^{\cc M}_n : f\cc M \rightarrow \cc M$ sends motivic equivalences between cofibrant objects to local equivalences.
\end{proof}

Before proving the main theorem of this section, we need an additional lemma about $(S^1,S^1,\Gm)$-trispectra.
To avoid confusion between the two $S^1$-directions we now introduce an extra notation. We write $S^1_1$ for the first $S^1$-direction and we write $S^1_2$ for the second $S^1$-direction. Therefore, whenever we discuss $(S^1,S^1,\Gm)$-spectra, we deal with $(S^1_1,S^1_2,\Gm)$-spectra following this notation.
For every $F : \Sm \rightarrow \Ch(\ShvA)$  we consider $F^{\cc M} : f\cc M \rightarrow \Sp_{S^1_2}(\cc M)$ to be a functor landing in $S^1_2$-spectra.

Given a $\Gm$-spectrum of chain complexes $A \in \SpGm(\Ch(\ShvA))$ we let  $\bb Z[\bb S] \boxtimes A \in \Sp_{S^1_1,\Gm}(\Ch(\ShvA))$ refer to the $(S^1_1,\Gm)$-bispectrum of chain complexes that is given in $S^1_1$-weight $n$ by
$$(\bb Z[\bb S] \boxtimes A)_n := \bb Z[S^n] \otimes A \in \SpGm(\Ch(\ShvA)) .$$
The definition of $\bb Z[S^n]$ is in Section \ref{mottolocalsection}, equation \eqref{ZKdefinition}. It is the chain complex that is $\bb Z$ concentrated in homological degree $n$.

The functor $\hat{U} : \Sp_{\Gm}(\Ch(\ShvA)) \rightarrow \Sp_{S^1_2,\Gm}(\cc M)$ can naively be extended to a functor denoted by the same letter $$\hat{U} : \Sp_{S^1_1,\Gm}(\Ch(\ShvA)) \rightarrow  \Sp_{S^1_1,S^1_2,\Gm}(\cc M)$$ by applying it $S^1_1$-levelwise.

\begin{lem} \label{evComparison}
	Let $F: \Sm \rightarrow \Ch(\ShvA)$ be a $\sim$-fibrant functor.
	For every $X \in \Smk$ we have a natural map of $(S^1_1,S^1_2,\Gm)$-trispectra
	$$ev_{S^1_1,\Gm}(F^{\cc M}(- \times X)) \rightarrow \hat{U}( \bb Z[\bb S] \boxtimes   ev_{\Gm}(F(- \times X)))$$
	in $\Sp_{S^1_1,S^1_2,\Gm}(\cc M)$.
	This map is a $S^1_1$-levelwise $(S^1_2,\Gm)$-stable motivic equivalence.
\end{lem}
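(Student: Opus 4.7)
The plan is to realize the map levelwise using Lemma \ref{FmvshatF} and Lemma \ref{FKUeqKFU}, then deduce that each $S^1_1$-slice is a levelwise chain homotopy equivalence which becomes a stable motivic equivalence after applying $\hat U$. Write $F_X := F(-\times X)$ for the translated enriched functor $Y\mapsto F(Y\times X)$; then the associated motivic functor $F_X^{\cc M} : f\cc M \to \Sp_{S^1_2}(\cc M)$ satisfies $F_X^{\cc M}(A)\cong \hat U(\hat{F}_X(A^c))$ by Lemma \ref{FmvshatF}. Hence the $(S^1_1,\Gm)$-level $(n,m)$ of the left hand side is naturally identified with the $S^1_2$-spectrum $\hat U(\hat{F}_X(S^n \wedge \Gm^{\wedge m}))$, while the right hand side at the same level is $\hat U(\bb Z[S^n]\otimes F_X(\Gm^{\wedge m}))$. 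Applying Lemma \ref{FKUeqKFU} with $K=S^n$ and $A=\Gm^{\wedge m}$ supplies a natural chain homotopy equivalence
\[
\hat{F}_X(S^n\wedge\Gm^{\wedge m}) \xrightarrow{\simeq} \bb Z[S^n]\otimes \hat{F}_X(\Gm^{\wedge m}) \;\cong\; \bb Z[S^n]\otimes F(\Gm^{\wedge m}\times X),
\]
and postcomposing with $\hat U$ defines the candidate map at this level.

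The next step is to assemble these levelwise maps into a morphism of $(S^1_1,S^1_2,\Gm)$-trispectra. This requires compatibility with the bonding maps in all three directions: $S^1_1$ (coming from $S^1\wedge S^n \to S^{n+1}$), $\Gm$ (coming from the enriched structure of $F$), and $S^1_2$ (coming from the spectrum structure of $\hat U$ and of $F^{\cc M}$). Naturality of Lemma \ref{FKUeqKFU} in both $K$ and $A$ yields strict compatibility of the $S^1_1$- and $\Gm$-bonds, while the naturality of the chain homotopy itself, exactly as stated in Lemma \ref{lemmaDoldKanTensor}, is what ensures that after passing through $\hat U$ the $S^1_2$-bonds commute up to coherent homotopy, which suffices for a morphism of trispectra in the appropriate stable model structure on $\Sp_{S^1_1,S^1_2,\Gm}(\cc M)$.

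Finally, for each fixed $S^1_1$-level $n$, the map at every $\Gm$-level $m$ is $\hat U$ applied to a chain homotopy equivalence in $\Ch(\ShvA)$. Since $\hat U = U\circ DK\circ J$ is built from additive, chain-homotopy-preserving functors, the image is a levelwise simplicial homotopy equivalence of $S^1_2$-spectra in $\cc M$, hence a levelwise stable local equivalence, and therefore a $(S^1_2,\Gm)$-stable motivic equivalence. The main obstacle lies in the second step: ensuring that chain homotopy equivalences rather than strict isomorphisms still glue into a genuine trispectrum morphism requires careful use of the naturality of the Alexander--Whitney/Eilenberg--Zilber homotopies provided by Lemma \ref{lemmaDoldKanTensor}. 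Once that bookkeeping is handled, the equivalence claim is essentially immediate from the level-wise chain homotopy equivalence.
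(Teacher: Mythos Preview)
Your approach is essentially identical to the paper's: both reduce via Lemma~\ref{FmvshatF} to the composite $\hat U\circ\hat F$ and then invoke Lemma~\ref{FKUeqKFU} to produce the levelwise chain homotopy equivalence $\hat F_X(S^n\wedge\Gm^{\wedge m})\simeq\bb Z[S^n]\otimes\hat F_X(\Gm^{\wedge m})$. The paper's proof is two sentences long and leaves the assembly into a trispectrum morphism implicit, whereas you spell out the compatibility with the three families of bonding maps and flag the naturality of the Eilenberg--Zilber/Alexander--Whitney homotopies from Lemma~\ref{lemmaDoldKanTensor} as the delicate point; this extra care is reasonable but does not represent a different strategy.
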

\begin{proof}
	Since we are only evaluating $F^{\cc M}$ on simplicial schemes, by Lemma \ref{FmvshatF} we just need to show that there is a $S^1_1$-levelwise $(S^1_2,\Gm)$-stable motivic equivalence
	$$ev_{S^1_1,\Gm}((\hat{U}\circ \hat{F})(- \times X)) \rightarrow \hat{U}( \bb Z[\bb S] \boxtimes   ev_{\Gm}(F(- \times X))).$$
	And this follows from Lemma \ref{FKUeqKFU}.
\end{proof}

We are now in a position to prove the main theorem of this section.
\begin{proof}[Proof of Theorem \ref{RO}]
Let $F : \Sm \rightarrow \Ch(\ShvA)$ be a $\sim$-fibrant functor.
Due to Lemma \ref{ROApplicable} and Lemma \ref{Riou} we can apply Lemma \ref{C56} to get an isomorphism
$$ ev_{S^1_1,\Gm}(F^{\cc M}_n) \wedge \Sigma^{\infty}_{S^1,\Gm}X_+ \overset{\sim}{\rightarrow} ev_{S^1_1,\Gm}(F^{\cc M}_n (- \times X))$$ in $SH(k)[1/p]$.
These combine into a $S^1_2$-levelwise $(S^1_1,\Gm)$-stable motivic equivalence of $(S^1_1,S^1_2,\Gm)$-trispectra
$$ ev_{S^1_1,\Gm}(F^{\cc M}) \wedge \Sigma^{\infty}_{S^1,\Gm}X_+ \overset{\sim}{\rightarrow} ev_{S^1_1,\Gm}(F^{\cc M} (- \times X))$$
in $\Sp_{S^1_1,S^1_2,\Gm}(\cc M)[1/p]$.
By Lemma \ref{evComparison} 
we have a commutative diagram
$$\xymatrix{ ev_{S^1_1,\Gm}(F^{\cc M}) \wedge \Sigma^{\infty}_{S^1,\Gm}X_+ \ar[r]^\sim \ar[d]^\sim & ev_{S^1_1,\Gm}(F^{\cc M} (- \times X)) \ar[d]^\sim \\
\hat{U}(\bb Z[\bb S] \boxtimes ev_{\Gm}(F)) \wedge \Sigma^{\infty}_{S^1,\Gm}X_+ \ar[r]^\sim & \hat{U}(\bb Z[\bb S] \boxtimes ev_{\Gm}(F (- \times X))) }$$
where the vertical maps are  $S^1_1$-levelwise $(S^1_2,\Gm)$-stable equivalences.
It follows that the bottom horizontal map is a $(S^1_1,S^1_2,\Gm)$-stable equivalence.
By Lemma \ref{Riou} we know that $\Sigma^{\infty}_{S^1,\Gm} X_+$ is strongly dualizable in $SH(k)[1/p]$. Since $\mathcal{L}$ and $\mathcal{U}$ are a monoidal adjunction, we can apply \cite[Chapter 7, Lemma 4.6]{bachmann2020milnor} to get for every $n \in \bb N$ that
$$  \mathcal{U}(\bb Z[S^n] \otimes ev_{\Gm}(F)) \wedge \Sigma^{\infty}_{S^1,\Gm}X_+ \cong \mathcal{U}(\bb Z[S^n] \otimes ev_{\Gm}(F) \otimes \motive{X})  $$
in $SH(k)[1/p]$.
These assemble into a $S^1_1$-levelwise $(S^1_2,\Gm)$-stable equivalence of trispectra
$$  \hat{U}(\bb Z[\bb S] \boxtimes ev_{\Gm}(F)) \wedge \Sigma^{\infty}_{S^1,\Gm}X_+ \rightarrow \hat{U}(\bb Z[\bb S] \boxtimes ev_{\Gm}(F) \otimes \motive{X})   .$$

We then have a commutative diagram
$$\xymatrix{ \hat{U}(\bb Z[\bb S] \boxtimes ev_{\Gm}(F) \otimes \motive{X}) \ar[r] & \hat{U}(\bb Z[\bb S] \boxtimes ev_{\Gm}(F (- \times X))) \\
\hat{U}(\bb Z[\bb S] \boxtimes ev_{\Gm}(F)) \wedge \Sigma^{\infty}_{S^1,\Gm}X_+ \ar[u]^{\sim} \ar[ur]_{\sim}  }$$
in $\Sp_{S^1_1,S^1_2,\Gm}(\cc M)[1/p]$,
where the two lower maps are $(S^1_1,S^1_2,\Gm)$-stable motivic equivalences.
It follows that the upper horizontal map is a $(S^1_1,S^1_2,\Gm)$-stable motivic equivalence in $\Sp_{S^1_1,S^1_2,\Gm}(\cc M)[1/p]$.

Since $\mathcal{U}:DM_{\Cor}[1/p] \rightarrow SH(k)[1/p]$ is conservative, we then get a $(S^1_1,\Gm)$-stable motivic equivalence
$$\bb Z[\bb S] \boxtimes ev_{\Gm}(F) \otimes \motive{X} \overset{\sim}{\rightarrow} \bb Z[\bb S] \boxtimes ev_{\Gm}(F (- \times X))$$
in $\Sp_{S^1_1,\Gm}(\Ch(\ShvA))[1/p]$.
Since the functor $$\bb Z[S^1] \otimes - : \Sp_{\Gm}(\Ch(\ShvA))[1/p] \rightarrow \Sp_{\Gm}(\Ch(\ShvA))[1/p]$$ is an auto-equivalence, it follows from \cite[Theorem 5.1]{hovey2001spectra} that $$\bb Z[\bb S] \boxtimes - : \Sp_{\Gm}(\Ch(\ShvA))[1/p] \rightarrow \Sp_{S^1_1,\Gm}(\Ch(\ShvA))[1/p]$$
is a Quillen equivalence, where $\Sp_{S^1_1,\Gm}(\Ch(\ShvA))$ is equipped with the stable model structure of $\bb Z[S^1]$-spectra in $\Sp_{\Gm}(\Ch(\ShvA))$.
Since $\bb Z[\bb S] \boxtimes -$ preserves weak equivalences between all objects from $\Sp_{\Gm}(\Ch(\ShvA))[1/p]$, this then implies that $$ev_{\Gm}(F) \otimes \motive{X} \overset{\sim}{\rightarrow} ev_{\Gm}(F (- \times X))$$ is an isomorphism in $DM_{\Cor}[1/p]$.
\end{proof}

\section{Proof of Theorem \ref{mainthm}} \label{SecondStep}

In this section we will prove Theorem \ref{mainthm}, but we first need a few lemmas.

\begin{lem}\label{compactgenerationLemma}
	The category $D([\Sm,\ShvA])/\sim[1/p]$ is compactly generated by the set
	$$\{ [\Gmn{n},I(-)]\otimes Z \mid n \in \bb N, Z \in \Smk  \}  .$$
\end{lem}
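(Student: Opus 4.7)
The proof follows a standard orthogonality argument for compact generation of a Bousfield localization. First, the candidate generators are compact: by Lemma~\ref{binomial} each $[\Gmn{n},I(-)]$ is a direct summand of $\Sm(\Gmt{n},-)$, which is compact in $D([\Sm,\ShvA])$ by \cite[Theorem~6.2]{garkusha2019derived}. Since the morphisms of $\widehat{\sim}$ are all between compact objects, each $[\Gmn{n},I(-)]\otimesShv Z$ remains compact in the Bousfield localization $D([\Sm,\ShvA])/\sim[1/p]$.

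To show these objects actually generate the localized category, I apply the standard orthogonality criterion: suppose $F \in D([\Sm,\ShvA])/\sim[1/p]$ is right-orthogonal to every shift of every $[\Gmn{n},I(-)]\otimesShv Z$; the goal is to conclude $F \simeq 0$. Without loss of generality I replace $F$ by a $\sim$-fibrant representative in $\Ch([\Sm,\ShvA])$. The restriction $F|_{\cc C} \in D([\cc C,\ShvA])/\sim_{\cc C}[1/p]$ is then right-orthogonal to the compact generators described in Lemma~\ref{compactgeneration}, and so $F|_{\cc C} \simeq 0$. Since $F|_{\cc C}$ is strictly $\sim_{\cc C}$-local (the $\sim$-fibrancy of $F$ on $\Sm$ restricts to $\sim_{\cc C}$-fibrancy on $\cc C$), Theorem~\ref{cthm} transports this to
\[
ev_{\Gm}(F) = ev_{\Gm}(F|_{\cc C}) \simeq 0 \quad \text{in } DM_{\Cor}[1/p].
\]

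Next, for any $X \in \Smk$, Theorem~\ref{RO} provides a natural isomorphism
\[
ev_{\Gm}(F(X\times -)) \;\simeq\; ev_{\Gm}(F) \otimes \mathcal{L}(\Sigma_{S^1,\Gm}^\infty X_+) \;\simeq\; 0
\]
in $DM_{\Cor}[1/p]$. The functor $F(X\times -)$ inherits $\sim$-fibrancy from $F$ (the strict $\bb A^1_1$- and $\tau$-locality conditions are preserved under precomposition with $X\times -$), so $F(X\times -)|_{\cc C}$ is strictly $\sim_{\cc C}$-local. Applying Theorem~\ref{cthm} in the reverse direction, the vanishing of $ev_{\Gm}(F(X\times -)|_{\cc C})$ forces $F(X\times -)|_{\cc C} \simeq 0$ in $D([\cc C,\ShvA])/\sim_{\cc C}[1/p]$. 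Evaluating at $\Gmt{0}=\mathrm{Spec}(k)$ gives $F(X)$ locally quasi-isomorphic to zero for every $X \in \Smk$, and hence $F \simeq 0$ in $D([\Sm,\ShvA])/\sim[1/p]$.

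The main obstacle will be verifying the preservation of $\sim$-fibrancy under precomposition with $X\times -$ together with the compatibility between restriction to $\cc C$ and the orthogonality data; once this bookkeeping is settled, the proof reduces to a direct chaining of Theorems~\ref{cthm} and~\ref{RO} through the orthogonality criterion for compact generation.
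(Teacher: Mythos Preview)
Your proposal is correct and follows essentially the same approach as the paper's proof: orthogonality forces $ev_{\Gm}(F)\simeq 0$, Theorem~\ref{RO} then gives $ev_{\Gm}(F(X\times-))\simeq 0$ for every $X$, and reading off weight~$0$ yields $F(X)\simeq 0$. The only cosmetic difference is that you route the first and last steps through Theorem~\ref{cthm} (the equivalence $DM_{\Cor}[\cc C]\simeq DM_{\Cor}$), whereas the paper computes $\Hom([\Gmn{n},I(-)]\otimes Z,F)\cong F(\Gmn{n})(Z)$ directly via Yoneda and then extracts $F(U)$ as the weight-$0$ term of the stably fibrant spectrum $ev_{\Gm}(F(U\times-))$; both arguments rest on the same verification that $F(X\times-)$ inherits the needed fibrancy, which you rightly flag.
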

\begin{proof}
	The objects $[\Gmn{n},I(-)]\otimes Z$ are compact by \cite[Theorem 6.2]{garkusha2019derived}.
	Let $F\in D([\Sm,\ShvA])/\sim[1/p]$ be an enriched functor such that
	for all $n\in \bb N, Z \in \Smk$
	$$\mathrm{Hom}_{D([\Sm,\ShvA])/\sim[1/p]}([\Gmn{n},I(-)]\otimes Z,F) = 0.$$
	Without loss of generality, $F$ is $\sim$-fibrant.
	Then we get for all $n\in \bb N, Z \in \Smk$ that
	$F(\Gmn{n})(Z) \cong 0$
	in $D(\Ab)[1/p]$.
	This implies that
	$ev_{\Gm}(F) \cong 0$
	in $DM_{\Cor}[1/p]$.
	It follows Theorem \ref{RO} that for every $U \in \Smk$
	$$ev_{\Gm}(F(U \times -))\cong ev_{\Gm}(F) \otimes \mathcal{L}(\Sigma^{\infty}_{S^1,\Gm}U_+) \cong 0 $$
	in $DM_{\Cor}[1/p]$.
	Since $F(U \times -)$ is $\sim$-fibrant, the $\Gm$-spectrum $ev_{\Gm}(F(U \times -))$ is motivically fibrant in $DM_{\Cor}[1/p]$.
	Then
	$$F(U) \cong F(U \times pt) = ev_{\Gm}(F(U \times -))(0) \cong 0$$
	in $D(\ShvA)[1/p]$.
	This means that
	$F \cong 0$ in
	$D([\Sm,\ShvA])/\sim[1/p]$.
	So 
$$\{ [\Gmn{n},I(-)]\otimes Z \mid n \in \bb N, Z \in \Smk  \}$$ is a set of compact generators for $D([\Sm,\ShvA])/\sim[1/p]$.
\end{proof}

\begin{lem} \label{withoutZfibrant}
The enriched functor $[\Gmn{n},M_{\Cor}(-)] : \Sm \rightarrow \Ch(\ShvA)$ satisfies Nisnevich excision in the sense of Definition \ref{nisdef}.
\end{lem}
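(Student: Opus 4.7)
The plan is to realize the $[\Gmn{n},M_{\Cor}(-)]$-square of an elementary Nisnevich square as the image of the short exact sequence of sheaves supplied by Axiom~(3) of Definition~\ref{tak} under two successive triangulated operations: first $C_*$, then the right-derived internal hom $R[\Gmn{n},-]$. Since each $M_{\Cor}(X)$ has $\bb A^1$-invariant cohomology sheaves by the strict $V$-property, Lemma~\ref{Gmpreserveslocal} will finally allow me to replace the derived internal hom by the strict one.

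First, given an elementary Nisnevich square $(U',X',U,X)$, Axiom~(3) of Definition~\ref{tak} provides a short exact sequence
\[0\to I(U')\to I(U)\oplus I(X')\to I(X)\to 0\]
in $\ShvA$, and hence a distinguished triangle in $D(\ShvA)$. By the lemma preceding Corollary~\ref{moteqC}, the endofunctor $C_*$ on $D(\ShvA)$ is the $\bb A^1$-localization functor, which is in particular triangulated. Applying $C_*$ to the above triangle yields a distinguished triangle
\[M_{\Cor}(U')\to M_{\Cor}(U)\oplus M_{\Cor}(X')\to M_{\Cor}(X)\]
in $D(\ShvA)$, that is, the $M_{\Cor}$-square of the given Nisnevich square is homotopy cartesian in $\Ch(\ShvA)$.

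Next, I note that $\Gmn{n}$ is cofibrant in $\Ch(\ShvA)$: by Lemma~\ref{binomial} it is a direct summand of $I(\Gmt{n})=L_{\nis}(\Cor(-,\Gmt{n}))$, and the latter is the image under the left Quillen functor $L_{\nis}$ of a cofibrant object in $\Ch(\PshA)$. Because the local model structure on $\Ch(\ShvA)$ is monoidal, it follows that $[\Gmn{n},-]$ is right Quillen, so the derived functor $R[\Gmn{n},-]$ is triangulated and sends the homotopy cartesian $M_{\Cor}$-square to another homotopy cartesian square.

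Finally, I compare the strict and derived internal homs. Since $\Cor$ has the strict $V$-property, each $M_{\Cor}(X)$ has $\bb A^1$-invariant cohomology sheaves, so Lemma~\ref{Gmpreserveslocal} applies to the canonical map $M_{\Cor}(X)\to M_{\Cor}(X)^f$ to a pointwise locally fibrant replacement, giving a local quasi-isomorphism
\[[\Gmn{n},M_{\Cor}(X)]\xrightarrow{\sim}[\Gmn{n},M_{\Cor}(X)^f]\simeq R[\Gmn{n},M_{\Cor}(X)].\]
Therefore the strict $[\Gmn{n},M_{\Cor}(-)]$-square is locally quasi-isomorphic to a homotopy cartesian square and is itself homotopy cartesian, proving Nisnevich excision. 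The main obstacle is precisely this last identification: without the strict $V$-property, $[\Gmn{n},-]$ on $\Ch(\ShvA)$ would not commute with its derived version, and the whole strategy would collapse.
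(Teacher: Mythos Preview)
Your proof is correct and follows essentially the same route as the paper: obtain the short exact sequence of representable sheaves from the axioms of a category of correspondences, apply $C_*$ to get a distinguished triangle of motives, pass to the derived internal hom $[\Gmn{n},-]$, and then use Lemma~\ref{Gmpreserveslocal} to identify the derived internal hom with the strict one on these $\bb A^1$-local complexes. One small correction: the Nisnevich short exact sequence is Axiom~(2) of Definition~\ref{tak}, not Axiom~(3); also, the lemma you cite for $C_*$ being the $\bb A^1$-localization is stated for $D([\cc C,\ShvA])$, though the same argument (or direct exactness of $C_*$) gives the result for $D(\ShvA)$.
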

\begin{proof}
Take an elementary Nisnevich square: $$\xymatrix{ U^\prime \ar[r]_{\beta} \ar[d]^{\alpha} & X^\prime \ar[d]_{\gamma}\\
	U \ar[r]^{\delta} & X }$$
From Definition \ref{tak} it follows that there is an exact sequence
$$0 \rightarrow \Cor(-,U^\prime)_{nis} \rightarrow \Cor(-,U)_{nis} \oplus \Cor(-,X^\prime)_{nis}  \rightarrow \Cor(-,X)_{nis} \rightarrow 0 .$$
Since $\Cor$ is a strict $V$-category of correspondences, by applying $C_*$ we get a triangle 
$$M_{\Cor}(U^\prime) \rightarrow M_{\Cor}(U) \oplus M_{\Cor}(X^\prime) \rightarrow M_{\Cor}(X) \rightarrow \Sigma M_{\Cor}(U^\prime)$$
in $D(\ShvA)$.
We can take local fibrant replacements $M_{\Cor}(X)^f$  of each of these terms $M_{\Cor}(X)$, and then apply $\Omega_{\Gm}^n$ to get a triangle of locally fibrant complexes in $D(\ShvA)$
$$\Omega_{\Gm}^n(M_{\Cor}(U^\prime)^f) \rightarrow \Omega_{\Gm}^n(M_{\Cor}(U)^f) \oplus \Omega_{\Gm}^n(M_{\Cor}(X^\prime)^f) \rightarrow \Omega_{\Gm}^n(M_{\Cor}(X)^f) \rightarrow \Sigma\Omega_{\Gm}^n(M_{\Cor}(U^\prime)^f).$$

Lemma \ref{Gmpreserveslocal} says that $\inthom{\Ch(\ShvA)}{\Gmn{1}}{-} : \Ch(\ShvA) \rightarrow \Ch(\ShvA)$ preserves local equivalences between $\bb A^1$-local complexes. This implies that   $[\Gmn{n},M_{\Cor}(X)]$ is locally equivalent to $[\Gmn{n},M_{\Cor}(X)^f] = \Omega_{\Gm}^n(M_{\Cor}(X)^f)$. So we ultimately get a triangle in $D(\ShvA)$
$$[\Gmn{n},M_{\Cor}(U^\prime)] \rightarrow [\Gmn{n},M_{\Cor}(U)] \oplus [\Gmn{n},M_{\Cor}(X^\prime)] \rightarrow [\Gmn{n},M_{\Cor}(X)]\rightarrow \Sigma [\Gmn{n},M_{\Cor}(U^\prime)].$$
This means that $$\xymatrix{ [\Gmn{n},M_{\Cor}(U^\prime)] \ar[r]_{\beta_*} \ar[d]^{\alpha_*} & [\Gmn{n},M_{\Cor}(X^\prime)] \ar[d]_{\gamma_*}\\
	[\Gmn{n},M_{\Cor}(U)] \ar[r]^{\delta_*} & [\Gmn{n},M_{\Cor}(X)] }$$ is homotopy cartesian, so $[\Gmn{n},M_{\Cor}(-)] : \Sm \rightarrow \Ch(\ShvA)$ satisfies Nisnevich excision.
\end{proof}

\begin{lem} \label{fibrant}
	For every $Z \in \Smk$ the enriched functor $[\Gmn{n},M_{\Cor}(- \times Z)] : \Sm \rightarrow \Ch(\ShvA)$ satisfies Nisnevich excision in the sense of Definition \ref{nisdef}.
\end{lem}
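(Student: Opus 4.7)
The plan is to reduce this to Lemma \ref{withoutZfibrant} by observing that the operation of taking the product with a fixed smooth scheme $Z$ preserves elementary Nisnevich squares. Concretely, given an elementary Nisnevich square
\[
\xymatrix{ U^\prime \ar[r]_{\beta} \ar[d]^{\alpha} & X^\prime \ar[d]_{\gamma}\\
	U \ar[r]^{\delta} & X }
\]
in $\Smk$, I would first argue that the square
\[
\xymatrix{ U^\prime\times Z \ar[r]_{\beta\times Z} \ar[d]^{\alpha\times Z} & X^\prime\times Z \ar[d]_{\gamma\times Z}\\
	U\times Z \ar[r]^{\delta\times Z} & X\times Z }
\]
is again an elementary Nisnevich square. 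This is standard: open immersions, étale maps, and Cartesian squares are all stable under base change along $X\times Z\to X$, and the isomorphism property of $\gamma$ over the complement of $U$ pulls back to an isomorphism of $\gamma\times Z$ over the complement of $U\times Z$.

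Having done this, I would apply Lemma \ref{withoutZfibrant} directly to the square obtained by multiplying with $Z$, which gives the homotopy cartesian diagram
\[
\xymatrix{ [\Gmn{n},M_{\Cor}(U^\prime\times Z)] \ar[r] \ar[d] & [\Gmn{n},M_{\Cor}(X^\prime\times Z)] \ar[d]\\
	[\Gmn{n},M_{\Cor}(U\times Z)] \ar[r] & [\Gmn{n},M_{\Cor}(X\times Z)] }
\]
in $\Ch(\ShvA)$. But this diagram is, by definition, precisely the value on the original elementary Nisnevich square of the enriched functor $[\Gmn{n},M_{\Cor}(-\times Z)]$. So the functor satisfies Nisnevich excision.

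The argument is essentially book-keeping — the only nontrivial content has already been absorbed into Lemma \ref{withoutZfibrant}, which in turn relied on the strict $V$-property of $\Cor$ and on Lemma \ref{Gmpreserveslocal} to commute $[\Gmn{n},-]$ past a local fibrant replacement of an $\bb A^1$-invariant complex. There is no real obstacle here: the only thing to verify carefully is the stability of elementary Nisnevich squares under $(-)\times Z$, which is a routine consequence of the base-change properties of étale morphisms and Cartesian squares.
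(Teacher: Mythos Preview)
Your proposal is correct and follows essentially the same approach as the paper: reduce to Lemma \ref{withoutZfibrant} by noting that an elementary Nisnevich square remains one after taking the product with a fixed $Z\in\Smk$. The paper's proof is in fact slightly terser, simply asserting that the $(-)\times Z$ square is again elementary Nisnevich and invoking Lemma \ref{withoutZfibrant}.
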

\begin{proof}
	Take an elementary Nisnevich square $$\xymatrix{ U^\prime \ar[r]_{\beta} \ar[d]^{\alpha} & X^\prime \ar[d]_{\gamma}\\
		U \ar[r]^{\delta} & X }$$
	Then the square $$\xymatrix{ U^\prime \times Z \ar[r]_{\beta \times 1} \ar[d]^{\alpha \times 1} & X^\prime \times Z \ar[d]_{\gamma \times 1}\\
		U \times Z \ar[r]^{\delta \times 1} & X \times Z }$$ is again an elementary Nisnevich square.
	The result now follows from Lemma \ref{withoutZfibrant}.
\end{proof}

\begin{proof}[Proof of Theorem \ref{mainthm}]
	Let $$T_{\mathscr{C}} := \langle [\Gmt{n},-]\otimes X \mid n \in \mathbb{N}, X\in \Smk \rangle$$ be the full triangulated subcategory of $D([\Sm,\ShvA])$ that is compactly generated by $[\Gmt{n},-]\otimes X$ for all $n \in \mathbb{N}$ and $X \in \Smk$.
According to \cite[Lemma 4.10]{garkusha2022recollements} the composite 
	$$T_{\mathscr{C}} \rightarrow D([\Sm,\ShvA]) \overset{res}{\rightarrow} D([\mathscr{C},\ShvA])$$ 
	is an equivalence of triangulated categories, where the first map is the inclusion map and the second map is the map restricting functors from $\Sm$ to $\cc C$.
	
	Let $\widehat{\sim_{\cc{C}}}$ be the set of morphisms, following the notation from Lemma \ref{localobjectcomparison} by
	$$\widehat{\sim_{\cc{C}}} := \{ (f \otimes Z)[n] \mid f \in \sim_{\cc C}, Z \in \Smk, n \in \bb N\}.$$
		Here $\sim_{\cc{C}}$ is defined in Section \ref{simCdef} on page \pageref{simCdef}.
	We can consider $\widehat{\sim_{\cc{C}}}$ to be a set of morphisms in $T_{\cc C}$. We write $T_{\mathscr{C}}/\sim_{\cc{C}}$ for the localization of $T_{\cc C}$ along the set of morphisms $\widehat{\sim_{\cc{C}}}$ between compact objects.
	
	The equivalence $T_{\cc C} \rightarrow D([\mathscr{C},\ShvA])$ then induces an equivalence of compactly generated triangulated categories $$T_{\mathscr{C}}/\sim_{\cc{C}} \rightarrow D([\mathscr{C},\ShvA])/\sim_{\cc{C}}.$$
	By Theorem \ref{cthm} we have that $$ev_{\Gm}: D([\mathscr{C}, \ShvA])/\sim_{\cc{C}} \rightarrow DM_{\Cor}$$ is an equivalence of compactly generated triangulated categories.
	So we have an equivalence of compactly generated triangulated categories
	$$ev_{\Gm} : T_{\mathscr{C}}/\sim_{\cc{C}} \rightarrow DM_{\Cor}.$$
	Next, the inclusion $T_{\cc C} \rightarrow D([\Sm,\ShvA])$ induces a triangulated functor
	$$\Phi: T_{\cc C}/\sim_{\cc{C}}  \rightarrow D([\Sm,\ShvA])/\sim .$$
	We will now use Lemma \ref{garkushaLemma} to show that $$\Phi[1/p] : T_{\cc C}/\sim_{\cc{C}}[1/p]  \rightarrow D([\Sm,\ShvA])/\sim [1/p] $$ is an equivalence of triangulated categories.
	Following the notation of Lemma \ref{garkushaLemma}, here $A= T_{\cc C}/\sim_{\cc{C}}[1/p] $ and $B=D([\Sm,\ShvA])/\sim [1/p]$ are compactly generated triangulated categories.
	
	Due to Lemma \ref{binomial} and the definition of $T_{\cc C}$, the set $$\Sigma:= \{ [\Gmn{n},I(-)] \otimes X \mid n \in \bb N, X \in \Smk   \} $$is a set of compact generators for $T_{\cc C}/\sim_{\cc{C}}[1/p]$. This is the set of compact generators to which we apply Lemma \ref{garkushaLemma}.
	Due to Lemma \ref{compactgenerationLemma}, the functor $\Phi[1/p]$ sends $\Sigma$ to a set of compact generators for $D([\Sm,\ShvA])/\sim [1/p]$, so the first condition of Lemma \ref{garkushaLemma} is satisfied.
	
	Let us check the second condition.
	Since $T_{\mathscr{C}}/\sim_{\cc{C}}$ is equivalent to $D([\mathscr{C}, \ShvA])/\sim_{\cc{C}}$,
	by Lemma \ref{GmtimesXcomputation} we have an isomorphism 
	$$[\Gmt{n},I(-)]\otimesShv X \cong [\Gmt{n},\cc M_{\Cor}(X)] $$
	in $T_{\mathscr{C}}/\sim_{\cc{C}}$.
	From Lemma \ref{fibrant} it follows that the enriched functor $[\Gmt{n},\cc M_{\Cor}(X)]: \Sm \rightarrow \Ch(\ShvA)$ satisfies Nisnevich excision. Similarly to Lemma \ref{enrichedmotivelocal}, it is also strictly local with respect to the relations $\bb A^1_1$, $\tau$. The definitions of these relations is in Section \ref{simdef}, page \pageref{simdef}. Since the map
	$M_{\Cor}(X \times \bb A^1) \rightarrow M_{\Cor}(X)$
	is an isomorphism in $DM_{\Cor}^{\eff}$ between $\bb A^1$-local complexes, so it is also a local quasi-isomorphism. Since $[\Gmn{n},-]$ preserves local quasi-isomorphisms between $\bb A^1$-local objects, it follows that $[\Gmt{n},\cc M_{\Cor}(X)]$ is strictly local with respect to $\bb A^1_2$.
	So the enriched functor
	$[\Gmt{n},\cc M_{\Cor}(X)]: \Sm \rightarrow \Ch(\ShvA)$ is strictly $\sim$-local.
	Also for every $d \in \bb N$ the shifted functor
	$[\Gmt{n},\cc M_{\Cor}(X)][d]: \Sm \rightarrow \Ch(\ShvA)$ is strictly $\sim$-local.
	
	The functor $\Phi : T_{\cc C}/\sim_{\cc{C}} \rightarrow D([\Sm,\ShvA])/\sim$ is by construction fully faithful on strictly $\sim$-local objects, in the sense that if $A, B \in T_{\cc C}/\sim_{\cc{C}}$ are strictly $\sim$-local then the map
	$$\mathrm{Hom}_{T_{\cc C}/\sim_{\cc{C}}}(A, B) \rightarrow \mathrm{Hom}_{D([\Sm,\ShvA])/\sim}(\Phi(A),\Phi(B)) $$
	is a bijection of abelian groups.
	In particular $\Phi$ is fully faithful on all shifts of objects of the form $[\Gmt{n},\cc M_{\Cor}(X)]$, where $n \in \bb N$, $X \in \Smk$.
	Since the objects $[\Gmt{n},\cc M_{\Cor}(X)]$ are isomorphic to the objects $[\Gmt{n},I(-)] \otimes X$ in $T_{\cc C}/\sim_{\cc{C}}$, it follows that $\Phi$ is fully faithful on all shifts of objects from the set of compact generators $\Sigma$.
	
	This verifies the second condition from Lemma \ref{garkushaLemma}.
	It now follows that 
	$$\Phi: T_{\cc C}/\sim_{\cc{C}}[1/p] \rightarrow D([\Sm,\ShvA])/\sim[1/p]$$
	is an equivalence of triangulated categories.
	Recall that by Lemma \ref{DMSmcomparison} we have a canonical equivalence of triangulated categories
	$$DM_{\Cor}[\Sm] \rightarrow D([\Sm,\ShvA])/\sim.$$
	We then have a commutative diagram
	$$\xymatrix{DM_{\Cor}[\Sm][1/p] \ar[r]^(0.4)\sim & D([\Sm,\ShvA])/\sim[1/p] \ar[dr]^{ev_{\Gm}} \\
	& T_{\cc C}/\sim_{\cc{C}}[1/p] \ar[r]^\sim_{ev_{\Gm}} \ar[u]_\sim^{\Phi} & DM_{\Cor}[1/p]  }$$
	which shows that the evaluation functor
	$$ev_{\Gm}: DM_{\Cor}[\Sm][1/p] \rightarrow DM_{\Cor}[1/p]$$
	is an equivalence of categories.
	This completes the proof of Theorem \ref{mainthm}.
\end{proof}

\end{document}